\documentclass{scrartcl}

\usepackage{imakeidx}
\makeindex[name=general]

\hbadness=10000 \vbadness=10000
\hfuzz=100pt

\usepackage[]{algorithm2e}
\usepackage{bbm} 
\usepackage{dsfont} 

\usepackage{commath} 

\usepackage{amsmath, amsthm, amssymb}
\usepackage{mathabx} 
\usepackage{mathrsfs}
\usepackage{url}
\usepackage{amssymb,amsfonts}
\usepackage{color}
\usepackage{shuffle}
\usepackage[usenames,dvipsnames,table]{xcolor}

\usepackage[linecolor=white,backgroundcolor=white,bordercolor=white,textsize=tiny]{todonotes}
\usepackage{breqn}

\usepackage{silence}
\WarningFilter{latex}{Marginpar on page} 
\WarningFilter{latex}{Label(s) may have changed} 

\usepackage[colorlinks,backref]{hyperref}
\usepackage{cleveref}
\usepackage{graphicx}
\usepackage[font={small,it}]{caption}
\usepackage[flushleft]{threeparttable}
\usepackage{longtable}
\usepackage{enumitem}

\newtheorem{theorem}{Theorem}[section]
\theoremstyle{plain}

\newtheorem{corollary}[theorem]{Corollary}

\newtheorem{example}[theorem]{Example}

\newtheorem{lemma}[theorem]{Lemma}

\newtheorem{remark}[theorem]{Remark}


\theoremstyle{definition} 

\newtheorem{definition}[theorem]{Definition}
\newtheorem*{tata}{Generalization}
  {\begin{mdframed}[backgroundcolor=lightgray]\begin{tata}}%
  {\end{tata}\end{mdframed}}

\parindent0pt           
\parskip1ex

\newcommand{\R}{\mathbb{R}}

\newcommand{\C}{\mathbb{C}}
\newcommand{\N}{\mathbb{N}}
\newcommand{\Z}{\mathbb{Z}}

\newcommand{\Q}{\mathbb{Q}}


\newcommand{\eps}{\epsilon}

\newcommand{\vareps}{\varepsilon}
\newcommand{\id}{\operatorname{id}}

\newcommand{\mute}[1]{}
\let\todon\todo
\renewcommand{\todo}[1]{\todon[inline,color=green!40]{\color{NavyBlue}{#1}}}

\usepackage{stmaryrd}
\usepackage{faktor}
\usepackage{subcaption}

\newcommand\diag{\operatorname{diag}}

\newcommand\adiag{\operatorname{adiag}}

\newcommand\QSym{{\operatorname{QSym}^{(2)}}}
\newcommand\qSh{\operatorname{qSh}}
\newcommand\Zero{\mathsf{Zero}}
\newcommand\zero{\mathsf{zero}}

\newcommand{\qShuffle}{\stackrel{\scalebox{0.6}{$\mathsf{qs}$}}{\shuffle}}

\renewcommand\O{{\mathcal O}}
\DeclareMathOperator*{\multicirc}{\bigcirc}

\newcommand\splitt{\mathfrak{S}}

\newcommand\A{{\mathfrak A}} 

\newcommand\ISS{\operatorname{ISS}}

\newcommand\Endom{\operatorname{End}}
\newcommand\weight{\mathsf{weight}}
\newcommand\Span{\operatorname{span}}

\newcommand\CS{\operatorname{cumsum}}

\definecolor{myGreen}{rgb}{0.18039216 0.49803922 0.09411765}
\definecolor{oldlace}{rgb}{0.99, 0.96, 0.9}
\definecolor{moccasin}{rgb}
{0.93333333 0.49019608 0.04313725}
\definecolor{palegreen}{rgb}{0.6, 0.98, 0.6}
\definecolor{green(html/cssgreen)}{rgb}{0.0, 0.5, 0.0}
\definecolor{harlequin}{rgb}{0.25, 1.0, 0.0}
\definecolor{lasallegreen}{rgb}{0.03, 0.47, 0.19}
\definecolor{kellygreen}{rgb}{0.3, 0.73, 0.09}
\definecolor{forestgreen(web)}{rgb}{0.2, 0.8, 0.2}
\definecolor{darktangerine}{rgb}{1.0, 0.66, 0.07}
\definecolor{fashionfuchsia}{rgb}{0.96, 0.0, 0.63}
\definecolor{darkred}{rgb}{0.65, 0.0, 0.0}
\definecolor{darkRed}{rgb}{0.65, 0.0, 0.0}
\definecolor{coquelicot}{rgb}{0.9, 0.22, 0.0}
\definecolor{ballblue}{rgb}
{0.392,0.513,0.725}
\definecolor{blue-violet}{rgb}{0.54, 0.17, 0.89}
\definecolor{darkorange}{rgb}{0.894, 0.866, 0.125}
\definecolor{airforceblue}{rgb}{0.36, 0.44, 0.46}
\definecolor{tealBlue}{rgb}{0.21, 0.46, 0.53}
\definecolor{indigoWeb}{rgb}{0.29, 0.0, 0.51}
\definecolor{CeruleanBlue}{rgb}{0.16, 0.32, 0.75}
\definecolor{darkBlue}{rgb}{0.16, 0.32, 0.75}
\definecolor{darkGrey}{rgb}{0.66, 0.66, 0.66}
\definecolor{babypink}{rgb}{0.878, 0.345, 0.12549}
\definecolor{brown}{rgb}{0.45882353 0.27058824 0.00784314}

\newcommand\w[1]{{\color{cyan}\mathbf{#1}}}

\newcommand\tuIn[1]{\boldsymbol{#1}}

\newcommand\DEF[1]{\textbf{#1}\index[general]{#1}}

\newcommand\Stutter{\mathsf{warp}}

\newcommand\composition{\mathsf{Cmp}}
\newcommand\groundRing{{\mathbb{K}}}
\newcommand\monoidComp{\mathfrak{M}}
\newcommand\monCompNeutrElem{\vareps}
\newcommand\ec{\mathsf{e}} 
\newcommand\formalPorerSeriesOfFiniteDegree{\groundRing\llbracket x\rrbracket_{<\infty}}

\setcounter{MaxMatrixCols}{20}

\begin{document}

\newcommand\closureDiag{{\overline{\monoidComp_d}^{\chain}}} %
\newcommand\closureADiag{{\overline{\A^{1\times 1}}^{\adiag}}}
\newcommand\closureDiagADiag{{\overline{\A^{1\times 1}}^{\diag,\adiag}}}
\newcommand\sh{\operatorname{sh}}
\newcommand\SH{\operatorname{SH}}
\newcommand\QSH{\operatorname{QSH}}

\newcommand\e[1]{{e}_{#1}} 

\newcommand\NFzero{{\operatorname{NF}_{\mathsf{Zero}}}}
\newcommand\NFstut{{\operatorname{NF}_{\mathsf{warp}}}}
\newcommand\NFsim{{\operatorname{NF}_{\sim}}}
\newcommand\NFconst{{\operatorname{NF}_{\ker(\delta)}}}
\newcommand\EQzero{\sim_{\operatorname{Zero}}}
\newcommand\EQstut{\sim_{\operatorname{warp}}}

\newcommand\compositionConnected{\composition_{\mathsf{con}}}

\newcommand\funcFinSupport{\groundRing^{(\N\times\N)}}

\newcommand\evZ{\mathsf{evZ}}
\newcommand\evC{\mathsf{evC}}

\newcommand\row{\mathsf{rows}}
\newcommand\col{\mathsf{cols}}
\newcommand\size{\mathsf{size}}

\newcommand\domainAlphaBeta{\mathfrak{s}}

\makeatletter
\newsavebox{\@brx}
\newcommand{\llangle}[1][]{\savebox{\@brx}{\(\m@th{#1\langle}\)}%
  \mathopen{\copy\@brx\kern-0.5\wd\@brx\usebox{\@brx}}}
\newcommand{\rrangle}[1][]{\savebox{\@brx}{\(\m@th{#1\rangle}\)}%
  \mathclose{\copy\@brx\kern-0.5\wd\@brx\usebox{\@brx}}}
\makeatother

\title{Two-parameter sums signatures and corresponding quasisymmetric functions}
\author{Joscha Diehl \and Leonard Schmitz}
\date{\emph{University of Greifswald}}
\maketitle

\begin{abstract}
  Quasisymmetric functions have recently been used in time series analysis as
  polynomial features that are invariant under, so-called, dynamic time warping.
  We extend this notion to data indexed by two parameters and thus provide
  warping invariants for images.
  We show that two-parameter quasisymmetric functions are complete in a certain
  sense, and provide a two-parameter quasi-shuffle identity. A compatible coproduct is
  based on diagonal concatenation of the input data, leading to a (weak) form of
  Chen’s identity. 
\end{abstract}

\textbf{Keywords:} quasisymmetric functions, 
warping invariants, 
matrix compositions, 
Hopf algebra, 
image analysis, 
signatures and data streams

\tableofcontents

\section{Introduction}

Certain forms of \emph{signatures} have proven beneficial as features in time series analysis.
The \emph{iterated-integrals signature} was introduced by Chen in the 1950s \cite{chen1954iterated}
for homological considerations on loop space.
After applications in control theory, starting in the 1970s, \cite{fliess1976outil,fliess1981fonctionnelles} and rough path theory, starting in the 1990s, \cite{lyons1998differential},
it has in the last decade been successfully applied in machine learning tasks on time series \cite{BKA19,kiraly2019kernels,diehl2019invariants,KMFL20,TBO20}.
As the name suggests, it applies to \emph{continuous} objects, namely (smooth enough) curves in Euclidean space.
For discrete time series to fit in the machinery,
they have to undergo a (simple) interpolation step.

The \emph{iterated-sums signature}, introduced in \cite{Diehl_2020},
forgoes this intermediate step and immediately works on the discrete-time object.
This discrete perspective brings additional benefits:
a broader class of features (even for one-dimensional time series, whose \emph{integral} signature is trivial),
flexibility in the underlying ground field \cite{diehl2020tropical},
and a tight, new-found, connection to the theory of quasisymmetric functions \cite{malvenuto1995duality} and \emph{dynamic time warping} \cite{SC1978,berndt1994using,keogh2005exact}.

In the present work we will take the latter perspective and apply it to data indexed by \emph{two parameters},
the canonical example being image data.

\bigskip

\textbf{Related work}

In data science, two
recent works have very successfully
applied iterated integrals to images.
In \cite{ibrahim2022imagesig}
the classical, one-parameter, iterated-integrals signature is used for images (by working ``row-by-row''), whereas
certain multi-parameter iterated integrals are used in \cite{ZLT22}.
A principled extension of Chen's iterated integrals,
based on their original use in topology,
is presented in \cite{GLNO2022}.

More generally, the use of ``signature-like''
feature-maps has recently been extended to
graphs \cite{toth2022capturing,caudillo2022graph}
and trees  \cite{cochrane2021sk}.

\bigskip

\index[general]{n@$\N$, strictly positive integers}
\index[general]{n0@$\N_0$, non-negative integers}
\index[general]{composition@$\circ$, composition of functions}
\index[general]{row@$\row$}
\index[general]{col@$\col$}
\index[general]{size@$\size$}
\index[general]{C@$\C$, complex number field}

\textbf{Notation}

Throughout,
$\N = \{1,2,\dots\}$ denotes the strictly positive integers and
$\N_0 = \{0\}\cup\N$ denotes the non-negative integers.
Let $(\N^2,\leq)$ denote the product poset (partially ordered set),
i.e.
(here, and throughout, we denote tuples with bold letters)
\begin{align*}
    \tuIn{i}\leq\tuIn{j}  \Leftrightarrow \tuIn{i}_1\leq\tuIn{j}_1 \text{ and } \tuIn{i}_2\leq\tuIn{j}_2.
\end{align*}
For every matrix $\mathbf{A}\in M^{m\times n}$ with entries from an arbitrary set $M$ let $\size(\mathbf{A}):=(\row(\mathbf{A}),\col(\mathbf{A})):=(m,n)\in \N^2$ denote the number of rows and columns in $\mathbf{A}$ respectively. 
 Let $g\circ f:M\rightarrow P,\;m\mapsto g(f(m))$ be the set-theoretic composition of  functions $f:M\rightarrow N$ and $g:N\rightarrow P$. 
Let $\C$ denote the complex number field.

  \subsection{Warping invariants motivate the signature}\label{sec:inroInvariants}
 
We briefly recall the notion of  (classical, one-parameter) time warping invariants, as covered in \cite{Diehl_2020}. 
For simplicity, we consider eventually-constant, $\C$-valued \DEF{time series} in discrete time, 
\begin{align*}\evC(\N,\C)&:=
\left\{x:\N\rightarrow\C\mid \exists n\in\N\,:x_i\not=x_{j}\implies i\leq n\right\}.
\end{align*}

Intuitively one might think of complex numbers as colored pixels, becoming especially valuable for visualization in the two-parameter case.
Later in this paper, the complex numbers are actually replaced by the module $\groundRing^d$ over some arbitrary integral domain, covering the classical encoding of colors via $\R^3$. 
A single \DEF{time warping operation} is formalized by the mapping 
  $$\Stutter_{k}:\evC(\N,\C)\rightarrow\evC(\N,\C),\;{(\Stutter_{k}x)}_i
    :=
    \begin{cases}
      x_i       & i \le k\\
      x_{i-1} & i > k,
    \end{cases}$$
    which leaves all entries until the $k$-th unchanged, copies this value once, attaches it at position $k+1$, and shifts all remaining successors by one. 

  For example, consider  
  \begin{align*}
\Stutter_{2}\!\left(\,\begin{tabular}{|llll}
    \hline
    \cellcolor{ballblue}2
   &\cellcolor{moccasin}1
   &\cellcolor{myGreen}3
   &\cellcolor{moccasin}1\\
    \hline
\end{tabular}\cdots\right)
&=
\begin{tabular}{|>{\columncolor{red}}lllll}
    \hline
    \cellcolor{ballblue}2
   &\cellcolor{moccasin}1
   &\cellcolor{moccasin}1
   &\cellcolor{myGreen}3
   &\cellcolor{moccasin}1\\
    \hline
\end{tabular}\cdots
\end{align*}
where the dots on the right hand side indicate that all relevant information is provided, i.e., that the series has reached a  constant and will not change again. 
A \DEF{time warping invariant} is a function from the set of time series to the complex numbers which remains unchanged under warping.
An example of such an invariant is 
\begin{equation}\label{eq:exOneDimInv}
 \varphi:\evC(\N,\C)\rightarrow\C,\;x\mapsto x_{1}-\lim\limits_{t\rightarrow\infty}x_{t}
 \end{equation}
where the limit exists, since $x$ was assumed to be eventually constant. 
This invariant does not ``see'', whether certain entries of a time series are repeated over and over again.  
Indeed,
neither the first entry nor the limt can be changed by any warping. 
In the numerical example
\begin{align*}
\!\!\!\!\!\!\varphi\!\left(\,\begin{tabular}{|llllll}
    \hline
    \cellcolor{ballblue}2
   &\cellcolor{myGreen}3
   &\cellcolor{moccasin}1
   &\cellcolor{darkorange}5
   &\cellcolor{moccasin}1
   &\cellcolor{moccasin}1\\
    \hline
\end{tabular}\cdots\right)
&=\varphi\!\left(\,
\begin{tabular}{|>{\columncolor{red}}llllllll}
    \hline
    \cellcolor{ballblue}2
   &\cellcolor{myGreen}3
   &\cellcolor{myGreen}3
   &\cellcolor{moccasin}1
   &\cellcolor{moccasin}1
   &\cellcolor{moccasin}1
   &\cellcolor{darkorange}5
   &\cellcolor{moccasin}1\\
    \hline
\end{tabular}\cdots\right)\\
&=\varphi\!\left(\,
\begin{tabular}{|>{\columncolor{red}}lllllll}
    \hline
    \cellcolor{ballblue}2
   &\cellcolor{ballblue}2
   &\cellcolor{ballblue}2
   &\cellcolor{myGreen}3
   &\cellcolor{moccasin}1
   &\cellcolor{darkorange}5
   &\cellcolor{moccasin}1\\
    \hline
\end{tabular}\cdots\right)\\
&=\varphi\!\left(\,
\begin{tabular}{|>{\columncolor{red}}llllllll}
    \hline
    \cellcolor{ballblue}2
   &\cellcolor{myGreen}3
   &\cellcolor{myGreen}3
   &\cellcolor{moccasin}1
   &\cellcolor{darkorange}5
   &\cellcolor{darkorange}5
   &\cellcolor{darkorange}5
   &\cellcolor{moccasin}1\\
    \hline
\end{tabular}\cdots\right)\\
&=
\begin{tabular}{|l|}
    \hline
    \cellcolor{ballblue}2\\
    \hline
\end{tabular}
-
\begin{tabular}{|l|}
    \hline
    \cellcolor{moccasin}1\\
    \hline
\end{tabular}
\end{align*}
the initial time series is warped to three different representatives and yet still yields the same value under $\varphi$. 

Next, we move to the two-parameter case, which is the focus of this paper.   
  
We denote by
\begin{align*}\evC(\N^2,\C)&:=
\left\{X:\N^2\rightarrow\C\mid \exists \tuIn{n}\in\N^2\,:X_{\tuIn{i}}\not=X_{\tuIn{j}}\implies \tuIn{i}\leq \tuIn{n}\right\}
\end{align*}
the set of two-parameter functions which are eventually constant. 
A function from this set is a two-parameter analog to a (classical, one-parameter) time series that is eventually constant
and can be thought of as an image of arbitrary size, with its pixels being encoded by $\C$.  

We define a single \DEF{warping operation}
$\Stutter_{a,k}$ similar to the one-parameter case,
except that we add a second index $a\in\{1,2\}$ indicating on which axis the warping takes place. 
For the axis $a=1$ we obtain an operation on rows, i.e., at position $k$ we copy a row and shift all remaining rows by one. 
Illustratively, for $k=2$ we have
\begin{align*}
\Stutter_{1,2}\!\left(\;
{\begin{tabular}{lllll}
\hline
\multicolumn{1}{|l}{
\cellcolor{ballblue}2
}
   &\cellcolor{moccasin}1
   &\cellcolor{myGreen}3
   &\cellcolor{moccasin}1
   &\cellcolor{moccasin}1\\
\multicolumn{1}{|l}{
\cellcolor{myGreen}3}
   &\cellcolor{ballblue}2
   &\cellcolor{darkorange}5
   &\cellcolor{moccasin}1
   &\cellcolor{moccasin}1\\
\multicolumn{1}{|l}{
   \cellcolor{moccasin}1}
   &\cellcolor{moccasin}1
   &\cellcolor{moccasin}1
   &\cellcolor{moccasin}1
   &\cellcolor{moccasin}1\\
   &
\end{tabular}}_{\;\ddots\;}
\right)
&=\;
{\begin{tabular}{lllll}
\hline
\multicolumn{1}{|l}{
\cellcolor{ballblue}2
}
   &\cellcolor{moccasin}1
   &\cellcolor{myGreen}3
   &\cellcolor{moccasin}1
   &\cellcolor{moccasin}1\\
\multicolumn{1}{|l}{
\cellcolor{myGreen}3}
   &\cellcolor{ballblue}2
   &\cellcolor{darkorange}5
   &\cellcolor{moccasin}1
   &\cellcolor{moccasin}1\\
\multicolumn{1}{|l}{
\cellcolor{myGreen}3}
   &\cellcolor{ballblue}2
   &\cellcolor{darkorange}5
   &\cellcolor{moccasin}1
   &\cellcolor{moccasin}1\\
\multicolumn{1}{|l}{
   \cellcolor{moccasin}1}
   &\cellcolor{moccasin}1
   &\cellcolor{moccasin}1
   &\cellcolor{moccasin}1
   &\cellcolor{moccasin}1\\
   &
\end{tabular}}_{\;\ddots\,}
\end{align*}
whereas for the axis $a=2$ we get the warping of columns, illustrated by  
\begin{align*}
\Stutter_{2,2}\!\left(\;
{\begin{tabular}{lllll}
\multicolumn{1}{|l}{
\cellcolor{ballblue}2
}
   &\cellcolor{moccasin}1
   &\cellcolor{myGreen}3
   &\cellcolor{ballblue}2
   &\cellcolor{ballblue}2\\
\multicolumn{1}{|l}{
\cellcolor{myGreen}3}
   &\cellcolor{gray}0
   &\cellcolor{darkorange}5
   &\cellcolor{ballblue}2
   &\cellcolor{ballblue}2\\
\multicolumn{1}{|l}{
   \cellcolor{ballblue}2}
   &\cellcolor{ballblue}2
   &\cellcolor{ballblue}2
   &\cellcolor{ballblue}2
   &\cellcolor{ballblue}2\\
   &
\end{tabular}}_{\;\ddots\;}
\right)
&=\;
{\begin{tabular}{llllll}
\multicolumn{1}{|l}{
\cellcolor{ballblue}2
}
   &\cellcolor{moccasin}1
   &\cellcolor{moccasin}1
   &\cellcolor{myGreen}3
   &\cellcolor{ballblue}2
   &\cellcolor{ballblue}2\\
\multicolumn{1}{|l}{
\cellcolor{myGreen}3}
   &\cellcolor{gray}0
   &\cellcolor{gray}0
   &\cellcolor{darkorange}5
   &\cellcolor{ballblue}2
   &\cellcolor{ballblue}2\\
\multicolumn{1}{|l}{
   \cellcolor{ballblue}2}
   &\cellcolor{ballblue}2
   &\cellcolor{ballblue}2
   &\cellcolor{ballblue}2
   &\cellcolor{ballblue}2
   &\cellcolor{ballblue}2\\
   &
\end{tabular}}_{\;\ddots\,}
\end{align*}

also at position $k=2$.
A formal definition of $\Stutter_{a,k}$ is provided in \Cref{sec:invariantsWarping}.

We call a function $\psi:\evC(\N^2,\C)\rightarrow\C$ an \DEF{invariant to warping} (in both directions independently), if it remains unchanged under warping, i.e., 
$$\psi\circ\Stutter_{a,k}=\psi \quad\forall (a,k)\in\{1,2\}\times\N.$$
\Cref{fig:trees}
illustrates eight two-parameter functions (i.e. images),
obtained from three initial functions by repeatedly applying warping.%
\footnote{In the sense of \Cref{def:StuttInv} each of the eight images is \emph{equivalent} to one of the initial three.}
Hence, the value of any warping invariant will coincide on pairwise equivalent inputs.

\begin{figure}
\centering
     \begin{subfigure}[b]{0.23\textwidth}
         \centering
         \includegraphics[height=0.95\textwidth]{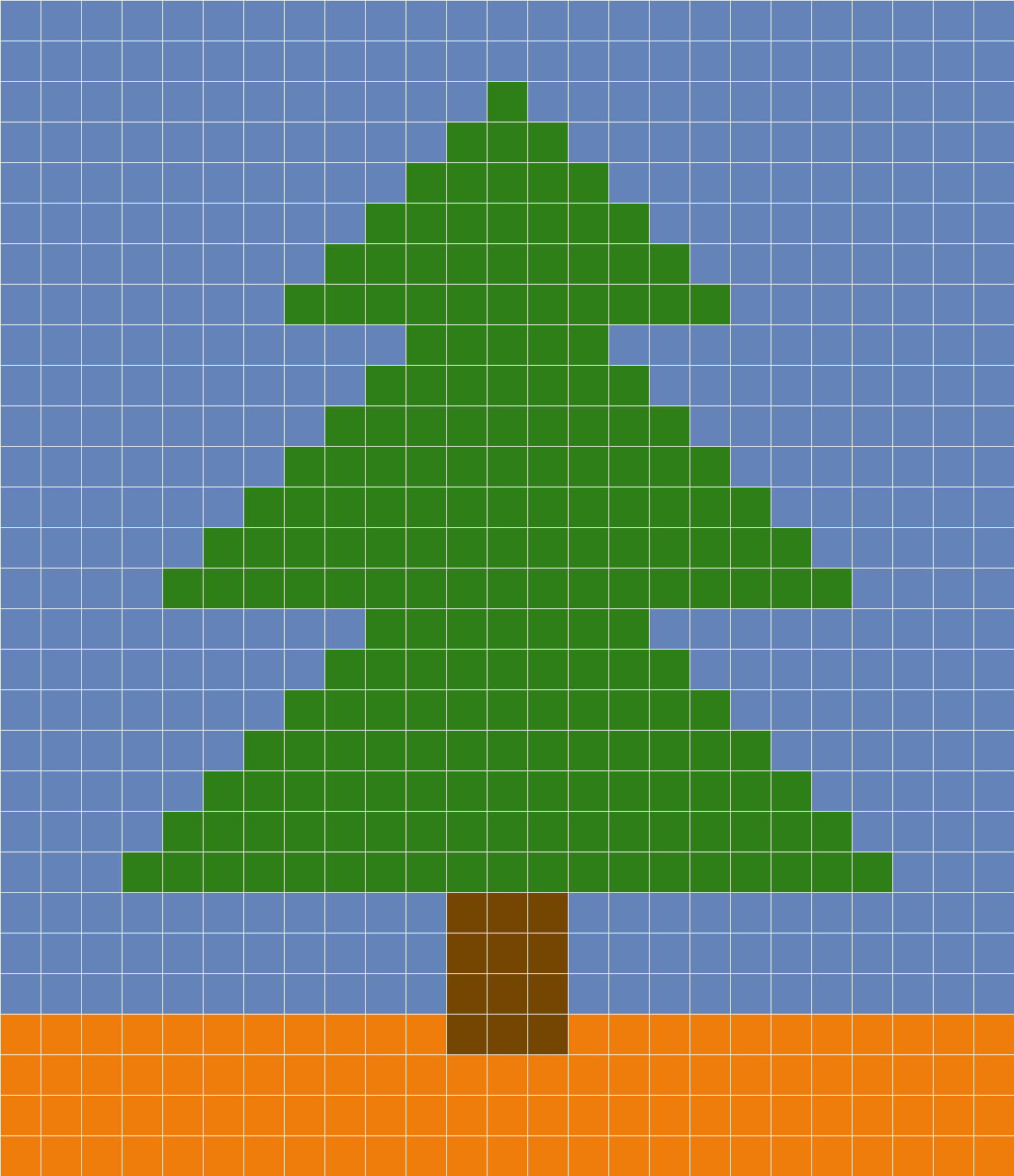}
         \caption{}
        \label{fig:pica}
     \end{subfigure}
     \hfill
     \begin{subfigure}[b]{0.23\textwidth}
         \centering
         \includegraphics[height=0.95\textwidth]{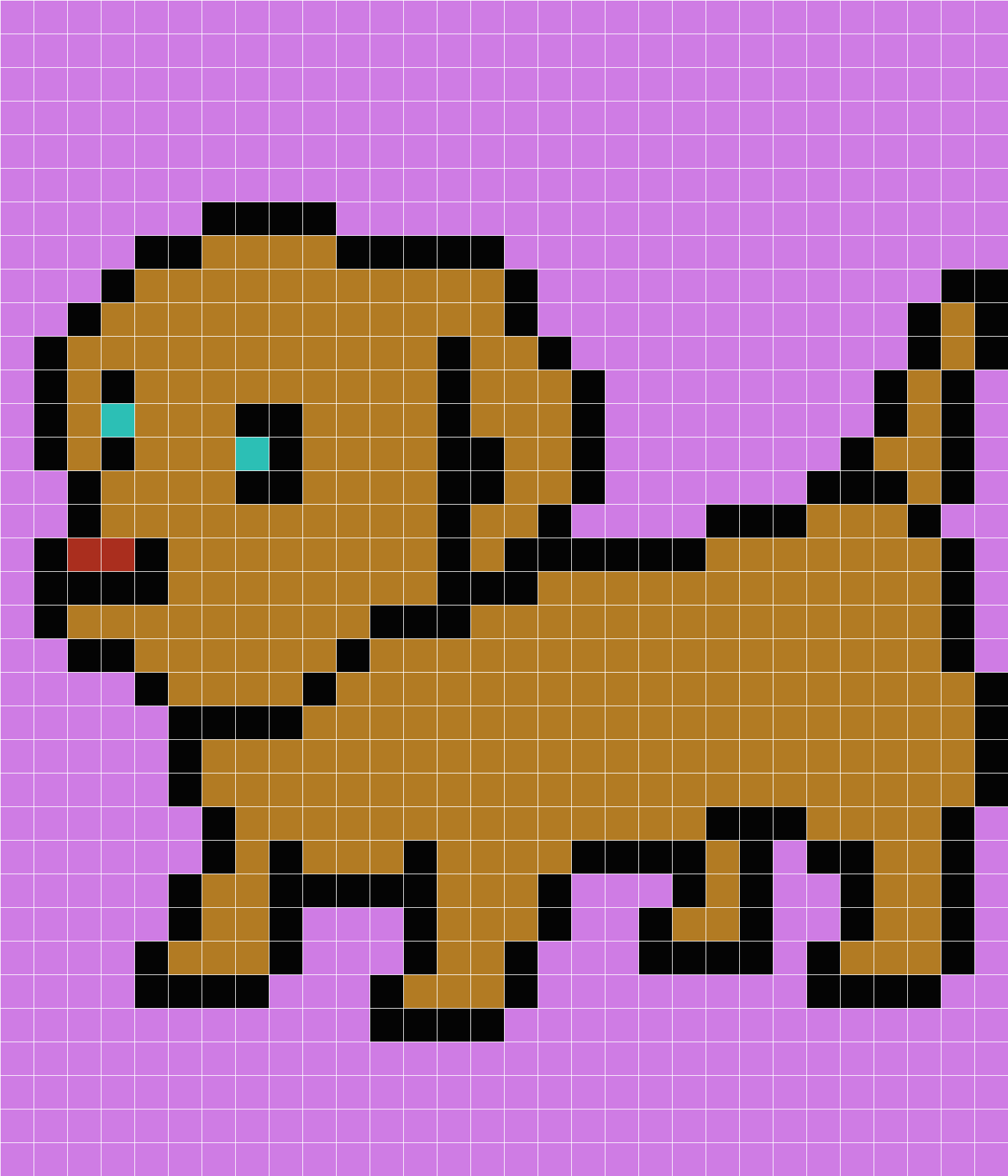}
         \caption{}
        \label{fig:picb}
     \end{subfigure}
     \hfill
     \begin{subfigure}[b]{0.23\textwidth}
         \centering
         \includegraphics[height=0.95\textwidth]{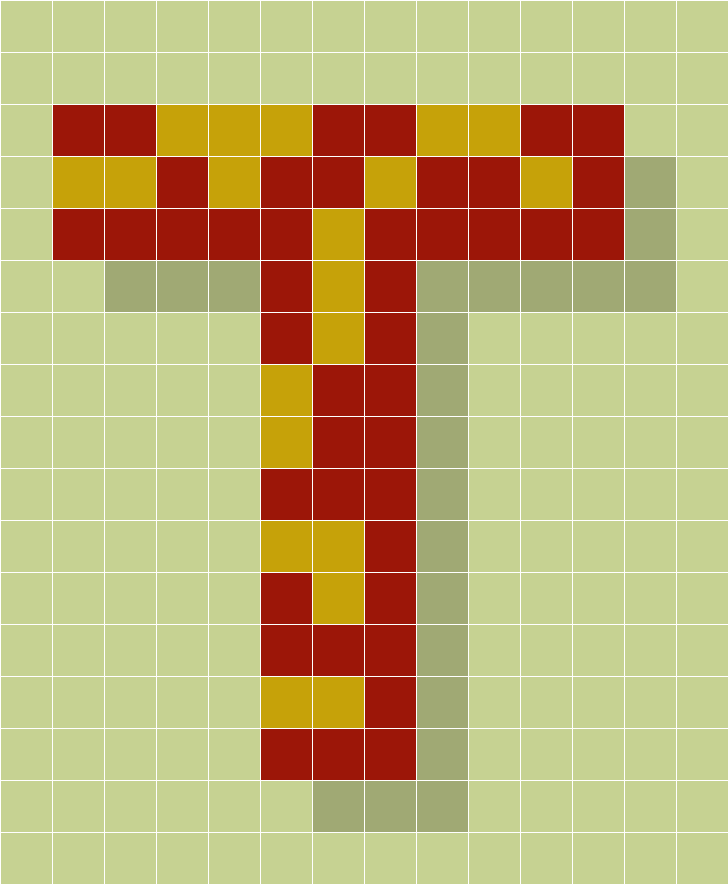}
         \caption{}
     \end{subfigure}
     \hfill
     \begin{subfigure}[b]{0.23\textwidth}
         \centering
         \includegraphics[height=0.95\textwidth]{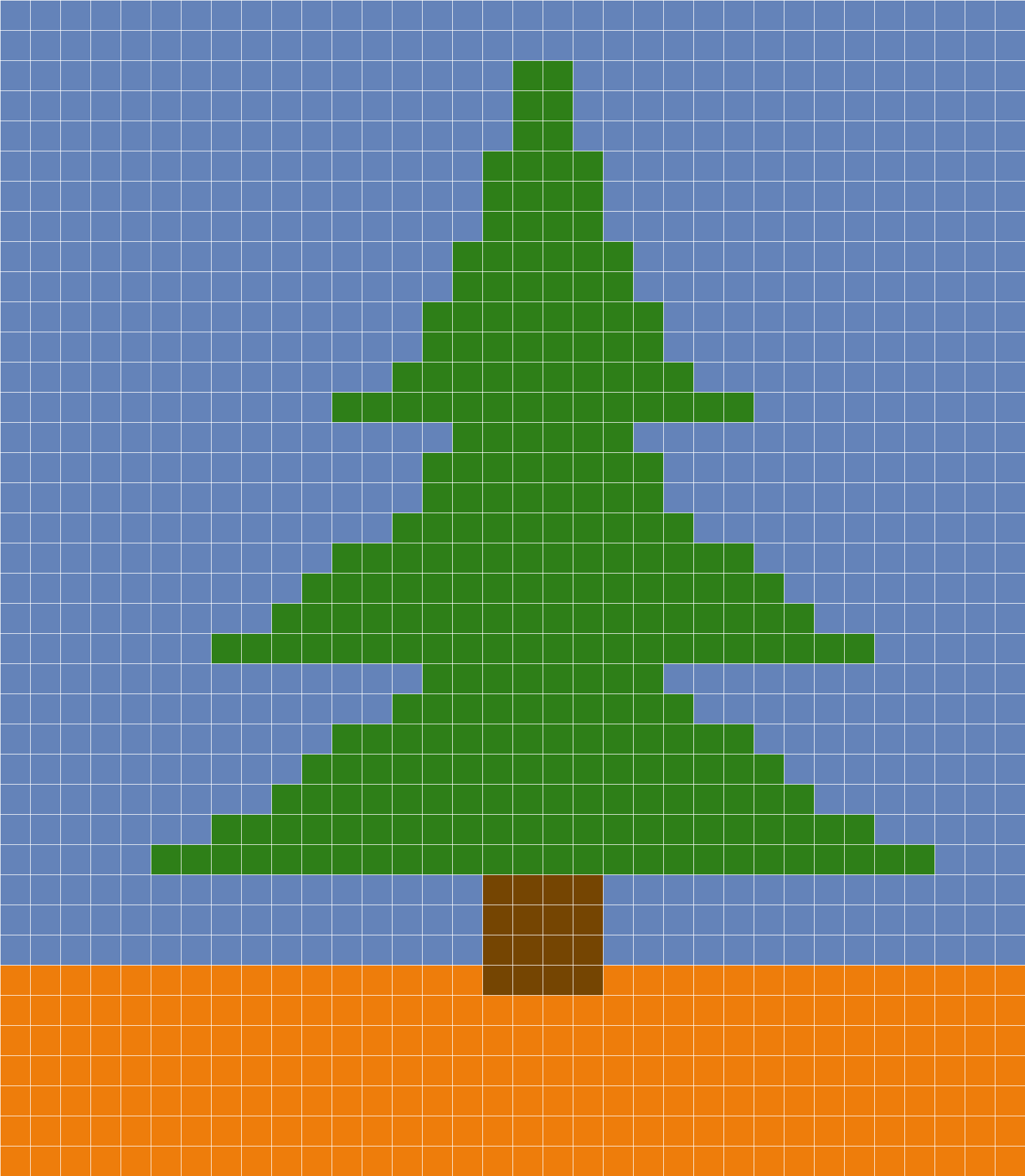}
         \caption{}
     \end{subfigure}
     \newline\\
     \centering
     \begin{subfigure}[b]{0.23\textwidth}
         \centering
         \includegraphics[height=0.95\textwidth]{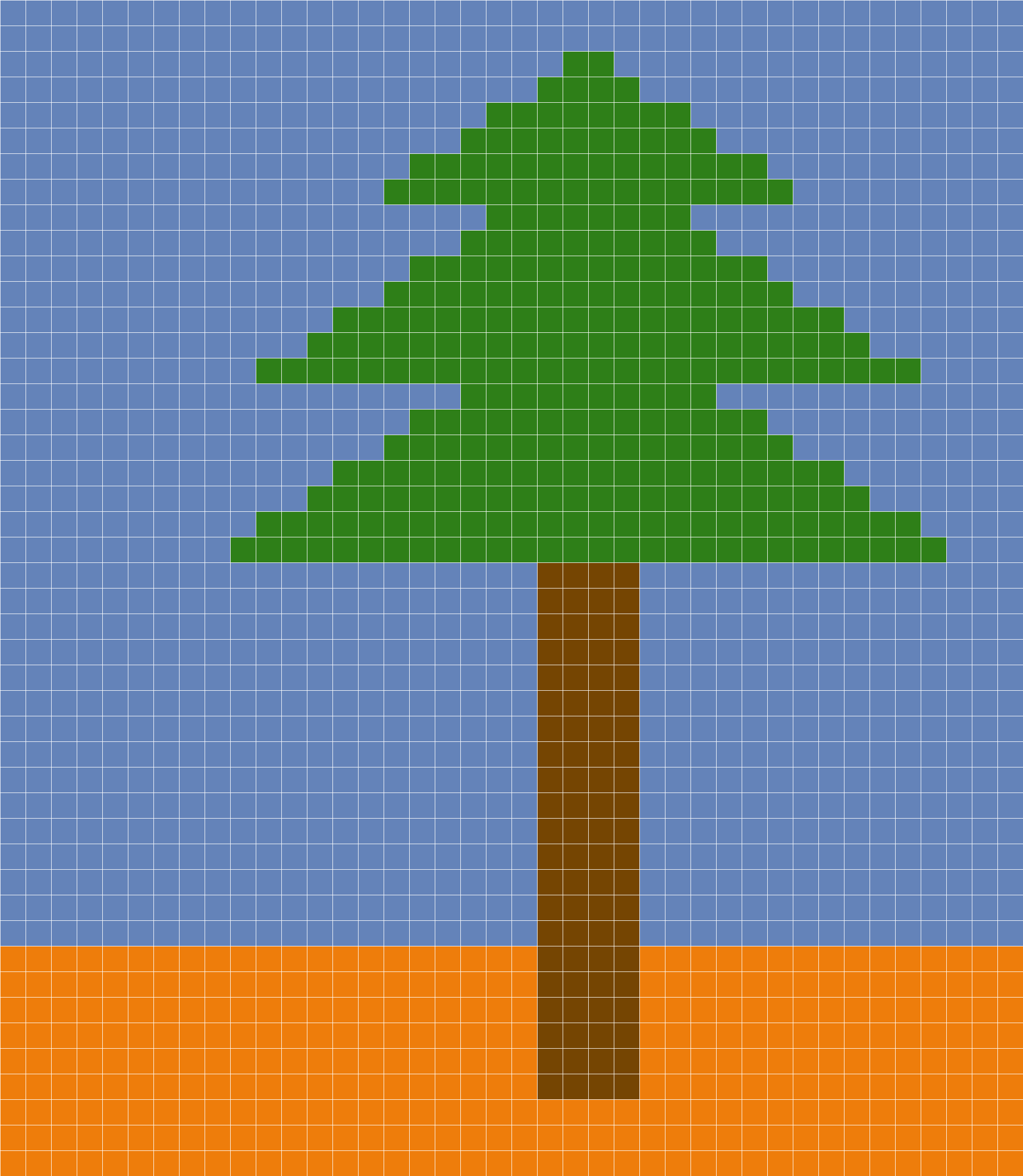}
         \caption{}
     \end{subfigure}
     \hfill
     \begin{subfigure}[b]{0.23\textwidth}
         \centering
         \includegraphics[height=0.95\textwidth]{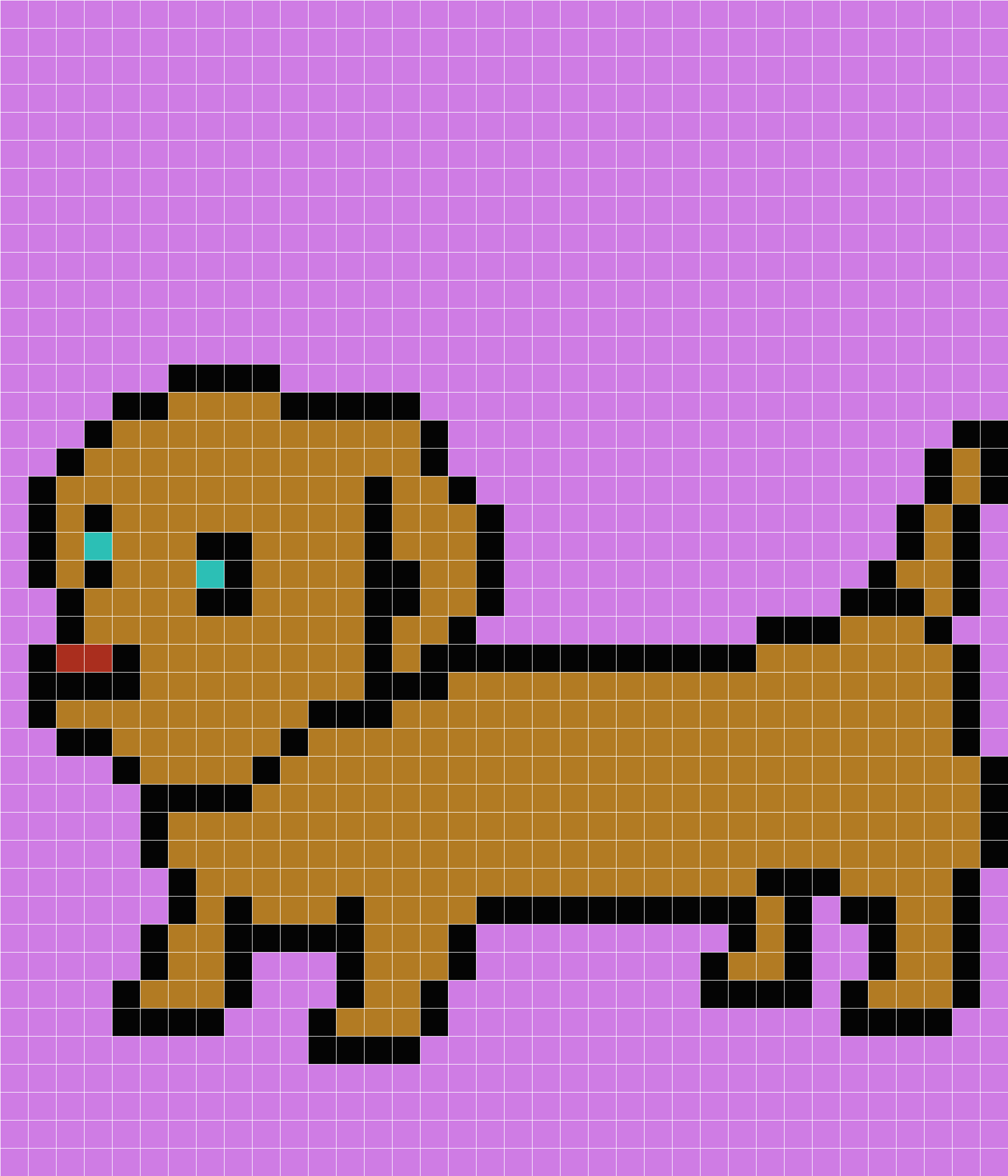}
         \caption{}
     \end{subfigure}
     \hfill
     \begin{subfigure}[b]{0.23\textwidth}
         \centering
         \includegraphics[height=0.95\textwidth]{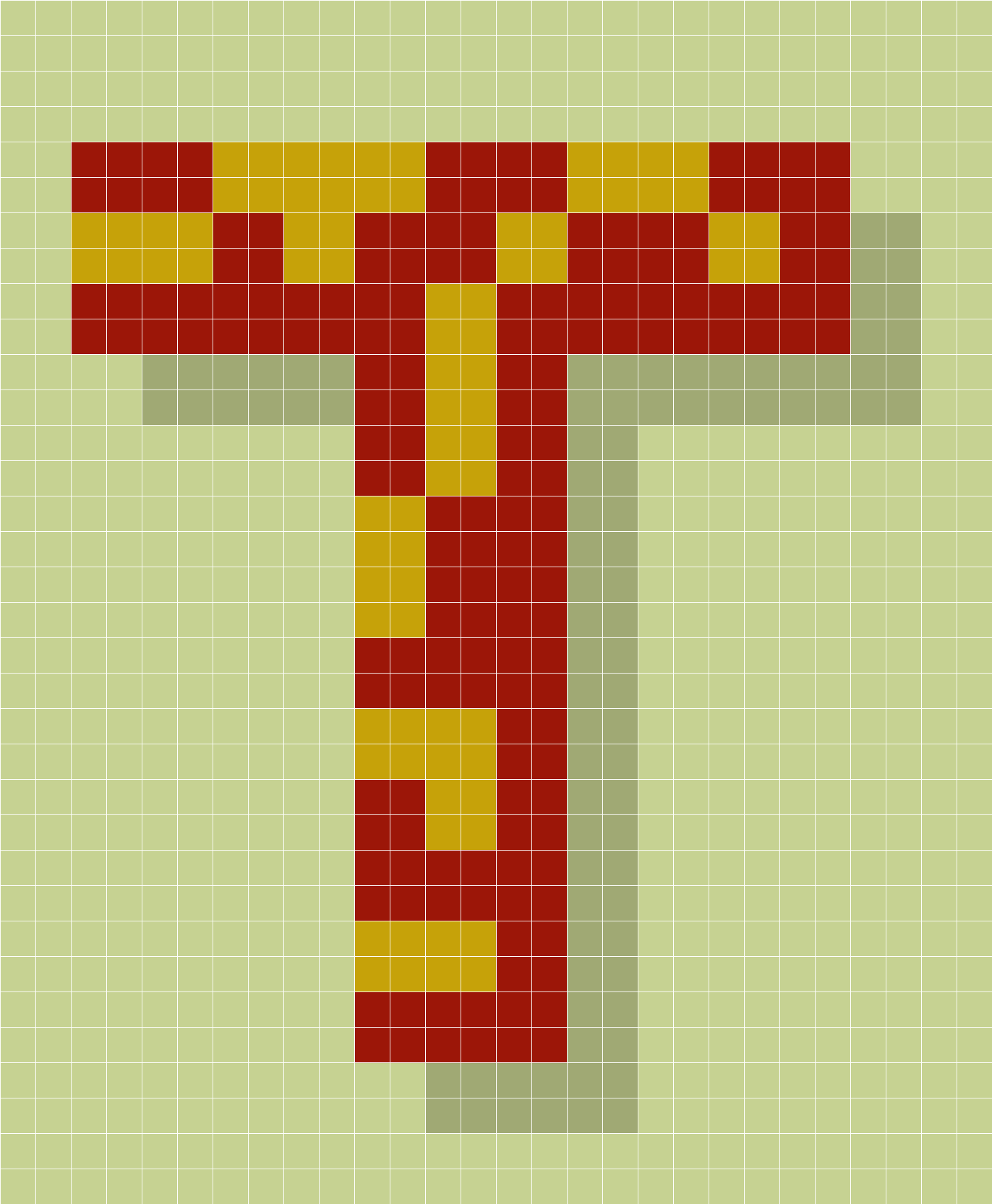}
         \caption{}
     \end{subfigure}
     \hfill
     \begin{subfigure}[b]{0.23\textwidth}
         \centering
         \includegraphics[height=0.95\textwidth]{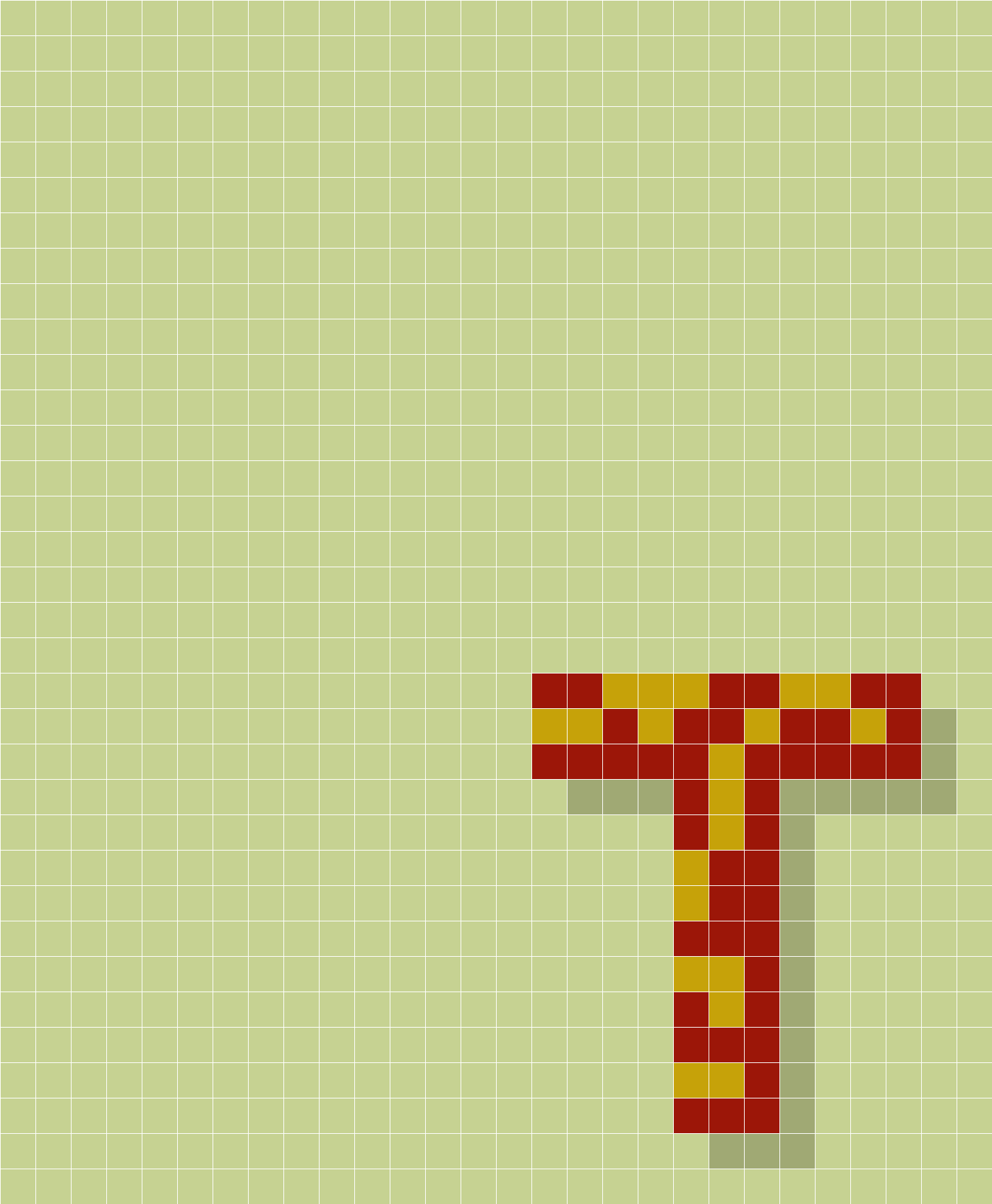}
         \caption{}
     \end{subfigure}
\caption{Functions in $\evC(\N^2,\C)$.
The images (a), (d) and (e) are equivalent under warping,
so are (b) and (f), as well as (c), (g) and (h).}
\label{fig:trees}
\end{figure}

As an example, consider the two-parameter analogon of \Cref{eq:exOneDimInv}, 
\begin{equation}\label{eq:exampleWarpingInvariant2dims}
\Psi_{\begin{scriptsize}\begin{bmatrix}1\end{bmatrix}\end{scriptsize}}:\evC(\N^2,\C)\rightarrow\C,\;X\mapsto X_{1,1}-\lim\limits_{s,t\rightarrow\infty}X_{s,t}
\end{equation}
which is a warping invariant.
In fact it belongs to an entire class $\Psi$ of invariants constructed as follows.

An $\N_0$-valued matrix is called a \DEF{matrix composition}, if it has no  zero lines or zero columns. 
\renewcommand\SS{\mathsf{SS}}
For every eventually-zero $Z\in\evC(\N^2,\C)$, that is $\lim_{s,t\rightarrow\infty}Z_{s,t}=0$, we define the \DEF{two-parameter sums signature} coefficient of $Z$ at the matrix composition $\mathbf{a}$ via 
\begin{align*}
\langle\SS(Z),\mathbf{a}\rangle:=\sum_{\substack{\iota_1<\cdots<\iota_{\row(\mathbf{a})}\\\kappa_1<\cdots<\kappa_{\col(\mathbf{a})}}}\;\prod_{s=1}^{\row(\mathbf{a})}\prod_{t=1}^{\col(\mathbf{a})}{ Z}_{\iota_s,\kappa_t}^{\mathbf{a}_{s,t}}\in\C.
\end{align*}
Note that this sum over strictly increasing chains $\iota$ and $\kappa$ is always finite since $Z$ is zero almost everywhere.  
A numerical illustration is provided in \Cref{ex:numericalExSS}.

We collect the second ingredient for warping invariants. 
For $X\in\evC(\N^2,\C)$ let $\delta X\in\evC(\N^2,\C)$ be defined via the forward \DEF{difference operator}
\begin{equation*}
{(\delta X)}_{i,j}:=X_{i+1,j+1}-X_{i+1,j}-X_{i,j+1}+X_{i,j}. 
\end{equation*}
Note that $\delta X$ is eventually zero, since each index $\tuIn{i}$ for which $X_{\tuIn{i}}$ has no changing successors yields ${(\delta X)}_{\tuIn{i}}=0$. 
Putting all together, we consider a matrix composition $\mathbf{a}$ as an index and obtain an  $\mathbf{a}$-indexed family of functions
\begin{align*}
\Psi_{\mathbf{a}}:\evC(\N^2,\C)\rightarrow\C,X\mapsto \langle\SS(\delta X),\mathbf{a}\rangle
\end{align*}
which are invariant to warping in both directions independently (\Cref{theorem:invariants}). 
For example, we have 
\begin{align*}
\Psi_{\mathbf{a}}\!\left(
\begin{scriptsize}
\;{\begin{tabular}{lllll}
\hline
\multicolumn{1}{|l}{
\cellcolor{ballblue}2}
   &\cellcolor{ballblue}2
   &\cellcolor{ballblue}2
   &\cellcolor{ballblue}2
   &\cellcolor{gray}0\\
\multicolumn{1}{|l}{
\cellcolor{ballblue}2}
   &\cellcolor{myGreen}3
   &\cellcolor{myGreen}3
   &\cellcolor{ballblue}2
   &\cellcolor{gray}0\\
\multicolumn{1}{|l}{
\cellcolor{ballblue}2}
   &\cellcolor{myGreen}3
   &\cellcolor{myGreen}3
   &\cellcolor{ballblue}2
   &\cellcolor{gray}0\\
   \multicolumn{1}{|l}{
\cellcolor{ballblue}2}
   &\cellcolor{myGreen}3
   &\cellcolor{myGreen}3
   &\cellcolor{myGreen}3
   &\cellcolor{gray}0\\
   \multicolumn{1}{|l}{
\cellcolor{ballblue}2}
   &\cellcolor{ballblue}2
      &\cellcolor{brown}4
   &\cellcolor{ballblue}2
   &\cellcolor{gray}0\\
\multicolumn{1}{|l}{
   \cellcolor{moccasin}1}
   &\cellcolor{moccasin}1
      &\cellcolor{brown}4
   &\cellcolor{moccasin}1
   &\cellcolor{gray}0\\
   \multicolumn{1}{|l}{
   \cellcolor{gray}0}
        &\cellcolor{gray}0
     &\cellcolor{gray}0
      &\cellcolor{gray}0
   &\cellcolor{gray}0\\
   &
\end{tabular}}_{\;\ddots\,}
\end{scriptsize}
\right)
=
\Psi_{\mathbf{a}}\!\left(
\begin{scriptsize}
\;{\begin{tabular}{llllll}
\hline
\multicolumn{1}{|l}{
\cellcolor{ballblue}2}
   &\cellcolor{ballblue}2
   &\cellcolor{ballblue}2
   &\cellcolor{ballblue}2
   &\cellcolor{ballblue}2
   &\cellcolor{gray}0\\
\multicolumn{1}{|l}{
\cellcolor{ballblue}2}
   &\cellcolor{myGreen}3
   &\cellcolor{myGreen}3
   &\cellcolor{ballblue}2
   &\cellcolor{ballblue}2
   &\cellcolor{gray}0\\
   \multicolumn{1}{|l}{
\cellcolor{ballblue}2}
   &\cellcolor{myGreen}3
   &\cellcolor{myGreen}3
   &\cellcolor{myGreen}3
   &\cellcolor{myGreen}3
   &\cellcolor{gray}0\\
      \multicolumn{1}{|l}{
\cellcolor{ballblue}2}
   &\cellcolor{ballblue}2
      &\cellcolor{brown}4
   &\cellcolor{ballblue}2
   &\cellcolor{ballblue}2
   &\cellcolor{gray}0\\
   \multicolumn{1}{|l}{
\cellcolor{ballblue}2}
   &\cellcolor{ballblue}2
      &\cellcolor{brown}4
   &\cellcolor{ballblue}2
   &\cellcolor{ballblue}2
   &\cellcolor{gray}0\\
      \multicolumn{1}{|l}{
\cellcolor{ballblue}2}
   &\cellcolor{ballblue}2
      &\cellcolor{brown}4
   &\cellcolor{ballblue}2
   &\cellcolor{ballblue}2
   &\cellcolor{gray}0\\
\multicolumn{1}{|l}{
   \cellcolor{moccasin}1}
   &\cellcolor{moccasin}1
      &\cellcolor{brown}4
   &\cellcolor{moccasin}1
   &\cellcolor{moccasin}1
   &\cellcolor{gray}0\\
   \multicolumn{1}{|l}{
   \cellcolor{gray}0}
        &\cellcolor{gray}0
     &\cellcolor{gray}0
      &\cellcolor{gray}0
      &\cellcolor{gray}0
   &\cellcolor{gray}0\\
   &
\end{tabular}}_{\;\ddots\,}
\end{scriptsize}
\right)
\end{align*}
for every matrix composition $\mathbf{a}$. 
 Note that the invariant from  \Cref{eq:exampleWarpingInvariant2dims} is indeed ``signature-induced'', verified by \Cref{ex:polynomial_invariant}.

In \Cref{corr:suffLarge} and \Cref{theorem:ssIffStuttInv}
we argue that the family $\Psi$ is sufficiently large. 
In one dimension, they are precisely given by the polynomial invariants (\Cref{the:identificationPolynomailInvariants}). 
In \Cref{subsection:IteratedSScoeff} we provide a certain subfamily of $\Psi$ which can be computed iteratively in linear time.

\subsection{Illustrations of quasi-shuffle relations  and of Chen's identity}\label{subsection:introQshAndChen}

The sums signature introduced in the previous section, and properly defined in \Cref{def:ss},
satisfies two crucial properties. 

First, polynomial expressions in its entries can be rewritten
as \emph{linear} expressions of other entries.
For instance, the invariant \eqref{eq:exampleWarpingInvariant2dims}, when squared, can be rewritten as the following linear combination
 \begin{align*}
\Psi_{\begin{scriptsize}
\begin{bmatrix}1\end{bmatrix}
\end{scriptsize}}
\cdot
\Psi_{\begin{scriptsize}
\begin{bmatrix}1\end{bmatrix}
\end{scriptsize}}
 =
\Psi_{\begin{scriptsize}
\begin{bmatrix}2\end{bmatrix}
\end{scriptsize}}
 +2\,
\Psi_{\begin{scriptsize}
 \begin{bmatrix}1\\1\end{bmatrix}
\end{scriptsize}}
 +2\,
\Psi_{\begin{scriptsize}
 \begin{bmatrix}1&1\end{bmatrix}
\end{scriptsize}}
 +2\,
\Psi_{\begin{scriptsize}
 \begin{bmatrix}1&0\\0&1\end{bmatrix}
\end{scriptsize}}
 +2\,
\Psi_{\begin{scriptsize}
 \begin{bmatrix}0&1\\1&0\end{bmatrix}
\end{scriptsize}}.
 \end{align*}
The two-parameter \emph{quasi-shuffle} for matrix compositions,
which appears on the right-hand side here,
is defined in \Cref{def:twodim_qshuffle}.
This allows us to formulate a \emph{quasi-shuffle identity} for two-parameter signatures, as stated in \Cref{lem:quasiShuffleRel}. 
We put our results into context to the (classical, one-parameter) quasi-shuffle of words, and we suggest an algorithm (\Cref{sec:algo2dim}) for an efficient computation.

Second, the sums signature satisfies a kind of
\emph{Chen's identity.}
For two-parameter data, there is no single choice of concatenation,
and we focus on the case of \emph{diagonal} concatenation.
The general statement is formulated in \Cref{thm:chen} and \Cref{chen:diff},
after properly introducing diagonal concatenation for functions with domain
$\N^2$. 
Concerning the invariants $\Psi_{\mathbf{a}}$ from above, we obtain a formula
to compute $\Psi_{\mathbf{a}}(X)$ with $X\in\evC(\N^2,\C)$ via certain subparts
of $X$ and $\mathbf{a}$, specified by the deconcatenations of $X$ and
$\mathbf{a}$ in the sense of
\Cref{def:diagonalConcatenation,def:concatDiagBox,def:coproduct}, respectively. 
As a numerical example, we compute 
\begin{align*}
\Psi_{\mathbf{a}}\!\left(
\begin{scriptsize}
\;{\begin{tabular}{llllll}
\hline
\multicolumn{1}{|l}{
\cellcolor{babypink}7}
   &\cellcolor{ballblue}2
   &\cellcolor{ballblue}2
   &\cellcolor{gray}0\\
\multicolumn{1}{|l}{
\cellcolor{darkorange}5}
   &\cellcolor{ballblue}2
   &\cellcolor{ballblue}2
   &\cellcolor{gray}0\\
\multicolumn{1}{|l}{
\cellcolor{ballblue}2}
   &\cellcolor{ballblue}2
   &\cellcolor{ballblue}2
   &\cellcolor{gray}0\\
\multicolumn{1}{|l}{
\cellcolor{ballblue}2}
   &\cellcolor{ballblue}2
   &\cellcolor{myGreen}3
   &\cellcolor{gray}0\\
\multicolumn{1}{|l}{
\cellcolor{ballblue}2}
   &\cellcolor{ballblue}2
   &\cellcolor{myGreen}3
   &\cellcolor{gray}0\\
   \multicolumn{1}{|l}{
\cellcolor{ballblue}2}
   &\cellcolor{ballblue}2
   &\cellcolor{myGreen}3
   &\cellcolor{gray}0\\
   \multicolumn{1}{|l}{
\cellcolor{ballblue}2}
   &\cellcolor{ballblue}2
      &\cellcolor{brown}4
   &\cellcolor{gray}0\\
\multicolumn{1}{|l}{
   \cellcolor{moccasin}1}
   &\cellcolor{moccasin}1
      &\cellcolor{brown}4
   &\cellcolor{gray}0\\
   \multicolumn{1}{|l}{
   \cellcolor{gray}0}
     &\cellcolor{gray}0
      &\cellcolor{gray}0
   &\cellcolor{gray}0\\
   &
\end{tabular}}_{\;\ddots\,}
\end{scriptsize}
\right)
=
\sum\limits_{\diag(\mathbf{b},\mathbf{c})=\mathbf{a}}
\!\Psi_{\mathbf{b}}\!\left(
\begin{scriptsize}
\;{\begin{tabular}{ll}
\hline
\multicolumn{1}{|l}{
\cellcolor{babypink}7}
   &\cellcolor{ballblue}2\\
\multicolumn{1}{|l}{
\cellcolor{darkorange}5}
   &\cellcolor{ballblue}2\\
\multicolumn{1}{|l}{
\cellcolor{ballblue}2}
   &\cellcolor{ballblue}2\\
   &
\end{tabular}}_{\;\ddots\,}
\end{scriptsize}
\right)
\Psi_{\mathbf{c}}\!\left(
\begin{scriptsize}
\;{\begin{tabular}{lllll}
\hline
\multicolumn{1}{|l}{
\cellcolor{ballblue}2}
   &\cellcolor{ballblue}2
   &\cellcolor{gray}0\\
\multicolumn{1}{|l}{
\cellcolor{ballblue}2}
   &\cellcolor{myGreen}3
   &\cellcolor{gray}0\\
\multicolumn{1}{|l}{
\cellcolor{ballblue}2}
   &\cellcolor{myGreen}3
   &\cellcolor{gray}0\\
   \multicolumn{1}{|l}{
\cellcolor{ballblue}2}
   &\cellcolor{myGreen}3
   &\cellcolor{gray}0\\
   \multicolumn{1}{|l}{
\cellcolor{ballblue}2}
      &\cellcolor{brown}4
   &\cellcolor{gray}0\\
\multicolumn{1}{|l}{
   \cellcolor{moccasin}1}
      &\cellcolor{brown}4
   &\cellcolor{gray}0\\
   \multicolumn{1}{|l}{
   \cellcolor{gray}0}
      &\cellcolor{gray}0
   &\cellcolor{gray}0\\
   &
\end{tabular}}_{\;\ddots\,}
\end{scriptsize}
\right)
\end{align*}
via the upper left and lower right subpart of $X$, and all diagonal submatrices of composition $\mathbf{a}$. 
Note that this formula includes an empty composition $\ec$ for which $\Psi_{\ec}$ is constant with image $\{1\}$. 

\Cref{section:2dimSS} introduces matrix compositions and two-parameter signatures, with its corresponding quasisymmetric functions presented in 
\Cref{section_QSym_2Param}.
Note that several details and proofs are postponed until \Cref{section:proofs}.

\section{Two-parameter sums signature}\label{section:2dimSS}

\index[general]{monoid@$\monoidComp$, commutative monoid}
\index[general]{eps@$\vareps$, neutral element of $\monoidComp$}
\index[general]{star@$\star$, monoid operation of $\monoidComp$}

\index[general]{k@$\groundRing$, integral domain}

Let $\groundRing$ denote an integral domain, that is a non-zero, commutative ring which includes a multiplicative identity and which contains no zero divisors. 
For every set $M$ let $\groundRing^M$ denote the $\groundRing$-algebra of functions from $M$ to $\groundRing$.

We now replace the monoid $(\N_0,+,0)$ used for compositions from the previous section (and not to be confused with
the role played by $\N$ in the indexing poset of a function $A:\N^2\rightarrow\groundRing$) by an arbitrary commutative monoid $(\monoidComp,\star,\monCompNeutrElem)$.
Apart from the non-negative integers,
the monoid most relevant to us is the free commutative monoid generated by $d$ letters $\w{1},\ldots,\w{d}$. 
The latter is used in this article for the same purpose as in \cite{diehl2020tropical,Diehl_2020},
to index monomials on a $d$-dimensional vector (the data at each point $\tuIn{i} \in \N^2$).

A \DEF{matrix composition}\index[general]{Composition@$\composition$, matrix compositions} (with entries in $\monoidComp$)
is an element in the set of all matrices without $\monCompNeutrElem$-lines or $\monCompNeutrElem$-columns\footnote{Let $\vareps_{s\times t}$ denote the $\monoidComp$-valued $s\times t$ matrix with only $\vareps$ as its entries.}, 
\begin{align*}
\composition:=\composition(\monoidComp)
:=
\left\{\mathbf{a}\in\monoidComp^{m\times n}\;
\begin{array}{|l}
\;\mathbf{a}_{i,\bullet}\not=\monCompNeutrElem_{1\times n}
\text{ for }1\leq i\leq n\\
\;\mathbf{a}_{\bullet,j}\not=\monCompNeutrElem_{m\times 1}\text{ for }1\leq j\leq m
\end{array}
\right\}\cup\{\ec\},
\end{align*}
including the \DEF{empty composition} denoted by $\ec$. \index[general]{EmptyComposition@$\ec$}
Matrix compositions have already appeared as 
    the building block of a certain Hopf algebra
    in \cite{duchamp2002noncommutative},
but the Hopf algebra we construct in \Cref{ss:hopfAlgebra} 
    is different.
The following binary operation turns $\composition$ into a monoid.

 \begin{definition}\label{def:diag}
    For each pair of matrices $(\mathbf{a},\mathbf{b})\in\monoidComp^{m\times n}\times\monoidComp^{s\times t}$, define the block matrix\footnote{Block matrices are allowed to be non-square, similar to the direct sum of two $\C$-valued matrices when considered as homomorphisms of $\C$-vector spaces.}
    \begin{align*}
        \diag(\mathbf a, \mathbf b)
        :=
        \begin{bmatrix}
            \mathbf a & \monCompNeutrElem_{m\times t} \\ 
            \monCompNeutrElem_{n\times s}        & \mathbf b
        \end{bmatrix}
        \in \monoidComp^{(m+s)\times (n+t)}.
    \end{align*}
\end{definition}
With 
$\diag(\ec,\mathbf{a}):=\diag(\mathbf{a},\ec):=\mathbf{a}$ for all $\mathbf{a}\in\composition$ this defines a binary operation  $\diag:\composition\times\composition\rightarrow\composition$, 
    which   
    extends to any sequence of matrices $(\mathbf a_i)_{1\leq i\leq k}$ 
    via%
    \footnote{Note that $\diag$ is associative,
    so we could equivalently take
        $\diag( \mathbf{a}_1, \diag( \mathbf a_2, \dots, \mathbf a_{k}))$. 
        For convenience we also allow single inputs $\diag( \mathbf{a}):=\mathbf{a}$. }
    \begin{align*}
        \diag( \mathbf a_1, \dots \mathbf a_k )
        :=
        \diag( \diag( \mathbf a_1, \dots, \mathbf a_{k-1}), \mathbf a_k ).
    \end{align*}

A non-empty composition $\mathbf{a}\not=\ec$ is called \DEF{connected},
if $\mathbf{a}=\diag(\mathbf{b},\mathbf{c})$ implies $\mathbf{b}=\ec$ or $\mathbf{c}=\ec$, i.e., if it has no non-trivial block matrix decomposition. 
Let $\compositionConnected$ \index[general]{connected@$\compositionConnected$} denote the set of connected compositions.
\begin{example}
Let $(\mathfrak{M}_3,\star,\vareps)$ denote the free commutative monoid generated by three elements $\w{1}$, $\w{2}$ and $\w{3}$. Then, 
$$
\begin{bmatrix} \w{1}&\w{2}\end{bmatrix},
\begin{bmatrix} \w{3}\end{bmatrix},
\begin{bmatrix} \w{2}&\w{1}\star\w{2}\\\w{3}&\w{1}\end{bmatrix}\in\compositionConnected$$ 
and as a non-example,
\begin{equation}\label{eq:example_connectedDec}
\diag(\begin{bmatrix}\w{1}&\w{2}\end{bmatrix},
\begin{bmatrix}\w{3}\end{bmatrix})
=\begin{bmatrix}\w{1}&\w{2}&\vareps\\
\vareps&\vareps&\w{3}\end{bmatrix}
\not\in\compositionConnected.
\end{equation}
\end{example}
For illustration of the following lemma,
whose immediate proof we omit,
see \Cref{eq:example_connectedDec} or \Cref{ex:coproduct}.
\begin{lemma}\label{lemma:primitive_decomposition}
Every non-empty composition $\mathbf{a}\not=\ec$ has a unique \DEF{factorization into connected compositions}, i.e., for uniquely determined $k\in\N$,
$$\mathbf{a}=\diag(\mathbf{v}_1,\ldots,\mathbf{v}_k),$$
where $\mathbf{v}\in\compositionConnected^k$ is unique. 
\end{lemma}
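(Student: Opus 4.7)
The plan is to prove existence by induction on the total size $\row(\mathbf{a})+\col(\mathbf{a})$ and uniqueness via an intrinsic characterization of the block boundaries. For existence: if $\mathbf{a}$ is already connected, take $k=1$ and $\mathbf{v}_1=\mathbf{a}$; otherwise write $\mathbf{a}=\diag(\mathbf{b},\mathbf{c})$ with $\mathbf{b},\mathbf{c}\neq\ec$, note that both have strictly smaller size, apply the induction hypothesis to each, and concatenate the resulting factorizations using associativity of $\diag$.

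For uniqueness, I would introduce the notion of a \emph{cut point} of $\mathbf{a}$: a pair $(m,n)$ with $0<m<\row(\mathbf{a})$ and $0<n<\col(\mathbf{a})$ such that every entry $\mathbf{a}_{i,j}$ with either $(i\leq m$ and $j>n)$ or $(i>m$ and $j\leq n)$ equals $\monCompNeutrElem$. Since $\mathbf{a}$ has no $\monCompNeutrElem$-rows or $\monCompNeutrElem$-columns, such cut points correspond bijectively to non-trivial decompositions $\mathbf{a}=\diag(\mathbf{b},\mathbf{c})$. The main obstacle and the key lemma is to show that any two distinct cut points $(m,n),(m',n')$ are strictly comparable in the product order, i.e.\ both coordinates strictly increase together. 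The argument is by contradiction: if, say, $m\leq m'$ and $n>n'$, then the first cut forces the non-$\monCompNeutrElem$ entries of column $n$ to lie in rows $\leq m$, while the second cut (applied to $j=n>n'$) forces them to lie in rows $>m'$; since $m\leq m'$, these sets are disjoint, so column $n$ is entirely $\monCompNeutrElem$, contradicting the matrix-composition property. The edge cases $m<m',\, n=n'$ and $m=m',\, n>n'$ are handled by symmetric arguments producing a $\monCompNeutrElem$-row or $\monCompNeutrElem$-column respectively.

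With the chain property in hand, consider any factorization $\mathbf{a}=\diag(\mathbf{v}_1,\ldots,\mathbf{v}_k)$ into connected pieces and set $R_i:=\sum_{j\leq i}\row(\mathbf{v}_j)$, $C_i:=\sum_{j\leq i}\col(\mathbf{v}_j)$; then $(R_i,C_i)$ for $1\leq i\leq k-1$ is a cut point of $\mathbf{a}$. Conversely, any other cut point $(m,n)$ must, by the chain lemma, either coincide with some $(R_i,C_i)$ or fit strictly between consecutive $(R_{i-1},C_{i-1})$ and $(R_i,C_i)$; in the latter case $(m-R_{i-1},n-C_{i-1})$ would be a cut point of $\mathbf{v}_i$, contradicting its connectedness. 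Thus the set of cut points of $\mathbf{a}$ equals exactly $\{(R_i,C_i):1\leq i\leq k-1\}$. Since this set is determined by $\mathbf{a}$ alone, any second factorization $\mathbf{a}=\diag(\mathbf{w}_1,\ldots,\mathbf{w}_\ell)$ into connected pieces produces the same chain, hence $k=\ell$, the cumulative dimensions agree, and $\mathbf{v}_i=\mathbf{w}_i$ follows by reading off the corresponding blocks of $\mathbf{a}$.
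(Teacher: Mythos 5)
The paper states this lemma without proof; it is treated as a routine structural fact, essentially unique factorization in the free monoid $(\composition,\diag)$ over $\compositionConnected$, which the paper alludes to just below the lemma when introducing the notation $\groundRing\llangle\compositionConnected\rrangle$. Your argument supplies a correct and complete justification of that fact. The cut-point formulation is the right intrinsic invariant, and the chain lemma is sound: in the case $m\leq m'$, $n>n'$, the two cuts force column $n$ to be entirely $\monCompNeutrElem$ (rows $\leq m'$ covered by the second cut, rows $>m'\geq m$ by the first), contradicting $\mathbf a\in\composition$; the degenerate configurations with one coordinate equal are handled by the symmetric row argument. Two small points worth making explicit. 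First, the bijection you assert between cut points and non-trivial $\diag$-decompositions uses that $\mathbf a$ has no $\monCompNeutrElem$-rows or $\monCompNeutrElem$-columns to conclude that both diagonal blocks again lie in $\composition$; you state this hypothesis but do not verify the consequence (each row of the upper-left block is precisely the support-carrying part of the corresponding row of $\mathbf a$). Second, the induction for existence terminates because in any non-trivial split $\diag(\mathbf b,\mathbf c)=\mathbf a$ both factors have strictly smaller row \emph{and} column count, which also shows that $1\times n$ and $m\times 1$ compositions are automatically connected, so there is no degenerate base case. With these observations spelled out, your proof is complete.
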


Next, we consider formal power series over connected compositions of the form 
\begin{equation}\label{formalPowerSeriesF}
F:=\sum_{\mathbf{a}\in\composition}c_{\mathbf{a}}\,\mathbf{a}\end{equation}
where $c:\composition\rightarrow\groundRing$ provides coefficients from the integral domain. 
The set of all such series is denoted by 
\index[general]{formalpowerseries@$\groundRing\llangle\compositionConnected\rrangle$, formal power series over connected compositions}
\index[general]{freeAlgebra@$\groundRing\langle\compositionConnected\rangle$, free algebra over connected compositions}
$$\groundRing\llangle\compositionConnected\rrangle\cong\groundRing^\composition.$$

The notation $\groundRing\llangle\compositionConnected\rrangle$,
indicating (the closure of) a free associative algebra over the ``letters'' $\compositionConnected$,
is justified since matrix compositions can be considered as words, with respect to $\diag$ from \Cref{def:diag}, over connected compositions, i.e.
$$\composition=\left\{\diag( \mathbf a_1, \dots \mathbf a_k )\mid k\in\N, \mathbf{a}\in\compositionConnected^k\right\}\cup\{\ec\}.$$

The \DEF{support} of a series (\ref{formalPowerSeriesF})  consists of all $\mathbf{a}\in\composition$ with $c_{\mathbf{a}}\not=0$.
We denote by 
$$\groundRing\langle\compositionConnected\rangle:=\bigoplus_{\mathbf{a}\in\composition}\groundRing\mathbf{a}$$
the subset of series with finite support, i.e., the direct sum of all modules $\groundRing\mathbf{a}$.  
Each series (\ref{formalPowerSeriesF}) can be viewed as $F\in\groundRing^\composition$ with
$$F(\mathbf{a}):=\langle F,\mathbf{a}\rangle:=c_{\mathbf{a}}$$
and extends uniquely to a linear map $F:\groundRing\langle\compositionConnected\rangle\rightarrow\groundRing$, i.e.
we obtain a pairing $$\langle\cdot,\cdot\rangle:\groundRing\llangle\compositionConnected\rrangle\times\groundRing\langle\compositionConnected\rangle\rightarrow\groundRing.$$
We turn $\groundRing\llangle\compositionConnected\rrangle$ into a $\groundRing$-algebra by equipping it with a linear and multiplicative structure
\begin{align*}
    \langle sF,\mathbf{a}\rangle&:=s \langle F,\mathbf{a}\rangle,\\
    \langle F+G,\mathbf{a}\rangle&:=\langle F,\mathbf{a}\rangle+\langle G,\mathbf{a}\rangle\text{, and}\\
    \langle F\cdot G,\mathbf{a}\rangle&:=\sum_{\diag(\mathbf{b},\mathbf{c})=\mathbf{a}}\langle F,\mathbf{b}\rangle\,\langle G,\mathbf{c}\rangle
\end{align*}
where $\mathbf{a}\in\composition$ and $s\in\groundRing$. 
The \DEF{constant series} are precisely the scaled empty compositions $s\ec$, including in particular the empty composition $\ec$ as the neutral element with respect to multiplication.

\index[general]{Md@$\mathfrak{M}_d$, free commutative monoid generated by $d$ elements}
Fixing a dimension $d\in\N$, let $\mathfrak{M}_d$ denote the free commutative monoid generated by $d$ elements $\w{1},\ldots,\w{d}$. 

\begin{center}
\textbf{We use, from now on, $\mathfrak{M}_d$ as
the underlying monoid of $\composition = \composition(\mathfrak{M}_d)$.}
\end{center}
\index[general]{weight@$\weight$}
\begin{definition}
    Let $\weight:\monoidComp_d\rightarrow\N_0$ denote the homomorphism of monoids, uniquely determined by $\weight(\w{j})=1$ for all $\w{j}\in\{\w{1},\ldots,\w{d}\}$.
    For every $\mathbf{a}\in\composition$ let 
    $$\weight(\mathbf{a}):=\sum\limits_{\tuIn{n}\leq\size(\mathbf{a})}\weight(\mathbf{a}_{\tuIn{n}}).$$
\end{definition}
This yields an $\N_0$-grading of the $\groundRing$-module
\begin{equation}\label{eq:grading}
\groundRing\langle\compositionConnected\rangle=\groundRing\ec\oplus\bigoplus_{i\in\N}\left(\bigoplus_{\substack{\mathbf{a}\in\composition\\\weight(\mathbf{a})=i}}
  \!\!\!\groundRing\mathbf{a}\right)
  \end{equation}
with respect to $\weight$. 
Note that (\ref{eq:grading}) is also connected,
i.e. its zero-weight component is one-dimensional. 
This grading is compatible with the diagonal operation from \Cref{def:diag}:
 for all $\mathbf{a},\mathbf{b}\in\composition$, 
 \begin{align}
    \label{lemma:idweightDiag}
    \weight(\diag(\mathbf{a},\mathbf{b})) = \weight(\mathbf{a}) +\weight(\mathbf{b}).
 \end{align}

For every $d$-dimensional data point $z\in\groundRing^d$ let  $z^{(\cdot)}:\mathfrak{M}_d\rightarrow\groundRing$ denote its \DEF{evaluation} homomorphism, where $z^{(\w{j})}:=z_j$ extends uniquely via the universal property of $\monoidComp_d$. 

\begin{definition}\label{def:ss}
\index[general]{SS@$\SS_{\tuIn{\ell};\tuIn{r}}(Z)$, two-parameter sums signature of $Z:\N^2\rightarrow\groundRing^d$}
For $Z: \N^2\to \groundRing^d$ and $\tuIn{\ell},\tuIn{r}\in\N_0^2$ define the \DEF{two-parameter sums signature} $\SS_{\tuIn{\ell};\tuIn{r}}(Z)\in\groundRing\llangle\compositionConnected\rrangle$
via its coefficients of $\mathbf{a}\in\composition$, 
\begin{align*}
\langle\SS_{\tuIn{\ell};\tuIn{r}}(Z),\mathbf{a}\rangle:=\sum_{\substack{\tuIn{\ell}_1<\iota_1<\cdots<\iota_{\row(\mathbf{a})}\leq \tuIn{r}_1\\\tuIn{\ell}_2<\kappa_1<\cdots<\kappa_{\col(\mathbf{a})}\leq \tuIn{r}_2}}\;
\prod_{s=1}^{\row(\mathbf{a})}
\prod_{t=1}^{\col(\mathbf{a})}
Z_{\iota_s,\kappa_t}^{(\mathbf{a}_{s,t})}\in\groundRing. 
\end{align*}
If $Z$ has finite \DEF{support}, that is $Z_{\tuIn{i}}\not=0_d$ for only finitely many $\tuIn{i}\in\N^2$, we also write $\SS(Z):=\lim_{\tuIn{r}\rightarrow\infty}\SS_{0_2;\tuIn{r}}(Z)$. 
\end{definition}
If $d=1$, one has $\monoidComp_1\cong\N_0$ as monoids, and thus obtains the definition of the two-parameter sums signature 
from the introduction. 

\begin{example}\label{ex:numericalExSS}
Let $d=1$ and $\groundRing=\C$ as in the introduction. 
If, for instance, $\mathbf{a}$ is of format $1\times 1$, then the product reduces to a single factor $Z_{\iota_1,\kappa_1}$ raised to the power of $\mathbf{a}_{1,1}$.
For instance 
  \begin{align*}
\langle\SS\!\left(\,{\begin{tabular}{llll}
\hline
\multicolumn{1}{|l}{
\cellcolor{ballblue}$2$
}
   &\cellcolor{moccasin}$1$
   &\cellcolor{gray}$0$\\
\multicolumn{1}{|l}{
\cellcolor{myGreen}$3$}
   &\cellcolor{moccasin}$1$
   &\cellcolor{gray}$0$\\
\multicolumn{1}{|l}{
    \cellcolor{gray}$0$}
   &\cellcolor{gray}$0$
   &\cellcolor{gray}$0$\\
   &
\end{tabular}}_{\;\ddots\,}\right),
\begin{bmatrix}
1\end{bmatrix}\rangle
=
{\begin{tabular}{|l|}
\hline
\cellcolor{ballblue}$2$\\
\hline\end{tabular}}^{\,1}
+
{\begin{tabular}{|l|}
\hline
\cellcolor{moccasin}$1$\\
\hline\end{tabular}}^{\,1}
+
{\begin{tabular}{|l|}
\hline
\cellcolor{myGreen}$3$\\
\hline\end{tabular}}^{\,1}
+
{\begin{tabular}{|l|}
\hline
\cellcolor{moccasin}$1$\\
\hline\end{tabular}}^{\,1}
\end{align*}
is simply the sum of all non-zero entries from the input function.

If $\mathbf{a}$ is of format $2\times 2$, then one considers all matrices 
$$\begin{bmatrix}
Z_{\iota_1,\kappa_1}&Z_{\iota_1,\kappa_2}\\
Z_{\iota_2,\kappa_1}&Z_{\iota_2,\kappa_2}
\end{bmatrix}\in\C^{2\times 2}$$
with increasing $\iota_1<\iota_2$ and $\kappa_1<\kappa_2$, raises it entrywise to the power of $\mathbf{a}$, multiplies its entries, and sums the resulting products over all constellations of independent $\iota$ and $\kappa$. 
For instance, in
\begin{align*}
\langle\SS\!\left(\,{\begin{tabular}{llll}
\hline
\multicolumn{1}{|l}{
\cellcolor{moccasin}$1$
}
   &\cellcolor{ballblue}$2$
   &\cellcolor{gray}$0$\\
\multicolumn{1}{|l}{
\cellcolor{gray}$0$}
   &\cellcolor{babypink}$7$
   &\cellcolor{gray}$0$\\
\multicolumn{1}{|l}{
\cellcolor{myGreen}$3$}
   &\cellcolor{moccasin}$1$
   &\cellcolor{gray}$0$\\
\multicolumn{1}{|l}{
    \cellcolor{gray}$0$}
   &\cellcolor{gray}$0$
   &\cellcolor{gray}$0$\\
   &
\end{tabular}}_{\;\ddots\,}\right),
\begin{bmatrix}1&
2\\
0&
1\end{bmatrix}\rangle
=
{\begin{tabular}{|l|}
\hline
\cellcolor{moccasin}$1$\\
\hline\end{tabular}}^{\,1}
\!\cdot
{\begin{tabular}{|l|}
\hline
\cellcolor{ballblue}$2$\\
\hline\end{tabular}}^{\,2}
\!\cdot
{\begin{tabular}{|l|}
\hline
\cellcolor{babypink}$7$\\
\hline\end{tabular}}^{\,1}
+
{\begin{tabular}{|l|}
\hline
\cellcolor{moccasin}$1$\\
\hline\end{tabular}}^{\,1}
\!\cdot
{\begin{tabular}{|l|}
\hline
\cellcolor{ballblue}$2$\\
\hline\end{tabular}}^{\,2}
\!\cdot
{\begin{tabular}{|l|}
\hline
\cellcolor{moccasin}$1$\\
\hline\end{tabular}}^{\,1}
\end{align*}  
only the two constellations $\iota_1=\kappa_1=1$, $\kappa_2=2$ and either $\iota_2=2$ or $\iota_2=3$ yield non-zero summands.  

\end{example}

\subsection{The Hopf algebra of matrix compositions}
\label{ss:hopfAlgebra}

  We shall now equip the $\groundRing$-module $\groundRing\langle\compositionConnected\rangle$ with another product by  introducing a two-parameter version of the well-known quasi-shuffle from  \cite{hoffman1999quasishuffle}. 
This novel product (\Cref{def:twodim_qshuffle}) is clearly different from the classical quasi-shuffle over $\groundRing\langle\compositionConnected\rangle$. 
We call the latter \DEF{one-parameter quasi-shuffle}, recalled in \Cref{subsection:quasishuffle} 
  to compute our two-parameter generalization using recursion. 
  
  As in the classical setting, let $\qSh(m,s;j)$ with $m,s,j\in\N$ denote the set of surjections
$$q:\{1,\ldots,\,m+s\}\twoheadrightarrow\{1,\ldots,\,j\}$$ 
such that $q(1)<\ldots<q(m)$ and $q(m+1)<\ldots<q(m+s)$.
  \index[general]{qSh@$\qSh(m,s;j)$}
  \index[general]{quasiShuffleProduct@$\qShuffle$}
\begin{definition}\label{def:twodim_qshuffle}
The \DEF{two-parameter quasi-shuffle product} is the bilinear mapping
$\qShuffle:\groundRing\langle\compositionConnected\rangle\times\groundRing\langle\compositionConnected\rangle \rightarrow\groundRing\langle\compositionConnected\rangle$ determined on compositions
$(\mathbf{a},\mathbf{b})\in\monoidComp^{m\times n}\times\monoidComp^{s\times t}$ via $\ec\qShuffle \ec:=\ec$, $\mathbf{a}\qShuffle \ec:=\ec\qShuffle\mathbf{a}:=\mathbf{a}$ and
$$\mathbf{a}\qShuffle\mathbf{b}
:=\sum_{j,k\in\N}\;\; 
\sum_{\substack{p\in\qSh(m,s;j)\\q\in\qSh(n,t;k)}}\;\;
\begin{bmatrix}
\mathbf{c}^{p,q}_{1,1}&\ldots&\mathbf{c}^{p,q}_{1,k}\\
\vdots&&\vdots\\
\mathbf{c}^{p,q}_{j,1}&\ldots&\mathbf{c}^{p,q}_{j,k}\\
\end{bmatrix}$$
where

\begin{equation}\label{eq:c_quasishuffle}
\mathbf{c}^{p,q}_{x,y}:=\underset{\substack{u\in p^{-1}(x)\\v\in q^{-1}(y)}}\bigstar{\diag(\mathbf{a},\mathbf{b})}_{u,v}\in\monoidComp.
\end{equation}
\end{definition}

\begin{example}\label{ex:qSh_small}
The quasi-shuffle of $\begin{bmatrix}
\w{1}
\end{bmatrix}$ and $\begin{bmatrix}
\w{2}
\end{bmatrix}$ from $\monoidComp_2^{1\times1}$ 
involves surjections only in $\qSh(1,1;1)$ and $\qSh(1,1;2)$, i.e.,
\begin{align*}
\begin{bmatrix} \w{1}\end{bmatrix}\qShuffle\begin{bmatrix}\w{2}\end{bmatrix}=
\begin{bmatrix} \w{1}&\vareps\\\vareps&\w{2}\end{bmatrix}
+&\begin{bmatrix}\vareps& \w{1}\\\w{2}&\vareps\end{bmatrix}
+\begin{bmatrix}\w{2}&\vareps\\\vareps& \w{1}\end{bmatrix}
+\begin{bmatrix}\vareps&\w{2}\\ \w{1}&\vareps\end{bmatrix}\\
+&\begin{bmatrix} \w{1}\\\w{2}\end{bmatrix}
+\begin{bmatrix}\w{2}\\ \w{1}\end{bmatrix}
+\begin{bmatrix} \w{1}&\w{2}\end{bmatrix}
+\begin{bmatrix}\w{2}& \w{1}\end{bmatrix}
+\begin{bmatrix} \w{1}\star\w{2}\end{bmatrix}.
\end{align*}
Larger examples are provided in \Cref{subsection:quasishuffle}, after relating two-parameter quasi-shuffles to the one-parameter setting. 
\end{example}
Note that if $j<\max(m,s)$ or $m+s<j$, then $\qSh(m,s;j)$ is empty, i.e., the sum in \Cref{def:twodim_qshuffle} is guaranteed to be finite.

We now endow $\groundRing\langle\compositionConnected\rangle$ with a coproduct, turning it into
a graded, connected bialgebra (\Cref{theoroem:bialgebra}) and hence into a Hopf algebra.
The coproduct will be seen to be compatible
with a certain ``concatenation" of the input data,
leading to a (weak) form of Chen's identity (\Cref{subsection:Chen}).

\begin{definition}\label{def:coproduct}
\index[general]{Delta@$\Delta$}
The \DEF{deconcatenation coproduct} $$\Delta:\groundRing\langle\compositionConnected\rangle\rightarrow\groundRing\langle\compositionConnected\rangle\otimes\groundRing\langle\compositionConnected\rangle$$ is defined on non-empty basis elements $\mathbf{a}\in\composition$
as
\begin{align*}\Delta\mathbf{a}&:
=\mathbf{a}\otimes \ec+\ec\otimes\mathbf{a}+\sum_{\alpha=1}^{a-1}\diag(\mathbf{v}_1,\ldots,\mathbf{v}_{\alpha})\otimes\diag(\mathbf{v}_{\alpha+1},\ldots,\mathbf{v}_a)\\
&=\sum_{\diag(\mathbf{b},\mathbf{c})=\mathbf{a}}\mathbf{b}\otimes\mathbf{c}.
\end{align*}
Here, $\mathbf{v}$ is a factorization  of composition $\mathbf{a}$ into connected components $\mathbf{v}_\alpha$ according to \Cref{lemma:primitive_decomposition}.
Furthermore, let $\Delta\ec:=\ec\otimes\ec$.
\end{definition}

\begin{example}\label{ex:coproduct}
For the factorization of the $4\times 5$ composition
$$\mathbf{a}=
\begin{bmatrix}
\w{1}&\w{2}&\vareps&\vareps&\vareps\\
\vareps&\vareps&\w{3}&\vareps&\vareps\\
\vareps&\vareps&\vareps&\w{2}&\w{1}\star\w{2}\\
\vareps&\vareps&\vareps&\w{1}&\w{1}
\end{bmatrix}
=\diag(
\begin{bmatrix} \w{1}&\w{2}\end{bmatrix},
\begin{bmatrix} \w{3}\end{bmatrix},
\begin{bmatrix} \w{2}&\w{1}\star\w{2}\\\w{1}&\w{1}\end{bmatrix})$$ 
into connected compositions, 
$$
\Delta\mathbf{a}
=\mathbf{a}\otimes\ec 
+
\begin{bmatrix}\w{1}&\w{2}&\vareps\\
\vareps&\vareps&\w{3}\end{bmatrix}\otimes
\begin{bmatrix} \w{2}&\w{1}\star\w{2}\\\w{1}&\w{1}\end{bmatrix}
+
\begin{bmatrix}\w{1}&\w{2}\\
\end{bmatrix}\otimes
\begin{bmatrix} \w{3}&\vareps&\vareps\\
\vareps&\w{2}&\w{1}\star\w{2}\\
\vareps&\w{1}&\w{1}\end{bmatrix}
+ \ec\otimes\mathbf{a}.$$
\end{example}

This coproduct is coassociative with canonical \DEF{counit} map $\eps:\groundRing\langle\compositionConnected\rangle\rightarrow\groundRing$, where $\eps(\ec):=1$ and $\eps(\mathbf{a}):=0$ for all $\mathbf{a}\not=\ec$. 
Furthermore, it is graded with respect to (\ref{eq:grading}) due to \Cref{lemma:idweightDiag}. 
Note that this coproduct is dual to the free associative product $\diag$ from \Cref{def:diag}.

\begin{theorem}
\label{theoroem:hopfalgebra}
If $\groundRing$ is a commutative $\Q$-algebra, 
then 
$$(\groundRing\langle\compositionConnected\rangle,\qShuffle,\eta,\Delta,\eps)$$ is a graded, connected and commutative  Hopf algebra with canonical \DEF{unit} map $\eta:\groundRing\rightarrow\groundRing\langle\compositionConnected\rangle$ determined through $\eta(1):=\ec$.
The \DEF{antipode} map  $$A:\groundRing\langle\compositionConnected\rangle\rightarrow\groundRing\langle\compositionConnected\rangle$$ is defined on basis elements via $A(\mathbf{a}):=$
$$
\sum_{(\iota_1,\ldots,\iota_\ell)\in\mathcal{C}(k)}{(-1)}^\ell
\diag(\mathbf{v}_1,\ldots,\mathbf{v}_{\iota_1})
\qShuffle\cdots\qShuffle
\diag(\mathbf{v}_{(\iota_1+\cdots+\iota_{(\ell-1)}+1)},\ldots,{\mathbf{v}}_{k}),$$
where $\mathbf{a}=\diag(\mathbf{v}_1,\ldots,{\mathbf{v}}_k)$ is according to \Cref{lemma:primitive_decomposition} and $\mathcal{C}$ contains all (classical) compositions of length $k\in\N$. 
In particular, $(\groundRing\langle\compositionConnected\rangle,+,\qShuffle,0,\ec)$ is free. 
\end{theorem}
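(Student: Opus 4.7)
The overall strategy is to derive the three assertions in turn, starting from the graded, connected bialgebra $(\groundRing\langle\compositionConnected\rangle,\qShuffle,\eta,\Delta,\eps)$ supplied by \Cref{theoroem:bialgebra}. Since every graded connected bialgebra (over any commutative ring) admits a unique antipode by the standard inductive construction, the Hopf algebra claim is immediate; the $\Q$-algebra hypothesis is only needed for the final freeness statement.

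To obtain the explicit formula for the antipode $A$ I would apply Takeuchi's formula $A = \sum_{\ell\geq 0}(-1)^\ell\, \mu^{\ell-1}\circ \pi^{\otimes \ell}\circ \Delta^{\ell-1}$, where $\pi=\id-\eta\circ\eps$ projects onto the augmentation ideal and $\mu^{\ell-1}, \Delta^{\ell-1}$ denote iterated products resp.\ coproducts. For a basis element $\mathbf{a}=\diag(\mathbf{v}_1,\ldots,\mathbf{v}_k)$ with each $\mathbf{v}_i$ connected, coassociativity together with the uniqueness part of \Cref{lemma:primitive_decomposition} identifies $\Delta^{\ell-1}(\mathbf{a})$ with the sum over all ordered factorizations $\diag(\mathbf{b}_1,\ldots,\mathbf{b}_\ell)=\mathbf{a}$, equivalently over contiguous blockings of $\mathbf{v}_1,\ldots,\mathbf{v}_k$ into $\ell$ groups. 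Applying $\pi^{\otimes\ell}$ restricts to blockings in which every block is non-empty, i.e.\ to classical compositions $(\iota_1,\ldots,\iota_\ell)\in\mathcal{C}(k)$; applying $\mu^{\ell-1}$ then produces precisely the iterated $\qShuffle$-product of the blocks, reproducing the stated formula.

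For freeness I would first observe that $\qShuffle$ is commutative, which follows from the manifest symmetry of \Cref{def:twodim_qshuffle} in $(\mathbf{a},\mathbf{b})$ under the obvious involution on pairs of surjections, combined with commutativity of $\star$ in $\monoidComp_d$. Thus $\groundRing\langle\compositionConnected\rangle$ is a graded, connected, commutative Hopf algebra over a commutative $\Q$-algebra, and Leray's structure theorem then identifies it with a free commutative algebra on any graded complement of the decomposables inside the augmentation ideal; the passage from the field case to a general commutative $\Q$-algebra is by base change. The main obstacle I anticipate is the combinatorial bookkeeping for the antipode: carefully matching iterated deconcatenations via $\Delta^{\ell-1}$ with the summation over $\mathcal{C}(k)$, and checking that the slices $(\mathbf{v}_1,\ldots,\mathbf{v}_{\iota_1}),\ldots,(\mathbf{v}_{\iota_1+\cdots+\iota_{\ell-1}+1},\ldots,\mathbf{v}_k)$ appear in the correct order and with the correct sign $(-1)^\ell$.
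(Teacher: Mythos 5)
Your proposal is correct and follows essentially the same route as the paper: delegate the bialgebra axioms (associativity, commutativity, gradedness, codomain, bialgebra relation) to \Cref{lemma_qsh_ass,CorgradedProd,Cor_sqsProp,theoroem:bialgebra}, then invoke the standard fact that a graded connected bialgebra is a Hopf algebra with an explicit antipode, and obtain freeness from the Leray/Milnor--Moore structure theory for commutative connected graded Hopf algebras over a $\Q$-algebra. The one place you go beyond the paper is in actually unwinding Takeuchi's formula to recover the stated antipode formula (the paper merely cites \cite{hazewinkel2010algebras,cartier2021classical} for this), and your unwinding is correct: the freeness of $(\composition,\diag)$ over $\compositionConnected$ together with the uniqueness in \Cref{lemma:primitive_decomposition} identifies the surviving terms of $\pi^{\otimes\ell}\circ\Delta^{\ell-1}(\mathbf{a})$ with classical compositions of $k$, and applying $\mu^{\ell-1}=\qShuffle^{\ell-1}$ with sign $(-1)^\ell$ gives the claimed expression.
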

\begin{proof}
In \Cref{lemma_qsh_ass,CorgradedProd,Cor_sqsProp} (\Cref{subsec:ommittedDetQSh}) we prove that the two-parameter quasi-shuffle is associative, commutative, graded and that its codomain is indeed $\groundRing\langle\compositionConnected\rangle$.
The proof of the bialgebra relation requires further machinery and is provided in
  \Cref{theoroem:bialgebra}.
The remaining follows since every graded, connected bialgebra is a Hopf algebra
where the antipode is explicit, e.g. \cite[Proposition 3.8.8]{hazewinkel2010algebras} or 
\cite[Remark 4.1.2, Example 4.1.3]{cartier2021classical}.
Freeness is a classical consequence, compare for instance \cite[Theorem 4.4.2]{cartier2021classical} or \cite[Theorem 1.7.29]{GR14}
\end{proof}

  \subsection{Quasi-shuffle identity}
  \label{subsection:quasishuffle}
 
Our first results concern the verification that $\SS$ is a proper sums ``signature''.
Specifically,
we mean that it possesses the two main properties mentioned in the introduction: Chen’s identity (\Cref{subsection:Chen}) and the \DEF{quasi-shuffle identity},  with the quasi-shuffle product taken from  \Cref{def:twodim_qshuffle}.

\begin{theorem}\label{lem:quasiShuffleRel}
  For all $Z:\N^2\rightarrow\groundRing^d$, bounds $\tuIn{\ell},\tuIn{r}\in\N^2$ and 
  $\mathbf{a},\mathbf{b}\in\composition$,
  $$
  \langle\SS_{\tuIn{\ell};\tuIn{r}}(Z),\mathbf{a}\rangle\,
  \langle\SS_{\tuIn{\ell};\tuIn{r}}(Z),\mathbf{b}\rangle
  =
  \langle\SS_{\tuIn{\ell};\tuIn{r}}(Z),\mathbf{a}\qShuffle\mathbf{b}\rangle.
  $$
\end{theorem}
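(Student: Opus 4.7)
The plan is to expand both sides as explicit iterated sums over strictly increasing chains and match them via a natural merge bijection, essentially carrying out the one-parameter quasi-shuffle argument independently on the row and column axes.

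Writing $(m,n):=\size(\mathbf{a})$ and $(s,t):=\size(\mathbf{b})$, I first unfold the left-hand side as
$$\sum_{\iota,\kappa,\iota',\kappa'}\Bigl(\prod_{i=1}^{m}\prod_{j=1}^{n} Z_{\iota_{i},\kappa_{j}}^{(\mathbf{a}_{i,j})}\Bigr)\Bigl(\prod_{i=1}^{s}\prod_{j=1}^{t} Z_{\iota'_{i},\kappa'_{j}}^{(\mathbf{b}_{i,j})}\Bigr),$$
where $\iota,\iota'$ are strictly increasing chains in $(\tuIn{\ell}_{1},\tuIn{r}_{1}]$ and $\kappa,\kappa'$ are strictly increasing chains in $(\tuIn{\ell}_{2},\tuIn{r}_{2}]$. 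Next, I reparametrize each pair $(\iota,\iota')$ by its set-theoretic merge: let $\tilde\iota_{1}<\cdots<\tilde\iota_{j}$ enumerate $\{\iota_{1},\ldots,\iota_{m}\}\cup\{\iota'_{1},\ldots,\iota'_{s}\}$ and define $p(i):=\tilde\iota^{-1}(\iota_{i})$ for $i\leq m$ and $p(m+i):=\tilde\iota^{-1}(\iota'_{i})$ for $i\leq s$. Strict monotonicity of $\iota$ and $\iota'$ gives $p\in\qSh(m,s;j)$, and the assignment $(\iota,\iota')\mapsto(\tilde\iota,p)$ is a bijection onto the set of such pairs with $\max(m,s)\leq j\leq m+s$. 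The analogous construction on the column axis produces $\tilde\kappa$ and $q\in\qSh(n,t;k)$.

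I then rewrite the summand. Since the evaluation $z^{(\cdot)}\colon\monoidComp_{d}\to\groundRing$ is a monoid homomorphism, $z^{(\monCompNeutrElem)}=1$, and so the two factors combine into
$$\prod_{u=1}^{m+s}\prod_{v=1}^{n+t} Z_{\tilde\iota_{p(u)},\tilde\kappa_{q(v)}}^{(\diag(\mathbf{a},\mathbf{b})_{u,v})},$$
exploiting that $\diag(\mathbf{a},\mathbf{b})_{u,v}$ equals $\mathbf{a}_{u,v}$ or $\mathbf{b}_{u-m,v-n}$ inside the two diagonal blocks and $\monCompNeutrElem$ in the off-diagonal blocks. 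Grouping pairs $(u,v)$ by their image $(x,y)=(p(u),q(v))$ and again invoking the homomorphism property collapses the factors at position $(x,y)$ into $Z_{\tilde\iota_{x},\tilde\kappa_{y}}^{(\mathbf{c}^{p,q}_{x,y})}$, with $\mathbf{c}^{p,q}$ precisely the matrix from \Cref{def:twodim_qshuffle}. Summing first over $(\tilde\iota,\tilde\kappa)$ yields $\langle\SS_{\tuIn{\ell};\tuIn{r}}(Z),\mathbf{c}^{p,q}\rangle$, and the outer sum over $j,k,p,q$ reassembles the right-hand side.

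The main obstacle I anticipate is the simultaneous bookkeeping for the two axes: the merges are combinatorially independent, but the summand is a single product over the full $(m+s)\times(n+t)$ block, so one must carefully check that all off-diagonal $\monCompNeutrElem$-entries really disappear from the $Z$-product and that the $\star$-products over the fibres of $p$ and $q$ can be interchanged with the double product. The complementary fact that every $\mathbf{c}^{p,q}$ is a valid composition (no all-$\monCompNeutrElem$ row or column) is a separate check, used implicitly when $\qShuffle$ is set up as a map into $\groundRing\langle\compositionConnected\rangle$.
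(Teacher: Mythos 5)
Your proof is correct, and it is worth noting how it relates to the paper's route. The paper establishes the quasi-shuffle identity first for the monomial basis of two-parameter quasisymmetric functions (\Cref{theorem_QSh_id}), which is carried out in the ambient power-series algebra with $d=1$, and then deduces \Cref{lem:quasiShuffleRel} by the evaluation map $\eval(M_{\mathbf{c}})(Z)=\langle\SS(Z),\mathbf{c}\rangle$; the case $d>1$ and the bounded signature $\SS_{\tuIn{\ell};\tuIn{r}}$ are only addressed implicitly. You instead argue directly from \Cref{def:ss}, so that arbitrary $d$, arbitrary bounds $(\tuIn{\ell},\tuIn{r})$, and in fact an arbitrary commutative monoid $\monoidComp$ are handled at once without detour through $\QSym$. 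The combinatorial core is the same: your merge $(\iota,\iota')\leftrightarrow(\tilde\iota,p)$ on each axis is precisely the bijection $\varphi$ constructed in the proof of \Cref{theorem_QSh_id}, only presented with $(\tilde\iota,p)$ derived from the chains rather than with the target monomial fixed. Two small points worth spelling out for full rigor: (a) surjectivity of $p$ onto $\{1,\dots,j\}$ follows because $\tilde\iota$ enumerates exactly the union, so each $\tilde\iota_x$ is hit; (b) the regrouping of the quadruple product by $(p(u),q(v))$ uses finiteness and commutativity of $\groundRing$. Both are implicit in your write-up and pose no obstacle. The paper's route is more modular and delivers the $\QSym$ statement as a byproduct, but your direct argument is the cleaner proof of the theorem as stated.
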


The proof is postponed until \Cref{sec:polynomialInvariant}. 
In \Cref{subsec:ommittedDetQSh} we relate the two-parameter quasi-shuffle to classical one-parameter operations on columns and rows, resulting in an algorithm for its efficient computation. 
Here, we give a brief explanation and a first illustration of the method (\Cref{ex:algo2dimQSh}),
in order to make the product more transparent to the reader.
 
\index[general]{quasi-shuffle of columns@$\qShuffle_2$}
Equip the free associative algebra $\groundRing\langle \monoidComp^i\rangle$ generated by  column vectors of size $i$ with a (classical, one-parameter) \DEF{quasi-shuffle of columns} defined recursively via
\begin{align*}
    \ec\qShuffle_2 {w}&:={w}\qShuffle_2\ec:={w},\text{ and}\\ {a}{v}\qShuffle_2{b}{w}&:={a}({v}\qShuffle_2{b}{w})+{b}({a}{v}\qShuffle_2{w})+({a}\star {b})({v}\qShuffle_2 {w})
\end{align*}
for all monomials $v,\,{w}\in\groundRing\langle \monoidComp^i\rangle$ and letters ${a},\,{b}\in\monoidComp^i$. 
Here $\star$ operates entrywise on the vectors.
\index[general]{quasi-shuffle of rows@$\qShuffle_1$}
Analogously let $\qShuffle_1$ be the (classical, one-parameter) \DEF{quasi-shuffle of rows} in $\groundRing\langle \monoidComp^{1\times j}\rangle$.

\begin{example}
A classical, one-parameter quasi-shuffle of columns from $\monoidComp_3^2$ is  
$$\begin{bmatrix} \w{1}\\\w{3}\end{bmatrix}\qShuffle_2\begin{bmatrix}\vareps\\ \w{2}\end{bmatrix}=
\begin{bmatrix} \w{1}&\vareps\\\w{3}& \w{2}\end{bmatrix}
+\begin{bmatrix} \w{1}\\ \w{2}\star\w{3}\end{bmatrix}
+\begin{bmatrix}\vareps& \w{1}\\ \w{2}&\w{3}\end{bmatrix}\in\groundRing\langle\monoidComp_3^2\rangle, $$
similar for columns of size three, 
\begin{equation}\label{eq:ex_shuffle_col}
\begin{bmatrix} \w{1} \\\vareps\\\vareps\end{bmatrix}
\qShuffle_2
\begin{bmatrix}\vareps\\ \w{2}\\ \w{3}\end{bmatrix}=
\begin{bmatrix}
 \w{1}&\vareps\\
\vareps& \w{2}\\
\vareps& \w{3}
\end{bmatrix}
+\begin{bmatrix}
 \w{1}\\
 \w{2}\\
 \w{3}
\end{bmatrix}
+\begin{bmatrix}
\vareps& \w{1}\\
\w{2}&\vareps\\
\w{3}&\vareps
\end{bmatrix}
\in\groundRing\langle\monoidComp_3^3\rangle.
\end{equation}
A quasi-shuffle of monomials in rows from $\monoidComp_3^{1\times 2}$ is 
\begin{equation}\label{eq:ex_shuffle_row}
\begin{bmatrix} \w{1}&\vareps\end{bmatrix}\qShuffle_1
\begin{bmatrix}\vareps& \w{2}\\
\vareps& \w{3}
\end{bmatrix}=
\begin{bmatrix} \w{1}&\vareps\\
\vareps& \w{2}\\
\vareps& \w{3}
\end{bmatrix}
+\begin{bmatrix} \w{1}& \w{2}\\
\vareps&\w{3}\end{bmatrix}
+\begin{bmatrix}
\vareps& \w{2}\\
\w{1}&\vareps\\
\vareps& \w{3}
\end{bmatrix}
+\begin{bmatrix}
\vareps& \w{2}\\ 
\w{1}&\w{3}
\end{bmatrix}
+\begin{bmatrix}
\vareps& \w{2}\\
\vareps& \w{3}\\
\w{1}&\vareps
\end{bmatrix}
\end{equation}
and thus an element in the free algebra $\groundRing\langle\monoidComp_3^{1\times 2}\rangle$.
\end{example}

Note that $\groundRing\langle\monoidComp^i\rangle$ can be considered as a set of linear combinations in $\monoidComp^{i\times j}$, where $j\in\N$ is varying. 
\Cref{sec:algo2dim} provides a full description of the algorithm for computing the two-parameter quasi-shuffle. 
We close this section by applying it step by step. 
  
\begin{example}\label{ex:algo2dimQSh}
First, we compute the column quasi-shuffle of the input $\mathbf{a}\in\monoidComp^{m\times n}_d$ and $\mathbf{b}\in\monoidComp^{s\times t}_d$, when lifted to   monomials in columns of length $m+s$. 
This step is illustrated for instance in \Cref{eq:ex_shuffle_col} with $m=n=t=1$, $s=2$ and $d=3$. 
From the resulting polynomial
we take each of its monomials, and decompose it in two $\monoidComp_3$-valued matrices $\mathbf{c}$ and $\mathbf{d}$ of shape $m\times j$ and $s\times j$, respectively.
In the running example this would be 
\begin{equation}\label{eq:ex_2dimQshAlgo_1st_monomials}
\begin{bmatrix}
 \w{1}&\vareps\\
\vareps& \w{2}\\
\vareps& \w{3}
\end{bmatrix}
+\begin{bmatrix}
\vareps& \w{1}\\
\w{2}&\vareps\\
\w{3}&\vareps
\end{bmatrix}
+\begin{bmatrix}
 \w{1}\\
 \w{2}\\
 \w{3}
\end{bmatrix}\in\groundRing\langle\monoidComp_3^3\rangle,
\end{equation}
from which we would take the first degree $j=2$ monomial and decompose it into  
\begin{equation*}
(\begin{bmatrix}
 \w{1}&\vareps
\end{bmatrix},
\begin{bmatrix}
\vareps& \w{2}\\
\vareps& \w{3}
\end{bmatrix})\in\monoidComp_3^{1\times 2}
\times\monoidComp_3^{2\times 2}.
\end{equation*}
We now interpret $\mathbf{c}$ and $\mathbf{d}$ as monomials of rows, and compute its row quasi-shuffle as illustrated in \Cref{eq:ex_shuffle_row}. 
The resulting monomials are then the first five terms of the quasi-shuffle
\begin{align*}
\begin{bmatrix} \w{1} \end{bmatrix}\qShuffle\begin{bmatrix}\w{2}\\ \w{3}\end{bmatrix}=&
\begin{bmatrix} \w{1}&\vareps\\
\vareps& \w{2}\\
\vareps& \w{3}
\end{bmatrix}
+
\begin{bmatrix} \w{1}& \w{2}\\
\vareps&\w{3}\end{bmatrix}
+\begin{bmatrix}
\vareps& \w{2}\\
\w{1}&\vareps\\
\vareps& \w{3}
\end{bmatrix}
+\begin{bmatrix}
\vareps& \w{2}\\ 
\w{1}&\w{3}
\end{bmatrix}
+\begin{bmatrix}
\vareps& \w{2}\\
\vareps& \w{3}\\
\w{1}&\vareps
\end{bmatrix}\\
&+
\begin{bmatrix} \w{1}\\
\w{2}\\
\w{3}
\end{bmatrix}
+\begin{bmatrix} \w{1}\star \w{2}\\
\w{3}\end{bmatrix}
+\begin{bmatrix}
\w{2}\\
\w{1}\\
\w{3}
\end{bmatrix}
+\begin{bmatrix}
\w{2}\\ 
\w{1}\star\w{3}
\end{bmatrix}
+\begin{bmatrix}
\w{2}\\
\w{3}\\
\w{1}
\end{bmatrix}\\
&+
\begin{bmatrix}
\vareps&\w{1}\\
\w{2}&\vareps\\
\w{3}&\vareps
\end{bmatrix}
+\begin{bmatrix} \w{2}& \w{1}\\
\w{3}&\vareps\end{bmatrix}
+\begin{bmatrix}
\w{2}&\vareps\\
\vareps&\w{1}\\
\w{3}&\vareps
\end{bmatrix}
+\begin{bmatrix}
\w{2}&\vareps\\ 
\w{3}&\w{1}
\end{bmatrix}
+\begin{bmatrix}
\w{2}&\vareps\\
\w{3}&\vareps\\
\vareps&\w{1}
\end{bmatrix}\in\groundRing\langle\compositionConnected
\rangle. 
\end{align*}
The remaining ten summands result with the remaining two monomials from \Cref{eq:ex_2dimQshAlgo_1st_monomials}, when plugged into the quasi-shuffle of rows as described in detail above for the first monomial. 
\end{example}

\subsection{Invariance to zero insertion}\label{subsection:zeroinsertion}

\index[general]{eventuallyZero@$\evZ$}

Before discussing warping invariance in more detail, we introduce another closely related type of invariance.
Instead of a warping operation, we consider the insertion of zero rows and columns. 
We show in \Cref{theorem_bahntrennend} that the two-parameter signature is invariant under the latter, and furthermore, that it characterizes whether two elements are equal up to insertion of zeros.  
This result is then used in \Cref{sec:invariantsWarping} concerning warping invariants. 
 Let 
 \begin{align*}\evZ:=\evZ(\N^2,\groundRing^d)&:=
\left\{X:\N^2\rightarrow\groundRing^d\mid \exists \tuIn{n}\in\N^2\,:X_{\tuIn{i}}\not=0_d\implies \tuIn{i}\leq \tuIn{n}\right\}
\end{align*}
denote the set of all functions from the index set $\N^2$ to the $\groundRing$-module $\groundRing^d$ which are \DEF{eventually zero}, i.e., of finite support. 
 
\index[general]{Zero@$\Zero$}
Let $\Zero_{a,k}:\evZ\rightarrow\evZ$ be the \DEF{zero insertion operation} which puts a zero row or column (specified by axis $a\in\{1,2\}$) at position $k\in\N$ via
$$
{(\Zero_{a,k}X)}_{\tuIn{i}}:=\begin{cases}X_{\tuIn{i}}&\tuIn{i}_a<k\\
0_d& \tuIn{i}_a = k\\
X_{\tuIn{i}-\e{a}}& \tuIn{i}_a> k.\end{cases}
$$

\begin{example}\label{ex:zeroInsertion}
As in the introduction with $d=1$ and $\groundRing=\C$, 
\begin{align*}
\Zero_{2,2}\circ\Zero_{1,2}\!\left(\;
{\begin{tabular}{llll}
\hline
\multicolumn{1}{|l}{
\cellcolor{darkorange}$5$
}
   &\cellcolor{moccasin}$1$
   &\cellcolor{gray}$0$\\
\multicolumn{1}{|l}{
\cellcolor{myGreen}$3$}
   &\cellcolor{moccasin}$1$
   &\cellcolor{gray}$0$\\
\multicolumn{1}{|l}{
    \cellcolor{gray}$0$}
   &\cellcolor{gray}$0$
   &\cellcolor{gray}$0$\\
   &
\end{tabular}}_{\;\ddots\,}
\;\right)=
\;{\begin{tabular}{llll}
\hline
\multicolumn{1}{|l}{
\cellcolor{darkorange}$5$
}
   &\cellcolor{gray}$0$
   &\cellcolor{moccasin}$1$
   &\cellcolor{gray}$0$\\
\multicolumn{1}{|l}{
\cellcolor{gray}$0$}
   &\cellcolor{gray}$0$
   &\cellcolor{gray}$0$
   &\cellcolor{gray}$0$\\
\multicolumn{1}{|l}{
\cellcolor{myGreen}$3$}
   &\cellcolor{gray}$0$
   &\cellcolor{moccasin}$1$
   &\cellcolor{gray}$0$\\
\multicolumn{1}{|l}{
    \cellcolor{gray}$0$}
   &\cellcolor{gray}$0$
   &\cellcolor{gray}$0$
   &\cellcolor{gray}$0$\\
   &
\end{tabular}}_{\;\ddots\,}
\in\evZ.\end{align*}
\end{example}

\index[general]{NFzero@$\NFzero$}
\begin{lemma}\label{lem:commuting_zero}~
\begin{enumerate}
  \item\label{lem:commuting_zero1} $\Zero_{a,k}$ is an injective endomorphism of modules for all $(a,k)\in \{1,2\}\times \N$. 
\item\label{lem:commuting_zero2} 
$\Zero_{1,k}\circ\Zero_{2,j}=\Zero_{2,j}\circ\Zero_{1,k}$ for all $k,\,j$. 
\item\label{lem:commuting_zero3} 
$\Zero_{a,k}\circ\Zero_{a,j}=\Zero_{a,j}\circ\Zero_{a,k-1}$ for $k> j$ and all $a$.
\item\label{lem:commuting_zero_part4} For every $A\in\evZ$ there exists a unique \DEF{zero insertion normal form} $ \NFzero(A)\in\evZ$ such that
$$A=\Zero_{a_q,k_q} \circ \cdots \circ \Zero_{a_1,k_1}{}\circ\NFzero(A)$$
with suitable $a_i\in\{1,\,2\}$, $k_i\in\N$,  
and where $\NFzero({A})$ has no zero column before a non-zero column, and no zero row before a non-zero row.
\end{enumerate}
\end{lemma}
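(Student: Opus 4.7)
The plan is to dispatch parts (1)--(3) by direct pointwise case analysis and concentrate the real work on the normal form statement in part (4).

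For part (1), $\groundRing$-linearity of $\Zero_{a,k}$ is immediate from its pointwise definition, since each value of $(\Zero_{a,k}X)_{\tuIn{i}}$ is either $0_d$ or equals $X_{\tuIn{j}}$ for a determined $\tuIn{j}$. Injectivity follows from an explicit left inverse: given $Y=\Zero_{a,k}X$, recover $X$ by $X_{\tuIn{i}}=Y_{\tuIn{i}}$ if $\tuIn{i}_a<k$ and $X_{\tuIn{i}}=Y_{\tuIn{i}+\e{a}}$ if $\tuIn{i}_a\geq k$.

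For parts (2) and (3), I would check each identity pointwise at $\tuIn{i}\in\N^2$ by a routine case split. In part (2), the two operations act on disjoint coordinates: both sides give $0_d$ exactly when $\tuIn{i}_1=k$ or $\tuIn{i}_2=j$, and otherwise both recover $X_{\tuIn{i}'}$ for the same shifted index $\tuIn{i}'$. For part (3), assuming $k>j$, the five sub-cases $\tuIn{i}_a<j$, $\tuIn{i}_a=j$, $j<\tuIn{i}_a<k$, $\tuIn{i}_a=k$, $\tuIn{i}_a>k$ each unwind to the same value on both sides; the shift of $-1$ from the first insertion is what forces the second insertion index to drop from $k$ to $k-1$.

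For part (4), existence is by direct construction. Given $A\in\evZ$, let $r_1<\cdots<r_N$ enumerate the rows of $A$ that contain a nonzero entry and $c_1<\cdots<c_M$ the columns with a nonzero entry; both sets are finite because $A$ has finite support. Define
\begin{align*}
\NFzero(A)_{i,j}:=\begin{cases}A_{r_i,c_j} & 1\leq i\leq N,\ 1\leq j\leq M,\\ 0_d & \text{otherwise.}\end{cases}
\end{align*}
By construction the nonzero rows of $\NFzero(A)$ are precisely $1,\ldots,N$ and the nonzero columns precisely $1,\ldots,M$, so the normal form condition is satisfied. Reinserting the missing columns at positions $c_j$ in increasing order, then the missing rows at positions $r_i$ in increasing order, recovers $A$ from $\NFzero(A)$ as a composition of $\Zero_{a_l,k_l}$ operations.

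The uniqueness half is the main obstacle. The cleanest route is intrinsic: observe that a single application of $\Zero_{a,k}$ preserves the \emph{ordered sequence of nonzero rows} (viewed as $\groundRing^d$-valued functions on $\N$) and, symmetrically, the ordered sequence of nonzero columns---the only effect is to reindex. Hence, for any $B\in\evZ$ reachable from $A$ by a finite sequence of $\Zero_{a_l,k_l}$'s, the ordered sequence of nonzero rows of $B$ coincides with that of $A$, and likewise for columns. If moreover $B$ satisfies the normal-form condition, those nonzero rows and columns must occupy the initial positions $1,\ldots,N$ and $1,\ldots,M$, forcing $B_{i,j}=A_{r_i,c_j}$ on this rectangle and $B_{i,j}=0_d$ elsewhere. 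Thus $B=\NFzero(A)$. The bookkeeping lemma to prove cleanly is exactly that each $\Zero_{a,k}$ is an order-preserving reindexing on the finite ordered set of nonzero rows (or columns), which is a short induction using the definition.
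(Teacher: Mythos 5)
Your route for parts (1)--(3), a direct pointwise case analysis, is correct and genuinely different from the paper's, which derives everything for $\Zero_{a,k}$ by conjugation: one writes $\Zero_{a,k}=\delta\circ\Stutter_{a,k}\circ\varsigma$ (from \Cref{oneToOneCorrDiffSig,lemma:ZeroVsStutter}) and then pulls the identities back from the corresponding facts for the warping operators in \Cref{lem:commuting_warp}. Your computation is more elementary; the paper's trades directness for not having to repeat the case split for both families of operators.

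Your uniqueness argument in part (4), however, rests on a false invariant. You claim that a single application of $\Zero_{a,k}$ preserves the ordered sequence of nonzero rows \emph{viewed as $\groundRing^d$-valued functions on $\N$}, and symmetrically the ordered sequence of nonzero columns. This fails whenever the insertion axis is transverse to the rows (resp.\ columns) being tracked. Take $X\in\evZ$ with $X_{\tuIn{i}}=1$ for $\tuIn{i}\leq(2,2)$ and $0$ otherwise, and apply $\Zero_{2,1}$. Each nonzero row of $X$ is the function $(1,1,0,0,\ldots)$, while each nonzero row of $\Zero_{2,1}X$ is $(0,1,1,0,\ldots)$: the set of nonzero row \emph{indices} is unchanged, but the rows as functions are not. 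The invariant you actually need is the finite \emph{reduced matrix} of $A$, the submatrix obtained by deleting every zero row and every zero column; this \emph{is} preserved by every $\Zero_{a,k}$, since inserting a zero row or column and then deleting all zero rows and columns recovers the original reduction. With your bookkeeping lemma restated for the reduced matrix, the rest of the argument goes through: a matrix $B$ in normal form equals its own reduced matrix padded with zeros, and is therefore forced to equal $\NFzero(A)$. (Separately, in your existence construction the zero rows and columns must be reinserted at the positions where $A$ has \emph{zero} rows and columns, not ``at positions $c_j$'' or ``at positions $r_i$'', which are the positions of the nonzero ones.)
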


 Most proofs in this and in the remaining subsections are postponed until \Cref{subsection:invariants}.
For an illustration of normal forms, compare the argument in \Cref{ex:zeroInsertion}.

\begin{definition}\label{def_insertionOfZeros}
  A mapping $\phi:\evZ\rightarrow\groundRing$ is \DEF{invariant to inserting zero}
  (in both directions independently%
  \footnote{Invariance to \emph{simultaneous} insertion of zeros would
    demand $\phi\circ\Zero_{2,k}\circ\Zero_{1,k}=\phi$ for all $k \in\N$.
    }
  ), if 
\begin{equation}\label{zeroInvariant}
\phi\circ\Zero_{a,k}=\phi\quad\forall (a,k)\in\{1,2\}\times \N.
\end{equation}
\end{definition}

With \Cref{lem:commuting_zero} one obtains a partition of $\evZ$ into classes of functions which are \DEF{equal up to insertion of zeros}.

\index[general]{equal up to insertion of zero@$\EQzero$}
\begin{lemma}\label{lemma:eqRelInsertionZero}
The relation  $\EQzero $ defined via
\begin{align*}
    A \EQzero B
    \;:\Leftrightarrow\;
   \NFzero(A)=\NFzero(B)
\end{align*}
is an equivalence relation on $\evZ$.
\end{lemma}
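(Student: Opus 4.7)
The plan is to observe that the relation $\EQzero$ is defined as the pullback of the (diagonal) equality relation on $\evZ$ along the map $\NFzero : \evZ \to \evZ$. Since equality is an equivalence relation on $\evZ$, and since the pullback of any equivalence relation along a function is automatically an equivalence relation, the verification reduces to checking that $\NFzero$ is a well-defined function, and then invoking the three defining properties of equality.

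Well-definedness of $\NFzero$ is exactly the content of \Cref{lem:commuting_zero}\eqref{lem:commuting_zero_part4}, which guarantees both existence and uniqueness of the zero-insertion normal form for every $A \in \evZ$. Once this is in hand, I would verify the three axioms in turn. Reflexivity $A \EQzero A$ is immediate from $\NFzero(A) = \NFzero(A)$. Symmetry follows because $\NFzero(A) = \NFzero(B)$ gives $\NFzero(B) = \NFzero(A)$ by the symmetry of equality. Transitivity is equally straightforward: if $\NFzero(A) = \NFzero(B)$ and $\NFzero(B) = \NFzero(C)$, then $\NFzero(A) = \NFzero(C)$ by transitivity of equality in $\evZ$.

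There is no real obstacle in this lemma, since all the work has already been done in establishing part \eqref{lem:commuting_zero_part4} of the previous lemma; the present statement is essentially a formal corollary of the fact that $\NFzero$ is a function from $\evZ$ to itself. I would therefore keep the proof to a few lines, making explicit the pullback reasoning so that the reader sees transparently where the content sits (namely, in the uniqueness part of \Cref{lem:commuting_zero}).
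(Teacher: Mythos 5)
Your proposal is correct: since $\NFzero$ is a well-defined function on $\evZ$ by \Cref{lem:commuting_zero}\eqref{lem:commuting_zero_part4}, the relation $\EQzero$ is the pullback of equality along $\NFzero$ and inherits reflexivity, symmetry, and transitivity directly. The paper leaves this lemma without an explicit proof precisely because it is this immediate formal corollary, so your argument matches the intended one.
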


It turns out that the two-parameter sums signature characterizes the resulting equivalence classes.
\begin{theorem}\label{theorem_bahntrennend}
For $A,B\in\evZ$, 
$$A\EQzero B\iff \SS(A)=\SS(B).$$
\end{theorem}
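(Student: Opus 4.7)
I would prove the biconditional by reducing everything to the normal-form description of $\EQzero$ from \Cref{lemma:eqRelInsertionZero}.

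The forward direction amounts to the identity $\SS\circ\Zero_{a,k}=\SS$, which I verify directly. Splitting the defining sum for $\langle\SS(\Zero_{a,k}X),\mathbf{a}\rangle$ according to whether some $\iota_s$ (for $a=1$) or $\kappa_t$ (for $a=2$) equals $k$, the terms hitting $k$ contribute zero: the inserted line of $\Zero_{a,k}X$ is identically $0_d$, and the corresponding line of $\mathbf{a}$ must contain a non-$\vareps$ entry by the definition of a matrix composition, producing a factor $0^{(w)}=0$ with $w\neq\vareps$. The remaining terms biject with summation indices for $X$ by deleting the insertion coordinate, and the products match. Iterating over the stack of zero insertions that produces $A$ from $\NFzero(A)$ yields $\SS(A)=\SS(\NFzero(A))$, and the claim follows since $A\EQzero B$ means $\NFzero(A)=\NFzero(B)$.

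For the converse, the forward direction lets me replace $A,B$ by their normal forms, so I may assume $A=\NFzero(A)$ and $B=\NFzero(B)$. Then $A$ is supported in a tight rectangle $\{1,\ldots,m_A\}\times\{1,\ldots,n_A\}$ in which every row and every column carries a non-zero entry, and similarly for $B$. I first recover the dimensions from $\SS$. Any composition $\mathbf{a}$ with more than $m_A$ rows has vanishing coefficient: a chain $\iota_1<\cdots<\iota_{\row(\mathbf{a})}$ with non-zero product must obey $\iota_s\leq m_A$ for every $s$ (row $s$ of $\mathbf{a}$ has a non-$\vareps$ entry, so $A_{\iota_s,\kappa_t}\neq 0_d$ for some $\kappa_t$), which contradicts strict monotonicity. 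Symmetrically for columns. The existence of a reference composition (built below) of size $m_A\times n_A$ with non-vanishing coefficient then forces $m_A=m_B=:m$ and $n_A=n_B=:n$.

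For compositions $\mathbf{a}$ of size exactly $m\times n$ the defining sum collapses to the single chain $\iota_s=s,\kappa_t=t$, giving $\langle\SS(A),\mathbf{a}\rangle=\prod_{s,t}(A_{s,t})^{(\mathbf{a}_{s,t})}$, and likewise for $B$. I construct a reference $\mathbf{a}^*$ by setting $\mathbf{a}^*_{s,t}:=\w{j(s,t)}$ for each $(s,t)$ with $A_{s,t}\neq 0_d$ (where $j(s,t)$ is any coordinate with $A_{s,t,j(s,t)}\neq 0$) and $\mathbf{a}^*_{s,t}:=\vareps$ elsewhere. The normal-form hypothesis makes the non-$\vareps$ positions cover every row and every column, so $\mathbf{a}^*$ is a valid matrix composition, and $P^*:=\langle\SS(A),\mathbf{a}^*\rangle=\prod_{A_{s,t}\neq 0_d}A_{s,t,j(s,t)}\neq 0$ in the integral domain. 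For any $(s_0,t_0)$ and any $w\in\monoidComp_d$, letting $\mathbf{a}$ agree with $\mathbf{a}^*$ except $\mathbf{a}_{s_0,t_0}:=\mathbf{a}^*_{s_0,t_0}\star w$ yields a valid $m\times n$ composition, and multiplicativity of $z^{(\cdot)}$ produces $\langle\SS(A),\mathbf{a}\rangle=(A_{s_0,t_0})^{(w)}P^*$ and $\langle\SS(B),\mathbf{a}\rangle=(B_{s_0,t_0})^{(w)}P^*$ (using $P^*=\langle\SS(A),\mathbf{a}^*\rangle=\langle\SS(B),\mathbf{a}^*\rangle$). Cancelling the non-zero $P^*$ in $\groundRing$ yields $(A_{s_0,t_0})^{(w)}=(B_{s_0,t_0})^{(w)}$ for all $w\in\monoidComp_d$; specialising $w=\w{j}$ gives $A_{s_0,t_0,j}=B_{s_0,t_0,j}$ for all $j$, hence $A=B$. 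The main obstacle I anticipate is the construction of $\mathbf{a}^*$ with $P^*\neq 0$: the whole converse rests on being able to produce a single valid $m\times n$ composition carrying a non-$\vareps$ letter at every non-zero position of $A$ using only "active" coordinates, and this is precisely where the normal-form reduction is used essentially, as it forces the support of $A$ to meet each row $1,\ldots,m$ and each column $1,\ldots,n$ of the tight bounding rectangle.
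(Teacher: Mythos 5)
Your proposal is correct and follows the same overall strategy as the paper: reduce to normal forms, recover the size $(m,n)$ from the signature, build an $m\times n$ reference composition whose coefficient is a nonzero product over the support (which is where the integral-domain hypothesis is used), and then extract entries by cancellation. The paper implements the last step by constructing a family of references $\mathbf{a}(\w{j},X)$ adapted to each coordinate $j$, first proving $\mathbf{a}(\w{j},X)=\mathbf{a}(\w{j},Y)$ via a three-way case analysis on how they might differ, and then reading off entry $j$; your version avoids that case analysis entirely by fixing one reference $\mathbf{a}^*$ and perturbing it with an arbitrary $w\in\monoidComp_d$ at a single position, letting multiplicativity of the evaluation and cancellation do all the work. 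This is a mild but genuine streamlining of the closing argument; the paper's approach needs the minimality clause in the definition of $\mathbf{a}(\w{j},X)$ precisely to make the case analysis go through, whereas yours does not. Your explicit dimension-recovery step also fills in a claim the paper states without proof ("if $\size(X)\neq\size(Y)$ then $\SS(X)\neq\SS(Y)$"). Everything checks out.
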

Its proof (provided in \Cref{subsection:invariants}) is similar to \cite[Theorem 3.15]{diehl2020tropical} after generalization for two parameters.

\subsection{Invariance to warping}\label{sec:invariantsWarping}
 \newcommand\funcEventConst{\operatorname{evC}(\N_0^2,\groundRing^{d})}
\index[general]{eventuallyConst@$\evC$}
\index[general]{Warp@$\Stutter$}

In this subsection we show that zero insertion invariants and warping invariants translate back and forth,
specified in \Cref{Theo:ZeroStutterBackForth}.  
Modulo constants, this results in \Cref{theorem:ssIffStuttInv}, a full characterization of when two time series are equal up to warping.
 
 From the introduction, recall the set of functions 
\begin{align*}\evC:=\evC(\N^2,\groundRing^d)&:=
\left\{X:\N^2\rightarrow\groundRing^d\mid \exists \tuIn{n}\in\N^2\,:X_{\tuIn{i}}\not=X_{\tuIn{j}}\implies \tuIn{i},\tuIn{j}\leq \tuIn{n}\right\}
\end{align*}
which are \DEF{eventually constant}. Note that $\evZ\subseteq\evC$. 

Let $\Stutter_{a,k}:\evC\rightarrow\evC$ be a single \DEF{warping operation} which inserts a copy of  the $k$-th row or column (specified by axis $a\in\{1,2\}$) via

  $$
    {(\Stutter_{a,k}X)}_{\tuIn{i}}
    :=
    \begin{cases}
      X_{\tuIn{i}}       & {\tuIn{i}}_a \le k\\
      X_{\tuIn{i}-e_{a}} & {\tuIn{i}}_a > k.
    \end{cases}
  $$

For illustrations, recall the introduction in  \Cref{sec:inroInvariants} with $d=1$. 
We collect analogous properties as in \Cref{lem:commuting_zero}, in particular a normal form with respect to warping. 

\index[general]{NFwarp@$\NFstut$}
\begin{lemma}\label{lem:commuting_warp}~
\begin{enumerate}
\item\label{lem:commuting_warp1} $\Stutter_{a,k}$ is an injective  endomorphism of modules for all $a\in\{1,2\}, k\in\N$.
\item\label{lem:commuting_warp2} 
$\Stutter_{1,k}\circ\Stutter_{2,j}=\Stutter_{2,j}\circ\Stutter_{1,k}$ for all $k,\,j\in\N$. 
\item\label{lem:commuting_warp3} 
$\Stutter_{a,k}\circ\Stutter_{a,j}=\Stutter_{a,j}\circ\Stutter_{a,k-1}$ for $k> j$ and $a \in \{1,2\}$.

\item\label{lem:commuting_warp4}  For every $A\in\evC$ there exists a unique \DEF{warping normal form} $\NFstut({A})\in\evC$ such that
$$A=\Stutter_{a_q,k_q} \circ \cdots \circ {\Stutter_{a_1,k_1}}\circ{\NFstut{}}(A)$$
with suitable $a_i\in\{1,\,2\}$, $k_i\in\N$ 
and where $\NFstut(A)$ has no warped columns and rows before the constant part.
\end{enumerate}
\end{lemma}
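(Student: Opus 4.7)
The plan is as follows. The four statements mirror parts \ref{lem:commuting_zero1}--\ref{lem:commuting_zero_part4} of \Cref{lem:commuting_zero} for $\Zero_{a,k}$, and I would adapt those proofs essentially verbatim, keeping in mind only that $\Stutter_{a,k}$ duplicates the value at row/column $k$ instead of inserting a zero.

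For part \ref{lem:commuting_warp1}, linearity is immediate from the pointwise linear case distinction in the definition of $\Stutter_{a,k}$. For injectivity I would exhibit a one-sided inverse: a deletion operator $D_{a,k}:\evC\rightarrow\evC$ which removes the $(k+1)$-st row (if $a=1$) or column (if $a=2$) and shifts later rows or columns down by one; a direct check from the definitions yields $D_{a,k}\circ\Stutter_{a,k}=\id$. Parts \ref{lem:commuting_warp2} and \ref{lem:commuting_warp3} are then pure index computations: for each $\tuIn{i}\in\N^2$ one unfolds the compositions using the piecewise definition of $\Stutter_{a,k}$ and matches cases. For \ref{lem:commuting_warp2} the axis-$1$ and axis-$2$ definitions act on disjoint coordinates of $\tuIn{i}$, so both orders agree. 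For \ref{lem:commuting_warp3} with $k>j$, inserting first at position $j$ shifts what used to be at row/column $k$ up to position $k+1$, which precisely matches applying $\Stutter_{a,k-1}$ before $\Stutter_{a,j}$.

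The substantial content lies in part \ref{lem:commuting_warp4}. For existence I would use an iterative reduction: given $A\in\evC$ with eventual-constancy bound $\tuIn{n}$, scan the non-constant region for two consecutive equal rows (or columns). If rows at positions $k$ and $k+1$ coincide within the non-constant region, write $A=\Stutter_{1,k}(A')$ with $A':=D_{1,k}(A)$, and analogously for columns. Each step strictly decreases a lexicographic complexity such as $(\tuIn{n}_1+\tuIn{n}_2,\tuIn{n}_1)$, so the procedure terminates. The terminal function has, by construction, no consecutive equal rows or columns before the constant tail, giving a candidate $\NFstut(A)$.

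The main obstacle is uniqueness of this normal form, i.e., that different reduction orders yield the same endpoint. Here I would apply the commutation relations from parts \ref{lem:commuting_warp2} and \ref{lem:commuting_warp3} in a diamond-style rewriting argument: any finite sequence of $\Stutter_{a_i,k_i}$ can be brought into a canonical order (say, all axis-$1$ warpings first and sorted by position, then all axis-$2$ warpings sorted by position), and then two such canonical presentations of the same $A$ with irreducible base must agree letter by letter, since any discrepancy would produce a consecutive-duplicate pair in the base, contradicting irreducibility. This is the same strategy that underlies part \ref{lem:commuting_zero_part4} of \Cref{lem:commuting_zero}, so the hardest technical step is the careful bookkeeping of how the index $k$ transforms under the moves in \ref{lem:commuting_warp3}, rather than any fundamentally new idea.
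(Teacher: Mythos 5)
Your proposal is correct and tracks the paper's own proof closely: parts~\ref{lem:commuting_warp1}--\ref{lem:commuting_warp3} are done by direct pointwise case analysis (the paper shows injectivity via $\ker(\Stutter_{a,k})=0$ rather than exhibiting the deletion inverse, but both are one-line observations), and for part~\ref{lem:commuting_warp4} both proofs first use the commutation relations of parts~\ref{lem:commuting_warp2} and~\ref{lem:commuting_warp3} to bring a sequence of warpings into a canonical order with strictly increasing position chains, and then argue that any discrepancy between two such presentations over irreducible bases would force a consecutive-duplicate row or column in the base, contradicting irreducibility. The one cosmetic difference is that you phrase existence as termination of a reduction procedure, whereas the paper invokes a minimum of $\size(B)$ over the product poset $(\N^2,\leq)$ (relying on \Cref{lem:sizeOfStutter}); your constructive termination argument is, if anything, slightly cleaner, since it sidesteps having to justify that this partially ordered set of achievable sizes actually has a unique minimal element.
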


\index[general]{difference operator@$\delta$}
\index[general]{sigmavar@$\varsigma$, right inverse of $\delta$}
\index[general]{normal form modulo constants@$\NFconst$}
In \Cref{lemma:ZeroVsStutter} we show how warping and zero insertion relate to each other. 
For this, recall the \DEF{difference operator} 
$\delta:\evC\rightarrow\evZ$ with
\begin{equation}\label{eq:def_diff}
{(\delta X)}_{i,j}:=X_{i+1,j+1}-X_{i+1,j}-X_{i,j+1}+X_{i,j}
\end{equation}
explained also in the introduction. 
Factorizing its kernel, we obtain an isomorphism of $\groundRing$-modules as follows. 

\begin{lemma}\label{oneToOneCorrDiffSig}~
\begin{enumerate}
    \item\label{oneToOneCorrDiffSig1} $\delta$ is a surjective homomorphism of $\groundRing$-modules with 
    \item\label{oneToOneCorrDiffSig2} $\ker(\delta)=\{X\mid X_{\tuIn{i}}=X_{\tuIn{j}}\;\forall \tuIn{i},\tuIn{j}\in\N^2\}$,
    \item and linear right inverse
     $$ \varsigma:\evZ\rightarrow\evC,\;Z\mapsto
   {\left(\sum_{i\leq s}
   \sum_{j\leq t} Z_{s,t}\right)}_{i,j}$$
    \item\label{oneToOneCorrDiffSig3}
    which yields a \DEF{normal form modulo constants} $\NFconst(X):=\varsigma\circ\delta(X)$ as a representative of 
    $$X+\ker(\delta)\in\faktor{\evC}{\ker(\delta)}\cong\evZ.$$ 
\end{enumerate}
\end{lemma}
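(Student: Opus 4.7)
I would prove the four assertions in a slightly shuffled order, starting with the section identity $\delta\circ\varsigma = \id_{\evZ}$, since the remaining items follow easily from it. Given $Z\in\evZ$ supported on some box $[1,n_1]\times[1,n_2]$, the defining sum $\varsigma(Z)_{i,j} = \sum_{s\geq i,\,t\geq j} Z_{s,t}$ is finite and vanishes whenever $i>n_1$ or $j>n_2$, so $\varsigma(Z)$ is eventually zero and in particular eventually constant, placing $\varsigma$ into $\evC$. Linearity of $\varsigma$ is clear. Unfolding the four-term definition of $\delta$ applied to $\varsigma(Z)$ and telescoping the tails in $s$ produces $(\delta\varsigma Z)_{i,j} = \sum_{t\geq j} Z_{i,t} - \sum_{t\geq j+1} Z_{i,t} = Z_{i,j}$. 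This simultaneously establishes the linearity and surjectivity of $\delta$ (completing part (1)) and exhibits the required linear right inverse.

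For the kernel description in part (2), the inclusion containing the constants is immediate from $\delta c = 0$. For the converse, $\delta X = 0$ rewrites as $X_{i,j+1} - X_{i,j} = X_{i+1,j+1} - X_{i+1,j}$, so the column-increment $h(j) := X_{i,j+1} - X_{i,j}$ is independent of $i$. The eventual-constancy hypothesis supplies an index bound $\tuIn{n}$ and a scalar $c$ with $X_{\tuIn{i}} = c$ outside $[1,\tuIn{n}_1]\times[1,\tuIn{n}_2]$; evaluating $h$ at any row $i > \tuIn{n}_1$ forces $h(j) = c - c = 0$, and a symmetric argument vanishes all row-increments, so $X$ is globally constant. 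The main subtlety here is leveraging the $\evC$-hypothesis to obtain vanishing increments at \emph{every} index, not merely asymptotically, which is exactly what the $i$-independence of $h$ unlocks.

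For the normal form and the quotient isomorphism in part (3), set $\NFconst(X) := \varsigma\circ\delta(X)$. Then $\delta(X - \NFconst X) = \delta X - \delta\varsigma\delta X = 0$ by the section identity, so $X$ and $\NFconst X$ represent the same coset of $\ker(\delta)$; and any $X'$ in that coset satisfies $\delta X' = \delta X$, hence $\NFconst X' = \NFconst X$, making $\NFconst X$ a canonical representative of $X+\ker(\delta)$. The isomorphism $\evC/\ker(\delta)\cong\evZ$ is then the first isomorphism theorem for $\groundRing$-modules applied to the surjective homomorphism $\delta$ with kernel as in part (2), with inverse induced by the quotient of $\varsigma$.
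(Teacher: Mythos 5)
Your proof is correct and follows the same overall structure as the paper's (establish the section identity $\delta\circ\varsigma=\id_{\evZ}$ by telescoping, then invoke the first isomorphism theorem). The one place you genuinely diverge is the kernel characterization. The paper argues from the corner: at the index $\size(X)$, the three neighboring entries lying outside the support box all equal the eventual limit, so the four-term vanishing of $\delta X$ forces the corner value to equal that limit as well, and one then recurses inward. You instead note that $\delta X = 0$ makes the column increment $h(j) := X_{i,j+1}-X_{i,j}$ independent of $i$; evaluating at $i$ large enough that eventual constancy kills the increment gives $h\equiv 0$, and the symmetric argument kills all row increments, so $X$ is globally constant. Both routes are valid; yours avoids the inward recursion and isolates the key observation (increments are constant in the transverse index) more transparently, at the cost of needing the symmetric argument twice. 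Either way, the essential input — that $\delta X=0$ is a discrete separable PDE whose boundary data, supplied by eventual constancy, forces triviality — is the same.
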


\begin{example}\label{ex:normalForm}For $d=1$ and $\groundRing=\C$, 
\begin{align*}
\varsigma\circ\delta\!\left(
\;{\begin{tabular}{lllll}
\hline
\multicolumn{1}{|l}{
\cellcolor{babypink}$7$
}
   &\cellcolor{myGreen}$3$
   &\cellcolor{ballblue}$2$
   &\cellcolor{ballblue}$2$\\
\multicolumn{1}{|l}{
\cellcolor{darkorange}$5$}
   &\cellcolor{myGreen}$3$
   &\cellcolor{ballblue}$2$
   &\cellcolor{ballblue}$2$\\
\multicolumn{1}{|l}{
   \cellcolor{ballblue}$2$}
   &\cellcolor{ballblue}$2$
   &\cellcolor{ballblue}$2$
   &\cellcolor{ballblue}$2$\\
   &
\end{tabular}}_{\;\ddots\,}\right)
=
\varsigma\!\left(
\;{\begin{tabular}{llll}
\hline
\multicolumn{1}{|l}{
\cellcolor{ballblue}$2$
}
   &\cellcolor{gray}$0$
   &\cellcolor{gray}$0$\\
\multicolumn{1}{|l}{
\cellcolor{ballblue}$2$}
   &\cellcolor{moccasin}$1$
   &\cellcolor{gray}$0$\\
\multicolumn{1}{|l}{
    \cellcolor{gray}$0$}
   &\cellcolor{gray}$0$
   &\cellcolor{gray}$0$\\
   &
\end{tabular}}_{\;\ddots\,}\right)
=
\;{\begin{tabular}{llll}
\hline
\multicolumn{1}{|l}{
\cellcolor{darkorange}$5$
}
   &\cellcolor{moccasin}$1$
   &\cellcolor{gray}$0$\\
\multicolumn{1}{|l}{
\cellcolor{myGreen}$3$}
   &\cellcolor{moccasin}$1$
   &\cellcolor{gray}$0$\\
\multicolumn{1}{|l}{
    \cellcolor{gray}$0$}
   &\cellcolor{gray}$0$
   &\cellcolor{gray}$0$\\
   &
\end{tabular}}_{\;\ddots\,} 
\end{align*}
is in normal form according to \Cref{oneToOneCorrDiffSig}. 
\end{example}

The three normal forms which were introduced in \Cref{lem:commuting_zero,lem:commuting_warp,oneToOneCorrDiffSig} are related as follows.

\begin{lemma}\label{lemma:ZeroVsStutter}
For all $(a,k)\in\{1,2\}\times \N$, 
\begin{enumerate}
    \item\label{lemma:ZeroVsStutter1}
   $\delta\circ\Stutter_{a,k}={\Zero_{a,k}}\circ{\delta}$,
    \item\label{lemma:ZeroVsStutter2}
    $\varsigma\circ \Zero_{a,k}={\Stutter_{a,k}}\circ{\varsigma}$, 
   \item\label{lemma:ZeroVsStutter3} $\delta\circ\NFstut{}=\NFzero{}\circ\delta$,
   \item\label{lemma:ZeroVsStutter4} 
    $\varsigma\circ\NFzero{}=\NFstut{}\circ\varsigma$, 
    \item\label{lemma:ZeroVsStutter5} $\NFconst{}\circ\Stutter_{a,k}{}=\Stutter_{a,k}{}\circ\NFconst{},$ and
    \item\label{lemma:ZeroVsStutter6} 
    $\NFstut{}\circ\NFconst{}=\NFconst{}\circ\NFstut{}$. 
   \end{enumerate}
    \end{lemma}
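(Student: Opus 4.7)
The plan is to establish the six identities in order, deducing the later ones from the earlier: parts (i) and (ii) are direct coordinate computations, parts (iii) and (iv) follow by uniqueness of normal forms, and parts (v) and (vi) are formal compositions of the earlier statements.

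For (i) with $a=1$ (the case $a=2$ is symmetric), I would split on the position of $i$ relative to $k$. For $i<k$ nothing is affected; for $i=k$ the rows at indices $k$ and $k{+}1$ of $\Stutter_{1,k}X$ coincide, so the $2\times 2$ alternating sum defining $(\delta\Stutter_{1,k}X)_{k,j}$ collapses to zero; for $i>k$ an index shift yields $(\delta X)_{i-1,j}$. This matches $\Zero_{1,k}(\delta X)$ case by case. Part (ii) is again direct: rewrite $(\varsigma\circ\Zero_{1,k}Z)_{i,j}=\sum_{s\ge i,\,t\ge j}(\Zero_{1,k}Z)_{s,t}$ and split the sum at $s<k$, $s=k$ (contributing $0$), and $s>k$ (with the substitution $s\mapsto s{-}1$); the output equals $\varsigma(Z)_{i,j}$ if $i\le k$ and $\varsigma(Z)_{i-1,j}$ if $i>k$, which is $(\Stutter_{1,k}\varsigma(Z))_{i,j}$.

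For (iii), I would factor $A=\Stutter_{a_q,k_q}\circ\cdots\circ\Stutter_{a_1,k_1}\circ\NFstut(A)$ according to \Cref{lem:commuting_warp}, apply $\delta$, and iterate (i) to obtain
\begin{equation*}
\delta(A)=\Zero_{a_q,k_q}\circ\cdots\circ\Zero_{a_1,k_1}\circ\delta(\NFstut(A)).
\end{equation*}
By the uniqueness clause of \Cref{lem:commuting_zero}, it then suffices to show that $\delta(\NFstut(A))$ is itself in zero-insertion normal form. The key observation is: if row $i$ of $\delta Y$ vanishes with $Y=\NFstut(A)$, then the identity $Y_{i+1,j+1}-Y_{i,j+1}=Y_{i+1,j}-Y_{i,j}$ holds for every $j$, so the row difference $Y_{i+1,\bullet}-Y_{i,\bullet}$ is constant in $j$; since $Y\in\evC$ is eventually constant, this common value must be zero, forcing rows $i$ and $i{+}1$ of $Y$ to coincide, which (if a non-zero row of $\delta Y$ follows) contradicts the warping-normal-form property of $\NFstut(A)$. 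The column case is symmetric. Part (iv) is dual: from $A=\Zero_{a_q,k_q}\circ\cdots\circ\Zero_{a_1,k_1}\circ\NFzero(A)$ and (ii) one obtains the analogous factorisation of $\varsigma(A)$ through $\Stutter$ operations, and uniqueness of the warping normal form reduces the claim to showing that $\varsigma(\NFzero(A))$ has no duplicated row or column before the constant part. If rows $i$ and $i{+}1$ of $\varsigma(Z)$ agree with $Z=\NFzero(A)$, then $\sum_{t\ge j}Z_{i,t}=0$ for all $j$, and differencing in $j$ forces $Z_{i,t}=0$ for every $t$, contradicting the zero-normal-form property of $\NFzero(A)$.

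Parts (v) and (vi) are purely formal. Using $\NFconst=\varsigma\circ\delta$ from \Cref{oneToOneCorrDiffSig}, part (v) is the chain
\begin{equation*}
\NFconst\circ\Stutter_{a,k}=\varsigma\circ\delta\circ\Stutter_{a,k}=\varsigma\circ\Zero_{a,k}\circ\delta=\Stutter_{a,k}\circ\varsigma\circ\delta=\Stutter_{a,k}\circ\NFconst,
\end{equation*}
and part (vi) follows by combining (iv), (iii), and the same presentation of $\NFconst$:
\begin{equation*}
\NFstut\circ\NFconst=\NFstut\circ\varsigma\circ\delta=\varsigma\circ\NFzero\circ\delta=\varsigma\circ\delta\circ\NFstut=\NFconst\circ\NFstut.
\end{equation*}
The main obstacle lies in (iii) and (iv): they cannot be verified coordinatewise, and the reduction via uniqueness of normal forms rests on the structural link between a vanishing row of $\delta Y$ and a pair of equal adjacent rows of $Y$ (obtained through the eventual-constancy limit in $j$), and dually between a duplicated row of $\varsigma(Z)$ and a zero row in $Z$ (obtained by differencing the tail cumulative sum in $j$). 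Everything else is bookkeeping.
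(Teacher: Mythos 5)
Your proof is correct and follows the paper's overall route (coordinate computation for the first two parts, reduction to uniqueness of normal forms for (iii)/(iv), formal chains for (v)/(vi)), but the details of (ii), (iii), (iv) differ enough to be worth noting. The paper proves (ii) not by a direct coordinate computation but by conjugating (i) through the isomorphism $\delta\circ\varsigma=\id$, $\varsigma\circ\delta=\id$ on $\evZ$, which is shorter but needs $\Stutter_{a,k}\circ\varsigma(\evZ)\subseteq\evZ$; your direct calculation sidesteps that. For (iii), the paper writes the same factorization you do and then simply asserts ``hence part 3 via uniqueness,'' implicitly relying on the fact (built into its proof of the existence of $\NFzero$) that the normal form is characterized by minimal size together with $\size\circ\delta=\size$. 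You instead verify the stated structural characterization directly: a zero row of $\delta Y$ forces $Y_{i+1,\bullet}-Y_{i,\bullet}$ to be constant in $j$, hence zero by eventual constancy, hence rows $i,i{+}1$ of $Y$ coincide, contradicting the warping normal form when a nonzero row follows. This fills in a step the paper leaves terse, so it is a genuine improvement in explicitness. Your (iv) is likewise a direct dual argument (duplicated rows of $\varsigma Z$ force a zero row of $Z$), whereas the paper derives (iv) from (iii) by sandwiching with $\delta$ and $\varsigma$; both are fine. One small point worth making explicit in your (iii)/(iv) reduction: you need $i<\row(\delta Y)$ respectively $i<\row(\varsigma Z)$ to turn ``a nonzero row follows'' into the index bound needed for the contradiction, and this uses $\size(\delta Y)=\size(Y)$ and $\size(\varsigma Z)=\size(Z)$ from \Cref{lem:sizeOfEvZeroDelta}, which you invoke implicitly.
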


\begin{definition}\label{def_stutter}
  A mapping $\psi:\evC\rightarrow\groundRing$
  is
  \begin{enumerate}
   \item \DEF{invariant modulo constants}, if 
 \begin{equation}\label{eq:defInvariantModC}
     \psi(X)=\psi(X+C)\quad\forall X,C\in\evC\text{ with constant }C, 
 \end{equation}
      \item and \DEF{invariant to warping}  (in both directions independently
          \footnote{Invariance to \emph{simultaneous} warping would
                demand $\psi\circ\Stutter_{2,k}\circ\Stutter_{1,k}=\psi$ for all $ k\in\N$.
                Thinking of image data, this operation corresponds to scaling
                uniformly in both directions, whereas the one
                we are interested in allows independent scaling for both directions.}),
      if
  \begin{equation}\label{StutterInvariant}
   \psi\circ\Stutter_{a,k}=\psi\quad\forall(a,k)\in\{1,\,2\}\times\N.\end{equation} 
 
  \end{enumerate}

\end{definition}

With transitivity, both statements can be reformulated  using normal forms, i.e., 
 \Cref{eq:defInvariantModC} is equivalent to 
    $\psi\circ\NFconst{}=\psi$, and
  \Cref{StutterInvariant}   holds if and only if $\psi\circ\NFstut{}=\psi$. 

Warping and zero insertion invariants translate back and forth via the difference operator $\delta$ and its right inverse $\varsigma$. 
\begin{corollary}\label{Theo:ZeroStutterBackForth}~
\begin{enumerate}
    \item\label{Theo:ZeroStutterBackForth1}
If $\phi:\evZ\rightarrow \groundRing$ is invariant to inserting zero, then 
$\phi\circ\delta$ is invariant to warping.
\item\label{Theo:ZeroStutterBackForth2}
If $\psi:\evZ\rightarrow \groundRing$ is invariant to warping, then 
$\psi\circ\varsigma$ is invariant to inserting zero.
\end{enumerate}
\end{corollary}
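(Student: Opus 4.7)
The plan is to reduce both claims to the two intertwining identities from Lemma~\ref{lemma:ZeroVsStutter}, namely
$\delta\circ\Stutter_{a,k}=\Zero_{a,k}\circ\delta$ (part~\ref{lemma:ZeroVsStutter1}) and
$\varsigma\circ\Zero_{a,k}=\Stutter_{a,k}\circ\varsigma$ (part~\ref{lemma:ZeroVsStutter2}).
These say that $\delta$ conjugates warping to zero insertion and that $\varsigma$ does the converse, which is precisely the content that the corollary requires. Since invariance is a pointwise/functional equation, the argument is a one-line chain of compositions in each case, and the main ``work'' has already been done in establishing Lemma~\ref{lemma:ZeroVsStutter}.

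For part~\ref{Theo:ZeroStutterBackForth1}, I would fix $(a,k)\in\{1,2\}\times\N$ and write
\begin{align*}
(\phi\circ\delta)\circ\Stutter_{a,k}
\;=\;\phi\circ(\delta\circ\Stutter_{a,k})
\;=\;\phi\circ(\Zero_{a,k}\circ\delta)
\;=\;(\phi\circ\Zero_{a,k})\circ\delta
\;=\;\phi\circ\delta,
\end{align*}
where the second equality is Lemma~\ref{lemma:ZeroVsStutter}\eqref{lemma:ZeroVsStutter1} and the last uses the assumed invariance of $\phi$ under $\Zero_{a,k}$. Since $(a,k)$ was arbitrary, $\phi\circ\delta$ satisfies \eqref{StutterInvariant}.

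For part~\ref{Theo:ZeroStutterBackForth2}, the symmetric computation gives
\begin{align*}
(\psi\circ\varsigma)\circ\Zero_{a,k}
\;=\;\psi\circ(\varsigma\circ\Zero_{a,k})
\;=\;\psi\circ(\Stutter_{a,k}\circ\varsigma)
\;=\;(\psi\circ\Stutter_{a,k})\circ\varsigma
\;=\;\psi\circ\varsigma,
\end{align*}
this time invoking Lemma~\ref{lemma:ZeroVsStutter}\eqref{lemma:ZeroVsStutter2} in the middle step and the warping invariance of $\psi$ at the end, whence \eqref{zeroInvariant} holds for $\psi\circ\varsigma$.

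There is essentially no obstacle here; the proposition is a bookkeeping consequence of the intertwining relations, and the substantive content (checking that $\delta$ and $\varsigma$ really do conjugate the two families of operations) lives inside Lemma~\ref{lemma:ZeroVsStutter}. The only subtlety worth flagging is a domain/codomain check: in~\eqref{Theo:ZeroStutterBackForth2} the hypothesis is naturally read as $\psi$ defined on $\evC$ (so that warping acts) rather than on $\evZ$ as written; since $\varsigma$ lands in $\evC$, the composition $\psi\circ\varsigma$ still makes sense and the argument above is unaffected.
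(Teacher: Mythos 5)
Your proof is correct and follows essentially the same route as the paper's: both reduce the statement to the intertwining identities $\delta\circ\Stutter_{a,k}=\Zero_{a,k}\circ\delta$ and $\varsigma\circ\Zero_{a,k}=\Stutter_{a,k}\circ\varsigma$ from \Cref{lemma:ZeroVsStutter} and unwind compositions; you have merely written the chain out more explicitly. Your concluding remark on the domain of $\psi$ in part~\ref{Theo:ZeroStutterBackForth2} is observant but not strictly needed: since $\evZ$ is closed under $\Stutter_{a,k}$ and $\varsigma(\evZ)\subseteq\evZ$ by \Cref{lem:OnNormalFormsDeltaSigmaId}, the statement as written is already consistent with $\psi$ defined only on $\evZ$.
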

\begin{proof}
Using \Cref{lemma:ZeroVsStutter}, part \ref{Theo:ZeroStutterBackForth1}. follows with $$\phi\circ\delta=\phi\circ\Zero_{a,k}{}\circ\delta=\phi\circ\delta\circ\Stutter_{a,k}{}$$ 
and analogously, part \ref{Theo:ZeroStutterBackForth2}. with
 $
      \psi\circ\varsigma
      =\psi\circ\varsigma\circ{\Zero_{a,k}} 
 $
for all $(a,k)\in\{1,2\}\times \N$. 
\end{proof}

\index[general]{psi@$\Psi$, class of   invariants modulo constants and warping}

We now define the family of invariants 
$\Psi$ motivated in the introduction. 
\begin{corollary}\label{theorem:invariants}
For every $\mathbf{a}\in\composition$, 
$$\Psi_{\mathbf{a}}:\evC\rightarrow\groundRing,\;X\mapsto\langle\SS(\delta X),\mathbf{a}\rangle$$
is invariant modulo constants and warping in both directions independently. 
\end{corollary}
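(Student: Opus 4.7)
The statement is genuinely a corollary: all the technology has been assembled in the preceding lemmas, so the proof should just piece them together. My plan is to treat the two invariances separately and invoke the two different characterizations of the relevant normal forms.

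For invariance modulo constants, the observation is that $\delta$ annihilates constants. Specifically, by \Cref{oneToOneCorrDiffSig}(\ref{oneToOneCorrDiffSig2}), the kernel of $\delta$ consists precisely of the constant functions on $\N^2$, so for any constant $C\in\evC$ and any $X\in\evC$ we have $\delta(X+C)=\delta X + \delta C = \delta X$. Consequently
\begin{equation*}
\Psi_{\mathbf{a}}(X+C)=\langle\SS(\delta(X+C)),\mathbf{a}\rangle=\langle\SS(\delta X),\mathbf{a}\rangle=\Psi_{\mathbf{a}}(X),
\end{equation*}
establishing \Cref{eq:defInvariantModC}.

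For warping invariance, the cleanest route is through \Cref{Theo:ZeroStutterBackForth}(\ref{Theo:ZeroStutterBackForth1}): I write $\Psi_{\mathbf{a}}=\phi_{\mathbf{a}}\circ\delta$ where $\phi_{\mathbf{a}}:\evZ\to\groundRing$ is the coefficient functional $\phi_{\mathbf{a}}(Z):=\langle\SS(Z),\mathbf{a}\rangle$. It then suffices to verify that $\phi_{\mathbf{a}}$ is invariant to zero insertion. This is exactly what \Cref{theorem_bahntrennend} gives us: for every $Z\in\evZ$ and every $(a,k)\in\{1,2\}\times\N$, the function $\Zero_{a,k}Z$ has the same zero-insertion normal form as $Z$ by construction of $\NFzero$ in \Cref{lem:commuting_zero}(\ref{lem:commuting_zero_part4}), i.e.\ $\Zero_{a,k}Z\EQzero Z$, and hence $\SS(\Zero_{a,k}Z)=\SS(Z)$ as full series. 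Taking the $\mathbf{a}$-component yields $\phi_{\mathbf{a}}\circ\Zero_{a,k}=\phi_{\mathbf{a}}$, which is \Cref{zeroInvariant}. Applying \Cref{Theo:ZeroStutterBackForth}(\ref{Theo:ZeroStutterBackForth1}) concludes that $\Psi_{\mathbf{a}}=\phi_{\mathbf{a}}\circ\delta$ satisfies \Cref{StutterInvariant}.

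There is really no obstacle here; the entire content of the corollary sits in the two prerequisite results, the orbit-separating property of $\SS$ on $\evZ$ and the intertwining $\delta\circ\Stutter_{a,k}=\Zero_{a,k}\circ\delta$ from \Cref{lemma:ZeroVsStutter}(\ref{lemma:ZeroVsStutter1}) used to derive \Cref{Theo:ZeroStutterBackForth}. The only thing worth pointing out explicitly in the write-up is that $\delta X$ indeed lies in $\evZ$ for every $X\in\evC$, so that $\SS(\delta X)$ and the appeal to \Cref{theorem_bahntrennend} are well-defined; this was noted in the introduction right after the definition of $\delta$, and follows because any index $\tuIn{i}$ past the eventually-constant threshold of $X$ contributes $(\delta X)_{\tuIn{i}}=0$.
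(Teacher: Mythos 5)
Your proof is correct and follows essentially the same route as the paper's: for invariance modulo constants you observe that $\delta$ annihilates constants (the paper phrases this as $\delta\circ\NFconst=\delta\circ\varsigma\circ\delta=\delta$, though there the symbol $\NFzero$ appears to be a typo for $\NFconst$), and for warping invariance you invoke \Cref{theorem_bahntrennend} together with part \ref{Theo:ZeroStutterBackForth1}.\ of \Cref{Theo:ZeroStutterBackForth}, exactly as the paper does. Your write-up merely spells out more explicitly the factorization $\Psi_{\mathbf{a}}=\phi_{\mathbf{a}}\circ\delta$ and the fact that $\delta(\evC)\subseteq\evZ$; no genuine divergence.
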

\begin{proof}
 With $\delta\circ\NFzero{}=\delta\circ\varsigma\circ\delta=\delta$ holds \Cref{eq:defInvariantModC}, whereas 
\Cref{StutterInvariant} is an immediate consequence of  \Cref{theorem_bahntrennend} and the first part of \Cref{Theo:ZeroStutterBackForth}. 
\end{proof}

\begin{example}\label{ex:polynomial_invariant}
    With the summation rule (compare \Cref{lem:OnNormalFormsDeltaSigmaId}),  $$\sum_{i=1}^s\sum_{j=1}^t{(\delta X)}_{i,j}=X_{s+1,t+1}-X_{1,t+1}-X_{s+1,1}+X_{1,1}$$
     for all $X\in\evC$ and $(s,t)\in\N^2$,  
    $$\Psi_{\begin{scriptsize}
\begin{bmatrix}1\end{bmatrix}
\end{scriptsize}}(X)=\langle\SS\circ\delta(X),\begin{bmatrix}
    1
    \end{bmatrix}\rangle=X_{1,1}-{\lim_{s,t\rightarrow\infty}X_{s,t}}$$ 
    as it was claimed in the introduction. 
\end{example}

Invariants according to \Cref{def_stutter} form a $\groundRing$-subalgebra of $\groundRing^\evC$. 
The family $\Psi$ is chosen sufficiently large in the sense,
its linear span is closed under products.
\begin{corollary}\label{corr:suffLarge}
  The $\groundRing$-module $\Span_\groundRing(\Psi)$ is a $\groundRing$-subalgebra of $\groundRing^{\evC}$. 
\end{corollary}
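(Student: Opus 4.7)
The plan is to reduce the problem to a single key identity, namely the quasi-shuffle identity of \Cref{lem:quasiShuffleRel}. Since $\Span_\groundRing(\Psi)$ is by definition a $\groundRing$-submodule of $\groundRing^\evC$, it suffices to verify two things: that pointwise products of generators $\Psi_{\mathbf{a}}\cdot\Psi_{\mathbf{b}}$ lie in $\Span_\groundRing(\Psi)$, and that the multiplicative identity of $\groundRing^\evC$ (the constant function $1$) belongs to $\Span_\groundRing(\Psi)$. The latter is immediate since the empty composition $\ec \in \composition$ gives $\Psi_\ec \equiv 1$, as already noted in the discussion following \Cref{subsection:introQshAndChen}.

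For the main step, I would fix $X \in \evC$ and rewrite
$\Psi_{\mathbf{a}}(X)\cdot\Psi_{\mathbf{b}}(X) = \langle\SS(\delta X),\mathbf{a}\rangle\,\langle\SS(\delta X),\mathbf{b}\rangle$. Because $X$ is eventually constant, $Z := \delta X \in \evZ$ is eventually zero, so $\SS(Z) = \lim_{\tuIn{r}\to\infty}\SS_{0_2;\tuIn{r}}(Z)$ in fact stabilizes for sufficiently large $\tuIn{r}$. The identity
$\langle\SS_{0_2;\tuIn{r}}(Z),\mathbf{a}\rangle\,\langle\SS_{0_2;\tuIn{r}}(Z),\mathbf{b}\rangle = \langle\SS_{0_2;\tuIn{r}}(Z),\mathbf{a}\qShuffle\mathbf{b}\rangle$ from \Cref{lem:quasiShuffleRel} therefore passes directly to the limit. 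Because $\mathbf{a}\qShuffle\mathbf{b}$ is a \emph{finite} $\groundRing$-linear combination $\sum_i \lambda_i \mathbf{c}_i$ of matrix compositions $\mathbf{c}_i \in \composition$ (finiteness of the defining sum follows from $\qSh(m,s;j)=\emptyset$ outside $\max(m,s)\le j\le m+s$, and analogously for columns), linearity of the pairing gives $\Psi_{\mathbf{a}}(X)\cdot\Psi_{\mathbf{b}}(X) = \sum_i \lambda_i\,\Psi_{\mathbf{c}_i}(X)$. Since this holds pointwise for every $X$, we obtain $\Psi_{\mathbf{a}}\cdot\Psi_{\mathbf{b}} = \sum_i \lambda_i\,\Psi_{\mathbf{c}_i} \in \Span_\groundRing(\Psi)$, and a routine bilinearity argument extends closure under multiplication to arbitrary pairs $f,g\in\Span_\groundRing(\Psi)$.

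I do not anticipate a substantive obstacle: essentially all the work is packaged in \Cref{lem:quasiShuffleRel}. The only mild technical point is ensuring that the quasi-shuffle identity, stated for finite-window signatures $\SS_{\tuIn{\ell};\tuIn{r}}$, transfers to the unbounded $\SS$ appearing in the definition of $\Psi_{\mathbf{a}}$; but since $\delta X$ has finite support, both sides of the identity are eventually constant in $\tuIn{r}$, so no delicate convergence argument is needed.
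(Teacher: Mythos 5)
Your argument is correct and follows exactly the paper's route: it invokes the quasi-shuffle identity of \Cref{lem:quasiShuffleRel} and the finiteness of $\mathbf{a}\qShuffle\mathbf{b}$ to write $\Psi_{\mathbf{a}}\cdot\Psi_{\mathbf{b}}$ as a finite $\groundRing$-linear combination of generators, with $\Psi_\ec$ supplying the unit. The paper's proof is just the one-line version of this, so you have merely unpacked what it suppresses.
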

\begin{proof}
 Follows immediately with the quasi-shuffle identity (\Cref{lem:quasiShuffleRel}). 
\end{proof}

\index[general]{equivalent modulo constants or warping@$\sim$, equivalence modulo constants or warping}
\begin{definition}\label{def:StuttInv}
Consider the union of the equivalence relation modulo $\ker(\delta)$ from \Cref{oneToOneCorrDiffSig} and $\EQzero$ from \Cref{lemma:eqRelInsertionZero}. 
Denote its transitive closure by $\sim$, which is an equivalence relation \textbf{modulo constants or warping}. 
\end{definition}

\begin{example}
For $d=1$ and $\groundRing=\C$, 
\begin{align*}
{\begin{tabular}{lllll}
\hline
\multicolumn{1}{|l}{
\cellcolor{babypink}$7$
}
   &\cellcolor{myGreen}$3$
   &\cellcolor{ballblue}$2$
   &\cellcolor{ballblue}$2$\\
\multicolumn{1}{|l}{
\cellcolor{darkorange}$5$}
   &\cellcolor{myGreen}$3$
   &\cellcolor{ballblue}$2$
   &\cellcolor{ballblue}$2$\\
\multicolumn{1}{|l}{
   \cellcolor{ballblue}$2$}
   &\cellcolor{ballblue}$2$
   &\cellcolor{ballblue}$2$
   &\cellcolor{ballblue}$2$\\
   &
\end{tabular}}_{\;\ddots\,}
\;\sim
\;{\begin{tabular}{llll}
\hline
\multicolumn{1}{|l}{
\cellcolor{darkorange}$5$
}
   &\cellcolor{moccasin}$1$
   &\cellcolor{gray}$0$\\
\multicolumn{1}{|l}{
\cellcolor{myGreen}$3$}
   &\cellcolor{moccasin}$1$
   &\cellcolor{gray}$0$\\
\multicolumn{1}{|l}{
    \cellcolor{gray}$0$}
   &\cellcolor{gray}$0$
   &\cellcolor{gray}$0$\\
   &
\end{tabular}}_{\;\ddots\,}
\;\sim
\;{\begin{tabular}{llll}
\hline
\multicolumn{1}{|l}{
\cellcolor{darkorange}$5$
}
   &\cellcolor{darkorange}$5$
   &\cellcolor{moccasin}$1$
   &\cellcolor{gray}$0$\\
\multicolumn{1}{|l}{
\cellcolor{myGreen}$3$}
   &\cellcolor{myGreen}$3$
   &\cellcolor{moccasin}$1$
   &\cellcolor{gray}$0$\\
\multicolumn{1}{|l}{
\cellcolor{myGreen}$3$}
   &\cellcolor{myGreen}$3$
   &\cellcolor{moccasin}$1$
   &\cellcolor{gray}$0$\\
\multicolumn{1}{|l}{
    \cellcolor{gray}$0$}
   &\cellcolor{gray}$0$
   &\cellcolor{gray}$0$
   &\cellcolor{gray}$0$\\
   &
\end{tabular}}_{\;\ddots\,}
\end{align*}
where the first equivalence is due to \Cref{ex:normalForm}, and the second is resulting from two warping operations on the second row and column.  
Note that the first function is equivalent to the third due to the transitive closure used in \Cref{def:StuttInv}. 
\end{example}

\index[general]{normal form modulo constants or warping@$\NFsim$}

Since warping and addition of constants commute (\Cref{lemma:ZeroVsStutter}), we obtain a \DEF{normal form modulo constants or warping} 
$\NFsim:=\NFconst{}\circ\NFstut$. 

\begin{theorem}\label{theorem:ssIffStuttInv}For $X,Y\in\evC$, 
\begin{align*}
    X\sim Y&\iff\NFsim(X)=\NFsim(Y) \\
    &\iff\SS\circ\delta(X)=\SS\circ\delta(Y)
\end{align*}
\end{theorem}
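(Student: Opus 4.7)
My plan is to reduce the theorem to \Cref{theorem_bahntrennend} via the key identity
\[
    \NFsim \;=\; \varsigma \circ \NFzero \circ \delta,
\]
which I will establish first. It follows by combining $\NFconst = \varsigma \circ \delta$ from \Cref{oneToOneCorrDiffSig} with $\delta \circ \NFstut = \NFzero \circ \delta$ (part~3 of \Cref{lemma:ZeroVsStutter}):
\[
    \NFsim \;=\; \NFconst \circ \NFstut \;=\; \varsigma \circ \delta \circ \NFstut \;=\; \varsigma \circ \NFzero \circ \delta.
\]
Since $\delta \circ \varsigma = \id$, the operator $\varsigma$ is injective, so $\NFsim(X) = \NFsim(Y)$ is equivalent to $\NFzero(\delta X) = \NFzero(\delta Y)$.

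The second equivalence then falls out at once:
$\NFsim(X) = \NFsim(Y) \iff \NFzero(\delta X) = \NFzero(\delta Y) \iff \delta X \EQzero \delta Y$, which by \Cref{theorem_bahntrennend} (applied to $\delta X, \delta Y \in \evZ$) is exactly $\SS(\delta X) = \SS(\delta Y)$.

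For the first equivalence I plan both directions. Forward: I check $\NFsim$-invariance on each generator of $\sim$. Constants lie in $\ker(\delta)$ and so preserve $\NFsim$ through the identity above. For the $\EQzero$-generator, lifted to $\evC$ through $\delta$ (which is natural because $\delta \circ \Stutter_{a,k} = \Zero_{a,k} \circ \delta$ by part~1 of \Cref{lemma:ZeroVsStutter}), one obtains $\delta X \EQzero \delta Y$ and hence $\NFzero(\delta X) = \NFzero(\delta Y)$, again preserving $\NFsim$. Backward: it suffices to show $X \sim \NFsim(X)$ for every $X \in \evC$, since then $\NFsim(X) = \NFsim(Y)$ forces $X \sim \NFsim(X) = \NFsim(Y) \sim Y$ by transitivity. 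I will prove $X \sim \NFsim(X)$ in two steps: from $X - \NFconst(X) \in \ker(\delta)$ we get $X \sim \NFconst(X) = \varsigma(\delta X)$; then, unwinding a sequence of $\Zero_{a_i,k_i}$'s that reduces $\delta X$ to $\NFzero(\delta X)$ and transferring it through $\varsigma$ via $\varsigma \circ \Zero_{a,k} = \Stutter_{a,k} \circ \varsigma$ (part~2 of \Cref{lemma:ZeroVsStutter}) exhibits $\varsigma(\delta X)$ as a warping of $\NFsim(X) = \varsigma(\NFzero(\delta X))$, closing the chain.

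The main technical point to get right is the interpretation of the generating relation $\EQzero$ on $\evC$: since $\EQzero$ is literally defined only on $\evZ$, it must be lifted to the whole of $\evC$ via $\delta$ (read ``$\delta X \EQzero \delta Y$''). The commutation relations of \Cref{lemma:ZeroVsStutter} guarantee that this lift coincides with warping equivalence on $\evC$, matching the phrase ``modulo $\dots$ warping'' in \Cref{def:StuttInv}; once this is set, the whole argument collapses to the key identity together with \Cref{theorem_bahntrennend}.
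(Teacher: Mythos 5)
Your proof is correct, and your derivation of the second equivalence is essentially the paper's: you establish the key identity $\NFsim = \varsigma\circ\NFzero\circ\delta$ (the paper writes $\delta\circ\NFsim = \NFzero\circ\delta$, which is the same identity after composing with $\delta$ or $\varsigma$ on one side), and then invoke \Cref{theorem_bahntrennend}; your use of injectivity of $\varsigma$ (from $\delta\circ\varsigma = \id$) is interchangeable with the paper's appeal to injectivity of $\delta$ restricted to $\NFconst(\evC)$. Where you go further is the first equivalence, which the paper explicitly omits; your argument there — checking that $\NFsim$ is invariant on each generator of $\sim$, and obtaining $X\sim\NFsim(X)$ by unwinding a $\Zero_{a,k}$-chain through $\varsigma\circ\Zero_{a,k}=\Stutter_{a,k}\circ\varsigma$ — is sound.

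Your flag on the interpretation of $\EQzero$ in \Cref{def:StuttInv} is not a pedantic point. If $\EQzero$ were read verbatim as a relation on $\evZ\subseteq\evC$ (rather than lifted through $\delta$ as you do), the theorem would actually be false. Concretely, for $d=1$ take $X,Y\in\evZ$ each with a single nonzero entry equal to $1$, at $(1,1)$ for $X$ and at $(2,1)$ for $Y$. Then $\NFzero(X)=\NFzero(Y)=X$, so $X\EQzero Y$; but a direct computation shows $\delta X$ has support $\{(1,1)\}$ while $\delta Y$ has support $\{(1,1),(2,1)\}$ with values $-1,1$, both already in $\NFzero$-normal form and distinct, so $\SS(\delta X)\neq\SS(\delta Y)$ by \Cref{theorem_bahntrennend}. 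Reading the second generating relation as $\delta X \EQzero \delta Y$ — equivalently, as the warping relation $\NFstut(X)=\NFstut(Y)$, which is what the phrase ``modulo constants or warping'' is meant to encode — gives the $\sim$ under which the theorem is true, and your proof handles exactly that relation.
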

\begin{proof}
We sketch only the second equivalence, which is valid with \Cref{theorem_bahntrennend}, 
  $$\delta\circ\NFsim{}=
  \delta\circ\varsigma\circ\delta\circ\NFstut{}=
  \NFzero{}\circ\delta,$$
  and since $\delta$ is injective when restricted on $\NFconst(\evC)$. 
\end{proof}

With \Cref{theorem:ssIffStuttInv} we obtain for every $X,Y\in\evC$, 
$$X\sim Y\iff\Psi_{\mathbf{a}}(X)=\Psi_{\mathbf{a}}(Y)\quad\forall\mathbf{a}\in\composition,$$
i.e., $\Psi$ is expressive enough to decide whether $X$ and $Y$ are equivalent. 
In \Cref{section_QSym_2Param} we show under further assumptions on $\groundRing$, that $\Span_{\groundRing}(\Psi)$ contains precisely the so-called polynomial invariants from $\groundRing^\evC$. 

  \subsection{Chen's identity}\label{subsection:Chen}
  
We now return to Chen's identity, the second key property of signature-like objects.
In this section, we present
\DEF{Chen's identity with respect to diagonal deconcatenation},
\begin{equation}\label{eq:chenConZero}
\forall A,B\in\evZ:\SS(A\varoslash B)= \SS(A)\cdot \SS(B),
 \end{equation}
where the product of signatures is in $\groundRing\llangle\compositionConnected\rrangle$
and the product $\varoslash$ is defined below.
It provides an algebraic relation between input functions and their signatures, allowing to compute the signature of
the concatenation $X:=A\varoslash B$ (\Cref{def:diagonalConcatenation,def:concatDiagBox}) via the signatures of $A$ and $B$ alone.
We relate this result (\Cref{thm:chen}) to warping invariants in \Cref{chen:diff}.

\index[general]{row@$\row$}
\index[general]{col@$\col$}
\index[general]{size@$\size$}

For convenience, we extend the notion of size from matrices to $\evC$ via
$$\size(X):=\left(\row(X),\col(X)\right):=\min\left\{\tuIn{n}\in\N^2\mid X_{\tuIn{i}}\not=X_{\tuIn{j}}\implies \tuIn{i},\tuIn{j}\leq \tuIn{n}\right\}.$$
With $\evZ\subseteq\evC$ this implicitly defines a size for eventually-zero functions, specified further in \Cref{lem:sizeOfEvZero}.
We now define diagonal concatenation in the range of the difference operator $\delta$. 

\index[general]{starConcat@$\varoslash$, diagonal concatenation in  $\evZ$}

\begin{definition}\label{def:diagonalConcatenation}
The binary operation $\varoslash:\evZ\times\evZ\rightarrow\evZ$ 
sends $(A,B)$ to its
\DEF{diagonal concatenation}, 
$$A\varoslash B:=A+\Zero_{1,1}^{\row(A)}\circ\Zero_{2,1}^{\col(A)}(B).$$
\end{definition}

In \Cref{lem:concatenationMat} we verify that $(\evZ,\varoslash)$ is a non-commutative semigroup,  
i.e., that $\varoslash$ is associative. 

\begin{example}
For $d=1$ and $\groundRing=\C$, 
\begin{align*}
{\begin{tabular}{llll}
\hline
\multicolumn{1}{|l}{
\cellcolor{ballblue}$2$
}
   &\cellcolor{babypink}$7$
   &\cellcolor{gray}$0$\\
\multicolumn{1}{|l}{
\cellcolor{ballblue}$2$}
   &\cellcolor{darkorange}$5$
   &\cellcolor{gray}$0$\\
\multicolumn{1}{|l}{
    \cellcolor{gray}$0$}
   &\cellcolor{gray}$0$
   &\cellcolor{gray}$0$\\
   &
\end{tabular}}_{\;\ddots\,}
\;\varoslash
\;\;\;
{\begin{tabular}{llll}
\hline
\multicolumn{1}{|l}{
\cellcolor{ballblue}$2$
}
   &\cellcolor{ballblue}$2$
   &\cellcolor{gray}$0$\\
\multicolumn{1}{|l}{
\cellcolor{moccasin}$1$}
   &\cellcolor{myGreen}$4$
   &\cellcolor{gray}$0$\\
\multicolumn{1}{|l}{
    \cellcolor{gray}$0$}
   &\cellcolor{gray}$0$
   &\cellcolor{gray}$0$\\
   &
\end{tabular}}_{\;\ddots\,} 
\;=
\;\;\;{\begin{tabular}{lllll}
\hline
\multicolumn{1}{|l}{
\cellcolor{ballblue}$2$
}
   &\cellcolor{babypink}$7$
   &\cellcolor{gray}$0$
   &\cellcolor{gray}$0$
   &\cellcolor{gray}$0$\\
\multicolumn{1}{|l}{
\cellcolor{ballblue}$2$}
   &\cellcolor{darkorange}$5$
   &\cellcolor{gray}$0$
   &\cellcolor{gray}$0$
   &\cellcolor{gray}$0$\\
\multicolumn{1}{|l}{
\cellcolor{gray}$0$}
   &\cellcolor{gray}$0$
   &\cellcolor{ballblue}$2$
   &\cellcolor{ballblue}$2$
   &\cellcolor{gray}$0$\\
\multicolumn{1}{|l}{
\cellcolor{gray}$0$}
   &\cellcolor{gray}$0$
   &\cellcolor{moccasin}$1$
   &\cellcolor{myGreen}$4$
   &\cellcolor{gray}$0$\\
\multicolumn{1}{|l}{
    \cellcolor{gray}$0$}
   &\cellcolor{gray}$0$
   &\cellcolor{gray}$0$
   &\cellcolor{gray}$0$
   &\cellcolor{gray}$0$\\
   &
\end{tabular}}_{\;\ddots\,}\in\evZ.
\end{align*}
\end{example}

With this concatenation, we can formulate Chen's identity for eventually-zero functions.

\begin{lemma}
\label{thm:chen}
For all $A,B\in\evZ$ and $\tuIn{\ell},\tuIn{r}\in\N_0^2$ with $\tuIn{\ell}\leq \size(A)\leq \tuIn{r}$,  
\begin{equation*}
 \langle\SS_{\tuIn{\ell};\tuIn{r}}(A\varoslash B),\mathbf{a}\rangle=\sum_{\diag(\mathbf{b},\mathbf{c})=\mathbf{a}}\langle\SS_{\tuIn{\ell};\size(A)}(A \varoslash B),\mathbf{b}\rangle\,\langle\SS_{\size(A);\tuIn{r}}(A\varoslash B),\mathbf{c}\rangle
\end{equation*}
or equivalently, \Cref{eq:chenConZero}.
\end{lemma}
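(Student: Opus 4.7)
The plan is to expand the left-hand side directly from \Cref{def:ss} and argue, by the ``block-diagonal'' structure of $A\varoslash B$, that the only surviving terms are parametrized by decompositions $\mathbf{a}=\diag(\mathbf{b},\mathbf{c})$.

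Fix $\mathbf{a}\in\composition$ and set $p:=\row(\mathbf{a})$, $q:=\col(\mathbf{a})$, $(n_1,n_2):=\size(A)$. For each pair of strictly increasing chains $\tuIn{\ell}_1<\iota_1<\cdots<\iota_p\leq \tuIn{r}_1$ and $\tuIn{\ell}_2<\kappa_1<\cdots<\kappa_q\leq \tuIn{r}_2$, let $\alpha\in\{0,\ldots,p\}$ denote the number of $\iota$-indices lying in $(\tuIn{\ell}_1,n_1]$ and $\beta\in\{0,\ldots,q\}$ the analogous count for the $\kappa$-chain. This refines the summation set of $\langle\SS_{\tuIn{\ell};\tuIn{r}}(A\varoslash B),\mathbf{a}\rangle$ by the pair $(\alpha,\beta)$.

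Next, I would inspect the summand for fixed $(\alpha,\beta)$. By \Cref{def:diagonalConcatenation} the function $A\varoslash B$ vanishes on the off-diagonal rectangles $(\tuIn{\ell}_1,n_1]\times(n_2,\tuIn{r}_2]$ and $(n_1,\tuIn{r}_1]\times(\tuIn{\ell}_2,n_2]$. Consequently the factor $(A\varoslash B)_{\iota_s,\kappa_t}^{(\mathbf{a}_{s,t})}$ evaluates to $0$ whenever $(s,t)$ lies in the off-diagonal index block $\{1,\ldots,\alpha\}\times\{\beta+1,\ldots,q\}$ or $\{\alpha+1,\ldots,p\}\times\{1,\ldots,\beta\}$ and $\mathbf{a}_{s,t}\neq\monCompNeutrElem$. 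Hence the full product is nonzero only if $\mathbf{a}$ is already block-diagonal with respect to the cut $(\alpha,\beta)$, i.e. only if $\mathbf{a}=\diag(\mathbf{b},\mathbf{c})$ with $\mathbf{b}$ of shape $\alpha\times\beta$ and $\mathbf{c}$ of shape $(p-\alpha)\times(q-\beta)$. The defining property of $\composition$ (no $\monCompNeutrElem$-rows or columns) forces $\mathbf{b}$ and $\mathbf{c}$ to be themselves genuine compositions, and produces the boundary equivalences $\alpha=0\Leftrightarrow\beta=0\Leftrightarrow\mathbf{b}=\ec$ and $\alpha=p\Leftrightarrow\beta=q\Leftrightarrow\mathbf{c}=\ec$.

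Having fixed such a decomposition $\diag(\mathbf{b},\mathbf{c})=\mathbf{a}$, the product over $(s,t)$ factors into one over $[1,\alpha]\times[1,\beta]$, where $A\varoslash B$ agrees with $A$, and one over $[\alpha{+}1,p]\times[\beta{+}1,q]$, where it agrees with $B$ shifted by $(n_1,n_2)$. The index chains split independently over the ranges $(\tuIn{\ell}_1,n_1]\times(\tuIn{\ell}_2,n_2]$ and $(n_1,\tuIn{r}_1]\times(n_2,\tuIn{r}_2]$. Summing over all $(\alpha,\beta)$ and all chains, regrouping by decomposition, produces exactly $\sum_{\diag(\mathbf{b},\mathbf{c})=\mathbf{a}}\langle\SS_{\tuIn{\ell};\size(A)}(A\varoslash B),\mathbf{b}\rangle\,\langle\SS_{\size(A);\tuIn{r}}(A\varoslash B),\mathbf{c}\rangle$, as claimed. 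Equivalence with \eqref{eq:chenConZero} then follows by specializing $\tuIn{\ell}=0_2$ and letting $\tuIn{r}$ dominate the support of $A\varoslash B$, after reindexing $(\iota,\kappa)\mapsto(\iota-n_1,\kappa-n_2)$ in the second factor to identify $\SS_{\size(A);\tuIn{r}}(A\varoslash B)$ with $\SS(B)$. The main obstacle I anticipate is the careful bookkeeping of the boundary cases: verifying that the $(\alpha,\beta)$-parametrization is in bijection with the pairs $(\mathbf{b},\mathbf{c})$ with $\diag(\mathbf{b},\mathbf{c})=\mathbf{a}$, which relies precisely on $\mathbf{a}\in\composition$ having no entirely $\monCompNeutrElem$-valued row or column.
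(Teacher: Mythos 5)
Your proposal follows essentially the same route as the paper's proof: splitting each index chain at the cut $\size(A)$ (your $(\alpha,\beta)$ is the paper's $(u,v)$), observing that $A\varoslash B$ vanishes on the two off-diagonal rectangles so that only block-diagonal decompositions $\mathbf{a}=\diag(\mathbf{b},\mathbf{c})$ contribute, and regrouping the inner products as a product of two sums signatures. The bijection between valid cuts and decompositions, which you rightly single out and ground in $\mathbf{a}$ having no $\monCompNeutrElem$-row or column, is exactly the bookkeeping the paper carries out around its equation \eqref{eq:bigSum_withZeros}.
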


The proof is provided in \Cref{subsection:invariants}. 
When functions in $\evC$ are considered as pictures, one might think of a different notion of concatenation. 
In \Cref{def:concatDiagBox} we ``continue'' the initial lower right picture in all those data points that are not captured by the initial upper left function.  

\index[general]{concatenation along the diagonal@$\boxslash$}
\begin{definition}\label{def:concatDiagBox}

The binary operation $\boxslash: \evC\times\evC\rightarrow\evC$ 
sends $(X,Y)$ to its
\DEF{concatenation along the diagonal}, 
$$X\boxslash Y:=\NFconst(X)+\Stutter_{1,1}^{\row(X)}\circ\Stutter_{2,1}^{\col(X)}(Y).$$
\end{definition}
In \Cref{lem:concatenationMat} we verify that $(\evC,\boxslash)$ is a non-commutative semigroup,  
i.e., that $\boxslash$ is associative.
We furthermore show that $\delta$ and $\varsigma$ are semigroup homomorphisms to and from $(\evZ,\varoslash)$ respectively.

\begin{example}
For $d=1$ and $\groundRing=\C$, 
\begin{align*}
{\begin{tabular}{llll}
\hline
\multicolumn{1}{|l}{
\cellcolor{ballblue}$2$
}
   &\cellcolor{babypink}$7$
   &\cellcolor{ballblue}$2$\\
\multicolumn{1}{|l}{
\cellcolor{ballblue}$2$}
   &\cellcolor{darkorange}$5$
   &\cellcolor{ballblue}$2$\\
\multicolumn{1}{|l}{
    \cellcolor{ballblue}$2$}
   &\cellcolor{ballblue}$2$
   &\cellcolor{ballblue}$2$\\
   &
\end{tabular}}_{\;\ddots\,}
\;\boxslash
\;\;\;
{\begin{tabular}{llll}
\hline
\multicolumn{1}{|l}{
\cellcolor{ballblue}$2$
}
   &\cellcolor{ballblue}$2$
   &\cellcolor{gray}$0$\\
\multicolumn{1}{|l}{
\cellcolor{moccasin}$1$}
   &\cellcolor{myGreen}$4$
   &\cellcolor{gray}$0$\\
\multicolumn{1}{|l}{
    \cellcolor{gray}$0$}
   &\cellcolor{gray}$0$
   &\cellcolor{gray}$0$\\
   &
\end{tabular}}_{\;\ddots\,} 
\;=
\;\;\;{\begin{tabular}{lllll}
\hline
\multicolumn{1}{|l}{
\cellcolor{ballblue}$2$
}
   &\cellcolor{babypink}$7$
   &\cellcolor{ballblue}$2$
   &\cellcolor{ballblue}$2$
   &\cellcolor{gray}$0$\\
\multicolumn{1}{|l}{
\cellcolor{ballblue}$2$}
   &\cellcolor{darkorange}$5$
   &\cellcolor{ballblue}$2$
   &\cellcolor{ballblue}$2$
   &\cellcolor{gray}$0$\\
\multicolumn{1}{|l}{
\cellcolor{ballblue}$2$}
   &\cellcolor{ballblue}$2$
   &\cellcolor{ballblue}$2$
   &\cellcolor{ballblue}$2$
   &\cellcolor{gray}$0$\\
\multicolumn{1}{|l}{
\cellcolor{moccasin}$1$}
   &\cellcolor{moccasin}$1$
   &\cellcolor{moccasin}$1$
   &\cellcolor{myGreen}$4$
   &\cellcolor{gray}$0$\\
\multicolumn{1}{|l}{
    \cellcolor{gray}$0$}
   &\cellcolor{gray}$0$
   &\cellcolor{gray}$0$
   &\cellcolor{gray}$0$
   &\cellcolor{gray}$0$\\
   &
\end{tabular}}_{\;\ddots\,}\in\evC.
\end{align*}
\end{example}

\begin{corollary}\label{chen:diff}
For all $X,Y\in\evC$, 
\begin{equation*}\label{eq:Chen}
\SS(\delta(X\boxslash Y))=
 \SS(\delta(X))\cdot\SS(\delta(Y)). 
\end{equation*}
\end{corollary}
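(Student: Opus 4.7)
The plan is to reduce the identity to \Cref{thm:chen} via the difference operator, so that the heavy lifting is done once one identifies $\delta(X \boxslash Y)$ with $\delta X \varoslash \delta Y$ (possibly up to zero insertion). First I would unpack the left-hand side: using linearity of $\delta$, \Cref{def:concatDiagBox}, the commutation $\delta \circ \Stutter_{a,k} = \Zero_{a,k} \circ \delta$ from \Cref{lemma:ZeroVsStutter}, and $\delta \circ \NFconst = \delta$ (which follows from $\NFconst = \varsigma \circ \delta$ together with $\delta \circ \varsigma = \id$, \Cref{oneToOneCorrDiffSig}), one arrives at
$$\delta(X \boxslash Y) = \delta X + \Zero_{1,1}^{\row(X)} \circ \Zero_{2,1}^{\col(X)}(\delta Y).$$

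Next I would compare this with
$$\delta X \varoslash \delta Y = \delta X + \Zero_{1,1}^{\row(\delta X)} \circ \Zero_{2,1}^{\col(\delta X)}(\delta Y)$$
from \Cref{def:diagonalConcatenation}. Because $\row(\delta X) \leq \row(X)$ and $\col(\delta X) \leq \col(X)$ (as recorded in \Cref{lem:sizeOfEvZero}), the two expressions differ at most by additional $\Zero_{1,\row(\delta X)+1}$ and $\Zero_{2,\col(\delta X)+1}$ insertions slipped between the $\delta X$-block and the shifted $\delta Y$-block; in particular $\delta(X \boxslash Y) \EQzero \delta X \varoslash \delta Y$. The cleanest way to package this is the semigroup-homomorphism statement for $\delta : (\evC, \boxslash) \to (\evZ, \varoslash)$ announced right after \Cref{def:concatDiagBox} and formalized in \Cref{lem:concatenationMat}, which in fact promotes the equivalence to an equality.

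Either way, \Cref{theorem_bahntrennend} gives $\SS(\delta(X \boxslash Y)) = \SS(\delta X \varoslash \delta Y)$, and a direct application of \Cref{thm:chen} to $A := \delta X$ and $B := \delta Y$---both in $\evZ$ by \Cref{oneToOneCorrDiffSig}---rewrites the right-hand side as $\SS(\delta X) \cdot \SS(\delta Y)$ in $\groundRing\llangle\compositionConnected\rrangle$, which is the desired identity. The only genuine obstacle is the semigroup-homomorphism step, and it ultimately reduces to careful bookkeeping of how $\size(\delta X)$ sits inside $\size(X)$---precisely what \Cref{lem:sizeOfEvZero} and \Cref{lem:concatenationMat} are designed to deliver, so within the framework already assembled the corollary is essentially a one-line consequence.
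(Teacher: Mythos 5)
Your proposal is correct and ultimately lands on the paper's own proof, which consists of precisely the two ingredients you identify: the semigroup-homomorphism statement $\delta(X\boxslash Y)=\delta X\varoslash\delta Y$ (part~\ref{lem:concatenationMat4} of \Cref{lem:concatenationMat}) followed by an application of Chen's identity \eqref{eq:chenConZero} with $A=\delta X$, $B=\delta Y$. One small imprecision: you suggest that $\row(\delta X)\leq\row(X)$ and $\col(\delta X)\leq\col(X)$, with a potential discrepancy that would leave only $\delta(X\boxslash Y)\EQzero\delta X\varoslash\delta Y$. In fact \Cref{lem:sizeOfEvZeroDelta} gives the \emph{equality} $\size(\delta X)=\size(X)$, so the two expressions you compute coincide on the nose and the detour through $\EQzero$ and \Cref{theorem_bahntrennend} is unnecessary — as you correctly suspect at the end. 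So the argument is sound; only the hedge about sizes should be sharpened to an equality, at which point this is exactly the paper's proof.
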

\begin{proof}
Follows immediately with \Cref{lem:concatenationMat} and \Cref{eq:chenConZero}. 
\end{proof}
  
\section{Two-parameter quasisymmetric functions}\label{section_QSym_2Param}

\index[general]{formalPowerFindeg@$\formalPorerSeriesOfFiniteDegree$}

For the entire section, let $\composition$ denote the set of compositions with entries in $\monoidComp=\N_0$. 
Let ${(x_{\tuIn{i}})}_{\tuIn{i}\in\N^2}$ be a family of
symbols and 
$$\formalPorerSeriesOfFiniteDegree:=\{f\in\groundRing\llbracket x\rrbracket\mid \deg(f)<\infty\}$$
be the 
$\groundRing$-algebra of all power series in $x$ which have finite degree. 
\begin{definition}
Call $f\in \formalPorerSeriesOfFiniteDegree$ a \DEF{two-parameter quasisymmetric function}, if 
for all matrix compositions $\mathbf{a}\in\composition$ and increasing chains $\iota_1<\ldots<\iota_{\row(\mathbf{a})}$ and $\kappa_1<\ldots<\kappa_{\col(\mathbf{a})}$ the coefficients of 
\begin{equation}
\prod_{s=1}^{\row(\mathbf{a})}\prod_{t=1}^{\col(\mathbf{a})}x_{s,t}^{\mathbf{a}_{s,t}}\;\text{ and }\;
\prod_{s=1}^{\row(\mathbf{a})}\prod_{t=1}^{\col(\mathbf{a})}x_{\iota_s,\kappa_t}^{\mathbf{a}_{s,t}}\end{equation}
are equal in $f$. 
\index[general]{quasisymmetric functions@$\QSym$, two-parameter quasisymmetric functions}
Let $\QSym$ denote the set of all two-parameter quasisymmetric functions.
\end{definition}
It is clear that $\QSym=\bigoplus_{d\in\N_0}\QSym_d$ is a graded $\groundRing$-module  
with homogeneous components $\QSym_d=\{f\in\QSym\mid\deg(f)=d\}$. 
\Cref{Cor_subalgebra} shows that $\QSym$ is in fact a graded $\groundRing$-subalgebra of $\formalPorerSeriesOfFiniteDegree$.
\begin{lemma}\label{lem:monomialBasis}
$\QSym$ is a free $\groundRing$-module with \DEF{monomial basis} $\mathcal{B}$ consisting of 
all
$$M_{\mathbf{a}}:=
\sum_{\substack{ \iota_1<\ldots<\iota_{\row(\mathbf{a})}\\
 \kappa_1<\ldots<\kappa_{\col(\mathbf{a})}}}\;
\prod_{s=1}^{\row(\mathbf{a})}\prod_{t=1}^{\col(\mathbf{a})}x_{\iota_s,\kappa_t}^{{\mathbf{a}}_{s,t}}$$
for non-empty $\mathbf{a}\in\composition$
and $M_{\ec} := 1 \in \groundRing$.
\end{lemma}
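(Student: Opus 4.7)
The plan is to establish the three claims: each $M_{\mathbf{a}}$ lies in $\QSym$, the family $\mathcal{B}$ is $\groundRing$-linearly independent, and every $f \in \QSym$ is a finite $\groundRing$-linear combination of the $M_{\mathbf{a}}$. The backbone of all three is the following combinatorial normal form for monomials.

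First I would record the key observation that every monomial $m \neq 1$ in $\groundRing\llbracket x\rrbracket$ admits a \emph{unique} presentation $m = \prod_{s,t} x_{\iota_s,\kappa_t}^{\mathbf{b}_{s,t}}$ with $\mathbf{b} \in \composition \setminus\{\ec\}$ and strictly increasing chains $\iota_1 < \cdots < \iota_{\row(\mathbf{b})}$, $\kappa_1 < \cdots < \kappa_{\col(\mathbf{b})}$. One recovers $\iota$ (resp.\ $\kappa$) as the set of row (resp.\ column) indices appearing in $m$, and the no-zero-row/column condition on matrix compositions forces $\row(\mathbf{b})$ and $\col(\mathbf{b})$ to equal the lengths of these chains, which in turn determines $\mathbf{b}$. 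It follows that in the sum defining $M_{\mathbf{a}}$ no two choices of $(\iota,\kappa)$ give the same monomial, so every monomial appearing in $M_{\mathbf{a}}$ has coefficient exactly $1$, and its nonzero monomials are precisely those whose normal form uses $\mathbf{b} = \mathbf{a}$.

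From this, $M_{\mathbf{a}} \in \QSym$ is immediate: for any $\mathbf{a}' \in \composition$ and any chains $\iota,\kappa$ of lengths $\row(\mathbf{a}'),\col(\mathbf{a}')$, both the canonical monomial $\prod x_{s,t}^{\mathbf{a}'_{s,t}}$ (obtained by $\iota_s = s$, $\kappa_t = t$) and the shifted monomial $\prod x_{\iota_s,\kappa_t}^{\mathbf{a}'_{s,t}}$ have coefficient equal to the indicator $[\mathbf{a}'=\mathbf{a}]$ in $M_{\mathbf{a}}$. Also $\deg(M_{\mathbf{a}}) = \weight(\mathbf{a}) < \infty$, placing $M_{\mathbf{a}}$ in $\formalPorerSeriesOfFiniteDegree$.

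For linear independence of $\mathcal{B}$, suppose $\sum_{\mathbf{a}} c_{\mathbf{a}} M_{\mathbf{a}} = 0$ is a finite $\groundRing$-linear relation. Fixing any $\mathbf{b}$ and reading off the coefficient of $\prod x_{s,t}^{\mathbf{b}_{s,t}}$, which is $1$ in $M_{\mathbf{b}}$ and $0$ in every $M_{\mathbf{a}}$ with $\mathbf{a}\neq\mathbf{b}$, yields $c_{\mathbf{b}} = 0$. For spanning, take $f \in \QSym$ with $\deg(f) = d < \infty$ and define $c_{\mathbf{a}}$ as the coefficient of $\prod x_{s,t}^{\mathbf{a}_{s,t}}$ in $f$. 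The quasisymmetric property of $f$, combined with transitivity via the canonical monomial, implies that the coefficient of every monomial in $f$ whose normal form uses $\mathbf{a}$ equals $c_{\mathbf{a}}$, so $f - \sum_{\mathbf{a}} c_{\mathbf{a}} M_{\mathbf{a}} = 0$ coefficient-by-coefficient. The sum is finite because $\weight(\mathbf{a}) \le d$ forces $\row(\mathbf{a}),\col(\mathbf{a}) \le d$ (every row and column contributes at least one unit of weight), leaving only finitely many candidate $\mathbf{a}$. The only genuine subtlety is pinning down the normal form cleanly; once this is done, each of the three claims reduces to reading off a single coefficient.
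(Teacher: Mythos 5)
Your proof is correct and follows the same basic strategy as the paper's: both hinge on the restriction map that reads off the coefficients of the ``canonical'' monomials $\prod_{s,t} x_{s,t}^{\mathbf{a}_{s,t}}$, using quasisymmetry to deduce that this determines all coefficients. The paper simply asserts this restriction map $\pi\colon\QSym_d\to V$ is a monomorphism, whereas you justify it explicitly through the unique-normal-form observation for monomials; this is a modest gain in rigor but not a different route.
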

\begin{proof}
For $\mathcal{B}_d:=\{M_{\mathbf{a}}\mid\mathbf{a}\in\composition\text{ with }\sum_{\tuIn{i}\leq\size(\mathbf{a})}\mathbf{a}_{\tuIn{i}}=d\}$ and monomorphism 
 $$\pi:\QSym_d\rightarrow\Span_{\groundRing}\left(\prod_{s=1}^{\row(\mathbf{a})}\prod_{t=1}^{\col(\mathbf{a})} x_{s,t}^{\mathbf{a}_{s,t}}\;\begin{array}{|l}
 \mathbf{a}\in\composition\\
 \end{array}\right)=:V,$$
the image $\pi(\mathcal{B}_d)$ is 
linear independent by construction, and therefore so is $\mathcal{B}_d$. 
It suffices to show that $\QSym_d$ is generated by $\mathcal{B}_d$.
For $f\in\QSym_d$ let $\pi(f)=:\sum_v\lambda(v)\,v\in V$ with $\lambda:\{v\in V\mid v\text{ monomial}\}\rightarrow\groundRing$ of finite support. 
With $f=\sum_v\lambda(v)\,M_{\mathbf{a}(v)}$ follows $\QSym_d=\Span_{\groundRing}{\mathcal{B}}_d$ for unique $\mathbf{a}(v)$ such that $M_{\mathbf{a}(v)}$ contains $v$. 
\end{proof}

\begin{example}
  \begin{align*}
    M_{\begin{scriptsize}\begin{bmatrix}1\end{bmatrix}\end{scriptsize}}&=\sum_{\iota_1,\kappa_1}x_{\iota_1,\kappa_1} \\
    M_{\begin{scriptsize}\begin{bmatrix}3&1\\0&2\end{bmatrix}\end{scriptsize}}&=\sum_{\substack{\iota_1<\iota_2\\\kappa_1<\kappa_2}}x_{\iota_1,\kappa_1}^{3}\,x_{\iota_1,\kappa_2}\,x_{\iota_2,\kappa_2}^{2}\\
    M_{\begin{scriptsize}\begin{bmatrix}0&1\\2&0\end{bmatrix}\end{scriptsize}}&=\sum_{\substack{\iota_1<\iota_2\\\kappa_1<\kappa_2}}x_{\iota_1,\kappa_2}\,x_{\iota_2,\kappa_1}^{2}=\sum_{\substack{\iota_1<\iota_2\\\kappa_1>\kappa_2}}x_{\iota_1,\kappa_1}\,x_{\iota_2,\kappa_2}^{2} \\
    M_{\begin{scriptsize}\begin{bmatrix}2&1\end{bmatrix}\end{scriptsize}}&=\sum_{\substack{\iota_1,\kappa_1<\kappa_2}}x_{\iota_1,\kappa_1}^{2}\,x_{\iota_1,\kappa_2}
  \end{align*}
\end{example}

\subsection{Closure property under multiplication}

We verify the \DEF{quasi-shuffle identity} for two-parameter quasisymmetric functions. 

\begin{theorem}\label{theorem_QSh_id}
For all $\mathbf{a},\mathbf{b}\in\QSym$,  
$$M_{\mathbf{a}}\,M_{\mathbf{b}}=\sum_{j,k\in\N}\,\sum_{\substack{p\in\qSh(\row(\mathbf{a}),\row(\mathbf{b});j)\\q\in\qSh(\col(\mathbf{a}),\col(\mathbf{b});k)}} M_{\mathbf{c}^{p,q}}$$
with $\mathbf{c}^{p,q}\in\composition$ according to \Cref{eq:c_quasishuffle}. 
\end{theorem}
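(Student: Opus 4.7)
The plan is to expand $M_{\mathbf{a}} \cdot M_{\mathbf{b}}$ directly from the definition in \Cref{lem:monomialBasis} and reorganise the resulting double sum by merging the two pairs of index chains. Writing $(m,n) := \size(\mathbf{a})$ and $(s,t) := \size(\mathbf{b})$, the product expands as
\begin{align*}
M_{\mathbf{a}} M_{\mathbf{b}} = \sum_{\substack{\iota_1<\cdots<\iota_m\\ \kappa_1 < \cdots < \kappa_n}}\,\sum_{\substack{\iota'_1 < \cdots < \iota'_s\\ \kappa'_1 < \cdots < \kappa'_t}} \prod_{i=1}^{m}\prod_{j=1}^{n} x_{\iota_i, \kappa_j}^{\mathbf{a}_{i,j}}\,\cdot\,\prod_{i=1}^{s}\prod_{j=1}^{t} x_{\iota'_i, \kappa'_j}^{\mathbf{b}_{i,j}}.
\end{align*}

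The next step is to classify each summand by the order-type of the merged chains. For a given summand, let $u_1<\cdots<u_j$ be the sorted distinct values of $\{\iota_i\}\cup\{\iota'_i\}$ (with $\max(m,s)\le j\le m+s$) and define $p:\{1,\dots,m+s\}\twoheadrightarrow\{1,\dots,j\}$ by $p(i) = x \iff \iota_i = u_x$ for $i\le m$ and $p(m+i) = x \iff \iota'_i = u_x$ for $1\le i\le s$. The increasing condition on the two original chains translates precisely into $p \in \qSh(m,s;j)$. An analogous construction for the column indices yields $v_1<\cdots<v_k$ and $q \in \qSh(n,t;k)$. Conversely, given $(j,k,p,q)$ together with strictly increasing row-chain $u_1<\cdots<u_j$ and column-chain $v_1<\cdots<v_k$, the original four chains are uniquely reconstructed, so this is a bijective reparametrisation of the index set of the double sum.

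After this relabelling, the monomial associated to a summand becomes
\begin{align*}
\prod_{i,j} x_{u_{p(i)},v_{q(j)}}^{\mathbf{a}_{i,j}} \cdot \prod_{i,j} x_{u_{p(m+i)},v_{q(n+j)}}^{\mathbf{b}_{i,j}} = \prod_{x=1}^{j}\prod_{y=1}^{k} x_{u_x,v_y}^{\mathbf{c}^{p,q}_{x,y}},
\end{align*}
where collecting the exponents at each $(x,y)$ gives exactly $\mathbf{c}^{p,q}_{x,y}=\underset{\substack{u\in p^{-1}(x)\\ v\in q^{-1}(y)}}{\bigstar}\diag(\mathbf{a},\mathbf{b})_{u,v}$ from \Cref{eq:c_quasishuffle}. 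Summing over the free chains $u_1<\cdots<u_j$ and $v_1<\cdots<v_k$ then produces $M_{\mathbf{c}^{p,q}}$ for each fixed $(p,q)$, yielding the claimed identity.

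The main (and only nontrivial) subtlety to handle is verifying that each $\mathbf{c}^{p,q}$ genuinely lies in $\composition$, i.e.\ has no $\vareps$-row and no $\vareps$-column, so that $M_{\mathbf{c}^{p,q}}$ is a well-defined basis element of $\QSym$. This follows because $p$ is surjective, so for every $x\in\{1,\dots,j\}$ there exists $u\in p^{-1}(x)$; the $u$-th row of $\diag(\mathbf{a},\mathbf{b})$ comes from either $\mathbf{a}$ or $\mathbf{b}$ (both of which have no $\vareps$-row by definition of $\composition$), hence contains some non-$\vareps$ entry at a column $v$, which in turn contributes to $\mathbf{c}^{p,q}_{x,q(v)}$. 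The same argument with $q$ rules out $\vareps$-columns. Once this is checked, the argument is essentially the classical bijective proof of the one-parameter quasi-shuffle identity applied independently to rows and columns, explaining the symmetric role of $p$ and $q$ in \Cref{def:twodim_qshuffle}.
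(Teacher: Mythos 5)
Your proof is correct and follows essentially the same route as the paper: expand the product, reorganise by the order-type of the merged row and column chains via the bijection between quadruples of chains and pairs $(p,q,u,v)$ (which the paper encodes as right-invertible matrices $\mathbf{P},\mathbf{Q}$), and identify the resulting monomial sums with $M_{\mathbf{c}^{p,q}}$. Your explicit check that $\mathbf{c}^{p,q}\in\composition$ is sound and a nice addition; the paper handles this implicitly by restricting the outer sum to $\mathbf{c}\in\composition$ and separately via \Cref{Cor_sqsProp}.
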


\begin{proof}
The proof is inspired by  \cite[Proposition 5.1.3.]{GR14} and  generalizes it for two parameters.
It uses the matrix notation for surjections introduced in \Cref{rem:onetoonesurjtomat}. 
Consider
\begin{align*}
M_{\mathbf{a}}\,M_{\mathbf{b}}&=
\sum_{\substack{ \iota_1<\ldots<\iota_{\row(\mathbf{a})}\\
 \kappa_1<\ldots<\kappa_{\col(\mathbf{a})}}}
 \sum_{\substack{ \sigma_1<\ldots<\sigma_{\row(\mathbf{b})}\\
 \tau_1<\ldots<\tau_{\col(\mathbf{b})}}}
\left(\prod_{i=1}^{\row(\mathbf{a})}\prod_{k=1}^{\col(\mathbf{a})}x_{\iota_i,\kappa_k}^{{\mathbf{a}}_{i,k}}\right)
\left(\prod_{s=1}^{\row(\mathbf{b})}\prod_{t=1}^{\col(\mathbf{b})}x_{\sigma_s,\tau_t}^{{\mathbf{b}}_{s,t}}\right)\\
&=\sum_{\mathbf{c}\in\composition}
\sum_{\substack{ \mu_1<\ldots<\mu_{\row(\mathbf{c})}\\
 \nu_1<\ldots<\nu_{\col(\mathbf{c})}}}N^{\mathbf{c}}_{\mu,\nu}\,
 \prod_{m=1}^{\row(\mathbf{c})}\prod_{n=1}^{\col(\mathbf{c})}x_{\mu_m,\nu_n}^{{\mathbf{c}}_{m,n}}
\end{align*}
where $N^{\mathbf{c}}_{\mu,\nu}\in\N$ is the number of 
all $4$-tuples 
\begin{equation}\label{eq:fourtuples}(\iota,\kappa,\sigma,\tau)\in \N^{\row(\mathbf{a})}\times\N^{\col(\mathbf{a})}\times\N^{\row(\mathbf{b})}\times\N^{\col(\mathbf{b})}\end{equation} 
such that $\iota,\kappa,\sigma$ and $\tau$ are strictly increasing with 
\begin{equation}\label{eq:proofQshId}
\left(\prod_{i=1}^{\row(\mathbf{a})}\prod_{k=1}^{\col(\mathbf{a})}x_{\iota_i,\kappa_k}^{{\mathbf{a}}_{i,k}}\right)
\left(\prod_{s=1}^{\row(\mathbf{b})}\prod_{t=1}^{\col(\mathbf{b})}x_{\sigma_s,\tau_t}^{{\mathbf{b}}_{s,t}}\right)
=\prod_{m=1}^{\row(\mathbf{c})}\prod_{n=1}^{\col(\mathbf{c})}x_{\mu_m,\nu_n}^{{\mathbf{c}}_{m,n}}.
\end{equation}
The central argument is that $N^{\mathbf{c}}_{\mu,\nu}$ is also the number of matrix pairs 
\begin{equation}\label{eq:matrixPairs}
    (\mathbf{P},\mathbf{Q})\in \QSH(\row(\mathbf{a}),\row(\mathbf{b});\row(\mathbf{c}))\times \qSh(\col(\mathbf{a}),\col(\mathbf{b});\col(\mathbf{c}))
    \end{equation}
    such that 
    \begin{equation}\label{eq:matrixPairsCondition}
      \mathbf{P}\diag(\mathbf{a},\mathbf{b})\mathbf{Q}^\top=\mathbf{c}.
    \end{equation}
To show this, we construct a bijection $\varphi$ from all $4$-tuples  in  (\ref{eq:fourtuples}) with (\ref{eq:proofQshId}) to the set of all matrix pairs (\ref{eq:matrixPairs})
with (\ref{eq:matrixPairsCondition}). 

Concerning the construction of $\varphi$, assume $(\iota,\kappa,\sigma,\tau)$ satisfies  (\ref{eq:proofQshId}). 
Then, for every  $(i,k)\leq\size(\mathbf{a})$ there exists a uniquely determined   $(p(i),q(i))\leq\size(\mathbf{c})$ such that $(\iota_i,\kappa_k)=(\mu_{p(i)},\nu_{q(i)})$. 
Furthermore, for every $(s,t)\leq\size(\mathbf{b})$ there is a unique  $(p' (s),q'(t))\leq\size(\mathbf{c})$ such that  $(\sigma_s,\tau_t)=(\mu_{p'(s)},\nu_{q'(t)})$.
We define
\begin{equation}\label{eq:defP_proof_Qsh}\mathbf{P}:=\begin{bmatrix}
\mathbf{P}_1&\mathbf{P}_2
\end{bmatrix}
:=\begin{bmatrix}
\begin{bmatrix}
e_{p(1)}
&\cdots 
& e_{p(\row(\mathbf{a}))}
\end{bmatrix}
&\begin{bmatrix}
e_{p'(1)}
&\cdots
&e_{p'(\row(\mathbf{b}))}
\end{bmatrix}
\end{bmatrix}
\end{equation}
for rows,
\begin{equation}\label{eq:defQ_proof_Qsh}\mathbf{Q}:=\begin{bmatrix}
\mathbf{Q}_1&
\mathbf{Q}_2
\end{bmatrix}:=
\begin{bmatrix}
\begin{bmatrix}
e_{q(1)}
&\cdots 
&e_{q(\col(\mathbf{a}))}
\end{bmatrix}
&
\begin{bmatrix}
e_{q'(1)}
&\cdots
&e_{q'(\col(\mathbf{b}))}
\end{bmatrix}
\end{bmatrix}
\end{equation}
for columns and set $\varphi(\iota,\kappa,\sigma,\tau):=(\mathbf{P},\mathbf{Q})$. 
Note that both $\mathbf{P}$ and $\mathbf{Q}$ are right invertible since (\ref{eq:proofQshId}) requires for all $m$ and $n$, that either  $x_{\mu_m,\nu_n}=x_{\iota_i,\kappa_k}$ or $x_{\mu_m,\nu_n}=x_{\sigma_s,\tau_t}$ with suitable $i,k,s$ and $t$. 
Hence $(\mathbf{P},\mathbf{Q})$ is of shape (\ref{eq:matrixPairs}). 
Furthermore,  $\mathbf{P}\diag(\mathbf{a},\mathbf{b})\mathbf{Q}^\top$ satisfies  \Cref{eq:proofQshId} via 
\begin{align*}\prod_{m=1}^{\row(\mathbf{c})}\prod_{n=1}^{\col(\mathbf{c})}&x_{\mu_m,\nu_n}^{{(\mathbf{P}\diag(\mathbf{a},\mathbf{b})\mathbf{Q}^\top)}_{m,n}}
=\prod_{m=1}^{\row(\mathbf{c})}\prod_{n=1}^{\col(\mathbf{c})}x_{\mu_m,\nu_n}^{({\mathbf{P}_1\mathbf{a}\mathbf{Q}_1^\top+\mathbf{P}_2\mathbf{b}\mathbf{Q}_2^\top)}_{m,n}}\\
&=\left(\prod_{m=1}^{\row(\mathbf{c})}\prod_{n=1}^{\col(\mathbf{c})}x_{\mu_m,\nu_n}^{({\mathbf{P}_1\mathbf{a}\mathbf{Q}_1^\top)}_{m,n}}\right)
\left(\prod_{m=1}^{\row(\mathbf{c})}\prod_{n=1}^{\col(\mathbf{c})}x_{\mu_m,\nu_n}^{{(\mathbf{P}_2\mathbf{b}\mathbf{Q}_2^\top)}_{m,n}}\right)
\end{align*}
where 
$$x_{\mu_m,\nu_n}^{{(\mathbf{P}_1\mathbf{a}\mathbf{Q}_1^\top)}_{m,n}}=
\begin{cases}
x_{\iota_i,\kappa_k}^{\mathbf{a}_{i,k}}&(p(i),q(k))=(m,n)\\
1&\text{elsewhere},
\end{cases}$$
and analogously, $$x_{\sigma_m,\tau_n}^{{(\mathbf{P}_2\mathbf{b}\mathbf{Q}_2^\top)}_{m,n}}=
\begin{cases}
x_{\sigma_s,\tau_t}^{\mathbf{b}_{i,k}}&(p'(s),q'(t))=(m,n)\\
1&\text{elsewhere}.
\end{cases}$$
Thus $\mathbf{c}=\mathbf{P}\diag(\mathbf{a},\mathbf{b})\mathbf{Q}^\top$ since a monomial is equal if and only if its exponent vector is equal, i.e., $\varphi$ is well-defined. 
Strictly order preserving $p,p',q$ and $q'$ yield that $\varphi$ is injective. 
Conversely, if $\mathbf{P}$ and $\mathbf{Q}$ are right invertible and according to \Cref{eq:defP_proof_Qsh,eq:defQ_proof_Qsh} 
but for arbitrary strictly increasing ${p},{p}',{q}$ and ${q}'$, then 
set $(\iota_{i},\kappa_k):=(\mu_{p(i)},\nu_{q(i)})$ and 
$(\sigma_{s},\tau_t):=(\mu_{p'(i)},\nu_{q'(i)})$. 
By construction, we have $\varphi(\iota,\kappa,\sigma,\tau)=(\mathbf{P},\mathbf{Q})$. 
This shows that $\varphi$ is bijective. 

Note that this implies $N^{\mathbf{c}}_{\mu,\nu}=0$ whenever $\mathbf{c}\in\N_0^{j\times k}$ for which there are no pairs (\ref{eq:matrixPairs})  satisfying
\Cref{eq:matrixPairsCondition}. 
Therefore, 
\begin{align*}
&M_{\mathbf{a}}\,M_{\mathbf{b}}
=\sum_{\mathbf{c}\in\composition}
\sum_{\substack{ \mu_1<\ldots<\mu_{\row(\mathbf{c})}\\
 \nu_1<\ldots<\nu_{\col(\mathbf{c})}}}
 N^{\mathbf{c}}_{\mu,\nu}\,
 \prod_{m=1}^{\row(\mathbf{c})}\prod_{n=1}^{\col(\mathbf{c})}x_{\mu_m,\nu_n}^{{\mathbf{c}}_{m,n}}\\
  &=\sum_{j,k\in\N}\!\!\!\!\!\!\!\!\!
  \sum_{\substack{\mathbf{c}\in\composition\cap\N_0^{j\times k}
  \\\text{where }\mathbf{c}=\mathbf{P}\diag(a,b)\mathbf{Q}^\top\\
  \text{for some}\\
  \mathbf{P}\in\QSH(\row(\mathbf{a}),\row(\mathbf{b});j)\\
 \mathbf{Q}\in\QSH(\col(\mathbf{a}),\col(\mathbf{b});k)}}
 \sum_{\substack{ \mu_1<\ldots<\mu_{\row(\mathbf{c})}\\
 \nu_1<\ldots<\nu_{\col(\mathbf{c})}}}
 \sum_{\substack{(\mathbf{P},\mathbf{Q})\text{ where }\\
 \mathbf{P}\in\QSH(\row(\mathbf{a}),\row(\mathbf{b});j)\\
 \mathbf{Q}\in\QSH(\col(\mathbf{a}),\col(\mathbf{b});k)\\\mathbf{P}\diag(a,b)\mathbf{Q}^\top=\mathbf{c}}}
 \prod_{m=1}^{\row(\mathbf{c})}\prod_{n=1}^{\col(\mathbf{c})}x_{\mu_m,\nu_n}^{{\mathbf{c}}_{m,n}}\\
 &=\sum_{j,k\in\N}\;\sum_{\substack{\mathbf{P}\in\qSh(\row(\mathbf{a}),\row(\mathbf{b});j)\\
 \mathbf{Q}\in\QSH(\col(\mathbf{a}),\col(\mathbf{b});k)}}M_{\mathbf{P}\diag(\mathbf{a},\mathbf{b})\mathbf{Q}^\top}
\end{align*}
which concludes the proof.
\end{proof}

\begin{corollary}\label{Cor_subalgebra}
$\QSym$ is a graded $\groundRing$-subalgebra of $\formalPorerSeriesOfFiniteDegree$. 
\end{corollary}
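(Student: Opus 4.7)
The plan is direct since all the substantive work has already been done in \Cref{lem:monomialBasis} and \Cref{theorem_QSh_id}. First I would note that $\QSym$ is already known to be a graded $\groundRing$-submodule of $\formalPorerSeriesOfFiniteDegree$, so it remains to verify that $1\in\QSym$, that $\QSym$ is closed under multiplication, and that the multiplication respects the grading. The multiplicative unit $1 = M_{\ec}$ lies in $\QSym$ by the convention fixed in \Cref{lem:monomialBasis}.

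For closure under multiplication, by $\groundRing$-bilinearity of the product in $\formalPorerSeriesOfFiniteDegree$ and the fact that $\{M_{\mathbf{a}}\}_{\mathbf{a}\in\composition}$ is a $\groundRing$-basis of $\QSym$ (again \Cref{lem:monomialBasis}), it suffices to show $M_{\mathbf{a}}\cdot M_{\mathbf{b}} \in \QSym$ for all pairs of basis elements. This is immediate from \Cref{theorem_QSh_id}, which expresses this product as a finite $\N$-linear combination of basis elements $M_{\mathbf{c}^{p,q}}$, each of which lies in $\QSym$ by construction.

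For compatibility with the grading I would reduce, again by bilinearity, to checking that $\deg(M_{\mathbf{c}^{p,q}}) = \deg(M_{\mathbf{a}}) + \deg(M_{\mathbf{b}})$ for every $M_{\mathbf{c}^{p,q}}$ appearing in the expansion of $M_{\mathbf{a}}\cdot M_{\mathbf{b}}$. Since $p\in\qSh(\row(\mathbf{a}),\row(\mathbf{b});j)$ and $q\in\qSh(\col(\mathbf{a}),\col(\mathbf{b});k)$ are surjections, their fibers $p^{-1}(x)\times q^{-1}(y)$ partition the full index set of $\diag(\mathbf{a},\mathbf{b})$. Hence each entry of $\diag(\mathbf{a},\mathbf{b})$ contributes to exactly one block $\mathbf{c}^{p,q}_{x,y}$ via the monoid operation (which here is addition in $\N_0$ and hence preserves total weight), so summing the weights yields $\weight(\mathbf{c}^{p,q}) = \weight(\diag(\mathbf{a},\mathbf{b}))$. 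Invoking \Cref{lemma:idweightDiag} then gives $\weight(\diag(\mathbf{a},\mathbf{b})) = \weight(\mathbf{a})+\weight(\mathbf{b})$, which is exactly the required grading identity.

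There is no significant obstacle here; the corollary is essentially a repackaging of \Cref{theorem_QSh_id} together with the weight-additivity of \Cref{lemma:idweightDiag}. The only point requiring minor care is the bookkeeping observation that the preimage fibers of the two quasi-shuffle surjections partition the index set of $\diag(\mathbf{a},\mathbf{b})$, so that each entry is counted exactly once in the total weight.
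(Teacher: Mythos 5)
Your proof is correct and takes the same route the paper intends: the paper states the corollary without proof immediately after \Cref{theorem_QSh_id}, and your deduction (unit $M_{\ec}$, closure under multiplication via the quasi-shuffle identity and the basis from \Cref{lem:monomialBasis}, grading via weight-additivity as in \Cref{lemma:idweightDiag}) is precisely the intended reading. The observation that $\deg(M_{\mathbf{c}^{p,q}})=\weight(\mathbf{a})+\weight(\mathbf{b})$ because the surjection fibers partition the index set of $\diag(\mathbf{a},\mathbf{b})$ also mirrors the paper's own argument for the analogous grading fact in \Cref{CorgradedProd}.
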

As in the one-parameter setting, e.g., \cite[p.969]{malvenuto1995duality},
we can endow $\QSym$ with a deconcatenation-type coproduct.
Let 
$I := \N \cup \overline{\N}$ be
the disjoint union of two copies of $\N$,
totally ordered by setting $\N < \overline{\N}$.
Let $(x_{\tuIn{i}})_{\tuIn{i}\in\N^2}$ and $(y_{\tuIn{i}})_{\tuIn{i}\in\overline{\N}^2}$ be two sets of variables and 
define $z$ indexed by $I \times I$ as
\begin{align*}
    z_{\tuIn{i}} :=
    \begin{cases}
      x_{\tuIn{i}} & \tuIn{i}\in \N^2 \\
      y_{\tuIn{i}} & \tuIn{i} \in {\overline{\N}}^2 \\
      0      & \text{elsewhere}.
    \end{cases}
\end{align*}
Any element $f \in \QSym$ can be evaluated at
the variables $z$
and can be written as
\begin{align*}
  f(z) = \sum_{k=1}^{\ell} g_k(x) h_k(y),
\end{align*}
for some uniquely determined two-parameter quasisymmetric functions $g_k$ and $h_k$, where $1\leq k\leq \ell$.
Then
\begin{align*}
  \Delta_\QSym f := \sum_{k=1}^{\ell} g_k \otimes h_k.
\end{align*}

\begin{example}
For the composition 
$\mathbf{a}=\diag(\begin{bmatrix}2&1\end{bmatrix},\begin{bmatrix}3\end{bmatrix})\in\N_0^{2\times 3}$, 
\begin{align*}
M_{\mathbf{a}}(z)=\sum_{\substack{\iota_1<\iota_2\\
 \kappa_1<\kappa_2<\kappa_3}}
 \!z_{\iota_1,\kappa_1}^2\,z_{\iota_1,\kappa_2}\,z_{\iota_2,\kappa_3}^3
&+\!\sum_{\substack{\iota_1<\overline\iota_2\\
 \kappa_1<\kappa_2<\overline\kappa_3}}\!
z_{\iota_1,\kappa_1}^2\,z_{\iota_1,\kappa_2}\,z_{\overline\iota_2,\overline\kappa_3}^3\\
&+\!\sum_{\substack{\overline\iota_1<\overline\iota_2\\
 \overline\kappa_1<\overline\kappa_2<\overline\kappa_3}}\!
z_{\overline\iota_1,\overline\kappa_1}^2\,z_{\overline\iota_1,\overline\kappa_2}\,z_{\overline\iota_2,\overline\kappa_3}^3
\end{align*}
which is equal to 
$M_{\mathbf{a}}(x)+M_{\begin{scriptsize}\begin{bmatrix}2&1\end{bmatrix}\end{scriptsize}}(x)\,M_{\begin{scriptsize}\begin{bmatrix}3\end{bmatrix}\end{scriptsize}}(y)+M_{\mathbf{a}}(y)$.

\end{example}

We omit the proof of the following result.
\begin{theorem}
  $\QSym$, with the induced product of power series,
  the coproduct $\Delta_\QSym$ and obvious unit and counit
  is a graded bialgebra, and hence a Hopf algebra.
  It is isomorphic to the Hopf algebra of \Cref{theoroem:hopfalgebra},
  with isomorphism given by the linear extension of
  \begin{align*}
    \groundRing\langle\compositionConnected\rangle &\to \QSym \\
    \mathbf{a} &\mapsto M_{\mathbf{a}}.
  \end{align*}
\end{theorem}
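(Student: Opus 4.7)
The plan is to exhibit the linear map $\phi:\groundRing\langle\compositionConnected\rangle\to\QSym$, $\mathbf{a}\mapsto M_{\mathbf{a}}$, as an isomorphism of Hopf algebras, and transport the bialgebra structure from \Cref{theoroem:hopfalgebra} along it. That $\phi$ is a bijective morphism of graded $\groundRing$-modules (with gradings by weight and by degree, respectively) is already immediate from \Cref{lem:monomialBasis}, together with the observation that $M_{\mathbf{a}}$ is homogeneous of degree $\weight(\mathbf{a})$. Hence only the compatibility with product, coproduct, unit and counit needs to be addressed; once this is done, $\QSym$ is automatically a graded, connected bialgebra, and being graded and connected it is a Hopf algebra by the same classical argument cited in the proof of \Cref{theoroem:hopfalgebra}.

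Multiplicativity is essentially already proved. Comparing \Cref{def:twodim_qshuffle} with \Cref{theorem_QSh_id}, one has
\begin{align*}
\phi(\mathbf{a}\qShuffle\mathbf{b})
&=\sum_{j,k\in\N}\sum_{\substack{p\in\qSh(\row(\mathbf{a}),\row(\mathbf{b});j)\\q\in\qSh(\col(\mathbf{a}),\col(\mathbf{b});k)}} M_{\mathbf{c}^{p,q}}
= M_{\mathbf{a}}\,M_{\mathbf{b}},
\end{align*}
so $\phi$ is an algebra morphism, and it sends $\ec$ to $M_{\ec}=1$, matching units. The counits also match by construction.

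The remaining and main step is compatibility with the coproducts. The task is to verify, for every $\mathbf{a}\in\composition$, the identity
\begin{align*}
\Delta_\QSym M_{\mathbf{a}}=\sum_{\diag(\mathbf{b},\mathbf{c})=\mathbf{a}}M_{\mathbf{b}}\otimes M_{\mathbf{c}},
\end{align*}
which matches \Cref{def:coproduct}. I would compute $M_{\mathbf{a}}(z)$ directly and stratify the sum by how many of the row indices $\iota_s$ lie in the $\N$-part of $I$ and how many of the column indices $\kappa_t$ do; call these counts $r_1$ and $c_1$. Because $I$ is totally ordered with $\N<\overline{\N}$, increasing chains are automatically split into an initial $\N$-segment of length $r_1$ (respectively $c_1$) and a terminal $\overline{\N}$-segment. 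The crucial observation is that for a fixed $(r_1,c_1)$ the monomial $\prod_{s,t} z_{\iota_s,\kappa_t}^{\mathbf{a}_{s,t}}$ is non-zero \emph{only if} $\mathbf{a}_{s,t}=\vareps$ whenever $(s,t)$ sits in the upper-right or lower-left block determined by $(r_1,c_1)$, i.e.\ precisely when $\mathbf{a}=\diag(\mathbf{b},\mathbf{c})$ with $\mathbf{b}\in\monoidComp^{r_1\times c_1}$ and $\mathbf{c}\in\monoidComp^{(\row(\mathbf{a})-r_1)\times(\col(\mathbf{a})-c_1)}$; and in that case the absence of zero rows or columns in $\mathbf{a}$ forces $\mathbf{b}$ and $\mathbf{c}$ to be legitimate matrix compositions (or $\ec$). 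For such valid splits the contribution factorises cleanly as $M_{\mathbf{b}}(x)\,M_{\mathbf{c}}(y)$, while all other $(r_1,c_1)$ contribute zero. Summing yields the displayed identity, and the definition of $\Delta_\QSym$ extracts exactly the coefficients $M_{\mathbf{b}}\otimes M_{\mathbf{c}}$.

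I expect this coproduct computation to be the main obstacle, mostly for bookkeeping reasons: one must argue carefully that admissible block-splittings of $\mathbf{a}$ are in bijection with the pairs $(r_1,c_1)$ giving non-zero contributions, including the degenerate splits producing $\ec$, and that no non-connected-component boundary can yield a valid block structure (since that would require zero rows or columns in one of the factors, which is impossible under the $\composition$-constraint). Once this is handled, combining it with $\phi(\ec\otimes\ec)=1\otimes 1$ and bilinearity shows $(\phi\otimes\phi)\circ\Delta=\Delta_\QSym\circ\phi$, completing the verification that $\phi$ is a bialgebra isomorphism; this simultaneously endows $\QSym$ with its graded, connected bialgebra (hence Hopf algebra) structure and proves the stated isomorphism.
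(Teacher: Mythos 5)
The paper explicitly omits the proof of this theorem, so there is no argument in the text to compare against; your proposal is, in effect, supplying the missing proof, and it does so correctly. The skeleton is exactly right: $\phi:\mathbf{a}\mapsto M_{\mathbf{a}}$ is a bijection of graded $\groundRing$-modules by \Cref{lem:monomialBasis} and the observation that $\deg M_{\mathbf{a}}=\weight(\mathbf{a})$; multiplicativity is a direct reading of \Cref{theorem_QSh_id}; units and counits match trivially; so everything hinges on $\Delta_\QSym M_{\mathbf{a}}=\sum_{\diag(\mathbf{b},\mathbf{c})=\mathbf{a}}M_{\mathbf{b}}\otimes M_{\mathbf{c}}$, and once that is established, transporting along the linear bijection turns the bialgebra axioms for $(\groundRing\langle\compositionConnected\rangle,\qShuffle,\eta,\Delta,\eps)$ into those for $\QSym$.

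Your coproduct computation is the right one. Stratifying the chains $\iota,\kappa$ in $I=\N\cup\overline{\N}$ by the break points $(r_1,c_1)$ works precisely because $\N<\overline{\N}$ forces chains to have an initial $\N$-segment followed by a terminal $\overline{\N}$-segment, and because $z_{\tuIn{i}}=0$ for mixed indices, so a summand survives only when the two off-diagonal blocks of $\mathbf{a}$ cut at $(r_1,c_1)$ are $\vareps$-valued. Two small points you should make explicit to tighten the argument. First, the resulting blocks $\mathbf{b},\mathbf{c}$ automatically lie in $\composition\cup\{\ec\}$: a zero row (or column) in $\mathbf{b}$ would, after padding with the $\vareps$-block, give a zero row (or column) of $\mathbf{a}$, contradicting $\mathbf{a}\in\composition$. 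Second, the degenerate pairs with $r_1=0,c_1>0$ or $r_1=\row(\mathbf{a}),c_1<\col(\mathbf{a})$ (and symmetrically) contribute nothing, again because $\mathbf{a}$ has no $\vareps$-rows or $\vareps$-columns; this guarantees that the nontrivial nonzero contributions are in bijection with the cuts of the unique factorization into connected compositions of \Cref{lemma:primitive_decomposition}, which is exactly the index set of $\Delta\mathbf{a}$ in \Cref{def:coproduct}. With these clarifications inserted, the proposal is a complete and correct proof of the theorem.
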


\subsection{Polynomial  invariants}\label{sec:polynomialInvariant}
\begin{definition}
Let $\zero_{a,k}\in\Endom\left(\formalPorerSeriesOfFiniteDegree\right)$
\index[general]{zero@$\zero$}
be the ``free analog'' of zero insertion defined via
$$\zero_{a,k}(x_{\tuIn{i}}):=\begin{cases}
x_{\tuIn{i}}&{\tuIn{i}}_a<k\\
 0 &{\tuIn{i}}_a=k\\
 x_{(\tuIn{i}-\e{a})}&{\tuIn{i}}_a>k.
\end{cases}$$
\end{definition}
\begin{example} For all $t\in\N$,
$\zero_{1,2}(x_{1,t}+x_{2,t}+x_{3,1}\,x_{4,1})=x_{1,t}+x_{2,1}\,x_{3,1}$.
\end{example}
\renewcommand\eval{\mathsf{eval}}
\index[general]{eval@$\eval$}
We evaluate power series to set-theoretic functions with domain $\evZ=\evZ(\N^2,\groundRing)$, 
i.e., we assume $d=1$ for the entire section. 
Let $\eval$ be the \DEF{evaluation} homomorphism of a formal power series $f\in\formalPorerSeriesOfFiniteDegree$ to its related $$\eval(f):\evZ\rightarrow\groundRing,$$
uniquely determined by $$\eval(x_{\tuIn{i}})(X):=X_{\tuIn{i}}$$
for $\tuIn{i}\in\N^2$. 
For convenience, we also write $f(X):=\eval(f)(X)$ for all $f$ and $X$. 

With this evaluation, we can  prove the quasi-shuffle identity for the two-parameter sums signature. 
\begin{proof}[Proof of \Cref{lem:quasiShuffleRel}]
We only treat the case $d=1$.
For all $Z\in\evZ$ and  
  $\mathbf{c}\in\composition$,
  $$\eval(M_{\mathbf{c}})(Z)=\langle\SS(Z),\mathbf{c}\rangle,$$
  and hence with \Cref{theorem_QSh_id}, 
\begin{align*}
  \langle\SS(Z),\mathbf{a}\rangle\,
  \langle\SS(Z),\mathbf{b}\rangle
  &=
  \eval(M_{\mathbf{a}}\,M_{\mathbf{b}})(Z)\\
  &=
  \sum_{j,k\in\N}\,\sum_{\substack{p\in\qSh(\row(\mathbf{a}),\row(\mathbf{b});j)\\q\in\qSh(\col(\mathbf{a}),\col(\mathbf{b});k)}} \eval\left(M_{\mathbf{c}^{p,q}}\right)(Z)\\
  &=\langle\SS(Z),\mathbf{a}\qShuffle\mathbf{b}\rangle. 
  \end{align*}
\end{proof}

\begin{lemma}\label{lemma:ZeroVsFormalZero}
  If $f\in\formalPorerSeriesOfFiniteDegree$ 
 with $\zero_{a,k}(f)=f$ for all $(a,k)\in\{1,2\}\times\N$, then $\eval(f)$ is invariant to inserting zero (in both directions independently). 
\end{lemma}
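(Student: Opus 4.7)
The plan is to establish the intertwining identity
\[
\eval(\zero_{a,k}(f))(X) \;=\; \eval(f)\bigl(\Zero_{a,k}(X)\bigr)
\qquad\forall\,f\in\formalPorerSeriesOfFiniteDegree,\,X\in\evZ,\,(a,k)\in\{1,2\}\times\N,
\]
and then simply apply the hypothesis. Indeed, once this identity is in hand, $\zero_{a,k}(f)=f$ gives $\eval(f)(\Zero_{a,k}(X))=\eval(f)(X)$, which is the desired invariance.

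I would verify the identity first on the generators $x_{\tuIn{i}}$. Matching the case distinction $\tuIn{i}_a<k$, $\tuIn{i}_a=k$, $\tuIn{i}_a>k$ in the definition of $\zero_{a,k}$ against the corresponding case distinction defining $\Zero_{a,k}$, one obtains in all three cases the same element (namely $X_{\tuIn{i}}$, $0$, or $X_{\tuIn{i}-\e{a}}$ respectively). Thus
\[
\eval\bigl(\zero_{a,k}(x_{\tuIn{i}})\bigr)(X) \;=\; (\Zero_{a,k}X)_{\tuIn{i}} \;=\; \eval(x_{\tuIn{i}})(\Zero_{a,k}X).
\]

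To pass from generators to all of $\formalPorerSeriesOfFiniteDegree$, I would observe that both
\[
f\mapsto\eval(\zero_{a,k}(f))(X) \qquad\text{and}\qquad f\mapsto\eval(f)(\Zero_{a,k}(X))
\]
are $\groundRing$-algebra homomorphisms into $\groundRing$: the first because $\zero_{a,k}$ is an algebra endomorphism (visible from its action on the free generators) and $\eval(\cdot)(X)$ is itself an evaluation homomorphism, and the second simply because $\Zero_{a,k}(X)\in\evZ$ so that $\eval(\cdot)(\Zero_{a,k}(X))$ is again an evaluation homomorphism. Two algebra homomorphisms out of $\formalPorerSeriesOfFiniteDegree$ that agree on all $x_{\tuIn{i}}$ must coincide.

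The only point that requires mild care, and which I would flag explicitly, is that elements of $\formalPorerSeriesOfFiniteDegree$ can carry infinitely many monomials; so one has to be sure that evaluation is well-defined and multiplicative on them. This follows from the standing assumption $X\in\evZ$ (and hence $\Zero_{a,k}(X)\in\evZ$): since $X$ has finite support, a power series of bounded total degree has only finitely many monomials whose variables all lie in $\supp(X)$, so $\eval(f)(X)$ is a finite sum and the standard rearrangement that proves multiplicativity is legitimate. This finiteness check is the only non-automatic step; there is no substantive obstacle in the argument.
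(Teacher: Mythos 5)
Your proof is correct and follows essentially the same route as the paper's: verify the intertwining $\eval\circ\zero_{a,k}(x_{\tuIn{i}})(X)=\eval(x_{\tuIn{i}})(\Zero_{a,k}X)$ on generators, note that both sides extend as algebra homomorphisms, conclude the intertwining for general $f$, and then invoke the hypothesis $\zero_{a,k}(f)=f$. Your explicit flag about why $\eval$ is well-defined and multiplicative on infinite-support power series of finite degree (finiteness of $\supp(X)$) is a sound clarification that the paper leaves implicit, but it is not a different argument.
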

\begin{proof}
This follows from 
$$f(X)=\eval\circ\zero_{a,k}(f)(X)=f(\Zero_{a,k}X)$$ 
for all $a,k$ and  $X$, since $\eval\circ \zero_{a,k}$ is a homomorphism and 
$$\eval\circ\zero_{a,k}(x_{\tuIn{i}})(X)
=\eval(x_{\tuIn{i}})(\Zero_{a,k}X)$$
for every $\tuIn{i}\in\N^2$.
\end{proof}
The converse of \Cref{lemma:ZeroVsFormalZero} is not true in general.

\begin{example}
Let $\groundRing$ be the field of two elements. 
The polynomial $f=x^2_{4,1}-x_{4,1}$ induces the constant zero function $\eval(f)=0$, but 
$\zero_{1,2}(f)=x^2_{3,1}-x_{3,1}\not=f$.
\end{example}

\begin{theorem}\label{lemma_polynomialInvariants_form}
  Let $f \in \formalPorerSeriesOfFiniteDegree$.
  Then:

  \begin{center}
    $f\in\QSym$ if and only if  $f=\zero_{a,k}(f)$ for all $a,k$.
  \end{center}
\end{theorem}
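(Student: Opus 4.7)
My plan is to prove both directions by direct computation on the monomial basis $\{M_{\mathbf{a}}\}$ of \Cref{lem:monomialBasis}.

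For the forward direction, by linearity it suffices to show $\zero_{a,k}(M_{\mathbf{a}})=M_{\mathbf{a}}$ for every $\mathbf{a}\in\composition$ and every $(a,k)\in\{1,2\}\times\N$. I treat only $a=1$; the axis $a=2$ is symmetric. Using the multiplicativity of $\zero_{1,k}$,
\begin{align*}
\zero_{1,k}(M_{\mathbf{a}})
\;=\;
\sum_{\substack{\iota_1<\cdots<\iota_{\row(\mathbf{a})}\\ \kappa_1<\cdots<\kappa_{\col(\mathbf{a})}}}\;
\prod_{s,t}\zero_{1,k}(x_{\iota_s,\kappa_t})^{\mathbf{a}_{s,t}}.
\end{align*}
Summands with some $\iota_s=k$ vanish: because $\mathbf{a}$ is a matrix composition, row $s$ has some entry $\mathbf{a}_{s,t_0}>0$, producing the factor $0^{\mathbf{a}_{s,t_0}}=0$. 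For the surviving summands the reindexing $\iota'_s:=\iota_s$ if $\iota_s<k$ and $\iota'_s:=\iota_s-1$ if $\iota_s>k$ is a bijection from strictly increasing tuples in $\N\setminus\{k\}$ onto strictly increasing tuples in $\N$, and the surviving product becomes $\prod x_{\iota'_s,\kappa_t}^{\mathbf{a}_{s,t}}$; the sum reassembles to $M_{\mathbf{a}}$.

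For the backward direction, expand $f=\sum_w\lambda(w)\,w$ over distinct monomials. To each $w$ associate its \emph{pattern} $\mathbf{a}(w)$: let $\iota_1<\cdots<\iota_p$ and $\kappa_1<\cdots<\kappa_q$ enumerate the first and second coordinates appearing with positive exponent in $w$, and record the exponents in $\mathbf{a}(w)\in\N_0^{p\times q}$. Every row and column of $\mathbf{a}(w)$ is nonzero by construction, so $\mathbf{a}(w)\in\composition$. By \Cref{lem:monomialBasis}, $f\in\QSym$ is equivalent to $\lambda(w)$ depending only on $\mathbf{a}(w)$. To extract this from $\zero$-invariance, observe that $\zero_{1,k}$ annihilates every monomial with support meeting $\{k\}\times\N$ and acts as an injection on the remaining monomials (since the ``down-shift'' $(i,j)\mapsto(i-[i>k],j)$ is injective on $(\N\setminus\{k\})\times\N$). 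Hence for each monomial $\tilde w$ there is a unique preimage $\sigma_{1,k}(\tilde w)$ under $\zero_{1,k}$, obtained by shifting first-coordinates $\geq k$ up by one and keeping exponents, and equating coefficients in $\zero_{1,k}(f)=f$ yields $\lambda(\tilde w)=\lambda(\sigma_{1,k}(\tilde w))$. Equivalently, whenever $w$'s first coordinates avoid $k$, $\lambda$ is unchanged under the corresponding down-shift.

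Iterating such down-shifts, any strictly increasing $p$-tuple $(\iota_1,\ldots,\iota_p)$ reduces to $(1,2,\ldots,p)$: while $\iota_1>1$, down-shift at $k=1$ (legal since no $\iota_s=1$) to decrease every entry by one; once $\iota_1=1$, while $\iota_2>2$ down-shift at $k=2$; and so on. Applying the analogous reduction to second coordinates via $\zero_{2,\cdot}$-invariance, every monomial $w$ is $\lambda$-equivalent to the canonical monomial supported on $\{1,\ldots,p\}\times\{1,\ldots,q\}$ with exponents $\mathbf{a}(w)$. Therefore $\lambda(w)$ is a function of $\mathbf{a}(w)$ alone, so $f$ is a linear combination of the $M_{\mathbf{a}}$'s and lies in $\QSym$. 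I expect the main obstacle to be the bookkeeping in this backward direction: verifying uniqueness of $\zero_{1,k}$-preimages, checking that the iterated down-shift procedure really connects any strictly increasing tuple to its canonical representative, and justifying that the axis-$1$ and axis-$2$ reductions can be combined; the forward direction, by contrast, collapses to a single bijection argument once the vanishing of ``boundary'' summands is noted.
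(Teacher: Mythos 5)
Your proposal is correct and takes essentially the same approach as the paper: the forward direction via the same bijection on increasing chains (summands hitting $k$ vanish because $\mathbf{a}$ has no zero rows/columns, the rest reindex), and the backward direction by composing zero insertions to move an arbitrary monomial to its canonical representative, thereby forcing equality of coefficients. The paper writes the relevant composition $\multicirc_m\zero_{1,m}^{\iota_m-\iota_{m-1}-1}\circ\multicirc_n\zero_{2,n}^{\kappa_n-\kappa_{n-1}-1}$ in one stroke, while you iterate single shifts greedily, but the underlying argument is identical.
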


\begin{proof}The proof is analogous to \cite[Theorem 3.11.]{diehl2020tropical} when generalized for two parameters.
For every $\mathbf{a}\in\composition$, 
\begin{align*}
\zero_{a,k}(M_{\mathbf{a}})
&=\sum_{\substack{\iota_1<\cdots<\iota_{\row(\mathbf{a})}\\
\kappa_1<\cdots<\kappa_{\col(\mathbf{a})}}}
\prod_{s=1}^{\row(\mathbf{a})}\prod_{t=1}^{\col(\mathbf{a})}
{\zero_{a,k}(x_{\iota_s,\kappa_t})}^{\mathbf{a}_{s,t}}\\
&=\sum_{u=1}^{\row(\mathbf{a})}\;\sum_{\substack{\iota_1<\cdots<\iota_u<k\leq \iota_{u+1}<\cdots<\iota_{\row(\mathbf{a})}\\
\kappa_1<\cdots<\kappa_{\col(\mathbf{a})}}}\;
\prod_{s=1}^{\row(\mathbf{a})}\prod_{t=1}^{\col(\mathbf{a})}
x_{\iota_s,\kappa_t}^{\mathbf{a}_{s,t}}
=M_{\mathbf{a}}
\end{align*}
thus $\QSym\subseteq\{f\mid \zero_{a,k}(f)=f\;\forall a,k\}$.

Conversely, consider every $f\in\QSym$ as a function from the free monoid generated by ${\left(x_{\tuIn{i}}\right)}_{\tuIn{i}\in\N^2}$ to the coefficient ring $\groundRing$. 
Then, 
\begin{align*}
&f\left(\prod_{s=1}^{\row(\mathbf{a})}\prod_{t=1}^{\col(\mathbf{a})}x_{s,t}^{\mathbf{a}_{s,t}}\right)\\&=
\multicirc_{1\leq m\leq \row(\mathbf{a})}\zero_{1,m}^{\iota_m-\iota_{m-1}-1}\circ\multicirc_{1\leq n\leq \col(\mathbf{a})}\zero_{2,n}^{\kappa_n-\kappa_{n-1}-1}(f)\left(
\prod_{s=1}^{\row(\mathbf{a})}\prod_{t=1}^{\col(\mathbf{a})}x_{\iota_s,\kappa_t}^{\mathbf{a}_{s,t}}
\right)\\
&=
f\left(
\prod_{s=1}^{\row(\mathbf{a})}\prod_{t=1}^{\col(\mathbf{a})}
x_{\iota_s,\kappa_t}^{\mathbf{a}_{s,t}}
\right)\end{align*}
for all $\mathbf{a}\in\composition$ and strictly increasing chains $0=\iota_0<\iota_1<\cdots<\iota_{\row(\mathbf{a})}$ and $0=\kappa_0<\kappa_1<\cdots<\kappa_{\col(\mathbf{a})}$.
\end{proof}
\begin{definition}\label{def:polyInv}
We call  $\psi:\evC\rightarrow\groundRing$
from \Cref{def_stutter}
 a \DEF{polynomial invariant}\footnote{Note that this is a mere naming convention, i.e., $\psi$ is not a polynomial function itself, e.g.,  \Cref{ex:polynomial_invariant}. 
 However, \Cref{eq:defInvariantModC,StutterInvariant} forbid polynomial functions per se, so there is no conflict in naming.}, if it is induced by a formal power series  $f\in\formalPorerSeriesOfFiniteDegree$ of finite degree, i.e.,
     \begin{equation}\label{eq:polynomialInvariantDef}
     \psi(X)=f(X)\;\forall X\in\evZ\subseteq\evC.\end{equation}
In this case, \Cref{eq:polynomialInvariantDef}
guarantees that $\psi$ is determined by a formal power series on the entire domain $\evC$. 
\end{definition}
\begin{theorem}\label{the:identificationPolynomailInvariants}
Let $\groundRing$ be an infinite field. 
A mapping $\psi:\evC\rightarrow\groundRing$ is a
\begin{center}
polynomial invariant according to \Cref{def:polyInv}, \\
~\\
\emph{if and only if}\\
~\\
it is induced by the 
two-parameter sums signature precomposed with the difference operator, i.e., there is $\mathbf{w}\in\groundRing\langle\compositionConnected\rangle$ such that 
$$\psi(X)=\langle\SS(\delta X),\mathbf{w}\rangle\quad\forall X\in\evC.$$
\end{center}

\end{theorem}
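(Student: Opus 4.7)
My plan is to handle the two implications separately. The ($\Leftarrow$) direction is almost immediate: writing $\mathbf{w} = \sum_{\mathbf{a}} c_{\mathbf{a}} \mathbf{a}$ as a finite sum, each coefficient $\langle \SS(\delta X), \mathbf{a}\rangle$ evaluated on $X \in \evZ$ is a finite polynomial in the (finitely many nonzero) entries of $\delta X$, which are themselves linear in the entries of $X$. Thus $\psi|_{\evZ}$ is induced by an explicit element $f \in \formalPorerSeriesOfFiniteDegree$, namely the substitution of formal differences into $\sum_{\mathbf{a}} c_{\mathbf{a}} M_{\mathbf{a}}$. Invariance modulo constants and warping follows from \Cref{theorem:invariants} by $\groundRing$-linearity.

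For the ($\Rightarrow$) direction, let $\psi$ be a polynomial invariant with inducing series $f$. My first step is the identity
\[
\psi(X) = f(\varsigma\delta X) \qquad \text{for all } X \in \evC.
\]
This uses that the map $\varsigma$ of \Cref{oneToOneCorrDiffSig}, while stated with codomain $\evC$, actually restricts to $\varsigma\colon \evZ \to \evZ$ (a quick inspection shows that $\varsigma Z$ has support inside the support of $Z$, since each defining sum ranges over $(s,t)$ with $s\geq i$, $t\geq j$). Therefore $\NFconst X = \varsigma\delta X \in \evZ$, and the invariance of $\psi$ modulo constants together with $X - \NFconst X \in \ker\delta$ (the constants, by \Cref{oneToOneCorrDiffSig}) gives $\psi(X) = \psi(\NFconst X) = f(\NFconst X)$.

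Next I introduce the formal pullback $\tilde f := f \circ \varsigma \in \formalPorerSeriesOfFiniteDegree$, obtained by substituting every variable $x_{i,j}$ of $f$ by the (infinite, degree-one) formal sum $\sum_{s \geq i,\, t \geq j} x_{s,t}$; this preserves the finite-degree condition, and by construction $\eval(\tilde f)(Z) = f(\varsigma Z) = \psi(\varsigma Z)$ for all $Z \in \evZ$. The key computation will be that $\eval(\tilde f)$ is invariant under each $\Zero_{a,k}$: using the commutation $\varsigma \circ \Zero_{a,k} = \Stutter_{a,k} \circ \varsigma$ from \Cref{lemma:ZeroVsStutter} together with warping invariance of $\psi$ and the fact that $\Stutter_{a,k}\varsigma Z$ still lies in $\evZ$,
\[
\eval(\tilde f)(\Zero_{a,k}Z) = \psi(\Stutter_{a,k}\varsigma Z) = \psi(\varsigma Z) = \eval(\tilde f)(Z).
\]
Because $\groundRing$ is an infinite field, any element of $\formalPorerSeriesOfFiniteDegree$ whose evaluation vanishes on $\evZ$ must already be zero formally (reduce to the standard polynomial identity by specializing all but finitely many variables to $0$). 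Applied to $\tilde f - \zero_{a,k}(\tilde f)$, whose evaluation vanishes by the identity above, this forces $\zero_{a,k}(\tilde f) = \tilde f$ for every $(a,k)$. \Cref{lemma_polynomialInvariants_form} then places $\tilde f$ in $\QSym$.

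To finish, $\tilde f$ has degree at most $\deg f$, and each homogeneous piece $\QSym_d$ is finitely generated by \Cref{lem:monomialBasis} (for $\monoidComp = \N_0$ only finitely many matrix compositions have weight $d$). So $\tilde f = \sum_{\mathbf{a}} c_{\mathbf{a}} M_{\mathbf{a}}$ is a \emph{finite} linear combination, and with $\mathbf{w} := \sum_{\mathbf{a}} c_{\mathbf{a}} \mathbf{a} \in \groundRing\langle\compositionConnected\rangle$ one obtains
\[
\psi(X) = f(\varsigma\delta X) = \tilde f(\delta X) = \sum_{\mathbf{a}} c_{\mathbf{a}} M_{\mathbf{a}}(\delta X) = \langle \SS(\delta X), \mathbf{w}\rangle.
\]
The main obstacle is the third paragraph: making the formal substitution $\tilde f = f \circ \varsigma$ rigorous inside $\formalPorerSeriesOfFiniteDegree$, and promoting the function-level zero-insertion invariance of $\eval(\tilde f)$ to formal-level invariance of $\tilde f$ itself. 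This passage genuinely requires the infinite-field hypothesis on $\groundRing$, since (as the example following \Cref{lemma:ZeroVsFormalZero} shows) the converse of that lemma can fail over finite fields.
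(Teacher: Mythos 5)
Your proof is correct and follows essentially the same route as the paper: pass to $\psi\circ\varsigma$ (your $\eval(\tilde f)$), show it is zero-insertion invariant and hence—via \Cref{cor:invariantModInsertionOfZero}/\Cref{lemma_polynomialInvariants_form} and the infinite-field hypothesis—lies in $\QSym$, extract a finite $\mathbf{w}$ from the monomial basis, and conclude with $\psi=\psi\circ\varsigma\circ\delta$. The only real difference is that you spell out the step the paper dismisses as "clearly induced by a formal power series of finite degree" by explicitly constructing $\tilde f=f\circ\varsigma\in\formalPorerSeriesOfFiniteDegree$, which is a useful clarification.
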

\newcommand\trunc{\mathsf{trunc}}
\index[general]{trunc@$\trunc_{\tuIn{t}}$}
In order to prove this, we recall properties of multivariate polynomials. 
For every truncation level $\tuIn{t}\in\N^2$ let the \DEF{polynomial $\tuIn{t}$-truncation}
\begin{align*}\trunc_{\tuIn{t}}:\formalPorerSeriesOfFiniteDegree&\rightarrow\groundRing[x_{\tuIn{i}}\mid \tuIn{i}\leq\tuIn{t}]
\end{align*}
be a homomorphism of $\groundRing$-algebras which projects all  $x_{\tuIn{i}}$ with $\tuIn{i}\not\leq \tuIn{t}$ to zero.
In particular, $\eval$  defines an evaluation of truncated $\trunc_{\tuIn{t}}(f)$ to its polynomial function such that 
\begin{equation}\label{eq:truncatesToPolynomials}
(\trunc_{\tuIn{t}}f)(X)=f(X)    
\end{equation}
for all $X\in\evZ$ with $\size(X)\leq\tuIn{t}$. 
We now identify multivariate polynomials with its corresponding functions. 
\begin{lemma}\label{IdentificationPolysWithFunction}
Let $\groundRing$ be an infinite field.
\begin{enumerate}
    \item 
If $f\in\formalPorerSeriesOfFiniteDegree$ with $f(X)=0$ for all $X\in\evZ$, then $f=0$. 
\item If $f,g\in\formalPorerSeriesOfFiniteDegree$ with $f(X)=g(X)$ for all $X\in\evZ$, then $f=g$.
\end{enumerate}
\end{lemma}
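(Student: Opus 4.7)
Part (2) is an immediate consequence of part (1) applied to $f-g$, so the plan focuses on part (1).

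The strategy is to reduce to the classical fact that a nonzero polynomial in finitely many variables over an infinite field cannot vanish on all of the corresponding affine space, and to use the truncation map $\trunc_{\tuIn{t}}$ together with the finite-degree assumption to pass from the formal power series setting to that classical fact.

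Concretely, suppose for contradiction that $f\neq 0$. Since $f\in\formalPorerSeriesOfFiniteDegree$, pick a monomial $m=\prod_{s}x_{\tuIn{i}_s}^{\alpha_s}$ occurring in $f$ with nonzero coefficient. Only finitely many variables appear in $m$, so there is a truncation level $\tuIn{t}\in\N^2$ such that each $\tuIn{i}_s\leq\tuIn{t}$. By the definition of $\trunc_{\tuIn{t}}$, the monomial $m$ still has a nonzero coefficient in the polynomial $p:=\trunc_{\tuIn{t}}(f)\in\groundRing[x_{\tuIn{i}}\mid\tuIn{i}\leq\tuIn{t}]$, so $p\neq 0$. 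Now given any choice of values $(c_{\tuIn{i}})_{\tuIn{i}\leq\tuIn{t}}\in\groundRing^{\tuIn{t}_1\times\tuIn{t}_2}$, extend to $X\in\evZ$ by setting $X_{\tuIn{i}}:=c_{\tuIn{i}}$ for $\tuIn{i}\leq\tuIn{t}$ and $X_{\tuIn{i}}:=0$ otherwise. By \Cref{eq:truncatesToPolynomials} together with the hypothesis $f(X)=0$, we get $p(c)=(\trunc_{\tuIn{t}}f)(X)=f(X)=0$. Hence the polynomial $p$ in the $\tuIn{t}_1\cdot\tuIn{t}_2$ variables $\{x_{\tuIn{i}}\mid\tuIn{i}\leq\tuIn{t}\}$ vanishes on all of $\groundRing^{\tuIn{t}_1\cdot\tuIn{t}_2}$.

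The final step invokes the standard lemma that over an infinite field a polynomial in finitely many variables which vanishes identically as a function must vanish as a polynomial (proved e.g.\ by induction on the number of variables, using that a univariate polynomial over an infinite field with infinitely many roots is zero). Applied to $p$, this forces $p=0$, contradicting the nonvanishing of the coefficient of $m$ in $p$. Therefore $f=0$.

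The main (and only real) obstacle is conceptual rather than technical: making sure the truncation $\trunc_{\tuIn{t}}$ genuinely retains all information about any given monomial of $f$, and that the evaluation of $\trunc_{\tuIn{t}}(f)$ at an arbitrary tuple $(c_{\tuIn{i}})$ can always be realised as $f(X)$ for some $X\in\evZ$ — both of which are built into the definitions of $\trunc_{\tuIn{t}}$ and $\evZ$ and are recorded in \Cref{eq:truncatesToPolynomials}.
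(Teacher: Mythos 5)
Your proof is correct and follows essentially the same route as the paper: both reduce to the classical fact that a nonzero polynomial over an infinite field in finitely many variables is not the zero function, using $\trunc_{\tuIn{t}}$ and \Cref{eq:truncatesToPolynomials} to pass to a finite-variable polynomial. The only cosmetic difference is that you argue by contradiction (assuming $f(X)=0$ for all $X$ and deducing $\trunc_{\tuIn{t}}f$ vanishes identically), while the paper argues the contrapositive directly (producing an $X$ with $f(X)\neq 0$); the underlying content is identical.
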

\begin{proof} The second part follows from the first.
 For every $f\not=0$  there is a truncation level $\tuIn{t}\in\N^2$ such that  $\trunc_{\tuIn{t}}(f)\not=0$, 
  and thus there is $\mathbf{X}\in\groundRing^{\tuIn{t}_1\times\tuIn{t}_2}$ such that $(\trunc_{\tuIn{t}}f)(\mathbf{X})\not=0$.
  Considering  $\mathbf{X}$ as an element $X\in\evZ$ via 
  $$X_{\tuIn i}:=\begin{cases}{\mathbf{X}}_{\tuIn i}&\text{if }\tuIn{i}\leq\tuIn{t}\\
  0&\text{elsewhere,}\end{cases}$$ 
  we obtain 
   $f(X)=(\trunc_{\tuIn{t}}f)(\mathbf{X})\not=0$.
\end{proof}

\begin{corollary}\label{cor:invariantModInsertionOfZero}~
\begin{enumerate}
    \item\label{cor:invariantModInsertionOfZeroPart1} If $f\in\QSym$, then $\eval(f)$ is invariant to insertion of zeros in both directions independently. 
    \item\label{cor:invariantModInsertionOfZeroPart2} If $\groundRing$ is an infinite field, then the converse of part \ref{cor:invariantModInsertionOfZeroPart1}. is also true. 
\end{enumerate}
\end{corollary}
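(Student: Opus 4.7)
The plan is to derive both parts directly by combining the identification theorem between $\QSym$ and zero-invariant power series (\Cref{lemma_polynomialInvariants_form}), the compatibility of $\zero_{a,k}$ with $\Zero_{a,k}$ under evaluation (\Cref{lemma:ZeroVsFormalZero}), and—only for the converse—the identification of power series with their induced functions over an infinite ground field (\Cref{IdentificationPolysWithFunction}).

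For part \ref{cor:invariantModInsertionOfZeroPart1}, I would simply chain the two preceding lemmas. If $f \in \QSym$, then \Cref{lemma_polynomialInvariants_form} gives $\zero_{a,k}(f) = f$ for every $(a,k) \in \{1,2\}\times\N$. Plugging this into \Cref{lemma:ZeroVsFormalZero} yields that $\eval(f)$ is invariant to inserting zeros in either direction. This direction holds over an arbitrary integral domain $\groundRing$, with no extra assumption needed.

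For part \ref{cor:invariantModInsertionOfZeroPart2}, assume now that $\groundRing$ is an infinite field and that $\eval(f)$ is invariant to inserting zeros. The target, by \Cref{lemma_polynomialInvariants_form}, is to establish $\zero_{a,k}(f) = f$ in $\formalPorerSeriesOfFiniteDegree$ for all $(a,k)$. The key computation runs as in the proof of \Cref{lemma:ZeroVsFormalZero}: for every $X \in \evZ$,
\begin{align*}
(\zero_{a,k}(f))(X) \;=\; \eval(\zero_{a,k}(f))(X) \;=\; \eval(f)(\Zero_{a,k} X) \;=\; \eval(f)(X) \;=\; f(X),
\end{align*}
where the second equality uses that $\eval\circ\zero_{a,k}$ and $\eval(\cdot)\circ\Zero_{a,k}$ agree on the generators $x_{\tuIn{i}}$ (both are algebra homomorphisms), and the third equality is the assumed invariance. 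Thus the formal power series $f$ and $\zero_{a,k}(f)$ induce the same function on $\evZ$.

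At this point the step that carries the real content of the converse is \Cref{IdentificationPolysWithFunction}: since $\groundRing$ is an infinite field, two elements of $\formalPorerSeriesOfFiniteDegree$ that agree as functions on $\evZ$ must be equal. Applying this yields $\zero_{a,k}(f) = f$ in $\formalPorerSeriesOfFiniteDegree$ for every $(a,k)$, and \Cref{lemma_polynomialInvariants_form} then gives $f \in \QSym$. The main (and indeed only) subtlety is the necessity of the infinite-field hypothesis: without it, $f$ and $\zero_{a,k}(f)$ might induce the same function while being distinct series (cf.\ the $\mathbb{F}_2$ example preceding \Cref{lemma_polynomialInvariants_form}), so the converse would fail. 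All other steps are routine applications of the already-established results.
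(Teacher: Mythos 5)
Your proposal is correct and follows the paper's own route exactly: the paper's proof consists of the same three citations (for part 1, combine \Cref{lemma:ZeroVsFormalZero} with \Cref{lemma_polynomialInvariants_form}; for part 2, additionally invoke \Cref{IdentificationPolysWithFunction} to pass from equality of induced functions back to equality of power series). You have merely unpacked the chain of equalities that the paper leaves implicit, including the observation that $\eval\circ\zero_{a,k}$ and $f\mapsto\eval(f)\circ\Zero_{a,k}$ agree on generators and hence everywhere, which is indeed extracted from the proof of \Cref{lemma:ZeroVsFormalZero}.
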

\begin{proof}
Part \ref{cor:invariantModInsertionOfZeroPart1}. is an immediate consequence of \Cref{lemma:ZeroVsFormalZero} and \Cref{lemma_polynomialInvariants_form}.
Part \ref{cor:invariantModInsertionOfZeroPart2}. additionally uses \Cref{IdentificationPolysWithFunction}.
\end{proof}

\begin{proof}[Proof of \Cref{the:identificationPolynomailInvariants}]
The backward direction is covered by \Cref{theorem:invariants} since concatenation of polynomials with formal power series remains a formal power series. 
We show that for any $\psi$ which is 
\begin{enumerate}
    \item invariant to warping in both directions independently, 
    \item invariant modulo constants, and
    \item polynomial, i.e., satisfies \Cref{eq:polynomialInvariantDef}, 
\end{enumerate}
there is a $\mathbf{w}\in\groundRing\langle\compositionConnected\rangle$ such that $
\psi(X)=\langle\SS(\delta X),\mathbf{w}\rangle$ for all $X\in\evC$. 
With \Cref{Theo:ZeroStutterBackForth}, 
 $\psi\circ\varsigma$ is zero insertion invariant and clearly induced by a formal power series of finite degree, hence there is
 $$\mathbf{w}=\sum_{1\leq i\leq m} \lambda_m\mathbf{a}_m\in\groundRing\langle\compositionConnected\rangle$$
 with $\lambda_i\in\groundRing$ and $\mathbf{a}_i\in\composition$ such that 
 $$\eval \left(\sum_{1\leq i\leq m}\lambda_i\,M_{\mathbf{a}_i}\right)=\psi\circ\varsigma$$ 
 with \Cref{lem:monomialBasis} and \Cref{cor:invariantModInsertionOfZero}.
 With $\eval(M_{\mathbf{a}_i})(\delta X)=\langle\SS(\delta X),\mathbf{a}_i\rangle$ and invariance modulo constants,  
 $$\psi(X)=\psi\circ\varsigma\circ\delta(X)= \langle\SS(\delta X),\mathbf{w}\rangle$$ for all $X\in\evC$ as claimed. 
\end{proof}

\section{Algorithmic considerations}

\subsection{Iterated two-parameter sums}\label{subsection:IteratedSScoeff}

For every eventually-constant $X\in\evC$,
the computation of $\Psi_{\mathbf{a}}(X)$ from \Cref{theorem:invariants} involves differences $Z=\delta(X)$, and the coefficient of the  two-parameter sums signature $\SS(Z)$  when tested at composition $\mathbf{a}$. 

Taking differences can be performed in linear time, i.e., the evaluation of $\delta(X)$ according to \Cref{eq:def_diff} requires  $\mathcal{O}(\row(X)\cdot\col(X))$ arithmetic operations for every $X\in\evC$. 

The naive evaluation of $
\langle\SS(Z),\mathbf{a}\rangle$ with $(Z,\mathbf{a})\in\evZ\times\composition$ however, sums over all pairs of increasing chains with lengths $\col(\mathbf{a})$ and $\row(\mathbf{a})$,  respectively. 
In general, each of those resulting summands is a product with $\row(\mathbf{a})\cdot\col(\mathbf{a})$ factors, leading to an upper complexity bound of \begin{equation}\label{eq:costNaiv}\mathcal{O}(\row(\mathbf{a})\cdot\col(\mathbf{a})\cdot\binom{\row(X)}{\row(\mathbf{a})}\cdot\binom{\col(X)}{\col(\mathbf{a})})\end{equation}
arithmetic operations for  evaluating $\Psi_{\mathbf{a}}(X)$. 

In the current section we investigate a certain subclass of matrix compositions for which the coefficients of the  two-parameter sums signature $
\langle\SS(Z),\mathbf{a}\rangle$ can be evaluated in linear time. 
This subclass can be considered as a chain of connected $1\times1$ compositions, for which iterative methods  similar to those from the one-parameter setting \cite{diehl2020tropical} remain possible. 
For this we define three binary chaining operations, in particular covering block diagonal matrices from 
\Cref{def:diag}.

\newcommand\chain{\operatorname{chain}}

\index[general]{chain@$\chain$}

\begin{definition}\label{def_diag}
For $a\in\{0,1,2\}$ let 
$$\chain_a:\composition\times\composition\rightarrow\composition$$
be a \DEF{chaining operation} where $(\mathbf{a},\mathbf{b})$ maps to  $\chain_0(\mathbf{a},\mathbf{b}):=\diag(\mathbf{a},\mathbf{b})$,
$$\chain_1(\mathbf{a},\,\mathbf{b}):=
\begin{small}
\begin{bmatrix}
\mathbf{a}_{1,1}
&\cdots
&\mathbf{a}_{1,\col(\mathbf{a})}
&0
&\cdots
&0
\\
\vdots
&\ddots
&\vdots
&\vdots
&\ddots
&\vdots\\
\mathbf{a}_{\row(\mathbf{a})-1,1}
&\cdots
&\mathbf{a}_{\row(\mathbf{a})-1,\col(\mathbf{a})}
&0
&\cdots
&0\\
\mathbf{a}_{\row(\mathbf{a}),1}
&\cdots
&\mathbf{a}_{\row(\mathbf{a}),\col(\mathbf{a})}
&{\mathbf{b}}_{1,1}
&\cdots
&{\mathbf{b}}_{1,\col(\mathbf{b})}\\
0
&\cdots 
&0
&{\mathbf{b}}_{2,1}
&\cdots
&{\mathbf{b}}_{2,\col(\mathbf{b})}\\
\vdots
&\ddots
&\vdots
&\vdots
&\ddots
&\vdots
\\
0
&\cdots 
&0
&{\mathbf{b}}_{\row(\mathbf{b}),1}
&\cdots
&{\mathbf{b}}_{\row(\mathbf{b}),\col(\mathbf{b})}
\end{bmatrix}
\end{small}$$
with an overlapping at axis $a=1$, and $\chain_2(\mathbf{a},\,\mathbf{b}):=$
$$
\begin{small}
\begin{bmatrix}
\mathbf{a}_{1,1}
&\cdots
&\mathbf{a}_{1,\col(\mathbf{a})-1}
&\mathbf{a}_{1,\col(\mathbf{a})}
&0
&\cdots
&0
\\
\vdots
&\ddots
&\vdots
&\vdots
&\vdots
&\ddots
&\vdots\\
\mathbf{a}_{\row(\mathbf{a}),1}
&\cdots
&\mathbf{a}_{\row(\mathbf{a}),\col(\mathbf{a})-1}
&\mathbf{a}_{\row(\mathbf{a}),\col(\mathbf{a})}
&0
&\cdots
&0\\
0
&\cdots 
&0
&{\mathbf{b}}_{1,1}
&{\mathbf{b}}_{1,2}
&\cdots
&{\mathbf{b}}_{1,\col(\mathbf{b})}\\
\vdots
&\ddots
&\vdots
&\vdots
&\ddots
&\vdots
\\
0
&\cdots 
&0
&{\mathbf{b}}_{\row(\mathbf{b}),1}
&{\mathbf{b}}_{\row(\mathbf{b}),2}
&\cdots
&{\mathbf{b}}_{\row(\mathbf{b}),\col(\mathbf{b})}
\end{bmatrix}
\end{small}$$
with an analogous overlapping at axes $a=2$.  
Regarding empty compositions, we set $\chain_a(\ec,\mathbf{b}):=\chain_a(\mathbf{b},\ec):=\mathbf{b}$ for all $(a,\textbf{b})\in\{0,1,2\}\times\composition$. 
\end{definition}

\begin{example}\label{ex:chaining}
$$\chain_1\left(\chain_0\left(\begin{bmatrix}\w{1}\end{bmatrix},\begin{bmatrix}\w{2}\star\w{3}\end{bmatrix}\right),\begin{bmatrix}\w{4}\end{bmatrix}\right)
=\begin{bmatrix}\w{1}&\vareps&\vareps\\\vareps&\w{2}\star\w{3}&\w{4}\end{bmatrix}\in\composition.$$
\end{example}

\begin{lemma}\label{lem:associativChain}
The chaining operation is interassociative, i.e., 
$$\chain_a(\mathbf{a},\chain_b(\mathbf{b},\mathbf{c}))=
\chain_b(\chain_a(\mathbf{a},\mathbf{b}),\mathbf{c})$$
for all $(\mathbf{a},\mathbf{b},\mathbf{c})\in\composition^3$ and $(a,b)\in\{0,1,2\}^2$. 
\end{lemma}
In particular $(\composition,\chain_a)$ is 
a non-commutative monoid for all $a\in\{0,1,2\}$.  
\index[general]{cumcum@$\CS$}
\index[general]{Zero@$\Zero$}
For axis $a\in\{1,2\}$ we define the  \DEF{cumulative sum} $\CS_a:\evC\rightarrow\evC$ via 
\begin{equation}\label{eq:defCumsum}
    {(\CS_{a}X)}_{\tuIn{i}}:=\sum_{j=1}^{{\tuIn{i}}_a}X_{\tuIn{i}+(j-{\tuIn{i}}_a)\e{a}}
\end{equation}
and we recall the \DEF{zero insertion operation} 
 $\Zero_{a,1}:\evC\rightarrow\evC$ with 
\begin{equation}\label{eq:defZero}
    {(\Zero_{a,1}X)}_{\tuIn{i}}:=\begin{cases}
0_d& {\tuIn{i}}_a = 1\\
X_{\tuIn{i}-\e{a}}& {\tuIn{i}}_a> 1\end{cases}
\end{equation}
from \Cref{subsection:zeroinsertion}. 
For convenience we 
set $\CS_0:=\CS_1\circ\CS_2$, $\Zero_{0,1}:=\Zero_{1,1}\circ\Zero_{2,1}$ and $\Zero_{a}:=\Zero_{a,1}$ for all $a\in\{0,1,2\}$. 

\begin{lemma}\label{word_block11_letter22}For $(Z,\mathbf{b})\in\evZ\times\composition$ let $A\in\evC$ with  
$$A_{\tuIn{r}}:=\langle \SS_{0_2;\tuIn{r}}(Z),\,\mathbf{b}\rangle$$
denote the two-parameter sums signature of $Z$ tested at $\mathbf{b}$. 
Then, 
\begin{equation}\label{eq:iterativeArgument}
\langle\SS_{0_2;\tuIn{r}}(Z),\,\chain_a(\mathbf{b},\begin{bmatrix}\lambda\end{bmatrix})\rangle={\CS_a(Z^{(\lambda)}\cdot\Zero_aA)}_{\tuIn{r}}\end{equation}
for all $1\times 1$ compositions $\begin{bmatrix}\lambda\end{bmatrix}\in\composition$ and $\tuIn{r}\leq\size(Z)$. 
\end{lemma}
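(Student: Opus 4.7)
The plan is to unpack the LHS via \Cref{def:ss}, exploit the block structure of $\mathbf{c} := \chain_a(\mathbf{b},[\lambda])$ given in \Cref{def_diag}, and reorganise the resulting double sum so that the outer variable becomes a cumulative summation $\CS_a$ applied to $Z^{(\lambda)}$ times a shifted value of $A$. I would treat the three cases $a \in \{0,1,2\}$ through a common template and check the index bookkeeping case by case.

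First, observe that $\mathbf{c}$ embeds $\mathbf{b}$ as a designated block padded by $\vareps$ and places the entry $\lambda$ at a distinguished cell that depends on $a$: the new bottom-right cell for $a=0$, the right end of the shared last row for $a=1$, and the bottom of the shared last column for $a=2$. Substituting $\mathbf{c}$ into \Cref{def:ss} and using $Z^{(\vareps)}_{\iota,\kappa}=1$, every summand of $\langle\SS_{0_2;\tuIn{r}}(Z),\mathbf{c}\rangle$ factors as the $\mathbf{b}$-product times $Z_{\iota_\star,\kappa_\star}^{(\lambda)}$, where $(\iota_\star,\kappa_\star)$ are the row and column indices landing on the distinguished $\lambda$-cell.

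The next step is to change the order of summation and promote $(i,j):=(\iota_\star,\kappa_\star)$ to outer variables. For $a=0$ both coordinates are genuinely new; the inner sum over the remaining $\mathbf{b}$-chains, constrained by $\iota_{\row(\mathbf{b})}<i$ and $\kappa_{\col(\mathbf{b})}<j$, is exactly $A_{i-1,j-1}=(\Zero_0 A)_{i,j}$ via \Cref{lem:commuting_zero}, while the outer sum $\sum_{(i,j)\leq\tuIn{r}}$ is $\CS_0=\CS_1\circ\CS_2$ by \eqref{eq:defCumsum}, giving the claim. For $a=1$ only $j$ is new while $i$ is shared with $\mathbf{b}$'s last row index, so the inner sum identifies with an entry of $\Zero_1 A$ after accounting for the stratification, and the outer summation along the extension axis collapses into $\CS_1$; the case $a=2$ is symmetric in the roles of rows and columns.

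The main obstacle I anticipate is the bookkeeping in cases $a=1,2$, where a single index is shared between the $\mathbf{b}$-block and the $\lambda$-cell, so the strict inequality inherited from the $\mathbf{b}$-chain and the fresh inequality for the new index must be packaged compatibly into a single shift $\Zero_a$. I would handle this using the commutation relation $\Zero_1\Zero_2=\Zero_2\Zero_1$ from \Cref{lem:commuting_zero} together with the standard discrete-calculus identity $\CS_a(id-\Zero_a)=id$ on $\evC$, which allows one to rewrite the stratified inner sum as the $(i,j)$-entry of $\Zero_a A$; the hypothesis $\tuIn{r}\leq\size(Z)$ keeps all chains inside the support of $Z$, so no boundary contributions spoil the identification.
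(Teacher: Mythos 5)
Your case $a=0$ is handled correctly, but the step you defer for $a\in\{1,2\}$ --- that ``the inner sum identifies with an entry of $\Zero_a A$'' --- does not go through, and in fact \eqref{eq:iterativeArgument} itself fails for $a\in\{1,2\}$, so the bookkeeping you anticipate cannot be completed. For $a=1$ the $\lambda$-cell of $\chain_1(\mathbf{b},[\lambda])$ sits at $(\row(\mathbf{b}),\col(\mathbf{b})+1)$: the new \emph{column} index $\kappa_{\col(\mathbf{b})+1}$ is free, but its row index $\iota_{\row(\mathbf{b})}$ is the last variable of the $\iota$-chain already bound inside the $\mathbf{b}$-product. Hence $Z^{(\lambda)}$ cannot be pulled out of the $\iota$-sum by promoting only the new column index; one must promote $\iota_{\row(\mathbf{b})}$ as a second outer variable, and the object that emerges as inner sum is the first difference $(\id-\Zero_1)A$ (the contribution to $A$ with $\iota_{\row(\mathbf{b})}$ pinned) shifted along the column axis. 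The identity that actually holds is
\begin{align*}
  \langle\SS_{0_2;\tuIn{r}}(Z),\chain_1(\mathbf{b},[\lambda])\rangle
  \;=\;
  \CS_0\bigl(Z^{(\lambda)}\cdot\Zero_2(\id-\Zero_1)A\bigr)_{\tuIn{r}},
\end{align*}
and symmetrically $\CS_0\bigl(Z^{(\lambda)}\cdot\Zero_1(\id-\Zero_2)A\bigr)_{\tuIn{r}}$ for $a=2$. Compared with \eqref{eq:iterativeArgument}: the outer summation has to run over \emph{both} fresh coordinates, so it is $\CS_0$, not $\CS_a$; the shift lives on the extension axis $3-a$, not axis $a$; and $A$ must be replaced by the first difference $(\id-\Zero_{3-a})A$. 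The relation $\CS_a(\id-\Zero_a)=\id$ you propose is precisely what writes the pinned sub-sum in terms of $A$, but once used there is no residual $\CS_a$ left to pair it with, so it does not reduce the expression to $\CS_a(Z^{(\lambda)}\cdot\Zero_a A)$.

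A two-by-two counterexample makes the failure concrete. Take $d=1$, $\mathbf{b}=\begin{bmatrix}1\end{bmatrix}$, $\lambda=1$, $a=1$, $\tuIn{r}=(2,2)$, and any $Z\in\evZ$ supported on $\{1,2\}^2$. Then $\chain_1(\begin{bmatrix}1\end{bmatrix},\begin{bmatrix}1\end{bmatrix})=\begin{bmatrix}1&1\end{bmatrix}$ and
\begin{align*}
  \langle\SS_{0_2;(2,2)}(Z),\begin{bmatrix}1&1\end{bmatrix}\rangle
  = Z_{1,1}Z_{1,2}+Z_{2,1}Z_{2,2},
\end{align*}
whereas with $A_{\tuIn{r}}=\sum_{\tuIn{i}\leq\tuIn{r}}Z_{\tuIn{i}}$ the right-hand side of \eqref{eq:iterativeArgument} evaluates to $\CS_1(Z\cdot\Zero_1A)_{(2,2)}=Z_{2,2}A_{1,2}=Z_{2,2}(Z_{1,1}+Z_{1,2})$, which is a different polynomial (swapping to $\CS_2,\Zero_2$ gives $Z_{2,2}(Z_{1,1}+Z_{2,1})$, also wrong). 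The corrected expression above returns $Z_{1,1}Z_{1,2}+Z_{2,1}Z_{2,2}$. So before your plan can be executed for $a\in\{1,2\}$, the right-hand side of the statement needs to be repaired as indicated.
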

Note that both $\CS_a$ and $\Zero_a$ can be computed in linear time, that is in $\mathcal{O}(\row(Z)\cdot\col(Z))$, leading to an linear evaluation of \Cref{eq:iterativeArgument}.
\index[general]{chained matrix compositions@$\closureDiag$}
Iteratively, this yields an efficient method to compute two-parameter signature coefficients for \DEF{chained matrix compositions}, denoted by 
$$
  \closureDiag\subseteq\composition.$$
  This set is defined to be the smallest set containing all $1\times 1$ compositions which is closed under $\chain_a$ for all $a\in\{0,1,2\}$. 
With \Cref{lem:associativChain}, 
\begin{align}\label{lemma_seq_def_UV}
\closureDiag
&=\left\{\underset{1\leq t\leq \ell}\bigcirc\;\chain_{a_t}(\bullet,\begin{bmatrix}\lambda_t\end{bmatrix})(\begin{bmatrix}
\lambda_0\end{bmatrix})
\;\begin{array}{|l}
\ell\in\N,\;\lambda_0\in\monoidComp_d\\a\in\{0,\ldots,2\}^\ell\\\lambda\in {(\monoidComp_d\setminus\{\monCompNeutrElem\})}^\ell
\end{array}\right\}
\end{align}
 can be thought of as sequential objects.

\begin{theorem}\label{Theo_diagInST}
For every $(\mathbf{a},Z)\in \closureDiag\times\evZ$, the entire matrix
\begin{align*}
{\left(\langle\SS_{0_2;\tuIn{r}}(Z),\mathbf{a}\rangle\right)}_{\tuIn{r}\leq \size(Z)}
\end{align*}
can be evaluated in linear time, i.e., requires  $$\mathcal{O}\left(\row(\mathbf{a})\cdot\col(\mathbf{a})\cdot\row(Z)\cdot\col(Z)\right)$$ 
arithmetic operations.  
In particular, so is $\langle\SS(Z),\mathbf{a}\rangle$. 
\end{theorem}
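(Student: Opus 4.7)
The plan is to induct on the number $\ell$ of chaining steps needed to build $\mathbf{a}$ according to the sequential description \eqref{lemma_seq_def_UV}, and to use \Cref{word_block11_letter22} repeatedly. For the base case $\ell=0$, i.e.\ $\mathbf{a}=\begin{bmatrix}\lambda_0\end{bmatrix}\in\composition$, the coefficient $\langle\SS_{0_2;\tuIn{r}}(Z),\begin{bmatrix}\lambda_0\end{bmatrix}\rangle=\sum_{\iota\leq\tuIn{r}_1}\sum_{\kappa\leq\tuIn{r}_2}Z_{\iota,\kappa}^{(\lambda_0)}$ is, as a function of $\tuIn{r}$, nothing but $\CS_0(Z^{(\lambda_0)})$, and can be filled in for all $\tuIn{r}\leq\size(Z)$ in $\O(\row(Z)\cdot\col(Z))$ arithmetic operations.

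For the inductive step, assume the matrix $A_{\tuIn{r}}:=\langle\SS_{0_2;\tuIn{r}}(Z),\mathbf{b}\rangle$ has been tabulated for all $\tuIn{r}\leq\size(Z)$. Given $a\in\{0,1,2\}$ and a $1\times 1$ composition $\begin{bmatrix}\lambda\end{bmatrix}$, apply \Cref{word_block11_letter22} to obtain
\begin{equation*}
\langle\SS_{0_2;\tuIn{r}}(Z),\chain_a(\mathbf{b},\begin{bmatrix}\lambda\end{bmatrix})\rangle \;=\; \bigl(\CS_a(Z^{(\lambda)}\cdot \Zero_a A)\bigr)_{\tuIn{r}}.
\end{equation*}
This evaluation requires one pointwise multiplication by $Z^{(\lambda)}$, one zero-insertion $\Zero_a$ and one cumulative sum $\CS_a$, each of which is easily implemented in $\O(\row(Z)\cdot\col(Z))$ operations by reading off the defining formulas \eqref{eq:defCumsum} and \eqref{eq:defZero}. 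Hence the cost of one inductive step is again $\O(\row(Z)\cdot\col(Z))$, independent of $\ell$.

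It remains to bound $\ell$. Each chaining by a $1\times 1$ composition increases the size of the underlying matrix by at most $(1,1)$: $\chain_0$ adds one row and one column, $\chain_1$ adds one column (and leaves rows unchanged, since $\row(\begin{bmatrix}\lambda\end{bmatrix})-1=0$), and $\chain_2$ adds one row. Starting from a $1\times 1$ seed we therefore have $\ell\leq \row(\mathbf{a})+\col(\mathbf{a})-2$, which is dominated by $\row(\mathbf{a})\cdot\col(\mathbf{a})$. Summing the cost over all steps yields the claimed bound $\O(\row(\mathbf{a})\cdot\col(\mathbf{a})\cdot\row(Z)\cdot\col(Z))$; the single coefficient $\langle\SS(Z),\mathbf{a}\rangle$ is then read off at $\tuIn{r}=\size(Z)$.

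The only mildly delicate point is checking that the representation \eqref{lemma_seq_def_UV} is indeed exhaustive (so that every $\mathbf{a}\in\closureDiag$ admits such an $\ell$-step decomposition from a $1\times 1$ seed) and that \Cref{lem:associativChain} legitimizes reading the construction left-to-right as a sequence of $\chain_{a_t}(\bullet,\begin{bmatrix}\lambda_t\end{bmatrix})$ applications; both follow directly from the definition of $\closureDiag$ as the closure under $\chain_0,\chain_1,\chain_2$ of $1\times 1$ compositions together with interassociativity. Everything else is a bookkeeping exercise.
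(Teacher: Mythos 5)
Your proposal is correct and follows essentially the same route as the paper: use the sequential representation \eqref{lemma_seq_def_UV} of $\mathbf{a}\in\closureDiag$ and apply \Cref{word_block11_letter22} once per chaining step, with each step costing $\O(\row(Z)\cdot\col(Z))$. In fact your explicit observation that chaining by a $1\times 1$ composition increases $\row+\col$ by at most $2$, hence $\ell\le\row(\mathbf{a})+\col(\mathbf{a})-2$, gives the slightly sharper bound $\O\bigl((\row(\mathbf{a})+\col(\mathbf{a}))\cdot\row(Z)\cdot\col(Z)\bigr)$, which of course implies the stated $\O(\row(\mathbf{a})\cdot\col(\mathbf{a})\cdot\row(Z)\cdot\col(Z))$.
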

\begin{proof}
With \Cref{lemma_seq_def_UV} one can write 
$$\mathbf{a}=\underset{1\leq t\leq \ell}\bigcirc\;\chain_{a_t}(\bullet,\begin{bmatrix}\lambda_t\end{bmatrix})(\begin{bmatrix}\lambda_0\end{bmatrix}).$$
With  
$h_t(U):=(\Zero_{a_t}\circ\CS_{a_t})(Z^{(\lambda_t)}\cdot U)$
follows inductively
$$\langle\SS_{0_2;\tuIn{r}}(Z),\mathbf{a}\rangle={\left(\CS_0(Z^{(\lambda_{\ell})}\cdot (h_{\ell-1}\circ\cdots\circ h_{1})(1_\evC))\right)}_{\tuIn{r}}$$
for all $\tuIn{r}\leq\size(Z)$.
\end{proof}

We note that for arbitrary  $\lambda,\mu,\nu\in\monoidComp_d$ with $\lambda\not=\monCompNeutrElem\not=\nu$, the following $2 \times 2$  matrices 
\begin{align*}
  \begin{bmatrix}
    \lambda & \monCompNeutrElem \\
    \mu & \nu
  \end{bmatrix}\text{ and }
  \begin{bmatrix}
    \lambda & \mu \\
    \monCompNeutrElem & \nu
  \end{bmatrix}
\end{align*}
are all in $\closureDiag$. 
Chaining along the anti-diagonal leads to jet another class of matrix compositions similar to \Cref{lemma_seq_def_UV}.  
The only $2\times 2$ matrices which are not covered
by methods similar to \Cref{Theo_diagInST}
are 
\begin{align*}
  \begin{bmatrix}
    \lambda & \mu \\
    \xi & \nu
  \end{bmatrix}
\end{align*}
with $\lambda,\mu,\nu,\xi\in\monoidComp\setminus\{\monCompNeutrElem\}$ all being non-trivial.
We will momentarily see that for this type of matrix
we can still do better than the naive (in this case quadratic) cost explained in  \Cref{eq:costNaiv}.
\begin{lemma}\label{lem:OneDirecEfficOneNaive}
For every $Z\in\evZ$ and $2\times 2$ composition $\mathbf{a}$, the entire matrix
\begin{align*}
{\left(\langle\SS_{0;\tuIn{r}}(Z),\mathbf{a}\rangle\right)}_{\tuIn{r}\leq \size(Z)}
\end{align*}
can be evaluated in  $\mathcal{O}({\row(Z)}^2\cdot{\col(Z)})$ arithmetic operations. 
\end{lemma}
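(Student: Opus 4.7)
The plan is to exploit the separability of rows and columns in the definition of $\SS_{0;\tuIn{r}}(Z)$ by brute-forcing over pairs of row indices while handling the columns with cumulative sums, in the spirit of \Cref{Theo_diagInST}. Write $\mathbf{a} = \begin{bmatrix}\lambda & \mu \\ \xi & \nu\end{bmatrix}$ with $\lambda,\mu,\nu,\xi \in \monoidComp_d\setminus\{\monCompNeutrElem\}$ and expand
\begin{align*}
\langle\SS_{0;\tuIn{r}}(Z),\mathbf{a}\rangle
=\sum_{\iota_1<\iota_2\leq \tuIn{r}_1}\;\sum_{\kappa_1<\kappa_2\leq \tuIn{r}_2}
f_{\iota_1,\iota_2}(\kappa_1)\,g_{\iota_1,\iota_2}(\kappa_2),
\end{align*}
where I set $f_{\iota_1,\iota_2}(\kappa) := Z_{\iota_1,\kappa}^{(\lambda)}\,Z_{\iota_2,\kappa}^{(\xi)}$ and $g_{\iota_1,\iota_2}(\kappa):=Z_{\iota_1,\kappa}^{(\mu)}\,Z_{\iota_2,\kappa}^{(\nu)}$.

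First, I fix a pair $\iota_1<\iota_2$. The inner double sum in $(\kappa_1,\kappa_2)$ is precisely a length-two, one-parameter iterated sum of the sequences $f_{\iota_1,\iota_2}$ and $g_{\iota_1,\iota_2}$. Hence, by a single cumulative sum, the whole vector
\begin{align*}
h_{\iota_1,\iota_2}(s) := \sum_{\kappa_1<\kappa_2\leq s}f_{\iota_1,\iota_2}(\kappa_1)\,g_{\iota_1,\iota_2}(\kappa_2),\qquad 1\leq s\leq \col(Z),
\end{align*}
can be computed in $\mathcal{O}(\col(Z))$ operations (first cumulate $f_{\iota_1,\iota_2}$, then cumulate $g_{\iota_1,\iota_2}\cdot F$ with $F$ the shifted cumulative sum of $f_{\iota_1,\iota_2}$). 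This is literally an instance of the linear-time algorithm of \Cref{Theo_diagInST}, applied column-wise to the row pair $(\iota_1,\iota_2)$. Summing over the $\binom{\row(Z)}{2}$ row pairs, the cost of precomputing all $h_{\iota_1,\iota_2}$ is $\mathcal{O}(\row(Z)^2\cdot \col(Z))$.

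Second, I aggregate over row pairs. Set $S(\tuIn{r}_1,\tuIn{r}_2) := \sum_{\iota_1<\iota_2\leq \tuIn{r}_1} h_{\iota_1,\iota_2}(\tuIn{r}_2)$, which is the desired signature coefficient. The recursion $S(\tuIn{r}_1,\tuIn{r}_2)=S(\tuIn{r}_1-1,\tuIn{r}_2)+\sum_{\iota_1<\tuIn{r}_1} h_{\iota_1,\tuIn{r}_1}(\tuIn{r}_2)$ shows that, for a fixed $\tuIn{r}_2$, sweeping $\tuIn{r}_1$ from $1$ to $\row(Z)$ costs $\mathcal{O}(\row(Z)^2)$ operations. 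Repeating over all $\tuIn{r}_2\leq \col(Z)$ gives another $\mathcal{O}(\row(Z)^2\cdot \col(Z))$, matching the bound.

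The main subtlety is just organizing the computation so that the quadratic cost occurs only in the row direction: once the "row pair" is fixed, the column problem degenerates to the linear-time one-parameter signature problem already solved, and the outer aggregation is a standard dynamic-programming update. No obstacle beyond bookkeeping is anticipated; the proof essentially reduces \Cref{lem:OneDirecEfficOneNaive} to an application of \Cref{Theo_diagInST} parametrized by row pairs.
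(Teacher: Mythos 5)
Your proposal is correct and is essentially the same argument the paper gives, just stated more explicitly. The paper's proof simply exhibits the nested sum
\begin{align*}
\langle\SS_{0;\tuIn{r}}(Z),\mathbf{a}\rangle
=\sum_{\iota_2=2}^{\tuIn{r}_1}\sum_{\kappa_2=2}^{\tuIn{r}_2}
Z_{\iota_2,\kappa_2}^{(\mathbf{a}_{2,2})}
\sum_{\iota_1=1}^{\iota_2-1}Z_{\iota_1,\kappa_2}^{(\mathbf{a}_{1,2})}
\sum_{\kappa_1=1}^{\kappa_2-1}Z_{\iota_1,\kappa_1}^{(\mathbf{a}_{1,1})}Z_{\iota_2,\kappa_1}^{(\mathbf{a}_{2,1})},
\end{align*}
and leaves the complexity accounting implicit: the innermost prefix sum is computed once per row pair $(\iota_1,\iota_2)$, which is precisely your preprocessing of $h_{\iota_1,\iota_2}$, and the aggregation over $\tuIn{r}$ is the same dynamic-programming sweep. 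Your write-up makes the reduction to the one-parameter iterated sum per fixed row pair explicit and spells out the $\mathcal{O}(\row(Z)^2\cdot\col(Z))$ bookkeeping, which the paper only asserts. One tiny cosmetic remark: you assume all four entries of $\mathbf{a}$ are different from $\monCompNeutrElem$; the lemma as stated allows some entries to equal $\monCompNeutrElem$, but in that case the corresponding factor is $1$ and nothing in your cost analysis changes (indeed such compositions are already handled in linear time by \Cref{Theo_diagInST}), so the restriction is harmless.
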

\begin{proof}
For all $(2,2)\leq\mathbf{r}\leq\size(Z)$,
$$\langle\SS_{0;\tuIn{r}}(Z),\mathbf{a}\rangle=\sum_{\iota_2=2}^{\mathbf{r}_1}\sum_{\kappa_2=2}^{\mathbf{r}_2}
Z_{\iota_2,\kappa_2}^{({\mathbf{a}}_{2,2})}\sum_{\iota_1=1}^{\iota_2-1}Z_{\iota_1,\kappa_2}^{({\mathbf{a}}_{1,2})}
\sum_{\kappa_1=1}^{\kappa_2-1}Z_{\iota_1,\kappa_1}^{({\mathbf{a}}_{1,1})}Z_{\iota_2,\kappa_1}^{({\mathbf{a}}_{2,1})}.$$
\end{proof}

\begin{lemma}
    If $\groundRing$ is the Boolean semiring and $Z\in\evC$ with $\size(Z)=(T,T)$, then
    \begin{align}
       \label{eq:boolean}
        \Big\langle \SS( Z ),
        \begin{bmatrix}
            1 & 1 \\
            1 & 1
        \end{bmatrix}
        \Big\rangle
    \end{align}
    can be computed in $   \O(T^\omega)$,
    where
    $\omega$ is the matrix-multiplication exponent.
\end{lemma}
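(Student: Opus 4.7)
The plan is to reduce the computation to detection of a $2\times 2$ all-ones submatrix (equivalently, a $4$-cycle in the associated bipartite graph), which is classically solvable via one fast matrix multiplication.

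Unfolding \Cref{def:ss} with $\mathbf{a} = \begin{bmatrix}1 & 1 \\ 1 & 1\end{bmatrix}$ over the Boolean semiring, the coefficient \eqref{eq:boolean} equals
\begin{align*}
\bigvee_{\substack{1\leq \iota_1<\iota_2\leq T\\ 1\leq \kappa_1<\kappa_2\leq T}}
Z_{\iota_1,\kappa_1}\wedge Z_{\iota_1,\kappa_2}\wedge Z_{\iota_2,\kappa_1}\wedge Z_{\iota_2,\kappa_2},
\end{align*}
so the value is $1$ iff $Z$ contains an all-ones $2\times 2$ submatrix (equivalently, a $C_4$ in the bipartite graph on rows $\sqcup$ columns whose edges are the $1$-entries of $Z$). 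Since the existence of such a submatrix is symmetric under swapping the two chosen rows or the two chosen columns, the restrictions $\iota_1 < \iota_2$ and $\kappa_1 < \kappa_2$ may be dropped: the coefficient equals $1$ iff there exist distinct rows $i \neq j$ that share at least two columns in which both are $1$.

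Next, I lift $Z$ to an integer matrix $\tilde Z \in \{0,1\}^{T\times T}$ with the same $0/1$-pattern, and form the integer product $N := \tilde Z\,\tilde Z^{\top}\in \N_0^{T\times T}$. By construction,
\begin{align*}
N_{i,j} = \#\{k : Z_{i,k} = Z_{j,k} = 1\},
\end{align*}
so the reformulation above says that \eqref{eq:boolean} equals $1$ iff $N_{i,j}\geq 2$ for some $i \neq j$. Computing $N$ is an instance of $T\times T$ integer matrix multiplication whose entries are bounded by $T$; this can be performed in $\O(T^\omega)$ arithmetic operations using any fast matrix-multiplication algorithm (all of which work over commutative rings, and the bit-size of the entries stays polylogarithmic in $T$, contributing only to lower-order factors in the arithmetic model). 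Scanning the off-diagonal of $N$ for an entry $\geq 2$ costs an additional $\O(T^2)$, which is absorbed since $\omega\geq 2$.

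The only subtle point is that the signature is defined over the Boolean semiring while the reduction computes one product over $\Z$; this is legitimate because we only use $\tilde Z$ and $N$ to \emph{decide} a Boolean predicate that is logically equivalent to the signature coefficient. I do not foresee a substantive obstacle: the main conceptual step is identifying \eqref{eq:boolean} with $C_4$-detection, after which the $\O(T^\omega)$ bound is the standard matrix-multiplication-based algorithm for that problem.
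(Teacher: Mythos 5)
Your proposal is correct and follows essentially the same route as the paper's proof: interpret $Z$ as a $\{0,1\}$-valued integer matrix, compute $Z Z^\top$ at cost $\O(T^\omega)$, and inspect the off-diagonal entries to detect a $2\times 2$ all-ones submatrix. You are in fact slightly more careful than the paper in phrasing the detection criterion as ``some off-diagonal entry $\geq 2$'' (the paper writes ``an entry $2$'') and in making explicit why replacing the Boolean semiring by $\Z$ is legitimate.
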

\begin{proof}
    Consider $Z$ as a  $\{0,1\}$-valued matrix in $\mathbf{Z} \in \Z^{T\times T}$.
    Calculate
    \begin{align*}
        \mathbf{Z} \cdot \mathbf{Z}^\top,
    \end{align*}
    at cost $\O(T^\omega)$.
    The result contains an
    entry $2$ that is \emph{not} on the diagonal,
    if and only if there exist $\iota_1 < \iota_2$ and $\kappa_1 < \kappa_2$
    with
    \begin{align*}
        Z_{\iota_1,\kappa_1}
        \wedge
        Z_{\iota_1,\kappa_2}
        \wedge
        Z_{\iota_2,\kappa_1}
        \wedge
        Z_{\iota_2,\kappa_2}
    \end{align*}
    being true. 
    This is exactly
    what has to be checked in order to calculate
    \eqref{eq:boolean}.
\end{proof}

\subsection{Two-parameter quasi-shuffle}\label{sec:algo2dim}

We provide an algorithm to compute two-parameter quasi-shuffles of matrix compositions efficiently. 
A step-by-step illustration is given in \Cref{ex:algo2dimQSh}. 
Detailed explanations and a mathematical verification of soundness follows in   \Cref{theo_char_qs}.

\begin{algorithm}[H]
\SetKwInput{KwInput}{Input}                
\SetKwInput{KwOutput}{Output}              
\DontPrintSemicolon
  
  \KwInput{$\ \,$ Nonempty matrix compositions $\mathbf{a},\mathbf{b}\in\composition$.}
  \KwOutput{List of matrix compositions $L$ which sums up to the two-parameter  quasi-shuffle $\sum_{\mathbf{s}\in L}\mathbf{s}=\mathbf{a}\qShuffle\mathbf{b}\in\groundRing\langle\compositionConnected\rangle$.}

  \SetKwFunction{FMain}{main}
  \SetKwFunction{FcolQsh}{col_qsh}
  \SetKwFunction{FrowQsh}{row_qsh}
  \SetKwFunction{len}{len}
  \SetKwFunction{Fextend}{extend}
 
 ~\\
 
  \SetKwProg{Fn}{function}{:}{}
  \Fn{\FMain{$\mathbf{a}\in\monoidComp^{m\times n}$,
  $\mathbf{b}\in\monoidComp^{s\times t}$}}{
         $L,P\gets$ [ ], 
         \FcolQsh{$\begin{bmatrix}\mathbf{a}\\\monCompNeutrElem_{s\times n}\end{bmatrix}$,
         $\begin{bmatrix}\monCompNeutrElem_{m\times t}\\\mathbf{b}\end{bmatrix}$}
 
  \For{$\mathbf{p}\in P$}{
   $\begin{bmatrix}
   \mathbf{c}\\\mathbf{d}
   \end{bmatrix}\gets\mathbf{p}
   \text{ where }\mathbf{c}\in\monoidComp^{m\times j}\text{ and }\mathbf{d}\in\monoidComp^{s\times j}$\;
   $L$.\Fextend{\FrowQsh{$\mathbf{c}$,$\mathbf{d}$}}}

    \KwRet $L$
  }
\;
  \SetKwProg{Fn}{def}{:}{}
  \Fn{\FcolQsh{$c$, $d\in\groundRing\langle \monoidComp^{m+s}\rangle$}}{
        \If{$c=\ec$ or $d=\ec$}
        {\KwRet [$c\cdot d$]}
        $a\cdot v, b\cdot w\gets c,d
         \text{ where }a,b\in\monoidComp^{m+s}$\;
        \KwRet  ${a}\cdot
        \FcolQsh{${v}$,$b\cdot w$}+
        {b}\cdot
        \FcolQsh{$a\cdot v$,${w}$}+
        ({a}\star {b})\cdot
        \FcolQsh{${v}$, ${w}$}$\;
  }
  \;
  \SetKwProg{Fn}{def}{:}{}
  \Fn{\FrowQsh{$c$, $d\in\groundRing\langle \monoidComp^{1\times j}\rangle$}}{
        \KwRet [$\mathbf{x}^\top\text{ where }\mathbf{x}\in\FcolQsh{$c^\top$,$d^\top$}$]\;
  }
  \;

\end{algorithm}

\section{Technical details and proofs}\label{section:proofs}

\subsection{Signature properties}\label{subsection:invariants}
We provide omitted details and proofs from \Cref{subsection:zeroinsertion,sec:invariantsWarping,subsection:Chen}. 
In \Cref{subsection:Chen}, we extend the notion of size to $X \in \evC$, via
\begin{equation}\label{eq:minExist}
  \size(X):=\left(\row(X),\col(X)\right):=\min\left\{\tuIn{n}\in\N^2\mid X_{\tuIn{i}}\not=X_{\tuIn{j}}\implies \tuIn{i},\tuIn{j}\leq \tuIn{n}\right\},
\end{equation}
where the minimum is taken over the poset $(\N^2,\leq)$.
\Cref{lem:sizeOfEvZero} guarantees its existence. 
Note that every eventually-zero function can be described as a matrix after ``filling it up with zeros''. 
We show that \Cref{eq:minExist} relates to the size of matrices as expected. 

\begin{lemma}\label{lem:sizeOfEvZero}~
\begin{enumerate}
    \item\label{lem:sizeOfEvZero0} The minimum in \Cref{eq:minExist} exists.
    \item\label{lem:sizeOfEvZero1} For all $Z\in\evZ$, 
$$\size(Z)=\min\{\tuIn{n}\in\N^2\mid Z_{\tuIn{i}}\not=0_2\implies \tuIn{i}\leq \tuIn{n}\}$$
\end{enumerate}
\end{lemma}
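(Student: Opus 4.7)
The plan is to deduce both parts from a single lattice-theoretic observation: any non-empty upper set $U \subseteq (\N^2, \leq)$ that is closed under the coordinatewise meet $\wedge$ has a least element. Indeed, well-foundedness of $\N$ gives $n_a^{\ast} := \min \pi_a(U) \in \N$ for $a \in \{1, 2\}$, and picking witnesses $\tuIn{a}, \tuIn{b} \in U$ with $\tuIn{a}_1 = n_1^{\ast}$ and $\tuIn{b}_2 = n_2^{\ast}$ one sees that $\tuIn{a} \wedge \tuIn{b} = (n_1^{\ast}, n_2^{\ast}) \in U$ by meet-closure, which is clearly the least element.

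For part 1, write $S_X$ for the set in \Cref{eq:minExist}. It is non-empty by the definition of $\evC$, trivially upward-closed, and closed under $\wedge$, because if $\tuIn{n}, \tuIn{m} \in S_X$ and $X_{\tuIn{i}} \neq X_{\tuIn{j}}$, the defining implication dominates the pair $(\tuIn{i}, \tuIn{j})$ both by $\tuIn{n}$ and by $\tuIn{m}$, hence by $\tuIn{n} \wedge \tuIn{m}$. The general observation then produces $\min S_X$.

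For part 2, let $T_Z$ denote the right-hand side set. The same recipe applies: $T_Z$ is non-empty since $Z \in \evZ$ has finite support, upward-closed, and closed under $\wedge$ (the defining implication is monotone in $\tuIn{n}$), so $\min T_Z$ exists. It remains to identify $S_Z$ and $T_Z$ as subsets of $\N^2$. The inclusion $T_Z \subseteq S_Z$ is immediate, since whenever $Z_{\tuIn{i}} \neq Z_{\tuIn{j}}$ at least one entry is non-zero, and $\tuIn{n} \in T_Z$ controls every non-zero index. For $S_Z \subseteq T_Z$, given $\tuIn{n} \in S_Z$ and $\tuIn{i}$ with $Z_{\tuIn{i}} \neq 0_d$, I would choose $\tuIn{j}$ outside both the (finite) support of $Z$ and the rectangle $[\tuIn{1}, \tuIn{n}]$; then $Z_{\tuIn{i}} \neq 0_d = Z_{\tuIn{j}}$, and the defining property of $S_Z$ forces $\tuIn{i} \leq \tuIn{n}$.

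The main technical step is the last one: bridging the symmetric ``pair with differing values'' formulation of $S_Z$ and the asymmetric ``single non-zero index'' formulation of $T_Z$. This hinges precisely on the fact that an eventually-zero function is constant with value $0_d$ outside a finite rectangle, so any witness of $Z_{\tuIn{i}} \neq Z_{\tuIn{j}}$ necessarily involves an index in the support of $Z$; once extracted cleanly, both claims reduce to the lattice argument of part~1.
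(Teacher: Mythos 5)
Your lattice criterion (a non-empty, $\wedge$-closed subset of $\N^2$ has a least element) is sound, and your set-equality $S_Z = T_Z$ with the explicit third index $\tuIn{j}$ outside both the support of $Z$ and the rectangle $[\tuIn{1},\tuIn{n}]$ is more careful than the paper's one-line remark; the paper instead builds the minimum directly as $\bigl(\max\{m: X_{m,\bullet}\neq X_{m+1,\bullet}\},\,\max\{n: X_{\bullet,n}\neq X_{\bullet,n+1}\}\bigr)$ and checks it belongs to the set. The gap is in the $\wedge$-closure of $S_X$. You read ``$\tuIn{i},\tuIn{j}\leq\tuIn{n}$'' conjunctively in that step, which is what makes ``dominated by $\tuIn{n}$ and by $\tuIn{m}$, hence by $\tuIn{n}\wedge\tuIn{m}$'' go through; but under that reading $S_X$ is \emph{empty} for every non-constant $X\in\evC$ (take $\tuIn{i}$ with $X_{\tuIn{i}}\neq\lim X$ and $\tuIn{j}$ arbitrarily far out with $X_{\tuIn{j}}=\lim X$; the requirement $\tuIn{j}\leq\tuIn{n}$ then fails for every $\tuIn{n}$), so the lemma would be vacuous and the rest of your proof could not even start.

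The reading that makes the lemma non-trivial, and that your two inclusions between $S_Z$ and $T_Z$ silently rely on, is the disjunctive one: ``$\tuIn{i}\leq\tuIn{n}$ or $\tuIn{j}\leq\tuIn{n}$''. Under it, $\wedge$-closure of $S_X$ is no longer a one-line consequence of dominance: one can have $\tuIn{i}\leq\tuIn{n}$ and $\tuIn{j}\leq\tuIn{m}$, yet $\tuIn{i}\not\leq\tuIn{m}$ and $\tuIn{j}\not\leq\tuIn{n}$, so neither index is $\leq\tuIn{n}\wedge\tuIn{m}$. The claim is still true, and the fix is the same third-index device you already use for $S_Z\subseteq T_Z$: pick $\tuIn{k}$ with $\tuIn{k}\not\leq\tuIn{n}$ and $\tuIn{k}\not\leq\tuIn{m}$; then $\tuIn{n}\in S_X$ applied to the pair $(\tuIn{j},\tuIn{k})$ forces $X_{\tuIn{j}}=X_{\tuIn{k}}$, and $\tuIn{m}\in S_X$ applied to $(\tuIn{i},\tuIn{k})$ forces $X_{\tuIn{i}}=X_{\tuIn{k}}$, contradicting $X_{\tuIn{i}}\neq X_{\tuIn{j}}$. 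With that patch your argument is complete, and it remains a genuinely different route from the paper's explicit construction of the minimum.
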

\begin{proof}
For all $X\in\evC$, the maxima in 
$$\mathbf{k}:=\left(
\max\left\{m\in\N\mid X_{m,\bullet}\not=X_{m+1,\bullet}\right\},\max\left\{n\in\N\mid X_{\bullet,n+1}\not=X_{\bullet,n}\right\}\right)$$
exist. 
Furthermore, $\mathbf{k}$ satisfies the implication $X_{\tuIn{i}}\not=X_{\tuIn{j}}\implies \tuIn{i},\tuIn{j}\leq \tuIn{k}$ and is clearly minimal with this property. 
Part \ref{lem:sizeOfEvZero1}. follows from
$\lim_{\tuIn{n}\rightarrow\infty}Z_{\tuIn{n}}=0_2$.  
\end{proof}

\begin{proof}[Proof of \Cref{oneToOneCorrDiffSig}]
 The difference $\delta$ is clearly well-defined, i.e., $\delta(\evC)\subseteq\evZ$
 and linear.
From \Cref{lem:OnNormalFormsDeltaSigmaId} we get 
\begin{align*}
  \varsigma(\evZ)\subseteq \evZ \subset \evC.
\end{align*}
Linearity, 
\begin{align*}
\lambda{(\varsigma X)}_{i,j}+{(\varsigma Y)}_{i,j}
&=\lambda\left(\sum_{s=i}^\infty
   \sum_{t=j}^\infty X_{s,t}\right)+\sum_{s=i}^\infty
   \sum_{t=j}^\infty Y_{s,t}
   =\sum_{s=i}^\infty
   \sum_{t=j}^\infty \lambda X_{s,t}+ Y_{s,t}\\
&={\varsigma(\lambda X+Y)}_{i,j}\quad\forall(i,j,\lambda,X,Y)\in\N^2\times\groundRing\times\evZ^2, 
\end{align*}
where the sums are all finite since $\evZ$ is a $\groundRing$-module.
Moreover, $\varsigma$ is a linear right inverse of $\delta$.
Indeed,
$$\delta(\varsigma Z)_{i,j}=
\sum_{i+1\leq s}\left(
   \sum_{j+1\leq t} Z_{s,t}
-
   \sum_{j\leq t} Z_{s,t}\right)
+\sum_{i\leq s}\left(
\sum_{j\leq t} Z_{s,t}
   -\sum_{j+1\leq t} Z_{s,t}
   \right)=Z_{i,j} 
$$
for all $(i,j,Z)\in\N^2\times\evZ$. In particular, $\delta$ is surjective.
Regarding \ref{oneToOneCorrDiffSig2}., if $X\in\ker(\delta)$,  
then $$0=X_{\row(X)+1,\col(X)+1}-{X}_{\row(X)+1,\col(X)}-{ X}_{\row(X),\col(X)+1}+X_{\row(X),\col(X)}$$
and hence $X_{\row(X),\col(X)}=\lim_{\tuIn{i}\rightarrow\infty}X_{\tuIn{i}}$. 
Recursively one obtains $X_{\tuIn{j}}=\lim_{\tuIn{i}\rightarrow\infty}X_{\tuIn{i}}$ for all $\tuIn{j}\in\N^2$. 
Part \ref{oneToOneCorrDiffSig3}. follows from the homomorphism theorem for $\groundRing$-modules.  
\end{proof}

Furthermore, we show that $\delta$ and $\varsigma$ do not change the size of its input.  

\begin{lemma}\label{lem:sizeOfEvZeroDelta}~
\begin{enumerate}
\item\label{lem:sizeOfEvZero2} For all $X\in\evC$, 
$\size(\delta X)=\size(X)$.
\item\label{lem:sizeOfEvZero3} For all $Z\in\evZ$, 
$\size(\varsigma Z)=\size(Z)$.
\end{enumerate}
\end{lemma}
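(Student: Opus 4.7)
The plan is to prove both parts in parallel, by combining a direct ``upper bound'' argument (the image of $\delta$, resp.\ $\varsigma$, vanishes outside the bounding box of the input) with a ``lower bound'' argument that exhibits a concrete nonzero entry of the output on the border row or column of the input. The minimality clause in the definition of $\size$ is what will unlock the lower bound.

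For part~\ref{lem:sizeOfEvZero2}, I would fix $(m,n) := \size(X)$ and let $c$ denote the eventual constant value of $X$, so that $X_{\tuIn{i}} = c$ whenever $\tuIn{i}\not\leq(m,n)$. For the upper bound, when $i > m$ all four entries appearing in $(\delta X)_{i,j}$ have first coordinate strictly greater than $m$ and hence equal $c$, so they cancel; likewise when $j > n$. Thus $\size(\delta X)\leq (m,n)$. For the lower bound, I would substitute the boundary identities $X_{m+1,\cdot}\equiv c$ and $X_{m,n+k}=c$ ($k\geq 1$) into $(\delta X)_{m,j}$ to see that it collapses to $X_{m,j}-X_{m,j+1}$ for $j\leq n-1$, to $X_{m,n}-c$ for $j=n$, and to $0$ for $j>n$. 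If all entries $(\delta X)_{m,j}$ vanished, telescoping these identities would force $X_{m,1}=\cdots=X_{m,n}=c$; row $m$ would then be constantly $c$ and $X$ would already be constant outside the smaller box $[1,m-1]\times[1,n]$, contradicting the minimality of $(m,n)$. So $\size(\delta X)_1\geq m$, and the argument for $\size(\delta X)_2\geq n$ is obtained by swapping rows and columns.

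For part~\ref{lem:sizeOfEvZero3}, set $(m,n):=\size(Z)$; by \Cref{lem:sizeOfEvZero} this means $Z$ is supported in $[1,m]\times[1,n]$. The upper bound is immediate since the sum defining $(\varsigma Z)_{i,j}$ is empty whenever $i>m$ or $j>n$. For the lower bound I would pick the rightmost nonzero entry of row $m$, namely $j^{\star}:=\max\{j\leq n \mid Z_{m,j}\neq 0\}$, which exists because the minimality of $\size(Z)_1=m$ forces some entry of row $m$ to be nonzero. Every summand in $(\varsigma Z)_{m,j^{\star}}$ beyond position $(m,j^{\star})$ vanishes, either because $s>m$ (outside the support of $Z$) or because $t>j^{\star}$ in row $m$ (by the very choice of $j^{\star}$), leaving $(\varsigma Z)_{m,j^{\star}}=Z_{m,j^{\star}}\neq 0$. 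A symmetric argument on columns delivers $\size(\varsigma Z)_2\geq n$.

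The main obstacle in both cases is the lower bound, where the abstract minimality of the bounding box has to be converted into a concrete nonzero entry of the output. Once that translation is made, everything else reduces to routine manipulation exploiting the two elementary facts that $\delta$ kills constants and that $\varsigma$ sees empty index ranges.
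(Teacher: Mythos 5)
Your proof is correct. For part~1 it follows essentially the same route as the paper: both derive the upper bound from the difference operator killing constants, and both argue the lower bound by showing that if the last row (or column) of $\delta X$ were entirely zero, then row $\row(X)$ of $X$ would be constant, contradicting the minimality of $\size(X)$. (You telescope the identities $(\delta X)_{m,j}=X_{m,j}-X_{m,j+1}$; the paper instead exhibits the specific nonzero entry $(\delta X)_{\size(X)}$ under the contradiction hypothesis, but the underlying mechanism is the same.) For part~2, however, you take a genuinely different route. The paper deduces it in one line from part~1 together with the right-inverse identity $\delta\circ\varsigma=\id$ of \Cref{oneToOneCorrDiffSig}, via $\size(Z)=\size(\delta(\varsigma Z))=\size(\varsigma Z)$. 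You instead argue directly: for the upper bound, $(\varsigma Z)_{i,j}$ sums only over indices $\geq(i,j)$, which all lie outside the support of $Z$ whenever $(i,j)\not\leq\size(Z)$; for the lower bound, you pick the rightmost nonzero entry $(m,j^\star)$ in the bottom row $m=\row(Z)$ of the support (which exists by minimality) and observe that every other summand in $(\varsigma Z)_{m,j^\star}$ vanishes, either because $s>m$ leaves the support or because $t>j^\star$ is excluded by maximality, so $(\varsigma Z)_{m,j^\star}=Z_{m,j^\star}\neq0$. Your direct version is more self-contained and avoids invoking $\delta\circ\varsigma=\id$ (which in the paper sits in a dependency chain through \Cref{lem:OnNormalFormsDeltaSigmaId} and \Cref{oneToOneCorrDiffSig} that touches this very lemma), while the paper's version is shorter once that machinery is granted.
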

\begin{proof}
  For part \ref{lem:sizeOfEvZero2}., note that 
  $(\delta X)_{\tuIn{i}}=0$ for all $\tuIn{i}\not\leq \size(X)$, hence $\size(\delta X)\leq\size(X)$.
  Assuming $\size(\delta X)\not=\size(X)$, then 
  $(\delta X)_{\col(X),t}=0$ or $(\delta X)_{t,\row(X)}=0$ for all $t\in\N$. 
  We pursue the first case, the second yields a similar contradiction. 
  With $(\delta X)_{\col(X),t}=0$ for all $t$ and $$X_{\col(X),n}\not=X_{\col(X)+1,n}=X_{\col(X)+1,s}$$
  for suitable $n\leq \row(X)$ and arbitrary $s\in\N$ follows
  $X_{\col(X),n}=X_{\col(X),s}$ for all $s$. 
  In particular this implies $(\delta X)_{\size(X)}=X_{\size(X)}-X_{\col(X),\row(X)+1}\not=0$. 
  This also implies \ref{lem:sizeOfEvZero3}., since 
  $\size(Z)=\size(\delta(\varsigma Z))=\size(\varsigma Z)$ for every $Z\in\evC$. 
\end{proof}

Restricted to eventually-zero functions, $\delta$ and $\varsigma$ are isomorphisms, as the following statement implies.
\begin{lemma}\label{lem:OnNormalFormsDeltaSigmaId}~
\begin{enumerate}
    \item\label{lem:OnNormalFormsDeltaSigmaId1} $\varsigma(\evZ)\subseteq\evZ$.
    \item\label{lem:OnNormalFormsDeltaSigmaId2} $\forall Z\in\evZ\subseteq\evC:\varsigma(\delta Z)=Z$.
\end{enumerate}
\end{lemma}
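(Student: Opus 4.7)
The plan is to handle the two statements independently: part \ref{lem:OnNormalFormsDeltaSigmaId1} reduces to support control, while part \ref{lem:OnNormalFormsDeltaSigmaId2} is a direct double-telescoping computation from the definitions of $\delta$ and $\varsigma$.

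For part \ref{lem:OnNormalFormsDeltaSigmaId1}, I would fix $Z \in \evZ$ and invoke \Cref{lem:sizeOfEvZero} to extract $\tuIn{n} := \size(Z)$, so that $Z_{s,t} = 0$ whenever $s > \tuIn{n}_1$ or $t > \tuIn{n}_2$. Then each double sum
\begin{equation*}
  \varsigma(Z)_{i,j} = \sum_{s\geq i}\sum_{t\geq j} Z_{s,t}
\end{equation*}
has only finitely many non-zero summands and is thus well-defined. As soon as $i > \tuIn{n}_1$, every $s \geq i$ already satisfies $s > \tuIn{n}_1$, forcing $Z_{s,t} = 0$; the analogous statement holds for $j > \tuIn{n}_2$. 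Hence $\varsigma(Z)_{i,j}$ vanishes whenever $(i,j) \not\leq \tuIn{n}$, giving $\varsigma(Z) \in \evZ$.

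For part \ref{lem:OnNormalFormsDeltaSigmaId2}, I would fix $Z \in \evZ$ and $(i,j) \in \N^2$ and plug the forward difference $(\delta Z)_{s,t} = Z_{s+1,t+1} - Z_{s+1,t} - Z_{s,t+1} + Z_{s,t}$ into the defining sum of $\varsigma$. Writing $S(a,b) := \sum_{s\geq a}\sum_{t\geq b} Z_{s,t}$, which is a finite sum by part \ref{lem:OnNormalFormsDeltaSigmaId1}, index shifts in $s$ and $t$ give
\begin{equation*}
  \varsigma(\delta Z)_{i,j} = S(i+1,j+1) - S(i+1,j) - S(i,j+1) + S(i,j).
\end{equation*}
One step of telescoping in the second coordinate yields $S(i,j) - S(i,j+1) = \sum_{s\geq i} Z_{s,j}$ and $S(i+1,j) - S(i+1,j+1) = \sum_{s\geq i+1} Z_{s,j}$; subtracting gives $Z_{i,j}$. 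Thus $\varsigma \circ \delta|_{\evZ} = \id_{\evZ}$.

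Both arguments are entirely elementary; the only detail worth flagging in the write-up is the legitimacy of the index shifts in the four double sums, which is justified precisely because each sum terminates after finitely many non-zero terms. This is what part \ref{lem:OnNormalFormsDeltaSigmaId1} supplies, so I would prove the two parts in the stated order and appeal to part \ref{lem:OnNormalFormsDeltaSigmaId1} when collapsing the telescoping sum in part \ref{lem:OnNormalFormsDeltaSigmaId2}.
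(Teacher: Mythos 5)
Your proof is correct and follows essentially the same route as the paper's: part \ref{lem:OnNormalFormsDeltaSigmaId1} by bounding the support of $\varsigma Z$ using $\size(Z)$, and part \ref{lem:OnNormalFormsDeltaSigmaId2} by a double telescoping computation (the paper collapses the inner $t$-sum first and then the $s$-sum, whereas you shift indices to express the result via $S(\cdot,\cdot)$, but these are the same calculation). The only minor imprecision is attributing the finiteness of the sums $S(a,b)$ to part \ref{lem:OnNormalFormsDeltaSigmaId1}; it follows already from $Z$ having finite support, which is the more primitive fact that both your proof and the paper's implicitly rely on.
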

\begin{proof}
  With \Cref{lem:sizeOfEvZero}, $\size(Z)=\size(\varsigma Z)$ and 
  $$\varsigma(Z)_{\row(Z),\col(Z)+1}=\sum_{s=\row(Z)}^\infty\;
   \sum_{t=\col(Z)+1}^\infty Z_{s,t}=0,$$
   hence $(\varsigma Z)_{\tuIn{n}}=0$ for all $\tuIn{n}\not\leq\size(Z)$, i.e., $\varsigma Z\in\evZ$. 
   Part \ref{lem:OnNormalFormsDeltaSigmaId2}. follows with 
   \begin{align*}
       {\left(\varsigma(\delta Z)\right)}_{i,j}
       &=\sum_{s=i}^{\row(Z)}\;
   \sum_{t=j}^{\col(Z)} Z_{s+1,\,t+1}-Z_{s+1,\,t}-Z_{s,\,t+1}+Z_{s,\,t}\\
   &=\sum_{s=i}^{\row(Z)}\;
    Z_{s+1,\,\col(Z)+1}-Z_{s+1,\,j}-Z_{s,\,\col(Z)+1}+Z_{s,\,j}=Z_{i,\,j}
   \end{align*}
   and $\size(Z)=\size(\delta Z)$.
\end{proof}

\begin{proof}[Proof of \Cref{lem:commuting_warp}, parts 
\ref{lem:commuting_warp1}., \ref{lem:commuting_warp2}. and \ref{lem:commuting_warp3}.]
 The mapping $\Stutter_{a,k}:\evC\rightarrow\evC$ from \Cref{sec:invariantsWarping} is well-defined (compare \Cref{lem:sizeOfStutter} for $\Stutter_{a,k}(\evC)\subseteq\evC$), linear with 
 \begin{align*}
    {(\Stutter_{a,k}\lambda X+Y)}_{\tuIn{i}}
    &=
    \begin{cases}
      {(\lambda X+Y)}_{\tuIn{i}}   & {\tuIn{i}}_a \le k\\
      {(\lambda X+Y)}_{\tuIn{i}-e_{a}}  & {\tuIn{i}}_a > k.
    \end{cases}\\
    &=\lambda{(\Stutter_{a,k}X)}_{\tuIn{i}}+{(\Stutter_{a,k}Y)}_{\tuIn{i}}
  \end{align*}
 for all $(\lambda,X,Y)\in\groundRing\times\evC^2$ and 
  injective since $\ker(\Stutter_{a,k})=0$. 
 Part \ref{lem:commuting_warp2}. follows with 
 \begin{align*}
    {(\Stutter_{1,k}(\Stutter_{2,j}X))}_{\tuIn{i}}
    &=
    \begin{cases}
      {(\Stutter_{2,j}X)}_{\tuIn{i}}       & {\tuIn{i}}_1 \le k\\
      {(\Stutter_{2,j}X)}_{(\tuIn{i}_1-1,\tuIn{i}_2)} & {\tuIn{i}}_1 > k
    \end{cases}\\
    &=
     \begin{cases}
      {X}_{\tuIn{i}}             & {\tuIn{i}}_1 \le k\land {\tuIn{i}}_2\leq j\\
      {X}_{({\tuIn{i}}_1,{\tuIn{i}}_2-1)} & {\tuIn{i}}_1 \le k\land {\tuIn{i}}_2>j\\
      {X}_{({\tuIn{i}}_1-1,{\tuIn{i}}_2)} & {\tuIn{i}}_1 > k \land {\tuIn{i}}_2\leq j \\
      {X}_{({\tuIn{i}}_1-1,{\tuIn{i}}_2-1)} & {\tuIn{i}}_1 > k \land {\tuIn{i}}_2>j\\
    \end{cases}\\
    &=
    \begin{cases}
      {(\Stutter_{1,k}X)}_{\tuIn{i}}       & {\tuIn{i}}_2 \le j\\
      {(\Stutter_{1,k}X)}_{({\tuIn{i}}_1,{\tuIn{i}}_2-1)} & {\tuIn{i}}_2 > j
    \end{cases}\\
    &={(\Stutter_{1,k}(\Stutter_{2,j}X))}_{\tuIn{i}}\quad \forall (\tuIn{i},j,k,X)\in\N^4\times\evC,
  \end{align*}
  and part \ref{lem:commuting_warp3}. with 
\begin{align*}
{(\Stutter_{a,k}(\Stutter_{a,j}X))}_{\tuIn{i}}
&=
    \begin{cases}
     {(\Stutter_{a,j}X)}_{\tuIn{i}}       & {\tuIn{i}}_a \le k\\
      {(\Stutter_{a,j}X)}_{\tuIn{i}-e_{a}} & {\tuIn{i}}_a > k
    \end{cases}\\
&=
    \begin{cases}
     {X}_{\tuIn{i}}       & {\tuIn{i}}_a \le k\land {\tuIn{i}}_a\leq j<k\\
     {X}_{\tuIn{i}-e_{a}}      & \tuIn{i}_a \le k \land \tuIn{i}_a > j\\
      {X}_{\tuIn{i}-e_{a}} & k<\tuIn{i}_a \land \tuIn{i}_a-1={(\tuIn{i}-e_{a})}_a \leq j<k\\
      {X}_{\tuIn{i}-2e_{a}} & \tuIn{i}_a > k>j  \land \tuIn{i}_a-1={(\tuIn{i}-e_{a})}_a > j
    \end{cases}\\
    &=
    \begin{cases}
     {X}_{\tuIn{i}}       & {\tuIn{i}}_a \le j<k\land {\tuIn{i}}_a\leq k-1\\
      {X}_{{\tuIn{i}}-e_{a}} & {\tuIn{i}}_a > j\land {\tuIn{i}}_a-1={({\tuIn{i}}-e_a)}_a\leq k-1\\
     {X}_{{\tuIn{i}}-e_{a}}   & {\tuIn{i}}_a \le j<k\land k-1< {\tuIn{i}}_a\\
     {X}_{{\tuIn{i}}-2e_{a}} & {\tuIn{i}}_a > j\land {\tuIn{i}}_a-1={({\tuIn{i}}-e_a)}_a>k-1>j-1
    \end{cases}\\
    &=
    \begin{cases}
     {(\Stutter_{a,k-1}X)}_{\tuIn{i}}       & {\tuIn{i}}_a \le j\\
      {(\Stutter_{a,k-1}X)}_{\tuIn{i}-e_{a}} & {\tuIn{i}}_a > j
    \end{cases}\\
    &={(\Stutter_{a,j}(\Stutter_{a,k-1} X))}_{\tuIn{i}}
\end{align*}
where $(a,\tuIn{i},j,k,X)\in\{1,2\}\times\N^4\times\evC$ and $k>j$. 
\end{proof}

\begin{lemma}\label{lem:sizeOfStutter}
For all $(a,k,X)\in\{1,2\}\times \N\times\evC$, 
$$\size(\Stutter_{a,k}X)=
\begin{cases}
\left(\row(X)+1,\col(X)\right)&a=1\land k\leq\row(X),\\
\left(\row(X),\col(X)+1\right)&a=2\land k\leq\col(X),\\
\size(X)&\text{elsewhere.}
\end{cases}$$
\end{lemma}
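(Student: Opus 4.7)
The plan is to compute $\row$ and $\col$ of $\Stutter_{a,k}X$ separately using the characterisation
$$\row(X)=\max\{i\in\N : X_{i,\bullet}\neq X_{i+1,\bullet}\},\quad \col(X)=\max\{j\in\N : X_{\bullet,j}\neq X_{\bullet,j+1}\}$$
from Lemma~\ref{lem:sizeOfEvZero}. By symmetry between axes, I restrict to $a=1$ and write $(m,n):=\size(X)$. The unpacking
$$(\Stutter_{1,k}X)_{i,c}=X_{\phi_k(i),c},\qquad \phi_k(i):=\begin{cases}i&i\leq k,\\ i-1&i>k,\end{cases}$$
shows that $\Stutter_{1,k}$ is a pure row-reindexing, so columns are left intact: since $\phi_k:\N\to\N$ is surjective, one has $X_{\bullet,c}=X_{\bullet,c+1}$ if and only if $(\Stutter_{1,k}X)_{\bullet,c}=(\Stutter_{1,k}X)_{\bullet,c+1}$, which gives $\col(\Stutter_{1,k}X)=n$ in every subcase.

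For the row direction when $k\leq m$, I would first pick $c$ with $X_{m,c}\neq X_{m+1,c}$; since $m+1>k$, $\phi_k(m+1)=m$ and $\phi_k(m+2)=m+1$, producing a change of $\Stutter_{1,k}X$ at row index $m+1$, hence $\row(\Stutter_{1,k}X)\geq m+1$. For the matching upper bound, any $i\geq m+2$ satisfies $i>k$, so $\phi_k(i)=i-1\geq m+1$ and $\phi_k(i+1)=i\geq m+1$; both indices lie in the constant tail of $X$ (indices exceeding $m$), so the corresponding rows of $\Stutter_{1,k}X$ coincide. Hence $\row(\Stutter_{1,k}X)=m+1$.

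For the subcase $k>m$, the analogous lower bound uses $\phi_k(m)=m$ and $\phi_k(m+1)=m+1$, both valid since $m+1\leq k$, giving a change at row index $m$ and thus $\row(\Stutter_{1,k}X)\geq m$. For the upper bound, any $i\geq m+1$ splits into $i\leq k$, in which case $\phi_k(i)=i\geq m+1$, or $i>k$, in which case $\phi_k(i)=i-1\geq k\geq m+1$; in both cases $\phi_k(i)$ (and $\phi_k(i+1)\geq\phi_k(i)$) lies in the constant tail of $X$, so the $i$-th and $(i{+}1)$-th rows of $\Stutter_{1,k}X$ agree. Hence $\row(\Stutter_{1,k}X)=m$, and combined with $\col(\Stutter_{1,k}X)=n$ we obtain $\size(\Stutter_{1,k}X)=\size(X)$.

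The $a=2$ case is handled by swapping the roles of rows and columns throughout, with $\phi_k$ now applied to the column coordinate. The only real source of friction is the $\phi_k$ bookkeeping across the break-point $i=k$ and making sure that the witnesses constructed for the lower bounds land on the correct side of $k$; once that is disentangled, everything reduces to a straightforward case split driven by Lemma~\ref{lem:sizeOfEvZero}.
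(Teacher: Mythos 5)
Your proof is correct and takes essentially the same approach as the paper's: both compute $\row$ and $\col$ of the warped function via the characterisation $\row(X)=\max\{i:X_{i,\bullet}\neq X_{i+1,\bullet}\}$ (and its column analogue) established in the proof of \Cref{lem:sizeOfEvZero}, with a two-way case split on whether $k$ is at most $\row(X)$. The only noteworthy organisational difference is that you prove the column-invariance once via the reindexing $\phi_k$ (a clean surjectivity argument), whereas the paper handles columns within each case, and for the case $k>\row(X)$ the paper shows the sharper fact $\Stutter_{1,k}X=X$ directly rather than verifying $\row$ and $\col$ separately as you do; both routes are valid.
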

\begin{proof}
 Assume $a=1$ and $\row(X)\geq k$. 
 For all $m>\row(X)$, 
 \begin{align*}
      {(\Stutter_{1,k}X)}_{m+1,\,\bullet}
      &={X}_{m,\,\bullet}
      ={X}_{m+1,\,\bullet}
      ={(\Stutter_{1,k}X)}_{m+2,\,\bullet}
  \end{align*}
which implies $\row(\Stutter_{1,k}X)\leq\row(X)+1$. With 
\begin{align*}
      {(\Stutter_{1,k}X)}_{\row(X)+1,\,\bullet}
      &={X}_{\row(X),\,\bullet}
      \not={X}_{\row(X)+1,\,\bullet}
      ={(\Stutter_{1,k}X)}_{\row(X)+2,\,\bullet}
  \end{align*}
  follows equality. 
  Furthermore 
  $${(\Stutter_{1,k}X)}_{\bullet,\,n}={(\Stutter_{1,k}X)}_{\bullet,\,n+1}\iff
      {X}_{\bullet,\,n}
      \not={X}_{\bullet,\,n+1}
      $$
      for all $n\in\N$, hence $\col(\Stutter_{1,k}X)=\col(X)$. 
      The case $a=2$ and $\col(X)\geq k$ is treated similar. 
      The following cases cover all remaining possibilities.   
      \begin{enumerate}
          \item\label{caseStudySizeStutter1} If $a=2$ and $k>\col(X)$,
          then 
          $${(\Stutter_{2,k}X)}_{i,\,j}
          =\begin{cases}
          X_{i,\,j}&j\leq k\\
          X_{i,\,j-1}=X_{i,\,j}&j> k>\col(X),
          \end{cases}$$
          thus 
          $\size(\Stutter_{2,k}X)=\size(X)$.
          \item\label{caseStudySizeStutter2} If $k>\row(X)$ and $a=1$,
          then analogously to part \ref{caseStudySizeStutter1}.,  $\Stutter_{1,k}X=X$.
          \item The case $k>\row(X)$ and $k>\col(X)$ is covered by either part \ref{caseStudySizeStutter1}. or \ref{caseStudySizeStutter2}.
      \end{enumerate}
\end{proof}

    \begin{proof}(of the remaining part \ref{lem:commuting_warp4}. from \Cref{lem:commuting_warp}.)
We use the parts \ref{lem:commuting_warp2}. and \ref{lem:commuting_warp3}. to rewrite every 
    \begin{align*}
    A&=\Stutter_{a_q,k_q} \circ \cdots \circ {\Stutter_{a_1,k_1}}(B)
    \end{align*}
    with $(a,k,A,B)\in\{1,2\}^q\times \N^q\times \evC^2$ into the form 
    \begin{align*}
    A
    &=\Stutter_{2,j_m} \circ \cdots \circ {\Stutter_{2,j_1}}\circ\Stutter_{1,i_\ell} \circ \cdots \circ {\Stutter_{1,i_1}}(B)
    \end{align*}
    with strictly increasing chains $i_1<\cdots<i_\ell\leq\row(A)$ and $j_1<\cdots< j_m\leq\col(A)$. 
    With this and \Cref{lem:sizeOfStutter}, there exists the minimum 
    $$\min\left\{\size(B)\mid A=\Stutter_{a_q,k_q} \circ \cdots \circ {\Stutter_{a_1,k_1}}(B),\;
    (a,k,B)\in\{1,2\}^q\times\N^q\times\evC\right\}$$
    over the poset $(\N^2,\leq)$ for every $A\in\evC$.
    Let $B,B'\in\evC$ such that the minimum is reached with 
    \begin{align*}
    A&=\Stutter_{2,j_m} \circ \cdots \circ {\Stutter_{2,j_1}}\circ\Stutter_{1,i_\ell} \circ \cdots \circ {\Stutter_{1,i_1}}(B)\\
    &=\Stutter_{2,j'_m} \circ \cdots \circ {\Stutter_{2,j'_1}}\circ\Stutter_{1,i'_\ell} \circ \cdots \circ {\Stutter_{1,i'_1}}(B'),
    \end{align*}
    where the strictly increasing chains $i,i'$ and $j,j'$ are of same length due to \Cref{lem:sizeOfStutter} and $\size(B)=\size(B')$. 
    If $i_1<i'_1<\cdots<i'_\ell$, then 
    $A_{i_1,\bullet }=A_{i_1+1,\bullet}$ by definition, and hence, there exists $B'':=\Stutter_{1,i_1}(B')\in\evC$ such that 
    \begin{align*}
    A
    =\Stutter_{2,j'_m} \circ \cdots \circ {\Stutter_{2,j'_1}}\circ\Stutter_{1,i'_\ell} \circ \cdots \circ {\Stutter_{1,i'_1}}\circ{\Stutter_{1,i_1}}(B''),
    \end{align*}
    contradicting that $\size(B)=\size(B')$ is minimal. 
    Therefore $i_1=i_1'$, thus with injectivety of $\Stutter_{1,i_1}$ and parts \ref{lem:commuting_warp2}. and \ref{lem:commuting_warp3}., 
    \begin{align*}
    A'&=\Stutter_{2,j_m} \circ \cdots \circ {\Stutter_{2,j_1}}\circ\Stutter_{1,i_\ell-1} \circ \cdots \circ {\Stutter_{1,i_2-1}}(B)\\
    &=\Stutter_{2,j'_m} \circ \cdots \circ {\Stutter_{2,j'_1}}\circ\Stutter_{1,i'_\ell-1} \circ \cdots \circ {\Stutter_{1,i'_2-1}}(B')
    \end{align*}
    for $A'\in\evC$ with $\Stutter_{1,i_1}(A')=A$. 
    Recursively we obtain $B=B'$. 
    \end{proof}

Clearly $\Zero_{a,k}:\evZ\rightarrow\evZ$ from  \Cref{subsection:zeroinsertion} is  well-defined. 
We use the following relations to show properties of $\Zero_{a,k}$ via the properties of $\Stutter_{a,k}$. 

\begin{proof}[Proof of \Cref{lemma:ZeroVsStutter}, parts \ref{lemma:ZeroVsStutter1}. and \ref{lemma:ZeroVsStutter2}.]
Let  $a=1$ and $X\in\evC$.
Then, 
\begin{align*}
{\left(\Zero_{1,k}(\delta X)\right)}_{i,j}
&=
\begin{cases}X_{i+1,j+1}
-X_{i+1,j}
-X_{i,j+1}
+X_{i,j}&i<k\\
X_{i,j+1}
-X_{i,j}
-X_{i,j+1}
+X_{i,j}& i = k\\
X_{i,j+1}
-X_{i,j}
-X_{i-1,j+1}
+X_{i-1,j}& i> k\end{cases}\\
&={\delta(\Stutter_{1,k}X)}_{i,\,j}\quad\forall (i,j,k,X)\in\N^3\times\evC.
\end{align*}
The case $a=2$ is treated similar. 
With this and \Cref{oneToOneCorrDiffSig,lem:OnNormalFormsDeltaSigmaId} follows part  \ref{lemma:ZeroVsStutter2}. via  
    $$\varsigma\circ \Zero_{a,k}{}=\varsigma\circ \Zero_{a,k}{}\circ\delta\circ\varsigma=\varsigma\circ\delta\circ \Stutter_{a,k}{}\circ\varsigma={\Stutter_{a,k}}{}\circ{\varsigma}$$
    for every $(a,k)\in\{1,2\}\times\N$. 
    \end{proof}

       \begin{lemma}\label{lemma:sizeZeroZ}
For all $(a,k,Z)\in\{1,2\}\times\N\times\evZ$, 
$$\size(\Zero_{a,k}Z)=
\begin{cases}
\left(\row(Z)+1,\col(Z)\right)&a=1\land k\leq\row(Z),\\
\left(\row(Z),\col(Z)+1\right)&a=2\land k\leq\col(Z),\\
\size(Z)&\text{elsewhere.}
\end{cases}$$
\end{lemma}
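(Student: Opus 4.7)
The plan is to reduce Lemma \ref{lemma:sizeZeroZ} to the already-proven size formula for the warping operation $\Stutter_{a,k}$ (Lemma \ref{lem:sizeOfStutter}), by exploiting the intertwining relation $\delta\circ\Stutter_{a,k}=\Zero_{a,k}\circ\delta$ from Lemma \ref{lemma:ZeroVsStutter} part \ref{lemma:ZeroVsStutter1} together with the fact that $\delta$ and $\varsigma$ preserve size (Lemma \ref{lem:sizeOfEvZeroDelta}).

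Concretely, given $Z\in\evZ$, I would set $X:=\varsigma Z\in\evC$. Since $\varsigma$ is a right inverse of $\delta$ (Lemma \ref{oneToOneCorrDiffSig}), one has $\delta X=Z$, and by Lemma \ref{lem:sizeOfEvZeroDelta} part \ref{lem:sizeOfEvZero3} one has $\size(X)=\size(Z)$. Applying the intertwining identity and invoking size-preservation of $\delta$ twice then yields
$$\size(\Zero_{a,k}Z)=\size(\Zero_{a,k}\delta X)=\size(\delta\Stutter_{a,k}X)=\size(\Stutter_{a,k}X).$$
Plugging this into Lemma \ref{lem:sizeOfStutter} applied to $X$ and substituting $\size(X)=\size(Z)$ produces exactly the claimed three-case formula, since the triggering conditions ``$k\leq\row(X)$'' and ``$k\leq\col(X)$'' translate verbatim to ``$k\leq\row(Z)$'' and ``$k\leq\col(Z)$''.

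I do not expect any real obstacle here: all the combinatorial work has already been carried out in the proof of Lemma \ref{lem:sizeOfStutter}, and the present reduction is purely formal. A self-contained alternative, which mirrors Lemma \ref{lem:sizeOfStutter} line for line, would be a direct case analysis on $(a,k)$ using the definition of $\Zero_{a,k}$ and the characterization $\size(Z)=\min\{\tuIn{n}\in\N^2\mid Z_{\tuIn{i}}\neq 0_d\implies \tuIn{i}\leq\tuIn{n}\}$ from Lemma \ref{lem:sizeOfEvZero} part \ref{lem:sizeOfEvZero1}. In the cases $a=1,\,k>\row(Z)$ and $a=2,\,k>\col(Z)$ the inserted zero row or column falls beyond the support of $Z$, so the support, and hence the size, remains unchanged; in the remaining two cases the non-zero entries of $Z$ are shifted by one along the corresponding axis, increasing the relevant coordinate of $\size$ by exactly one while leaving the other coordinate fixed.
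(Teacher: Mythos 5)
Your proposal matches the paper's proof almost verbatim: the paper likewise writes $\size\circ\Zero_{a,k}=\size\circ\Zero_{a,k}\circ\delta\circ\varsigma=\size\circ\delta\circ\Stutter_{a,k}\circ\varsigma=\size\circ\Stutter_{a,k}\circ\varsigma$ and then invokes \Cref{lem:sizeOfStutter} together with $\size\circ\varsigma=\size$. The reduction to the warping case via the intertwining relation and size-preservation of $\delta,\varsigma$ is exactly the intended argument.
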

\begin{proof}
Follows with \Cref{lem:sizeOfStutter},  $\size{}\circ\varsigma=\size$ and 
$$\size{}\circ\Zero_{a,k}{}
=\size{}\circ\Zero_{a,k}{}\circ\delta\circ\varsigma
=\size{}\circ\delta\circ\Stutter_{a,k}{}\circ\varsigma 
=\size{}\circ\Stutter_{a,k}{}\circ\varsigma$$
for all $(a,k)\in\{1,2\}\times\N$. 
\end{proof}

    \begin{proof}[Proof of \Cref{lem:commuting_zero}]
    With \Cref{oneToOneCorrDiffSig,lemma:ZeroVsStutter}, 
    $$\Zero_{a,k}{}=\Zero_{2,j}{}\circ\delta\circ\varsigma=\delta\circ\Stutter_{2,j}{}\circ\varsigma$$ is 
    linear for all $(a,k)\in\{1,2\}\times \N$, 
    \begin{align*}
    \Zero_{1,k}{}\circ\Zero_{2,j}{}
    &=\delta\circ\Stutter_{1,k}{}\circ\Stutter_{2,j}{}\circ\varsigma\\
    &=\delta\circ\Stutter_{2,j}{}\circ\Stutter_{1,k}{}\circ\varsigma
    =
    \Zero_{2,j}\circ\Zero_{1,k}
    \end{align*}
 for all $(j,k)\in\N^2$ with \Cref{lem:commuting_warp}, and analogously, part  \ref{lem:commuting_zero3}.   
 With \Cref{lemma:sizeZeroZ},
 we get a minimal $\delta(\NFstut(\varsigma(Z)))\in\evZ$ for every $Z\in\evZ$ with 
 \begin{align*}
Z&=\delta(\varsigma(Z))
=\delta\circ\Stutter_{2,j_m} \circ \cdots \circ {\Stutter_{2,j_1}}\circ\Stutter_{1,i_\ell} \circ \cdots \circ {\Stutter_{1,i_1}}(\NFstut(\varsigma(Z)))\\
&=
\Zero_{2,j_m} \circ \cdots \circ {\Zero_{2,j_1}}\circ\Zero_{1,i_\ell} \circ \cdots \circ {\Zero_{1,i_1}}(\delta(\NFstut(\varsigma(Z))))
\end{align*}
for increasing chains $i_1<\cdots <i_\ell$ and $j_1<\cdots< j_m$. 
 Uniqueness follows analogously to \Cref{lem:commuting_warp} via injective $\Zero_{a,k}$ and previous parts \ref{lem:commuting_zero2}. and  \ref{lem:commuting_zero3}.   
\end{proof}
    
    \begin{proof}[Proof of the remaining parts from  \Cref{lemma:ZeroVsStutter}]
    For $A\in\evC$, let $a,k$ such that 
        \begin{align*}
    \delta(A)&=\delta\circ\Stutter_{a_q,k_q} \circ \cdots \circ {\Stutter_{a_1,k_1}}\circ\NFstut(A)\\
    &=\Zero_{a_q,k_q} \circ \cdots \circ {\Zero_{a_1,k_1}}\circ\delta\circ\NFstut(A), 
    \end{align*}
   hence part \ref{lemma:ZeroVsStutter3}. via uniqueness. 
    With this follows also 
   $$\varsigma\circ\NFzero{}
   =\varsigma\circ\NFzero{}\circ\delta\circ\varsigma
   =\varsigma\circ\delta\circ\NFstut{}\circ\varsigma,$$
   and thus part \ref{lemma:ZeroVsStutter4}.  since  $\varsigma\circ\delta(X)=X$ for all $X\in\evZ\subseteq\evC$. 
    Part 
    \ref{lemma:ZeroVsStutter5}. holds via $$\NFconst{}\circ\Stutter_{a,k}{}=\varsigma\circ\Zero_{a\,k}{}\circ\delta=\Stutter_{a,k}{}\circ\NFconst{},$$
    resulting in part \ref{lemma:ZeroVsStutter6}.  analogously as in \ref{lemma:ZeroVsStutter3}.
\end{proof}

We show that $(\evC,\boxslash)$ is a semigroup. 
Together with \Cref{lem:assOfObash}, $\delta$ is a surjective homomorphism of semigroups.

\begin{lemma}\label{lem:concatenationMat}
For all $X,Y,Z\in\evC$, 
\begin{enumerate}
    
    \item\label{lem:concatenationMat1} $\row(X\boxslash Y)=\row(X)+\row(Y)$, 
    \item\label{lem:concatenationMat2} $\col(X\boxslash Y)=\col(X)+\col(Y)$, 
    \item\label{lem:concatenationMat3} $(X\boxslash Y)\boxslash Z=X\boxslash(Y\boxslash Z)$, and 
    \item\label{lem:concatenationMat4}
     $\delta(X\boxslash Y)=\delta(X)\varoslash\delta(Y)$.
\end{enumerate}
\end{lemma}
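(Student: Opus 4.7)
My plan is to prove the four parts in the order 1-2, then 4, then 3, since each part feeds the next. Throughout I abbreviate the ``stuttering operator'' by writing $S := \Stutter_{1,1}^{\row(X)}\circ\Stutter_{2,1}^{\col(X)}$, so that by definition $X\boxslash Y = \NFconst(X) + S(Y)$. The whole argument is built from the commutation and size identities for $\Stutter_{a,k}$ and $\Zero_{a,k}$ already collected in \Cref{lem:sizeOfStutter,lem:commuting_warp,lemma:ZeroVsStutter,oneToOneCorrDiffSig,lem:OnNormalFormsDeltaSigmaId,lem:sizeOfEvZeroDelta}; no really new idea is needed, the work is in bookkeeping.

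For parts 1 and 2, I iterate \Cref{lem:sizeOfStutter}: since every function in $\evC$ has $\size\in\N^2$, each successive application of $\Stutter_{1,1}$ (resp.\ $\Stutter_{2,1}$) increases the row (resp.\ column) size by one, so $\size(S(Y)) = (\row(X)+\row(Y),\col(X)+\col(Y))$. The summand $\NFconst(X) = \varsigma(\delta X)\in\evZ$ vanishes outside $\size(X)$ by \Cref{lem:OnNormalFormsDeltaSigmaId,lem:sizeOfEvZero}, so the sum $X\boxslash Y$ is constant beyond $(\row(X)+\row(Y),\col(X)+\col(Y))$. For minimality, I use that by definition of $\row(Y)$ there exists $n$ with $Y_{\row(Y),n}\neq Y_{\row(Y)+1,n}$; after applying $S$, the corresponding difference appears at index $(\row(X)+\row(Y),\col(X)+n)$ versus $(\row(X)+\row(Y)+1,\col(X)+n)$, and $\NFconst(X)$ is zero at both indices (since $n\geq 1$ forces $\col(X)+n>\col(X)$), so the difference is preserved. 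The column case is symmetric.

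For part 4, apply $\delta$ linearly: $\delta(X\boxslash Y) = \delta(\NFconst(X)) + \delta(S(Y))$. The first summand equals $\delta X$ because $\delta\circ\NFconst = \delta\circ\varsigma\circ\delta = \delta$ by \Cref{oneToOneCorrDiffSig}. Iterating $\delta\circ\Stutter_{a,k} = \Zero_{a,k}\circ\delta$ from \Cref{lemma:ZeroVsStutter} turns the second summand into $\Zero_{1,1}^{\row(X)}\circ\Zero_{2,1}^{\col(X)}(\delta Y)$. Finally, \Cref{lem:sizeOfEvZeroDelta} gives $\row(\delta X)=\row(X)$ and $\col(\delta X)=\col(X)$, so the expression matches the definition of $\delta X\varoslash\delta Y$ exactly.

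Part 3 is where the real bookkeeping happens. Expanding,
\begin{align*}
(X\boxslash Y)\boxslash Z &= \NFconst(X\boxslash Y) + \Stutter_{1,1}^{\row(X\boxslash Y)}\circ\Stutter_{2,1}^{\col(X\boxslash Y)}(Z), \\
X\boxslash(Y\boxslash Z) &= \NFconst(X) + S\bigl(\NFconst(Y)\bigr) + S\circ\Stutter_{1,1}^{\row(Y)}\circ\Stutter_{2,1}^{\col(Y)}(Z),
\end{align*}
using linearity of $S$. Parts 1 and 2 give $\row(X\boxslash Y)=\row(X)+\row(Y)$ and $\col(X\boxslash Y)=\col(X)+\col(Y)$, and the commutation identities of \Cref{lem:commuting_warp} show the two compositions of $\Stutter$'s acting on $Z$ coincide. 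It therefore remains to verify $\NFconst(X\boxslash Y) = \NFconst(X) + S(\NFconst(Y))$. Writing $c := \lim_{\tuIn{n}\to\infty}Y_{\tuIn{n}}$, we have $\NFconst(Y) = Y - c$ and $S(c)=c$ since stuttering fixes constant functions, so $S(\NFconst(Y)) = S(Y) - c$. Then $\NFconst(X) + S(\NFconst(Y)) = \NFconst(X) + S(Y) - c = X\boxslash Y - c$; since both $\NFconst(X)$ and $S(\NFconst(Y))$ have limit $0$, this equals $\NFconst(X\boxslash Y)$ as required. The main obstacle is precisely this last identity: it hinges on the delicate fact that $S$ preserves the constant-at-infinity value of its input, and on verifying that the ``constant part'' of $X\boxslash Y$ is inherited entirely from $Y$ and not from $X$ (which was normalised away).
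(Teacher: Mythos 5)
Your proof is correct and follows essentially the same overall strategy as the paper: parts 1--2 by iterating \Cref{lem:sizeOfStutter} plus checking that the constant tail is governed by $Y$, part~4 by linearity of $\delta$ together with $\delta\circ\varsigma=\id$ and $\delta\circ\Stutter_{a,k}=\Zero_{a,k}\circ\delta$, and part~3 by matching the two expansions of the triple concatenation. The one place where you take a genuinely different (and arguably cleaner) route is the key identity $\NFconst(X\boxslash Y)=\NFconst(X)+S(\NFconst(Y))$ inside part~3. The paper obtains it abstractly from linearity and idempotency of $\NFconst$ together with the commutation $\NFconst\circ\Stutter_{a,k}=\Stutter_{a,k}\circ\NFconst$ (\Cref{lemma:ZeroVsStutter}, part~5). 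You instead use the concrete constant-at-infinity decomposition $\NFconst(Y)=Y-c$, the observation that $S$ fixes constants and is linear, and the uniqueness of the $\evZ$-representative modulo constants. Both are valid; your version is self-contained and avoids invoking the commutation lemma, at the cost of spelling out why ``differing from $X\boxslash Y$ by a constant while lying in $\evZ$'' pins the element down uniquely. Two minor remarks: your stated ordering ``1--2, then 4, then 3 since each feeds the next'' is slightly misleading, as part~4 is not used in part~3 (neither in your argument nor the paper's); and both proofs implicitly assume $Y$ is not constant when establishing minimality of $\size(X\boxslash Y)$ in parts~1--2, but that edge case is already glossed over in the paper's own proof, so it is not a gap you introduced.
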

\begin{proof}
Clearly 
$$\left(\row(X),\col(X)\right)\leq\left(\row(X\boxslash Y), \col(X\boxslash Y)\right).$$ 
 Assume $\tuIn{n}\not\leq\left(\row(X)+\row(Y),\col(X)+\col(Y))\right)$ with   $\tuIn{n}_1>\row(X)+\row(Y)$. 
 Then 
\begin{align*}
{(X\boxslash Y)}_{\tuIn{n}}
&={\left(\NFconst(X)+\Stutter_{1,1}^{\row(X)}\left(\Stutter_{2,1}^{\col(X)}(Y)\right)\right)}_{\tuIn{n}}\\
&=\NFconst(X)_{\tuIn{n}}+Y_{(\tuIn{n}_1-\row(X),m)}=\lim_{\tuIn{i}\rightarrow\infty}Y_{\tuIn{i}}
\end{align*}
for suitable $m\in\N$. 
The remaining case ${\tuIn{n}}_2>\col(X)+\col(Y)$ is treated similar, hence 
$$\left(\row(X)+\row(Y),\col(X)+\col(Y)\right)\leq\left(\row(X\boxslash Y), \col(X\boxslash Y)\right).$$ 
Equality follows with 
\begin{equation*}\label{eq:proofChenCase1}
\left(\row(X),\col(X)\right)< \tuIn{n}\implies 
    {(X\boxslash Y)}_{\tuIn{n}}=Y_{\tuIn{n}}. 
\end{equation*}
This shows part \ref{lem:concatenationMat1}. and \ref{lem:concatenationMat2}. 
\Cref{lem:commuting_warp,oneToOneCorrDiffSig,lemma:ZeroVsStutter} yield
\begin{align*}
(X\boxslash Y)\boxslash Z
&=\NFconst\left(\NFconst(X)+
\Stutter_{1,1}^{\row(X)}\left(\Stutter_{2,1}^{\col(X)}(Y)\right)\right)
\\&\;\;\;+\Stutter_{1,1}^{\row(X\boxslash Y)}\left(\Stutter_{2,1}^{\col(X\boxslash Y)}(Z)\right)
\\
&=\NFconst(X)+
\Stutter_{1,1}^{\row(X)}\left(\Stutter_{2,1}^{\col(X)}\left(\NFconst(Y)\right)\right)\\
&\;\;\;+\Stutter_{1,1}^{\row(X)}\left(\Stutter_{2,1}^{\col(X)}\left(\Stutter_{1,1}^{\row(Y )}\left(\Stutter_{2,1}^{\col(Y)}(Z)\right)\right)\right)
\\
&=X\boxslash(Y\boxslash Z),
\end{align*}
i.e. associativity of $\boxslash$. 
Part  \ref{lem:concatenationMat4}. follows with 
\begin{equation*}
\delta(X\boxslash Y)=\delta\circ\varsigma\circ\delta(X)+\Zero_{1,1}^{\row(\delta X)}\left(\Zero_{2,1}^{\col(\delta X)}(\delta Y)\right)= \delta(X)\varoslash\delta(Y), 
\end{equation*}
and \Cref{lem:sizeOfEvZero}.
\end{proof}

Also $\varoslash$ is associative and $\varsigma$ an injective homomorphism of semigroups. 

\begin{lemma}\label{lem:assOfObash}For all $X,Y,Z\in\evC$, 
\begin{enumerate}
    \item\label{lem:concatenationdiag1} $\row(X\varoslash Y)=\row(X)+\row(Y)$, 
    \item\label{lem:concatenationdiag2} $\col(X\varoslash Y)=\col(X)+\col(Y)$, 
    \item\label{lem:concatenationdiag3} $(X\varoslash Y)\varoslash Z=X\varoslash(Y\varoslash Z)$, and 
    \item\label{lem:concatenationdiag4}  $\varsigma(X\varoslash Y)=\varsigma(X)\boxslash\varsigma(Y)$.
\end{enumerate}
\end{lemma}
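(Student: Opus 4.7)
The statement should be read for $X,Y,Z\in\evZ$ (since $\varoslash$ is defined only there; note $\evZ\subseteq\evC$). The plan is to piggy-back on the already proven \Cref{lem:concatenationMat} for $\boxslash$, transported via the isomorphism between $(\evZ,\varoslash)$ and a sub-semigroup of $(\evC,\boxslash)$ induced by $\varsigma$ and $\delta$.

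\textbf{Parts (1) and (2).} From \Cref{oneToOneCorrDiffSig}, $\delta\circ\varsigma=\id$ on $\evZ$, so $X\varoslash Y=\delta(\varsigma(X))\varoslash\delta(\varsigma(Y))=\delta(\varsigma(X)\boxslash\varsigma(Y))$ by \Cref{lem:concatenationMat}(\ref{lem:concatenationMat4}). Applying \Cref{lem:sizeOfEvZeroDelta}(\ref{lem:sizeOfEvZero2}) and then \Cref{lem:concatenationMat}(\ref{lem:concatenationMat1},\ref{lem:concatenationMat2}) together with $\size(\varsigma(\,\cdot\,))=\size(\,\cdot\,)$ from \Cref{lem:sizeOfEvZeroDelta}(\ref{lem:sizeOfEvZero3}) gives the claimed row and column counts.

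\textbf{Part (3).} Associativity likewise follows by a transfer argument: applying $\delta$ to both $(X\varoslash Y)\varoslash Z$ and $X\varoslash(Y\varoslash Z)$ and using \Cref{lem:concatenationMat}(\ref{lem:concatenationMat4}) twice turns each side into $\delta$ of a threefold $\boxslash$-product of $\varsigma(X),\varsigma(Y),\varsigma(Z)$; associativity of $\boxslash$ (\Cref{lem:concatenationMat}(\ref{lem:concatenationMat3})) then makes these two images coincide, and injectivity of $\delta$ on $\varsigma(\evZ)=\NFconst(\evC)\cap\evZ$ (ensured because $\varsigma\circ\delta=\id$ on $\evZ$ by \Cref{lem:OnNormalFormsDeltaSigmaId}(\ref{lem:OnNormalFormsDeltaSigmaId2})) concludes. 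Alternatively, one may just unfold the definition of $\varoslash$ and invoke \Cref{lem:commuting_zero}(\ref{lem:commuting_zero2},\ref{lem:commuting_zero3}) to rearrange the $\Zero_{1,1}$ and $\Zero_{2,1}$ operators.

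\textbf{Part (4).} This is the main point and expands both sides directly. On the left, linearity of $\varsigma$ together with \Cref{lemma:ZeroVsStutter}(\ref{lemma:ZeroVsStutter2}) gives
\[
\varsigma(X\varoslash Y)
=\varsigma(X)+\Stutter_{1,1}^{\row(X)}\circ\Stutter_{2,1}^{\col(X)}(\varsigma(Y)).
\]
On the right, by \Cref{def:concatDiagBox},
\[
\varsigma(X)\boxslash\varsigma(Y)
=\NFconst(\varsigma(X))+\Stutter_{1,1}^{\row(\varsigma(X))}\circ\Stutter_{2,1}^{\col(\varsigma(X))}(\varsigma(Y)).
\]
The identification $\NFconst(\varsigma(X))=\varsigma(\delta(\varsigma(X)))=\varsigma(X)$ uses $\delta\circ\varsigma=\id$ from \Cref{oneToOneCorrDiffSig}, and the exponents match via $\size(\varsigma(X))=\size(X)$ from \Cref{lem:sizeOfEvZeroDelta}(\ref{lem:sizeOfEvZero3}). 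Hence both sides agree.

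The only slightly delicate step is the bookkeeping in (4): one must verify that $\NFconst$ acts trivially on $\varsigma(X)$ and that the $\Stutter$-exponents on the two sides really are equal. All other parts are routine consequences of the already proven semigroup structure on $(\evC,\boxslash)$ and the intertwining relations in \Cref{lemma:ZeroVsStutter}.
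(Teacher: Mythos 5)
Your proposal is correct and follows essentially the same route as the paper: parts (1) and (2) use the chain $X\varoslash Y=\delta(\varsigma X)\varoslash\delta(\varsigma Y)=\delta(\varsigma X\boxslash\varsigma Y)$ combined with the size-preservation lemmas, and part (4) unfolds $\varsigma$ through linearity and $\varsigma\circ\Zero_{a,k}=\Stutter_{a,k}\circ\varsigma$ and then identifies the result with the definition of $\boxslash$ via $\NFconst\circ\varsigma=\varsigma$ and $\size\circ\varsigma=\size$ — exactly as the paper does. One small wording slip in part (3): once you have rewritten each bracketing of $\varoslash$ as $\delta$ of the corresponding bracketing of $\boxslash$-products of $\varsigma(X),\varsigma(Y),\varsigma(Z)$, associativity of $\boxslash$ already gives equality outright — no additional application of $\delta$ and no appeal to injectivity is required (and literally ``applying $\delta$'' to both sides would produce $\delta\circ\delta$, not ``$\delta$ of a threefold $\boxslash$-product''); your alternative direct unfolding via \Cref{lem:commuting_zero} is in fact what the paper's ``similar as in \Cref{lem:concatenationMat}'' refers to.
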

\begin{proof}
  Part \ref{lem:concatenationdiag1} 
  follows with \Cref{lem:sizeOfEvZero,lem:concatenationMat} via 
  \begin{align*}
  \row(X\varoslash Y)
  &=\row\left(\delta\left(\varsigma X\right)\varoslash\delta\left(\varsigma Y\right)\right)\\
  &=\row\left(\delta\left(\varsigma(X)\boxslash\varsigma(Y)\right)\right)
  =\row(\varsigma X)+\row(\varsigma Y)=\row(X)+\row(Y),\end{align*}
  similarly part \ref{lem:concatenationdiag2}. 
 Associativity is similar as in \Cref{lem:concatenationMat}, and with
  \begin{equation*}
\varsigma(X\varoslash Y)=\varsigma\circ\delta\circ\varsigma(X)+\Stutter_{1,1}^{\row( \varsigma X)}\left(\Stutter_{2,1}^{\col(\varsigma  X)}(\varsigma Y)\right)= \varsigma(X)\boxslash\varsigma(Y) 
\end{equation*}
follows part \ref{lem:concatenationdiag4}.
\end{proof}

With now prove Chen’s identity with respect to
diagonal concatenation. 

\begin{proof}[Proof of \Cref{thm:chen}.] 
Let $\tuIn{j}=\size(A)$ and $Z=A\varoslash B$. 
For every $\mathbf{a}\in\composition$ with $\size(\mathbf{a})=(m,n)$ follows
$\langle \ISS_{\tuIn{\ell};\tuIn{r}}(Z),\mathbf{a}\rangle$
\begin{align}
    &=
    \sum_{\substack{{\tuIn{\ell}}_1<\iota_1<\ldots<\iota_m\leq {\tuIn{r}}_1\\{\tuIn{\ell}}_2<\kappa_1<\ldots<\kappa_n\leq {\tuIn{r}}_2}}\;\prod_{s=1}^m\prod_{t=1}^nZ_{\iota_s,\kappa_t}^{({\mathbf{a}}_{s,t})}\nonumber\\
    &=\sum_{\substack{0\leq u\leq m\\0\leq v\leq n}}\;\sum_{\substack{{\tuIn{\ell}}_1<\iota_1<\ldots<\iota_{u}\leq {\tuIn{j}}_1<\iota_{u+1}<\ldots<\iota_m\leq {\tuIn{r}}_1\\{\tuIn{\ell}}_2<\kappa_1<\ldots<\kappa_{v}\leq \tuIn{j}_2<\kappa_{v+1}<\ldots<\kappa_n\leq {\tuIn{r}}_2}}\;\prod_{s=1}^m\prod_{t=1}^nZ_{\iota_s,\kappa_t}^{({\mathbf{a}}_{s,t})}\label{eq:bigSum_withZeros}\\
    &=\sum_{\substack{\diag(\mathbf{b},\mathbf{c})=\mathbf{a}\\\text{where } \mathbf{c}\not=\ec\\\
    \text{and }\mathbf{b}\in\monoidComp_d^{u\times v}}}
    \left(
    \sum_{\substack{{\tuIn{\ell}}_1<\iota_1<\ldots<\iota_u\leq {\tuIn{j}}_1\\{\tuIn{\ell}}_2<\kappa_1<\ldots<\kappa_v\leq {\tuIn{j}}_2}}\;\prod_{s=1}^u\prod_{t=1}^vZ_{\iota_s,\kappa_t}^{({\mathbf{b}}_{s,t})}
    \right)
    \left(
    \sum_{\substack{{\tuIn{j}}_1<\iota_1<\ldots<\iota_m\leq {\tuIn{r}}_1\\{\tuIn{j}}_2<\kappa_1<\ldots<\kappa_n\leq {\tuIn{r}}_2}}\;\prod_{s=1}^{m-u}\prod_{t=1}^{n-v}Z_{\iota_s,\kappa_t}^{({\mathbf{c}}_{s,t})}
    \right)\nonumber\\
    &\;\;\;\;\;+\sum_{\substack{{\tuIn{\ell}}_1<\iota_1<\ldots<\iota_m\leq {\tuIn{j}}_1\\{\tuIn{\ell}}_2<\kappa_1<\ldots<\kappa_n\leq {\tuIn{j}}_2}}\;\prod_{s=1}^m\prod_{t=1}^nZ_{\iota_s,\kappa_t}^{({\mathbf{a}}_{s,t})}
    +\sum_{\substack{{\tuIn{j}}_1<\iota_1<\ldots<\iota_m\leq {\tuIn{r}}_1\\{\tuIn{j}}_2<\kappa_1<\ldots<\kappa_n\leq {\tuIn{r}}_2}}\;\prod_{s=1}^m\prod_{t=1}^nZ_{\iota_s,\kappa_t}^{({\mathbf{a}}_{s,t})}\nonumber
\end{align}
where the summands in \Cref{eq:bigSum_withZeros} are always zero whenever $$\mathbf{a}=\begin{bmatrix}\mathbf{b}&\mathbf{u}\\\mathbf{v}&\mathbf{c}\end{bmatrix}$$ with $\vareps_{u\times(n-v)}\not=\mathbf{u}$ or $\vareps_{(m-u)\times v}\not=\mathbf{v}$ for $1\leq u\leq m-1$ and $1\leq v\leq n-1$. 
\end{proof}

\begin{proof}[Proof of \Cref{theorem_bahntrennend}]
The forward direction is discussed in  \Cref{cor:invariantModInsertionOfZero} for the case $d=1$.
As usual, the case $d>1$ is analogous.
We show the backwards direction. 
If $\size(X)\not=\size(Y)$, then $\SS(X)\not=\SS(Y)$ and $X\not\sim Y$. 
It therefore suffices to assume $\SS(X)=\SS(Y)$,  $X=\NFsim(X)$, $\size(X)=\size(Y)=(S,T)$, and to verify $X=Y$. 
Define the matrix composition $\mathbf{a}(\w{j},\mathbf{X})\in\monoidComp_{d}^{S\times T}$
with entries 
\begin{equation}\label{eq:def:aSj}
{\mathbf{a}(\w{j},X)}_{s,t}:=
\begin{cases}
\w{j}&\text{if }X_{s,t}^{(\w{j})}\not=0,\\
\w{i}&\text{if there exists }i\leq d\text{, chosen minimally, such that }X_{s,t}^{(\w{i})}\not=0,\\
\vareps&\text{elsewhere.}
\end{cases}
\end{equation}
This matrix is a composition since 
$(X_{s,t})_{s\leq S,t\leq T}$ has no zero lines (columns treated analogously), and therefore, for every column $X_{\bullet,t}$ with $t\leq T$ there exist $i\leq d$ and $s\leq S$ such that $X_{s,t}^{(\w i)}\not=0$ and   ${\mathbf{a}(\w{j},X)}_{s,t}=\w{i}$. 
Furthermore, since $\groundRing$ has no zero divisors, 
$$\langle\SS(X),\mathbf{a}(\w{j},X)\rangle\not=0$$
for all $j\leq d$. 
Assume there is $j\leq d$ with $\mathbf{a}(\w{j},X)\not=\mathbf{a}(\w{j},Y)$, then there is $(s,t)\leq(S,T)$ with either
$$\begin{cases}
\mathbf{a}(\w{j},X)_{s,t}=\w{j}\not=\w{i}=\mathbf{a}(\w{j},Y)_{s,t}\text{ or}\\
\mathbf{a}(\w{j},X)_{s,t}=\w{i}>\w{k}=\mathbf{a}(\w{j},Y)_{s,t}\text{ or}\\
\mathbf{a}(\w{j},X)_{s,t}=\w{i}\not=\vareps=\mathbf{a}(\w{j},Y)_{s,t}\text{ for suitable }i\leq d, 
\end{cases}$$
after possibly exchanging the role of $X$ and $Y$.
With the minimality condition in \Cref{eq:def:aSj},
$$0\not=\langle\SS(X),\mathbf{a}(\w{j},X)\rangle
=\langle\SS(Y),\mathbf{a}(\w{j},X)\rangle=0,$$
thus $\mathbf{a}(\w{j}):=\mathbf{a}(\w{j},X)=\mathbf{a}(\w{j},Y)$. In  particular holds $X_{s,t}^{(\w{j})}=0 \iff Y_{s,t}^{(\w{j})}=0$.
If $X_{s,t}^{(\w{j})}\not=0$, then
$$X_{s,t}^{(\w{j})}
\prod_{s=1}^S\prod_{t=1}^TX_{s,t}^{({\mathbf{a}(\w{j})}_{s,t})}=
Y_{s,t}^{(\w{j})}\prod_{s=1}^S\prod_{t=1}^TY_{s,t}^{({\mathbf{a}(\w{j})}_{s,t})}=
Y_{s,t}^{(\w{j})}\prod_{s=1}^S\prod_{t=1}^TX_{s,t}^{({\mathbf{a}(\w{j})}_{s,t})},
$$
thus $X_{s,t}^{(\w j)}=Y_{s,t}^{(\w j)}$, by the cancelation property\footnote{The cancelation property is satisfied via the quotient field of commutative rings without zero-divisors.} of $\groundRing$. 
\end{proof}

\subsection{Two-parameter quasi-shuffle}\label{subsec:ommittedDetQSh}

We provide omitted details and proofs from \Cref{ss:hopfAlgebra,subsection:quasishuffle,sec:algo2dim}. 
We define a more convenient description of the two-parameter quasi-shuffle via surjections, acting on columns or rows of compositions. 
For $1\leq  i\leq j$, let $e_i\in\N_0^j$ encode the $i$-th \DEF{standard column} defined via the Kronecker delta  ${(e_i)}_k:=\delta_{i,k}$. 
  
Consider the following set of non-negative integer matrices
$$\QSH(m,s;j):=\left\{\begin{bmatrix}\e{\iota_1}&\cdots&\e{\iota_m}&\e{\kappa_1}&\cdots&\e{\kappa_s}\end{bmatrix}\in\N_0^{j\times(m+s)}\;
\begin{array}{|l}
  \iota_{1}<\cdots<\iota_{m} \\
  \kappa_{1}<\cdots<\kappa_s \\
  \text{right invertible}
\end{array}\right\}.$$

\begin{remark}
\label{rem:onetoonesurjtomat}
$\QSH(m,s;j)$ is in one-to-one correspondence\footnote{
Throughout, the bijection $\varphi$ from $\qSh(m,s;j)$ to $\QSH(m,s;j)$ is  omitted.
For $q\in\qSh(m,s;j)$ the matrix 
$\varphi(q):=\mathbf{Q}:=\begin{bmatrix}\e{q(1)}&\cdots&\e{q(m)}&\e{q(m+1)}&\cdots&\e{q(m+s)}\end{bmatrix}$
is right invertible by construction and thus contained in $\QSH(m,s;j)$. 
The converse direction is similar. 
Compare also \Cref{lem:relPandp}.} with $\qSh(m,s;j)$.
\end{remark}
For every $\mathbf{P}\in\N_0^{j\times m}$ and matrix composition  $\mathbf{a}\in\monoidComp^{m\times n}$ let $\mathbf{P}\mathbf{a}\in\monoidComp^{j\times n}$ denote the \DEF{action on the row space} of  $\mathbf{a}$, 
$${(\mathbf{P}\mathbf{a})}_{\iota,\nu}:=\underset{1\leq \mu\leq m}\bigstar \mathbf{a}_{\mu,\nu}^{\mathbf{P}_{\iota,\mu}},\quad(\iota,\nu)\leq (j,n).$$
Analogously, let $\mathbf{a}\mathbf{Q}^\top\in\monoidComp^{m\times k}$ denote the \DEF{action on the column space} via $\mathbf{Q}\in\N_0^{k\times n}$. 

\begin{example}
The surjection $q\in\qSh(2,2;3)$ with $q(1)=1$, $q(2)=3$, $q(3)=2$ and $q(4)=3$ defines the following action on rows,
$$\begin{bmatrix}\w{1}\\\w{1}\star\w{3}\\\w{1}\star\w{2}\star\w{4}\end{bmatrix}=
\begin{bmatrix}
1&0&0&0\\
0&0&1&0\\
0&1&0&1
\end{bmatrix}
\,
\begin{bmatrix}\w{1}\\\w{1}\star\w{4}\\\w{1}\star\w{3}\\\w{2}\end{bmatrix}\in\monoidComp_4^3.
$$
\end{example}

With this language we can bring the matrix (\ref{eq:c_quasishuffle}) of products over preimages from  \Cref{def:twodim_qshuffle} in a convenient form. 
Note that this viewpoint is also valid for the classical one-parameter setting.

\begin{lemma}\label{lem:relPandp}~
\begin{enumerate}
\item For all $\mathbf{a}\in\monoidComp^{m+s}$ and $p\in\qSh(m,s;j)$ with $\mathbf{P}=\varphi(p)$,
$$\begin{bmatrix}
\underset{u\in p^{-1}(1)}\bigstar\mathbf{a}_{u}\\
\vdots\\
\underset{u\in p^{-1}(j)}\bigstar\mathbf{a}_{u}
\end{bmatrix}=\mathbf{P}\mathbf{a}\in\mathfrak{M}^{j}.$$
\item For all $\mathbf{b}\in\monoidComp^{(m+s)\times(n+t)}$ and $q\in\qSh(n,t;k)$ with $\mathbf{Q}=\varphi(q)$,  
$${\left(\underset{\substack{u\in p^{-1}(x)\\v\in q^{-1}(y)}}\bigstar\mathbf{b}_{u,v}\right)}_{x,y}=\mathbf{P}\mathbf{b}{\mathbf{Q}}^\top\in\mathfrak{M}^{j\times k}.$$
\end{enumerate}
\end{lemma}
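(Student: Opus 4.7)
The statement is essentially a bookkeeping lemma: translating between the ``surjection'' and ``matrix'' descriptions of a quasi-shuffle of rows/columns. My plan is to unwind definitions directly, using that $\varphi(p) = \bigl[e_{p(1)}\;\cdots\;e_{p(m+s)}\bigr]$ has $(\iota,\mu)$-entry equal to the Kronecker delta $\delta_{p(\mu),\iota}$, which also equals the indicator $\mathbbm{1}[\mu \in p^{-1}(\iota)]$.

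For part 1, I would simply compute the $\iota$-th entry of $\mathbf{P}\mathbf{a}$ from the definition of the action on the row space:
\begin{align*}
(\mathbf{P}\mathbf{a})_{\iota}
\;=\; \underset{1\le\mu\le m+s}\bigstar \mathbf{a}_{\mu}^{\mathbf{P}_{\iota,\mu}}
\;=\; \underset{1\le\mu\le m+s}\bigstar \mathbf{a}_{\mu}^{\delta_{p(\mu),\iota}}
\;=\; \underset{u\in p^{-1}(\iota)}\bigstar \mathbf{a}_{u},
\end{align*}
since the factors with $p(\mu)\neq\iota$ contribute $\mathbf{a}_\mu^{0} = \vareps$ (the neutral element of $\monoidComp$), and the remaining ones contribute $\mathbf{a}_\mu^{1} = \mathbf{a}_\mu$. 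Ranging $\iota$ over $1,\ldots,j$ yields the claimed column vector.

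For part 2, I would iterate this argument once for $\mathbf{P}$ and once for $\mathbf{Q}$, treating rows and columns separately. First, applying part 1 column-by-column to $\mathbf{b}$ (viewed as a tuple of column vectors in $\monoidComp^{m+s}$) gives
\begin{align*}
  (\mathbf{P}\mathbf{b})_{x,v} \;=\; \underset{u\in p^{-1}(x)}\bigstar \mathbf{b}_{u,v},
  \qquad (x,v)\leq (j,\,n+t).
\end{align*}
Applying the analogous identity on the column space with $\mathbf{Q}$ and the matrix $\mathbf{P}\mathbf{b}$, one obtains
\begin{align*}
  (\mathbf{P}\mathbf{b}\mathbf{Q}^{\top})_{x,y}
  \;=\; \underset{1\le\nu\le n+t}\bigstar (\mathbf{P}\mathbf{b})_{x,\nu}^{\mathbf{Q}_{y,\nu}}
  \;=\; \underset{v\in q^{-1}(y)}\bigstar (\mathbf{P}\mathbf{b})_{x,v}
  \;=\; \underset{\substack{u\in p^{-1}(x)\\ v\in q^{-1}(y)}}\bigstar \mathbf{b}_{u,v},
\end{align*}
where commutativity of $\star$ in $\monoidComp$ lets me merge the two nested big-star products into a single one indexed over $p^{-1}(x)\times q^{-1}(y)$.

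There is no real obstacle here; the only subtle points are that (a) the entries of $\mathbf{P}$ and $\mathbf{Q}$ lie in $\{0,1\}$, so the ``exponents'' in the definitions of the row/column actions collapse to plain indicator selection, and (b) one must invoke commutativity of $(\monoidComp,\star,\vareps)$ to rearrange the iterated product into a single product over a Cartesian product. Both facts are already in force from the standing assumptions on $\monoidComp$ and from the definition of $\QSH$ via standard columns.
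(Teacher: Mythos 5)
Your proposal is correct, and the paper does not in fact supply a proof of \Cref{lem:relPandp} at all — it is stated as a routine bookkeeping lemma and left implicit — so there is no alternative argument to compare against. Your computation is the natural one and it is carried out carefully: you correctly identify the $(\iota,\mu)$-entry of $\mathbf{P}=\varphi(p)$ as the Kronecker delta $\delta_{p(\mu),\iota}$, observe that the ``exponents'' in the defining formula of the row action are therefore $\{0,1\}$-valued indicators (so the $\bigstar$-product over all $\mu$ collapses to a product over the fiber $p^{-1}(\iota)$), and for part 2 you apply the same observation once on the left and once on the right, using commutativity of $(\monoidComp,\star,\vareps)$ to merge the nested $\bigstar$-products into a single product over $p^{-1}(x)\times q^{-1}(y)$. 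The only implicit conventions you rely on — that the row/column actions extend verbatim from $\mathbf{P}\in\N_0^{j\times m}$ acting on $\monoidComp^{m\times n}$ to the $j\times(m+s)$ and $k\times(n+t)$ sizes appearing here, and that the symbol $p$ (and $\mathbf{P}$) in part 2 is the one fixed in part 1 — are exactly the conventions the paper itself uses silently in the lemma statement. No gap.
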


\begin{lemma}\label{def_2ParamQS}~
  \begin{enumerate}
\item For all compositions
$(\mathbf{a},\mathbf{b})\in\monoidComp^{m\times n}\times\monoidComp^{s\times t}$, 
$$\mathbf{a}\qShuffle\mathbf{b}
=\sum_{j,k\in\N}\;\; 
\sum_{\substack{\mathbf{P}\in\QSH(m,s;j)\\\mathbf{Q}\in\QSH(n,t;k)}}
\mathbf{P}\diag(\mathbf{a},\mathbf{b}) \mathbf{Q}^\top.
$$
      \item 
       For monomials  $\mathbf{a},\mathbf{b}\in\groundRing\langle\monoidComp^{1\times n}\rangle$ with $\deg(\mathbf{a})=m$ and $\deg(\mathbf{b})=s$, $$\mathbf{a}\qShuffle_1\mathbf{b}=\sum_{ j\in\N}\,\sum_{\mathbf{P}\in\QSH(m,s;j)}\mathbf{P}\begin{bmatrix}\mathbf{a}\\\mathbf{b}\end{bmatrix}$$
      \item
    For monomials  $\mathbf{c},\mathbf{d}\in\groundRing\langle\monoidComp^m\rangle$ with $\deg(\mathbf{c})=n$ and $\deg(\mathbf{d})=t$,
     $$\mathbf{c}\qShuffle_2\mathbf{d}=\sum_{k\in\N}\,\sum_{\mathbf{Q}\in\QSH(n,t;k)}\begin{bmatrix}\mathbf{c}&\mathbf{d}\end{bmatrix} \mathbf{Q}^\top$$
  \end{enumerate}
\end{lemma}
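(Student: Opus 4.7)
All three claims are reformulations of the quasi-shuffle products in terms of matrix actions on row/column spaces, and they rest on the bijection $\varphi:\qSh(\cdot,\cdot;\cdot)\to\QSH(\cdot,\cdot;\cdot)$ recorded in \Cref{rem:onetoonesurjtomat} together with the identifications in \Cref{lem:relPandp}.

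For part (1), I would start directly from \Cref{def:twodim_qshuffle}: its right-hand side sums, over pairs $(p,q)\in\qSh(m,s;j)\times\qSh(n,t;k)$, the matrix with entries $\mathbf{c}^{p,q}_{x,y}=\bigstar_{u\in p^{-1}(x),\,v\in q^{-1}(y)}\diag(\mathbf{a},\mathbf{b})_{u,v}$ from \Cref{eq:c_quasishuffle}. Setting $\mathbf{P}:=\varphi(p)$ and $\mathbf{Q}:=\varphi(q)$, the second part of \Cref{lem:relPandp} identifies that very matrix with $\mathbf{P}\diag(\mathbf{a},\mathbf{b})\mathbf{Q}^\top$. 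Re-indexing the double sum along the bijection $\varphi$ yields the claimed formula.

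Parts (2) and (3) concern the classical one-parameter quasi-shuffles $\qShuffle_1$ and $\qShuffle_2$ and are symmetric to each other: part (3) follows from part (2) by transposing rows and columns throughout. For part (2) I would proceed by induction on $m+s$, using the recursive definition of $\qShuffle_1$ recalled in the paragraph preceding \Cref{ex:algo2dimQSh}. The three branches of that recursion -- take the first row from $\mathbf{a}$, take the first row from $\mathbf{b}$, or merge the two via $\star$ -- correspond exactly to the three ways of extending a surjection $p'\in\qSh(m-1,s;\cdot)$ or $p'\in\qSh(m,s-1;\cdot)$ to one in $\qSh(m,s;j)$ by prepending a new preimage. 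This puts the monomials appearing in $\mathbf{a}\qShuffle_1\mathbf{b}$ in bijection with $\bigsqcup_j\qSh(m,s;j)$, and under this bijection the $x$-th row of a monomial equals $\bigstar_{u\in p^{-1}(x)}\!\begin{bmatrix}\mathbf{a}\\\mathbf{b}\end{bmatrix}_u$. By the first part of \Cref{lem:relPandp} this is precisely the $x$-th row of $\mathbf{P}\begin{bmatrix}\mathbf{a}\\\mathbf{b}\end{bmatrix}$ with $\mathbf{P}=\varphi(p)$, and the formula follows.

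The only real obstacle is bookkeeping. One has to verify that the three branches of the $\qShuffle_1$-recursion really match the three prepending moves on surjections, that the strict-monotonicity conditions defining $\qSh$ and $\QSH$ are preserved by $\varphi$, and -- implicitly in part (1) -- that the triple product $\mathbf{P}\diag(\mathbf{a},\mathbf{b})\mathbf{Q}^\top$ reduces to $\mathbf{P}_1\mathbf{a}\mathbf{Q}_1^\top\star\mathbf{P}_2\mathbf{b}\mathbf{Q}_2^\top$ entrywise, so that the off-diagonal $\vareps$-blocks of $\diag(\mathbf{a},\mathbf{b})$ contribute trivially. Once this is set up cleanly, both the two-parameter and the one-parameter statements follow by direct symbol-pushing through \Cref{lem:relPandp} and the bijection $\varphi$.
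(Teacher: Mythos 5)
The paper states this lemma without an explicit proof, treating it as an immediate consequence of \Cref{rem:onetoonesurjtomat} and \Cref{lem:relPandp}; your argument for part (1) is precisely the reindexing the authors have in mind, and your inductive matching of the three branches of the $\qShuffle_a$-recursion with the three possible preimages $p^{-1}(1)\in\{\{1\},\{m+1\},\{1,m+1\}\}$ for parts (2)--(3) is the standard route. The only detail worth stating explicitly in a write-up is the one you flag at the end: since the $\monoidComp$-action $(\mathbf{P}\mathbf{a})_{\iota,\nu}=\bigstar_\mu\mathbf{a}_{\mu,\nu}^{\mathbf{P}_{\iota,\mu}}$ is multiplicative and $\vareps^k=\vareps$, the $\vareps$-blocks of $\diag(\mathbf{a},\mathbf{b})$ drop out, giving $\mathbf{P}\diag(\mathbf{a},\mathbf{b})\mathbf{Q}^\top=\mathbf{P}_1\mathbf{a}\mathbf{Q}_1^\top\star\mathbf{P}_2\mathbf{b}\mathbf{Q}_2^\top$ with $\mathbf{P}=[\mathbf{P}_1\;\mathbf{P}_2]$, $\mathbf{Q}=[\mathbf{Q}_1\;\mathbf{Q}_2]$, which is what makes the identification with \Cref{eq:c_quasishuffle} go through.
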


In \Cref{lemma_qsh_ass,Cor_sqsProp} we  show that  $(\groundRing\langle\compositionConnected\rangle,+,\qShuffle,0,\ec)$ is a commutative algebra. 
\Cref{theo_char_qs} brings the two-parameter quasi-shuffle into relation to the well-known one-parameter setting from \cite{EBRAHIMIFARD2017552,hoffman1999quasishuffle}. 
In particular one can use the recursive 
characterization  in the one-parameter setting for an efficient evaluation of \Cref{def_2ParamQS} relying not on surjections. 
As a preliminary consideration, we recall concatenation of surjections for one-parameter quasi-shuffles, encoded as matrices.

\begin{lemma}~\label{lemma_qSH_dec}
\begin{enumerate}
\item\label{lemma_qSH_dec_part1}
For all $\mathbf{Q}\in\QSH(m,s;j)$
exist unique block matrices
$\mathbf{Q}_1\in U(j,m)$ and 
$\mathbf{Q}_2\in U(j,s)$ such that 
$\mathbf{Q}=\begin{bmatrix}\mathbf{Q}_1&\mathbf{Q}_2\end{bmatrix}$ where 
$$U(j,\ell):=\left\{\begin{bmatrix}\e{\iota_1}&\ldots&\e{\iota_\ell}\end{bmatrix}\in\N_0^{j\times\ell}
\mid \iota_1<\ldots<\iota_\ell\right\}.$$
\item\label{lemma_qSH_dec_part2} $U(j,\ell)\cdot U(\ell,p)\subseteq U(j,p)$ where the product is taken entry-wise. 
\item\label{lemma_qSH_dec_part3}  
\begin{align*}
\QSH(m,s,u,k)&:=
\left\{\begin{bmatrix}
\e{\iota_1}&\cdots&\e{\iota_m}
&\e{\kappa_1}&\cdots&\e{\kappa_s}
&\e{\mu_1}&\cdots&\e{\mu_u}
\end{bmatrix}
\begin{array}{|l}
\iota_{1}<\cdots<\iota_{m}\\
\kappa_{1}<\cdots<\kappa_{s}\\
\mu_{1}<\cdots<\mu_{u}\\
\text{right invertible}
\end{array}\right\}\\
&=
\left\{\begin{bmatrix}\mathbf{F}_1&\mathbf{F}_2\mathbf{A}_1&
\mathbf{F}_2\mathbf{A}_2\end{bmatrix}\mid \mathbf{A}\in\QSH(s,u;j),\mathbf{F}\in\QSH(m,j;k)\right\}\\
&=
\left\{\begin{bmatrix}\mathbf{C}_1\mathbf{P}_1&\mathbf{C}_1\mathbf{P}_2&
\mathbf{C}_2\end{bmatrix}\mid
\mathbf{P}\in\QSH(m,s;j),\mathbf{C}\in\QSH(j,u;k)\right\}\\
&\subseteq\N_0^{k\times(m+s+u)}. 
\end{align*}
\end{enumerate}
\end{lemma}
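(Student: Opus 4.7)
The plan is to dispatch parts \Cref{lemma_qSH_dec_part1} and \Cref{lemma_qSH_dec_part2} by direct inspection, then to spend the bulk of the argument on part \Cref{lemma_qSH_dec_part3}, which is the substantive claim. For part \Cref{lemma_qSH_dec_part1}, $\mathbf{Q}\in\QSH(m,s;j)$ is by definition a concatenation $[\e{\iota_1}\,\cdots\,\e{\iota_m}\,|\,\e{\kappa_1}\,\cdots\,\e{\kappa_s}]$, so I would set $\mathbf{Q}_1:=[\e{\iota_1}\,\cdots\,\e{\iota_m}]\in U(j,m)$ and $\mathbf{Q}_2:=[\e{\kappa_1}\,\cdots\,\e{\kappa_s}]\in U(j,s)$; uniqueness is forced by the block shapes $j\times m$ and $j\times s$. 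For part \Cref{lemma_qSH_dec_part2}, I would compute directly: for $\mathbf{A}=[\e{\iota_1}\,\cdots\,\e{\iota_\ell}]\in U(j,\ell)$ and $\mathbf{B}=[\e{\kappa_1}\,\cdots\,\e{\kappa_p}]\in U(\ell,p)$, the identity $\mathbf{A}\e{\kappa_r}=\e{\iota_{\kappa_r}}$ gives $\mathbf{A}\mathbf{B}=[\e{\iota_{\kappa_1}}\,\cdots\,\e{\iota_{\kappa_p}}]\in U(j,p)$, since a composition of two strictly increasing sequences is strictly increasing.

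Part \Cref{lemma_qSH_dec_part3} is the matrix-level incarnation of associativity of the quasi-shuffle. For the first set equality I would establish both inclusions explicitly. Starting from an element $\mathbf{Q}=[\e{\iota_1}\,\cdots\,\e{\iota_m}\,|\,\e{\kappa_1}\,\cdots\,\e{\kappa_s}\,|\,\e{\mu_1}\,\cdots\,\e{\mu_u}]\in\QSH(m,s,u;k)$, I enumerate the union $\{\kappa_1,\ldots,\kappa_s\}\cup\{\mu_1,\ldots,\mu_u\}\subseteq\{1,\ldots,k\}$ in strictly increasing order as $\nu_1<\cdots<\nu_j$, and define $\mathbf{F}:=[\e{\iota_1}\,\cdots\,\e{\iota_m}\,|\,\e{\nu_1}\,\cdots\,\e{\nu_j}]$ together with $\mathbf{A}:=[\e{a_1}\,\cdots\,\e{a_s}\,|\,\e{b_1}\,\cdots\,\e{b_u}]$, where $a_i$ and $b_i$ are the positions of $\kappa_i$ and $\mu_i$ inside the enumeration $\nu$. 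Part \Cref{lemma_qSH_dec_part2} then gives $\mathbf{F}_2\mathbf{A}_1=[\e{\kappa_1}\,\cdots\,\e{\kappa_s}]$ and $\mathbf{F}_2\mathbf{A}_2=[\e{\mu_1}\,\cdots\,\e{\mu_u}]$, recovering $\mathbf{Q}$. Conversely, for any $\mathbf{F}\in\QSH(m,j;k)$ and $\mathbf{A}\in\QSH(s,u;j)$ the block $[\mathbf{F}_1\,|\,\mathbf{F}_2\mathbf{A}_1\,|\,\mathbf{F}_2\mathbf{A}_2]$ has three strictly increasing column-index sequences by part \Cref{lemma_qSH_dec_part2}, and is right invertible because every index in $\{1,\ldots,k\}$ is hit by a column of $\mathbf{F}$ and every column of $\mathbf{F}_2$ is hit by a column of $\mathbf{A}$. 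The second set equality is then proved by the symmetric argument, merging the $\iota$-sequence with the $\kappa$-sequence first instead of the $\kappa$-sequence with the $\mu$-sequence.

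The main obstacle will be the index bookkeeping needed to verify right invertibility in both directions of part \Cref{lemma_qSH_dec_part3}: checking that the constructed $\mathbf{F}$ and $\mathbf{A}$ are surjective onto $\{1,\ldots,k\}$ and $\{1,\ldots,j\}$ respectively, and conversely that the reassembled matrix exhausts $\{1,\ldots,k\}$. Both ultimately reduce to the defining identity $\{\nu_1,\ldots,\nu_j\}=\{\kappa_1,\ldots,\kappa_s\}\cup\{\mu_1,\ldots,\mu_u\}$ built into the construction of $\nu$.
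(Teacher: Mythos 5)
Your proof is correct. The paper offers no argument for this lemma beyond the remark that it ``follows from the classical, one-parameter setting,'' and your proposal---extracting the block decomposition, composing strictly increasing column selections via the identity $\mathbf{A}\e{\kappa_r}=\e{\iota_{\kappa_r}}$, and factoring a given $\mathbf{Q}\in\QSH(m,s,u,k)$ through the sorted merge $\nu$ of the $\kappa$- and $\mu$-indices with the position maps $a,b$ recording where each $\kappa_i,\mu_i$ lands---is precisely the matrix-language incarnation of that classical factorization of quasi-shuffle surjections, together with the transitivity-of-coverage check that establishes right invertibility in both directions.
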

The proof follows from the classical,  one-parameter setting.

\begin{corollary}\label{lemma_qsh_ass}
The two-parameter quasi-shuffle $\qShuffle$ is commutative and associative.
\end{corollary}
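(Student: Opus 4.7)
The plan is to deduce both properties from the matrix reformulation of $\qShuffle$ in \Cref{def_2ParamQS} combined with the structural decompositions of $\QSH$ in \Cref{lemma_qSH_dec}, using throughout that $\star$ is commutative on $\monoidComp$. For commutativity, I fix $(\mathbf{a},\mathbf{b})\in\monoidComp^{m\times n}\times\monoidComp^{s\times t}$ and decompose every $\mathbf{P}\in\QSH(m,s;j)$ as $[\mathbf{P}_1\,|\,\mathbf{P}_2]$ with $\mathbf{P}_1\in U(j,m)$, $\mathbf{P}_2\in U(j,s)$ by \Cref{lemma_qSH_dec_part1}, and likewise for $\mathbf{Q}$. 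A direct block computation, using that $\diag(\mathbf{a},\mathbf{b})$ has zero off-diagonal blocks, yields
\begin{align*}
\mathbf{P}\,\diag(\mathbf{a},\mathbf{b})\,\mathbf{Q}^\top = \mathbf{P}_1\mathbf{a}\mathbf{Q}_1^\top \,\star\, \mathbf{P}_2\mathbf{b}\mathbf{Q}_2^\top.
\end{align*}
The column-swaps $[\mathbf{P}_1\,|\,\mathbf{P}_2]\mapsto[\mathbf{P}_2\,|\,\mathbf{P}_1]$ and $[\mathbf{Q}_1\,|\,\mathbf{Q}_2]\mapsto[\mathbf{Q}_2\,|\,\mathbf{Q}_1]$ are bijections $\QSH(m,s;j)\to\QSH(s,m;j)$ and $\QSH(n,t;k)\to\QSH(t,n;k)$ whose contribution to $\mathbf{b}\qShuffle\mathbf{a}$ is the same expression with the two $\star$-factors transposed, and commutativity of $\star$ closes this direction.

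For associativity, let $\mathbf{a},\mathbf{b},\mathbf{c}$ have sizes $m\times n$, $s\times t$, $u\times v$, and expand both $(\mathbf{a}\qShuffle\mathbf{b})\qShuffle\mathbf{c}$ and $\mathbf{a}\qShuffle(\mathbf{b}\qShuffle\mathbf{c})$ bilinearly via \Cref{def_2ParamQS}. Iterating the two-block identity above, a typical row-contribution to $(\mathbf{a}\qShuffle\mathbf{b})\qShuffle\mathbf{c}$ indexed by $(\mathbf{P},\mathbf{C})\in\QSH(m,s;j)\times\QSH(j,u;k)$ (and an analogous column pair $(\mathbf{Q},\mathbf{D})$) produces
\begin{align*}
\bigl[\mathbf{C}_1\mathbf{P}_1\,\big|\,\mathbf{C}_1\mathbf{P}_2\,\big|\,\mathbf{C}_2\bigr]\,\diag(\mathbf{a},\mathbf{b},\mathbf{c})\,\bigl[\mathbf{D}_1\mathbf{Q}_1\,\big|\,\mathbf{D}_1\mathbf{Q}_2\,\big|\,\mathbf{D}_2\bigr]^\top,
\end{align*}
whereas the corresponding summand of $\mathbf{a}\qShuffle(\mathbf{b}\qShuffle\mathbf{c})$, indexed by $(\mathbf{A},\mathbf{F})\in\QSH(s,u;j)\times\QSH(m,j;k)$, takes the dual form with row-matrix $[\mathbf{F}_1\,|\,\mathbf{F}_2\mathbf{A}_1\,|\,\mathbf{F}_2\mathbf{A}_2]$. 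By \Cref{lemma_qSH_dec_part3} both parametrisations enumerate $\QSH(m,s,u;k)$ bijectively, and the same statement applies on the column side; consequently both associations collapse to the common triple sum
\begin{align*}
\sum_{k,l\in\N}\;\sum_{\substack{\mathbf{R}\in\QSH(m,s,u;k)\\\mathbf{S}\in\QSH(n,t,v;l)}} \mathbf{R}\,\diag(\mathbf{a},\mathbf{b},\mathbf{c})\,\mathbf{S}^\top.
\end{align*}

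The main obstacle will be the bookkeeping in the associativity step: one has to verify by block calculation that the nested products $\mathbf{C}\,\diag\!\bigl(\mathbf{P}\diag(\mathbf{a},\mathbf{b})\mathbf{Q}^\top,\mathbf{c}\bigr)\,\mathbf{D}^\top$ (and its mirror for the right-associated side) really collapse into the three-block form above. This in turn relies on \Cref{lemma_qSH_dec_part2} to keep the intermediate row/column factors $\mathbf{C}_1\mathbf{P}_i$ and $\mathbf{F}_2\mathbf{A}_i$ inside the appropriate $U(\cdot,\cdot)$, together with two nested applications of the two-block identity used in the commutativity step. Once this is in place, \Cref{lemma_qSH_dec_part3} identifies the two parametrisations with a single indexing set and the corollary follows.
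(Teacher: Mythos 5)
Your proposal matches the paper's proof in both structure and ingredients: commutativity via the block swap $[\mathbf{P}_1\,\mathbf{P}_2]\mapsto[\mathbf{P}_2\,\mathbf{P}_1]$ together with commutativity of $\star$, and associativity by expanding both associations via \Cref{def_2ParamQS}, collapsing the nested products with \Cref{lemma_qSH_dec}, and identifying both parametrisations with the common index set $\QSH(m,s,u;k)\times\QSH(n,t,v;l)$ via \Cref{lemma_qSH_dec_part3}. This is essentially the paper's argument.
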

\begin{proof} Let  $\mathbf{a}\in\monoidComp^{m\times n}$ and 
$\mathbf{b}\in\monoidComp^{s\times t}$ be compositions. 
Commutativity follows from 
$$\mathbf{a}\qShuffle\mathbf{b}=
\sum_{{[\mathbf{P}_2\;\mathbf{P}_1]}}
\sum_{{[\mathbf{Q}_2\;\mathbf{Q}_1]}}
\mathbf{P}_2\mathbf{b}\mathbf{Q}^\top_2
\star
\mathbf{P}_1\mathbf{a}\mathbf{Q}^\top_1=\mathbf{b}\qShuffle\mathbf{a}.$$
For composition  
$\mathbf{c}\in\monoidComp^{u\times v}$ follows 
 \begin{align*}
  \mathbf{a}\qShuffle(\mathbf{b}\qShuffle\mathbf{c})
  &=
  \mathbf{a}\qShuffle\left(\sum_{\substack{\mathbf{A}\in\QSH(s,u;j)\\
  \mathbf{B}\in\QSH(t,v;x)}}\begin{bmatrix}\mathbf{A}_1&\mathbf{A}_2\end{bmatrix}
  \,\diag(\mathbf{b},\mathbf{c})\,\begin{bmatrix}\mathbf{B}_1^\top\\\mathbf{B}_2^\top\end{bmatrix}\right)\\
  &=
 \sum_{\substack{\mathbf{A}\\ \mathbf{B}}}\;
 \sum_{\substack{\mathbf{F}\in\QSH(m,j;k)\\ \mathbf{G}\in\QSH(n,x;y)}}
 \begin{bmatrix}\mathbf{F}_1&\mathbf{F}_2\end{bmatrix}
 \diag(\mathbf{a},\begin{bmatrix}\mathbf{A}_1&\mathbf{A}_2\end{bmatrix}
  \,\diag(\mathbf{b},\mathbf{c})\,
  \begin{bmatrix}\mathbf{B}_1^\top\\\mathbf{B}_2^\top\end{bmatrix})\,\begin{bmatrix}\mathbf{G}_1^\top\\\mathbf{G}_2^\top\end{bmatrix}\\
  &=
 \sum_{\substack{\mathbf{A}\\ \mathbf{B}}}\;
 \sum_{\substack{\mathbf{F}\\ \mathbf{G}}}\;
 \begin{bmatrix}\mathbf{F}_1&\mathbf{F}_2\end{bmatrix}\,
 \begin{bmatrix}\mathrm{I}_m&0&0\\
 0&\mathbf{A}_1&\mathbf{A}_2\end{bmatrix}
 \diag(\mathbf{a},\mathbf{b},\mathbf{c})
 \begin{bmatrix}
 \mathrm{I}_m&0\\
 0&\mathbf{B}_1^\top\\
 0&\mathbf{B}_2^\top
 \end{bmatrix}\,
 \begin{bmatrix}\mathbf{G}_1^\top\\\mathbf{G}_2^\top\end{bmatrix}\\
  &=
 \sum_{\substack{\mathbf{A}\\ \mathbf{B}}}\;
 \sum_{\substack{\mathbf{F}\\ \mathbf{G}}}\;
 \begin{bmatrix}\mathbf{F}_1&\mathbf{F}_2\mathbf{A}_1&\mathbf{F}_2\mathbf{A}_2\end{bmatrix}
 \diag(\mathbf{a},\mathbf{b},\mathbf{c})
 \begin{bmatrix}\mathbf{G}_1^\top\\{(\mathbf{G}_2\mathbf{B}_1)}^\top\\
 {(\mathbf{G}_2\mathbf{B}_2)}^\top\end{bmatrix}\\
  &=
 \sum_{\substack{\mathbf{C}\in\QSH(j,u;k)\\ \mathbf{D}\in\QSH(x,v;y)}}
 \sum_{\substack{\mathbf{P}\in\QSH(m,s;j)\\ \mathbf{Q}\in\QSH(n,t;x)}}
 \begin{bmatrix}\mathbf{C}_1\mathbf{P}_1&\mathbf{C}_1\mathbf{P}_2&\mathbf{C}_2\end{bmatrix}
 \diag(\mathbf{a},\mathbf{b},\mathbf{c})
 \begin{bmatrix}{(\mathbf{Q}_1\mathbf{D}_1)}^\top\\{(\mathbf{Q}_2\mathbf{D}_2)}^\top\\
 {\mathbf{Q}_2}^\top\end{bmatrix}\\
  &=
 \left(
 \sum_{\substack{\mathbf{P}\\\mathbf{Q}}}
 \begin{bmatrix}\mathbf{P}_1&\mathbf{P}_2
 \end{bmatrix}\,
 \diag(\mathbf{a},\mathbf{b})
 \begin{bmatrix}\mathbf{Q}_1^\top\\\mathbf{Q}_2^\top\end{bmatrix}\right)\qShuffle\mathbf{c}
  =(\mathbf{a}\qShuffle\mathbf{b})\qShuffle \mathbf{c}.
  \end{align*}
\end{proof}

With the following statement we verify that the algorithm in \Cref{sec:algo2dim} is sound.

\begin{lemma}\label{theo_char_qs}
Let $\mathbf{a}\in\monoidComp^{m\times n}$ and $\mathbf{b}\in\monoidComp^{s\times t}$ be compositions. 
\begin{enumerate}
\item\label{theo_char_qs_part1} 
With the blocks arising in the following quasi-shuffle of columns 
$$\begin{bmatrix}\mathbf{a}\\\monCompNeutrElem_{s\times n}\end{bmatrix}\qShuffle_2
\begin{bmatrix}\monCompNeutrElem_{m\times t}\\\mathbf{b}\end{bmatrix}=
\sum_{{[\mathbf{Q}_1\;\mathbf{Q}_2]}\in\QSH(n,t;k)}\; \begin{bmatrix}\mathbf{a}\mathbf{Q}_1^\top
\\
\mathbf{b}\mathbf{Q}_2^\top\end{bmatrix} \in \groundRing\langle\monoidComp^{m+s}\rangle,$$
we have
$$\mathbf{a}\qShuffle\mathbf{b}=
\sum_{{[\mathbf{Q}_1\;\mathbf{Q}_2]}} \mathbf{a}\mathbf{Q}_1^\top\qShuffle_1 \mathbf{b}\mathbf{Q}_2^\top.$$

\item\label{theo_char_qs_part2} 
With the blocks arising in the following quasi-shuffle of rows
$$\begin{bmatrix}\mathbf{a}&\vareps _{m\times t}\end{bmatrix}\qShuffle_1
\begin{bmatrix}\vareps _{s\times n}&\mathbf{b}\end{bmatrix}=
\sum_{{[\mathbf{P}_1\;\mathbf{P}_2]}\in\QSH(m,s;j)}\; \begin{bmatrix}\mathbf{P}_1\mathbf{a}
&
\mathbf{P}_2\mathbf{b}\end{bmatrix} \in \groundRing\langle\monoidComp^{1\times(n+t)}\rangle,$$
we have
$$\mathbf{a}\qShuffle\mathbf{b}=
\sum_{{[\mathbf{P}_1\;\mathbf{P}_2]}} \mathbf{P}_1\mathbf{a}
\qShuffle_2
\mathbf{P}_2\mathbf{b}.$$
\end{enumerate}
\end{lemma}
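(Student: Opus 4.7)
The plan is to reduce both identities to the compact double-sum description of the two-parameter quasi-shuffle given by \Cref{def_2ParamQS} part 1,
\[
\mathbf{a}\qShuffle\mathbf{b} \;=\; \sum_{j,k\in\N}\sum_{\substack{\mathbf{P}\in\QSH(m,s;j)\\\mathbf{Q}\in\QSH(n,t;k)}} \mathbf{P}\,\diag(\mathbf{a},\mathbf{b})\,\mathbf{Q}^\top,
\]
by performing this double sum in two stages and recognizing each inner sum as a one-parameter row- or column-quasi-shuffle via parts 2 and 3 of the same lemma.

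For part \ref{theo_char_qs_part1}, I first view $\begin{bmatrix}\mathbf{a}\\\monCompNeutrElem_{s\times n}\end{bmatrix}$ and $\begin{bmatrix}\monCompNeutrElem_{m\times t}\\\mathbf{b}\end{bmatrix}$ as column-monomials of lengths $n$ and $t$ in $\groundRing\langle\monoidComp^{m+s}\rangle$. Applying \Cref{def_2ParamQS} part 3 gives their $\qShuffle_2$ as $\sum_{k,\mathbf{Q}\in\QSH(n,t;k)}\diag(\mathbf{a},\mathbf{b})\mathbf{Q}^\top$. Using the block decomposition $\mathbf{Q}=[\mathbf{Q}_1\;\mathbf{Q}_2]$ from \Cref{lemma_qSH_dec} part \ref{lemma_qSH_dec_part1}, the $\monCompNeutrElem$-blocks of $\diag(\mathbf{a},\mathbf{b})$ drop out and each summand collapses to $\begin{bmatrix}\mathbf{a}\mathbf{Q}_1^\top\\\mathbf{b}\mathbf{Q}_2^\top\end{bmatrix}$, exactly the stated column-quasi-shuffle output. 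Viewing this block as two row-monomials of lengths $m$ and $s$ in $\groundRing\langle\monoidComp^{1\times k}\rangle$, \Cref{def_2ParamQS} part 2 yields $\mathbf{a}\mathbf{Q}_1^\top\qShuffle_1\mathbf{b}\mathbf{Q}_2^\top = \sum_{j,\mathbf{P}\in\QSH(m,s;j)}\mathbf{P}\begin{bmatrix}\mathbf{a}\mathbf{Q}_1^\top\\\mathbf{b}\mathbf{Q}_2^\top\end{bmatrix} = \sum_{j,\mathbf{P}}\mathbf{P}\diag(\mathbf{a},\mathbf{b})\mathbf{Q}^\top$. Resumming over $\mathbf{Q}$ then recovers the double sum above, i.e.\ $\mathbf{a}\qShuffle\mathbf{b}$.

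Part \ref{theo_char_qs_part2} is completely symmetric: begin by row-quasi-shuffling $[\mathbf{a}\;\monCompNeutrElem_{m\times t}]$ and $[\monCompNeutrElem_{s\times n}\;\mathbf{b}]$ via \Cref{def_2ParamQS} part 2, use $\mathbf{P}=[\mathbf{P}_1\;\mathbf{P}_2]$ to simplify $\mathbf{P}\diag(\mathbf{a},\mathbf{b})=[\mathbf{P}_1\mathbf{a}\;\mathbf{P}_2\mathbf{b}]$, then column-quasi-shuffle each such block via \Cref{def_2ParamQS} part 3 to insert the $\mathbf{Q}^\top$ factor, and resum. No real obstacle is expected; the argument is the reassociation of a double sum. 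The only point to verify carefully is the algebraic identity $\diag(\mathbf{a},\mathbf{b})\mathbf{Q}^\top = \begin{bmatrix}\mathbf{a}\mathbf{Q}_1^\top\\\mathbf{b}\mathbf{Q}_2^\top\end{bmatrix}$ (and its row-side analogue $\mathbf{P}\diag(\mathbf{a},\mathbf{b}) = [\mathbf{P}_1\mathbf{a}\;\mathbf{P}_2\mathbf{b}]$), which holds in $\monoidComp$ since $\monCompNeutrElem$ is the neutral element of $\star$ and since each column of $\mathbf{Q}$ (resp.\ row of $\mathbf{P}$) is a standard column, so each $\star$-power expression collapses to a single factor.
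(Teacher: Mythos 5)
Your proposal is correct and follows essentially the same route as the paper: both start from the double-sum description $\mathbf{a}\qShuffle\mathbf{b}=\sum_{\mathbf{P},\mathbf{Q}}\mathbf{P}\diag(\mathbf{a},\mathbf{b})\mathbf{Q}^\top$ of \Cref{def_2ParamQS}, split $\diag(\mathbf{a},\mathbf{b})$ against the block decompositions $\mathbf{P}=[\mathbf{P}_1\;\mathbf{P}_2]$ and $\mathbf{Q}=[\mathbf{Q}_1\;\mathbf{Q}_2]$ to collapse the $\monCompNeutrElem$-blocks, and reassociate the double sum so that the inner sum is recognized as a one-parameter row- or column-quasi-shuffle via parts 2 and 3 of the same lemma. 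The paper just compresses this into a single two-step chain of equalities without verbalizing the reassociation, which you spell out.
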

\begin{proof}
$$\mathbf{a}\qShuffle\mathbf{b}=
\sum_{{[\mathbf{P}_1\;\mathbf{P}_2]}}
\left(\sum_{\mathbf{Q}}
\begin{bmatrix}\mathbf{P}_1\mathbf{a}&
\mathbf{P}_2\mathbf{b}\end{bmatrix}\mathbf{Q}^\top\right)
=\sum_{{[\mathbf{Q}_1\;\mathbf{Q}_2]}}
\left(\sum_{\mathbf{P}}
\mathbf{P}\,\begin{bmatrix}\mathbf{a}\mathbf{Q}_1^\top\\\mathbf{b}\mathbf{Q}_2^\top\end{bmatrix}\right)
$$
\end{proof}
\begin{corollary}~\label{Cor_sqsProp}
The two-parameter quasi-shuffle is closed, that is for all matrix compositions   $\mathbf{a},\mathbf{b}\in\composition$, 
$\mathbf{a}\qShuffle\mathbf{b}\in\groundRing\langle\compositionConnected\rangle$. 
\end{corollary}

\begin{proof}
In every summand 
$$\begin{bmatrix}\mathbf{P}_1\mathbf{a}&\mathbf{P}_2\mathbf{b}\end{bmatrix}\text{ from }
\begin{bmatrix}\mathbf{a}&\monCompNeutrElem_{m\times t}\end{bmatrix}\qShuffle_2
\begin{bmatrix}\monCompNeutrElem_{s\times n}&\mathbf{b}\end{bmatrix}$$
there does not appear an $\monCompNeutrElem$-column, and therefore 
not in $\mathbf{P}_1\mathbf{a}$ or $\mathbf{P}_2\mathbf{b}$, and thus not in $\mathbf{P}_1\mathbf{a}\qShuffle_2\mathbf{P}_2\mathbf{b}$ from $\mathbf{a}\qShuffle\mathbf{b}$. 
An analogous argument yields with 
part \ref{theo_char_qs_part1}. of \Cref{theo_char_qs} that $\mathbf{a}\qShuffle\mathbf{b}$ contains no $\monCompNeutrElem$-rows. 
\end{proof}

\begin{corollary}\label{CorgradedProd}
The two-parameter quasi-shuffle is graded, that is  for all $\mathbf{a},\mathbf{b}\in\composition$ 
with $i=\weight(\diag(\mathbf{a},\mathbf{b}))$,
$$\mathbf{a}\qShuffle\mathbf{b}\in\bigoplus_{\substack{\mathbf{c}\in\composition\\\weight(\mathbf{c})=i}}
  \!\!\!\groundRing\mathbf{c}.$$
\end{corollary}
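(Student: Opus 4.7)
The plan is to use the description of the quasi-shuffle from Definition \ref{def:twodim_qshuffle} together with the fact that $\weight:\monoidComp_d\to\N_0$ is a homomorphism of monoids. By \Cref{lemma:idweightDiag}, $\weight(\diag(\mathbf{a},\mathbf{b}))=\weight(\mathbf{a})+\weight(\mathbf{b})=i$, so it suffices to show that every composition $\mathbf{c}^{p,q}$ appearing in the sum that defines $\mathbf{a}\qShuffle\mathbf{b}$ satisfies $\weight(\mathbf{c}^{p,q})=\weight(\diag(\mathbf{a},\mathbf{b}))$.

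First, I would fix surjections $p\in\qSh(m,s;j)$ and $q\in\qSh(n,t;k)$ contributing a summand in \Cref{def:twodim_qshuffle}, and note that the families of preimages $\{p^{-1}(x)\}_{x\leq j}$ and $\{q^{-1}(y)\}_{y\leq k}$ partition $\{1,\ldots,m+s\}$ and $\{1,\ldots,n+t\}$ respectively. Consequently, the product sets $p^{-1}(x)\times q^{-1}(y)$, as $(x,y)$ ranges over $\{1,\ldots,j\}\times\{1,\ldots,k\}$, form a partition of the full index set $\{1,\ldots,m+s\}\times\{1,\ldots,n+t\}$ of $\diag(\mathbf{a},\mathbf{b})$.

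Then, since $\weight$ is a monoid homomorphism and the entry $\mathbf{c}^{p,q}_{x,y}$ is the $\star$-product of $\diag(\mathbf{a},\mathbf{b})_{u,v}$ over $(u,v)\in p^{-1}(x)\times q^{-1}(y)$, I obtain
\begin{align*}
\weight(\mathbf{c}^{p,q})
&=\sum_{x\leq j}\sum_{y\leq k}\weight(\mathbf{c}^{p,q}_{x,y})
=\sum_{x\leq j}\sum_{y\leq k}\;\sum_{\substack{u\in p^{-1}(x)\\v\in q^{-1}(y)}}\weight(\diag(\mathbf{a},\mathbf{b})_{u,v})\\
&=\sum_{u\leq m+s}\sum_{v\leq n+t}\weight(\diag(\mathbf{a},\mathbf{b})_{u,v})
=\weight(\diag(\mathbf{a},\mathbf{b})),
\end{align*}
using the partition property in the third equality. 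Together with \Cref{lemma:idweightDiag}, this gives $\weight(\mathbf{c}^{p,q})=\weight(\mathbf{a})+\weight(\mathbf{b})=i$, so every summand lies in the claimed weight-$i$ component, and the corollary follows by linear extension.

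There is essentially no obstacle here; the entire statement reduces to the observation that the preimages of a pair of surjections partition the full domain, together with the fact that $\weight$ sends the monoid operation $\star$ to ordinary addition. The only small bookkeeping step is to handle the edge cases $\mathbf{a}=\ec$ or $\mathbf{b}=\ec$ separately, where $\weight(\ec)=0$ makes the identity trivial.
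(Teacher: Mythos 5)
Your proof is correct and is essentially the same argument the paper has in mind: the paper's one-line proof invokes \Cref{lemma:idweightDiag} together with \Cref{lem:relPandp} (the matrix form $\mathbf{c}^{p,q}=\mathbf{P}\diag(\mathbf{a},\mathbf{b})\mathbf{Q}^\top$), and the underlying reason the weight is preserved is exactly the partition-of-preimages observation plus the monoid homomorphism property of $\weight$ that you spell out directly in the surjection language. You have simply supplied the details the paper leaves implicit.
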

\begin{proof}
This follows with \Cref{lem:relPandp,lemma:idweightDiag}.
\end{proof}

\subsection{Bialgebra structure of matrix compositions}

We show \Cref{theoroem:bialgebra}, which leads to   \Cref{theoroem:hopfalgebra}.
The following case study  illustrates the central argument in the special case, where all compositions are connected.

\begin{lemma}\label{lemma:PdiagABQ}
For all $\mathbf{a},\mathbf{b}\in\compositionConnected$,  $\mathbf{P}
\in\QSH(\row(\mathbf{a}),\row(\mathbf{b});j)$ and 
$\mathbf{Q}
\in\QSH(\col(\mathbf{a}),\col(\mathbf{b});k)$, 
exactly one of the following holds:
\begin{enumerate}

\item 
$\mathbf{P}\diag(\mathbf{a},\mathbf{b})\mathbf{Q}^\top$ is connected, and hence
$$\Delta(\mathbf{P}\diag(\mathbf{a},\mathbf{b})\mathbf{Q}^\top)
=
\mathbf{P}\diag(\mathbf{a},\mathbf{b})\mathbf{Q}^\top\otimes \ec
+ \ec\otimes\mathbf{P}\diag(\mathbf{a},\mathbf{b})\mathbf{Q}^\top.$$
\item\label{lemma:PdiagABQ_part1}
$\mathbf{P}\diag(\mathbf{a},\mathbf{b})\mathbf{Q}^\top$ 
is not connected,
$j=\row(\mathbf{a})+\row(\mathbf{b})$,  $k=\col(\mathbf{a})+\col(\mathbf{b})$,  $\mathbf{P}=\mathrm{I}_{j}$, $\mathbf{Q}=\mathrm{I}_{k}$, and
$$\Delta(\mathbf{P}\diag(\mathbf{a},\mathbf{b})\mathbf{Q}^\top)
=
\diag(\mathbf{a},\mathbf{b})\otimes \ec 
+ \mathbf{a}\otimes\mathbf{b} 
+ \ec\otimes\diag(\mathbf{a},\mathbf{b}).$$

\item\label{lemma:PdiagABQ_part2}
$\mathbf{P}\diag(\mathbf{a},\mathbf{b})\mathbf{Q}^\top$ 
is not connected,
$\mathbf{P}=\begin{bmatrix}0&\mathrm{I}_{\row(\mathbf{b})}\\\mathrm{I}_{\row(\mathbf{a})}&0\end{bmatrix}$,
$\mathbf{Q}=\begin{bmatrix}0&\mathrm{I}_{\col(\mathbf{a})}\\\mathrm{I}_{\col(\mathbf{b})}&0\end{bmatrix}$, and  
$$\Delta(\mathbf{P}\diag(\mathbf{a},\mathbf{b})\mathbf{Q}^\top)
=
\diag(\mathbf{b},\mathbf{a})\otimes \ec 
+ \mathbf{b}\otimes\mathbf{a} 
+ \ec\otimes\diag(\mathbf{b},\mathbf{a}).$$

\end{enumerate}
\end{lemma}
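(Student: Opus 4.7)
My plan is to split into three disjoint scenarios matching the three conclusions, handling scenarios (2) and (3) by direct computation and reserving the main work for scenario (1). For (2), I would substitute $\mathbf{P} = \mathrm{I}_{m+s}$ and $\mathbf{Q} = \mathrm{I}_{n+t}$ (writing $m := \row(\mathbf{a})$, $s := \row(\mathbf{b})$, $n := \col(\mathbf{a})$, $t := \col(\mathbf{b})$) to obtain $\mathbf{P}\diag(\mathbf{a},\mathbf{b})\mathbf{Q}^\top = \diag(\mathbf{a},\mathbf{b})$; connectedness of $\mathbf{a}$ and $\mathbf{b}$ identifies $(\mathbf{a},\mathbf{b})$ as the unique factorization from \Cref{lemma:primitive_decomposition}, and \Cref{def:coproduct} then immediately yields the claimed three-term $\Delta$. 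Scenario (3) is symmetric, producing $\diag(\mathbf{b},\mathbf{a})$.

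For the remaining ``connected'' scenario (1) I would argue by contrapositive: assume $\mathbf{M} := \mathbf{P}\diag(\mathbf{a},\mathbf{b})\mathbf{Q}^\top$ admits a non-trivial block decomposition $\mathbf{M} = \diag(\mathbf{X},\mathbf{Y})$, with $\mathbf{X}$ of size $r \times c$ and $\mathbf{Y}$ of size $(j-r) \times (k-c)$ both non-empty, and deduce that $(\mathbf{P},\mathbf{Q})$ is forced into one of the two configurations above. Letting $p, q$ be the surjections associated to $\mathbf{P}, \mathbf{Q}$, the strict monotonicity of $p$ on each of $A := \{1,\ldots,m\}$ and $B := \{m+1,\ldots,m+s\}$ separately lets me define cut indices $\alpha \in \{0,\ldots,m\}$ and $\beta \in \{0,\ldots,s\}$ such that $p^{-1}(\{1,\ldots,r\}) = \{1,\ldots,\alpha\} \cup \{m+1,\ldots,m+\beta\}$; analogous cuts $\gamma \in \{0,\ldots,n\}, \delta \in \{0,\ldots,t\}$ arise from $q$.

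The decisive step exploits that $\monoidComp_d$ is a free commutative monoid, so a $\star$-product equals $\monCompNeutrElem$ if and only if each factor is $\monCompNeutrElem$. Reading the off-block entries of $\mathbf{M}$ via \Cref{lem:relPandp}, the vanishing condition translates into $\mathbf{a}_{i,v} = \monCompNeutrElem$ on the rectangles $\{1,\ldots,\alpha\} \times \{\gamma+1,\ldots,n\}$ and $\{\alpha+1,\ldots,m\} \times \{1,\ldots,\gamma\}$, with the analogous statement for $\mathbf{b}$ at $(\beta,\delta)$. This exhibits $\mathbf{a}$ as a block-diagonal decomposition; connectedness of $\mathbf{a}$ combined with the ``no $\monCompNeutrElem$-row/column'' axiom forces $(\alpha,\gamma) \in \{(0,0),(m,n)\}$, and analogously $(\beta,\delta) \in \{(0,0),(s,t)\}$. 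Non-emptiness of $\mathbf{X},\mathbf{Y}$ eliminates the uniform configurations $(0,0,0,0)$ and $(m,n,s,t)$, leaving exactly $(m,n,0,0)$ (identity) and $(0,0,s,t)$ (swap), whereupon surjectivity and monotonicity of $p,q$ pin $\mathbf{P},\mathbf{Q}$ down uniquely.

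The main obstacle I anticipate is the bookkeeping required to juggle the four cut indices $\alpha,\beta,\gamma,\delta$ against the monotonicity, surjectivity, and connectedness constraints simultaneously. In particular, ruling out intermediate configurations such as $\alpha = 0$ with $\gamma > 0$ -- where the forced vanishing would make an entire column of $\mathbf{a}$ equal to $\monCompNeutrElem$ -- requires careful appeal to the ``no $\monCompNeutrElem$-column'' clause in the definition of $\composition$, which is easy to conflate with connectedness proper in the middle of the case analysis.
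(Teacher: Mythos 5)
Your plan is correct and is essentially the paper's argument: the paper likewise splits $\mathbf{P}\diag(\mathbf{a},\mathbf{b})\mathbf{Q}^\top$ into the factors $\mathbf{P}_1\mathbf{a}\mathbf{Q}_1^\top \star \mathbf{P}_2\mathbf{b}\mathbf{Q}_2^\top$, uses freeness of $\monoidComp_d$ to push the $\monCompNeutrElem$-constraints on the off-diagonal blocks onto each factor separately, and then invokes connectedness of $\mathbf{a}$, $\mathbf{b}$ together with the no-$\monCompNeutrElem$-row/column axiom to force either the identity or the swap. The only cosmetic difference is that the paper packages this as a dichotomy (for $\mathbf{P}_1\mathbf{a}\mathbf{Q}_1^\top=\diag(\mathbf{x}_1,\mathbf{y}_1)$, one of $\mathbf{x}_1,\mathbf{y}_1$ is $\monCompNeutrElem$-valued) rather than tracking the four cut indices $\alpha,\beta,\gamma,\delta$ explicitly, but the underlying case analysis is the same.
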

\begin{example}\label{ex:bialgebraRelConnectedCase}In the setting of  \Cref{ex:algo2dimQSh} with   
$$(\mathbf{a},\mathbf{b})=(\begin{bmatrix} \w{1} \end{bmatrix},\begin{bmatrix}\w{2}\\ \w{3}\end{bmatrix})\in\monoidComp_3^{1\times 1}\times\monoidComp_3^{1\times 2},$$ 
the only summands in $\mathbf{a}\qShuffle\mathbf{b}$ which are not connected are $\diag(\mathbf{a},\mathbf{b})$ and $$\diag(\mathbf{b},\mathbf{a})
=\begin{bmatrix}
e_{3}&e_1&e_2
\end{bmatrix}
\diag(\mathbf{a},\mathbf{b})
{\begin{bmatrix}
e_2&e_1
\end{bmatrix}}^\top.$$
\end{example}
\begin{proof}[Proof of \Cref{lemma:PdiagABQ}]
Let $(m,n)=\size(\mathbf{a})$ and $(s,t)=\size(\mathbf{b})$. 
Let $1\le \mu_1<\cdots< \mu_m \le j$ and $1 \le \nu_1< \cdots< \nu_s \le j$ be such that
$$\mathbf{P}
=\begin{bmatrix}
\mathbf{P}_1&\mathbf{P}_2
\end{bmatrix}
=\begin{bmatrix}\e{\mu_1}&\cdots&\e{\mu_m}&\e{\nu_1}&\cdots&\e{\nu_s}\end{bmatrix}\in\N_0^{j\times (m+s)}
$$ 
according to \Cref{lemma_qSH_dec}, and similarly $\mathbf{Q}
=\begin{bmatrix}\mathbf{Q}_1&\mathbf{Q}_2\end{bmatrix}\in\N_0^{k\times (n+t)}
$ with increasing chains $\iota$ and $\kappa$.   
For $i\geq 2$ and $\omega\in\{\mu,\iota\}$ let $\overline\omega_i:=\omega_i-\omega_{i-1}-1$. 
In the composition
\begin{equation}\label{eq:proofBialgebraPrimitiveCase}
\mathbf{P}\diag(\mathbf{a},\mathbf{b})\mathbf{Q}^\top=\mathbf{P}_1\mathbf{a}\mathbf{Q}^\top_1\star\mathbf{P}_2\mathbf{b}\mathbf{Q}^\top_2\in\monoidComp^{j\times k},\end{equation}
consider the factor derived from the connected matrix composition $\mathbf{a}$, 
\begin{align*}
\mathbf{P}_1\mathbf{a}\mathbf{Q}_1^\top&=
\begin{bmatrix}
\monCompNeutrElem_{(\mu_1-1)\times(\iota_1-1)}
&\monCompNeutrElem_{(\mu_1-1)\times1}&
\monCompNeutrElem_{(\mu_1-1)\times\overline\iota_2}&
\cdots&
\monCompNeutrElem_{(\mu_1-1)\times1}&
\monCompNeutrElem_{(\mu_1-1)\times(k-\iota_s)}\\
\monCompNeutrElem_{1\times(\iota_1-1)}
&\mathbf{a}_{1,1}&
\monCompNeutrElem_{1\times\overline\iota_2}&\cdots&\mathbf{a}_{1,s}&
\monCompNeutrElem_{1\times(k-\iota_s)}
\\
\monCompNeutrElem_{\overline\mu_2\times(\iota_1-1)}
&\monCompNeutrElem_{\overline\mu_2\times1}
&\monCompNeutrElem_{\overline\mu_2\times\overline\iota_2}
&\cdots&
\monCompNeutrElem_{\overline\mu_2\times1}&
\monCompNeutrElem_{\overline\mu_2\times(k-\iota_s)}\\
\monCompNeutrElem_{1\times(\iota_1-1)}
&\mathbf{a}_{2,1}&
\monCompNeutrElem_{1\times\overline\iota_2}&\cdots&\mathbf{a}_{2,s}&
\monCompNeutrElem_{1\times(k-\iota_s)}
\\
\monCompNeutrElem_{\overline\mu_3\times(\iota_1-1)}
&\monCompNeutrElem_{\overline\mu_3\times1}
&\monCompNeutrElem_{\overline\mu_3\times\overline\iota_2}
&\cdots&
\monCompNeutrElem_{\overline\mu_3\times1}&
\monCompNeutrElem_{\overline\mu_3\times(k-\iota_s)}\\
\vdots&\vdots&\vdots&\ddots&\vdots&\vdots\\
\monCompNeutrElem_{1\times(\iota_1-1)}
&\mathbf{a}_{m,1}
&\monCompNeutrElem_{1\times\overline\iota_2}
&\cdots
&\mathbf{a}_{m,s}&\monCompNeutrElem_{1\times(k-\iota_s)}\\
\monCompNeutrElem_{(j-\mu_m)\times(\iota_1-1)}
&\monCompNeutrElem_{(j-\mu_m)\times1}&
\monCompNeutrElem_{(j-\mu_m)\times\overline\iota_2}
&\cdots
&\monCompNeutrElem_{(j-\mu_m)\times1}&\monCompNeutrElem_{(j-\mu_m)\times(k-\iota_s)}
\end{bmatrix}.
\end{align*}
It satisfies the following property:
if 
\begin{equation}\label{eq:factorA_nonPrimitivePDiagABQT}
\mathbf{P}_1\mathbf{a}\mathbf{Q}_1^\top=\diag(\mathbf{x}_1,\mathbf{y}_1)\end{equation} 
with non-empty $\mathbf{x}_1$ and $\mathbf{y}_1$, then $\mathbf{x}_1$ has all entries equal
to $\monCompNeutrElem$ or $\mathbf{y}_1$ does.

The same argument holds for the connected matrix composition $\mathbf{b}$ and 
\begin{equation}\label{eq:factorB_nonPrimitivePDiagABQT}
\mathbf{P}_2\mathbf{b}\mathbf{Q}_2^\top=\diag(\mathbf{x}_2,\mathbf{y}_2).\end{equation}
If the composition $\mathbf{P}\diag(\mathbf{a},\mathbf{b})\mathbf{Q}^\top=\diag(\mathbf{x},\mathbf{y})$ is not connected with non-empty compositions $\mathbf{x}$ and $\mathbf{y}$, then the factors from \Cref{eq:proofBialgebraPrimitiveCase} must have a block factorization, and thus 
$$\mathbf{P}\diag(\mathbf{a},\mathbf{b})\mathbf{Q}^\top=\diag(\mathbf{x}_1\star\mathbf{x}_2,\mathbf{y}_1\star\mathbf{y}_2)$$
with 
(\ref{eq:factorA_nonPrimitivePDiagABQT}), (\ref{eq:factorB_nonPrimitivePDiagABQT}) and where $\size(\mathbf{x})=\size(\mathbf{x}_1)=\size(\mathbf{x}_2)$. 
If $\mathbf{y}_1$ is $\monCompNeutrElem$-valued, then $\mathbf{x}_1=\mathbf{a}$,
$\mathbf{x}_2$ is  $\monCompNeutrElem$-valued, thus
$\mathbf{y}_2=\mathbf{b}$, and therefore we are in case 
\ref{lemma:PdiagABQ_part1}. 
If instead $\mathbf{x}_1$ is  $\monCompNeutrElem$-valued, then
$\mathbf{y}_1=\mathbf{a}$,
$\mathbf{y}_2$ is   $\monCompNeutrElem$-valued,  $\mathbf{x}_2=\mathbf{b}$, and 
hence we obtain case 
\ref{lemma:PdiagABQ_part2}. 
\end{proof}
Let $\tau:\groundRing\langle\compositionConnected\rangle\otimes\groundRing\langle\compositionConnected\rangle\rightarrow\groundRing\langle\compositionConnected\rangle\otimes\groundRing\langle\compositionConnected\rangle$ denote the \DEF{flip} homomorphism, uniquely determined by $\tau(\mathbf{a}\otimes\mathbf{b}):=\mathbf{b}\otimes\mathbf{a}$ for all  $\mathbf{a},\mathbf{b}\in\composition$. 

\begin{corollary}
For $\mathbf{a},\mathbf{b}\in\compositionConnected$, 
$$(\qShuffle\otimes\qShuffle)\circ(\id\otimes\,\tau\otimes\id)\circ(\Delta\otimes\Delta)(\mathbf{a}\otimes\mathbf{b})=\Delta\,\circ\qShuffle(\mathbf{a}\otimes\mathbf{b}).$$
\end{corollary}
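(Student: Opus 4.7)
The plan is to evaluate both sides directly, using connectedness of $\mathbf{a}$ and $\mathbf{b}$ on the left, and the classification from \Cref{lemma:PdiagABQ} on the right, and check that they agree term-by-term.

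First I would expand the left-hand side. Since $\mathbf{a},\mathbf{b}$ are connected, \Cref{lemma:primitive_decomposition} gives their connected factorizations trivially, so $\Delta\mathbf{a} = \mathbf{a}\otimes\ec + \ec\otimes\mathbf{a}$ and likewise for $\mathbf{b}$. Substituting into $(\Delta\otimes\Delta)(\mathbf{a}\otimes\mathbf{b})$ produces four pure tensors of length four; applying $\id\otimes\tau\otimes\id$ to each, and then $\qShuffle\otimes\qShuffle$, yields
\begin{align*}
  (\mathbf{a}\qShuffle\mathbf{b})\otimes\ec \;+\; \mathbf{a}\otimes\mathbf{b} \;+\; \mathbf{b}\otimes\mathbf{a} \;+\; \ec\otimes(\mathbf{a}\qShuffle\mathbf{b}),
\end{align*}
using $\mathbf{x}\qShuffle\ec = \ec\qShuffle\mathbf{x} = \mathbf{x}$ throughout.

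Next I would compute the right-hand side. Using the surjection description from \Cref{def_2ParamQS}, I expand
$\mathbf{a}\qShuffle\mathbf{b} = \sum_{\mathbf{P},\mathbf{Q}} \mathbf{P}\diag(\mathbf{a},\mathbf{b})\mathbf{Q}^\top$
and apply $\Delta$ to each summand. By \Cref{lemma:PdiagABQ} precisely three kinds of summands occur: those for which $\mathbf{P}\diag(\mathbf{a},\mathbf{b})\mathbf{Q}^\top$ is connected (contributing $\mathbf{P}\diag(\mathbf{a},\mathbf{b})\mathbf{Q}^\top\otimes\ec + \ec\otimes\mathbf{P}\diag(\mathbf{a},\mathbf{b})\mathbf{Q}^\top$), the unique summand $\diag(\mathbf{a},\mathbf{b})$ coming from the identity pair, and the unique summand $\diag(\mathbf{b},\mathbf{a})$ coming from the swap pair, both non-connected. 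Collecting the $X\otimes\ec$ pieces across all summands reassembles $(\mathbf{a}\qShuffle\mathbf{b})\otimes\ec$, and similarly for $\ec\otimes(\mathbf{a}\qShuffle\mathbf{b})$; the only residual pieces are the genuine ``mixed'' contributions $\mathbf{a}\otimes\mathbf{b}$ from case \ref{lemma:PdiagABQ_part1} and $\mathbf{b}\otimes\mathbf{a}$ from case \ref{lemma:PdiagABQ_part2} of \Cref{lemma:PdiagABQ}.

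Comparing, the two expressions coincide, proving the relation. The substantive content is already packaged in \Cref{lemma:PdiagABQ}: once one knows that only the two trivial surjection pairs can produce a non-connected shuffle summand, and which deconcatenations those produce, the identity falls out. The likely obstacle—already handled in the cited lemma—is to verify that no other surjection pair $(\mathbf{P},\mathbf{Q})$ can make $\mathbf{P}\diag(\mathbf{a},\mathbf{b})\mathbf{Q}^\top$ decompose along the diagonal; this uses crucially the connectedness of $\mathbf{a}$ and $\mathbf{b}$, since any block-diagonal decomposition of $\mathbf{P}_1\mathbf{a}\mathbf{Q}_1^\top$ must be forced to an $\monCompNeutrElem$-valued block on one side and all of $\mathbf{a}$ on the other, and symmetrically for $\mathbf{b}$.
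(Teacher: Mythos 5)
Your proposal is correct and uses the same key ingredient as the paper's proof, namely \Cref{lemma:PdiagABQ}, which classifies exactly which surjection pairs $(\mathbf{P},\mathbf{Q})$ yield non-connected summands $\mathbf{P}\diag(\mathbf{a},\mathbf{b})\mathbf{Q}^\top$ and what their deconcatenations contribute. The paper writes the argument as a one-directional chain of equalities starting from $\Delta\circ\qShuffle(\mathbf{a}\otimes\mathbf{b})$ and regrouping into the form $(\qShuffle\otimes\qShuffle)(\cdots)$, whereas you expand both sides to the common normal form $(\mathbf{a}\qShuffle\mathbf{b})\otimes\ec+\mathbf{a}\otimes\mathbf{b}+\mathbf{b}\otimes\mathbf{a}+\ec\otimes(\mathbf{a}\qShuffle\mathbf{b})$ and compare; these are the same computation presented in slightly different order.
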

\begin{proof}
With \Cref{lemma:PdiagABQ}, 
\begin{align*}
\Delta\,\circ\qShuffle(\mathbf{a}\otimes\mathbf{b})
&=\sum_{\mathbf{P},\mathbf{Q}}\Delta(\mathbf{P}\diag(\mathbf{a},\mathbf{b})\mathbf{Q}^\top)\\
&=\mathbf{a}\otimes\mathbf{b}+\mathbf{b}\otimes\mathbf{a}+
\sum_{\mathbf{P},\mathbf{Q}}\left(\mathbf{P}\diag(\mathbf{a},\mathbf{b})\mathbf{Q}^\top\otimes \ec+ \ec\otimes\mathbf{P}\diag(\mathbf{a},\mathbf{b})\mathbf{Q}^\top\right)\\
&=(\qShuffle\otimes\qShuffle)(
\mathbf{a}\otimes \ec\otimes \ec\otimes\mathbf{b} + 
 \ec\otimes\mathbf{b}\otimes\mathbf{a}\otimes \ec + 
 \mathbf{a}\otimes\mathbf{b}\otimes \ec\otimes \ec+
 \ec\otimes \ec\otimes\mathbf{a}\otimes\mathbf{b})%
\end{align*}
verifies the bialgebra relation in the case where $\mathbf{a}$ and $\mathbf{b}$ are connected. 
\end{proof}

We now generalize  \Cref{lemma:PdiagABQ} for arbitrary matrix compositions, visualized in \Cref{ex:bialgebraRelationPossibleSplittings} and in \Cref{fig:casestudy_primitive_decomp_qshuffle} with decomposition length $a=2$ and $b=1$.
For this, let $$\underline u_{\alpha}:=\sum_{1\leq r\leq \alpha}u_r$$
denote the cumulative sum\footnote{The empty sum yields $\underline u_{0}:=0$.} of $u\in\N^a$ at index $0\leq \alpha \leq a$.  
\index[general]{splittings0@$\domainAlphaBeta$}
For fixed 
$\mathbf{a},\mathbf{b}\in \composition$ with decompositions into connected compositions $\mathbf{a}=\diag(\mathbf{v}_1,\ldots,\mathbf{v}_a)$ and $\mathbf{b}=\diag(\mathbf{w}_1,\ldots,\mathbf{w}_b)$ let 
$$\domainAlphaBeta_{a,b}:=\{(\alpha,\beta)\in\N_0^2\mid 0_2\leq(\alpha,\beta)\leq(a,b)\land0\not=\alpha+\beta\not=a+b\}.$$
For every $(\alpha,\beta)\in\domainAlphaBeta_{a,b}$ we call 
 $$\mathbf{P}
 =\begin{bmatrix}\mathbf{P}_1&\mathbf{P}_2
 \end{bmatrix}
=\begin{bmatrix}\e{\mu_1}&\cdots&\e{\mu_{\row(\mathbf{a})}}&\e{\nu_1}&\cdots&\e{\nu_{\row(\mathbf{b})}}\end{bmatrix}\in\QSH(\row(\mathbf{a}),\row(\mathbf{b}),j)$$
\DEF{$(\alpha,\beta)$-decomposable}, if%
\footnote{Note that the notation does not distinguish between standard columns $e_i$ from $\N_0^{j_1}$ and $\N_0^{j_2}$.} 
\footnote{
    In the cases $\alpha\in\{0,a\}$ we have   $\mathbf{P}_{11}^{(0)}=\begin{bmatrix}0_{j_1}&\cdots&0_{j_1}\end{bmatrix},\mathbf{P}_{11}^{(a)}=\begin{bmatrix}
\e{\mu_{1}}&\cdots&\e{\mu_{\row(\mathbf{a})}}\end{bmatrix}\in\N_0^{j_1\times\row(\mathbf{a})}$  and  $\mathbf{P}_{21}^{(0)}=\begin{bmatrix}
\e{(\mu_{1}-j_1)}&\cdots&\e{(\mu_{\row(\mathbf{a})}-j_1)}\end{bmatrix},\mathbf{P}_{21}^{(a)}=\begin{bmatrix}0_{j_2}&\cdots&0_{j_2}\end{bmatrix}\in\N_0^{j_2\times\row(\mathbf{a})}$.} 
\begin{align}\label{eq1:lem518}
\mathbf{P}_1&=
\begin{bmatrix}\e{\mu_{1}}&\cdots&\e{\mu_{\underline{u}_{\alpha}}}&0_{j_1}&\cdots&0_{j_1}\\
0_{j_2}&\cdots&0_{j_2
}&
\e{(\mu_{(\underline{u}_{\alpha}+1)}-j_1)}&\cdots&\e{(\mu_{\row(\mathbf{a})}-j_1)}
\end{bmatrix}=:\begin{bmatrix}\mathbf{P}_{11}^{(\alpha)}\\
\mathbf{P}_{21}^{(\alpha)}\end{bmatrix}
\end{align}
and 
\begin{align}\label{eq2:lem518}
\mathbf{P}_2&=
\begin{bmatrix}\e{\nu_{1}}&\cdots&\e{\nu_{\underline{\omega}_{\beta}}}&0_{j_1}&\cdots&0_{j_1}\\
0_{j_2}&\cdots&0_{j_2}&
\e{(\nu_{(\underline{\omega}_{\beta}+1)}-j_1)}&\cdots&\e{(\nu_{\row(\mathbf{b})}-j_1)}
\end{bmatrix}=:\begin{bmatrix}
\mathbf{P}_{12}^{(\beta)}\\
\mathbf{P}_{22}^{(\beta)}
\end{bmatrix}
\end{align}
with suitable $j_1,j_2\in\N$, where  $(u_\alpha, v_\alpha)=\size(\mathbf{v}_\alpha)$ and $(\omega_\beta, w_\beta)=\size(\mathbf{w}_\beta)$ denote the block sizes in $\mathbf{a}$ and  $\mathbf{b}$, respectively. 
\begin{lemma}\label{lem:uniquenessAndInje}
If $\mathbf{P}$ is $(\alpha,\beta)$-decomposable, then its block decomposition
\begin{equation}\label{eq:blockOfP}
\mathbf{P}=\begin{bmatrix}\mathbf{P}_{11}^{(\alpha)}&\mathbf{P}_{12}^{(\beta)}\\\mathbf{P}_{21}^{(\alpha)}&\mathbf{P}_{22}^{(\beta)}\end{bmatrix}
\end{equation}
is uniquely determined. 
Furthermore, if $\mathbf{P}$ is also $(\alpha',\beta')$-decomposable with 
$\size(\mathbf{P}_{11}^{(\alpha)})=\size(\mathbf{P}_{11}^{(\alpha')})$, then $(\alpha,\beta)=(\alpha',\beta')\in\domainAlphaBeta_{a,b}$. 
\end{lemma}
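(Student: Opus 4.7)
The plan is to prove uniqueness of the block decomposition \eqref{eq:blockOfP} by showing that the row-split $j_1$ is determined intrinsically by $\mathbf{P}$, $\alpha$ and $\beta$; once $j_1$ is fixed, the four blocks of \eqref{eq:blockOfP} are read off explicitly from \eqref{eq1:lem518} and \eqref{eq2:lem518}. The second claim will then fall out by strict monotonicity of the partial sums $\underline u_\bullet$ and $\underline \omega_\bullet$.

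First I would unpack what \eqref{eq1:lem518} and \eqref{eq2:lem518} force on $j_1$. The standard columns $\e{\mu_i}$ with $i \le \underline u_\alpha$ and $\e{\nu_k}$ with $k \le \underline \omega_\beta$ are required to lie in $\N_0^{j_1}$, yielding $\mu_{\underline u_\alpha} \le j_1$ and $\nu_{\underline \omega_\beta} \le j_1$. The complementary columns $\e{\mu_i - j_1}$ and $\e{\nu_k - j_1}$ must be standard columns of $\N_0^{j_2}$, so $\mu_{\underline u_\alpha + 1} > j_1$ and $\nu_{\underline \omega_\beta + 1} > j_1$ whenever the corresponding indices exist (the boundary cases $\alpha \in \{0,a\}$ or $\beta \in \{0,b\}$ drop one of these inequalities, as in the footnotes of the statement). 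Now I would invoke right-invertibility of $\mathbf{P}$, which forces $\{\mu_1, \ldots, \mu_{\row(\mathbf{a})}\} \cup \{\nu_1, \ldots, \nu_{\row(\mathbf{b})}\} = \{1, \ldots, j\}$. Combined with strict monotonicity of $\mu$ and $\nu$, the preceding inequalities pin down $\{1, \ldots, j_1\} = \{\mu_1, \ldots, \mu_{\underline u_\alpha}\} \cup \{\nu_1, \ldots, \nu_{\underline \omega_\beta}\}$, and taking the maximum gives the closed formula $j_1 = \max(\mu_{\underline u_\alpha}, \nu_{\underline \omega_\beta})$. Since this depends only on $\mathbf{P}$, $\alpha$ and $\beta$, the block decomposition is unique.

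For the second assertion, the hypothesis $\size(\mathbf{P}_{11}^{(\alpha)}) = \size(\mathbf{P}_{11}^{(\alpha')})$ reads $(j_1, \underline u_\alpha) = (j_1', \underline u_{\alpha'})$. Because \Cref{lemma:primitive_decomposition} guarantees $u_r = \row(\mathbf{v}_r) \ge 1$ for every connected component, the map $r \mapsto \underline u_r$ on $\{0, \ldots, a\}$ is strictly increasing, hence injective, so $\underline u_\alpha = \underline u_{\alpha'}$ forces $\alpha = \alpha'$. Applying the identity $j_1 = \max(\mu_{\underline u_\alpha}, \nu_{\underline \omega_\beta})$ to both decompositions, the bracket $\nu_{\underline \omega_\beta} \le j_1 < \nu_{\underline \omega_\beta + 1}$ determines the index $\underline \omega_\beta$ uniquely by strict monotonicity of $\nu$; hence $\underline \omega_\beta = \underline \omega_{\beta'}$, and strict monotonicity of $r \mapsto \underline \omega_r$ gives $\beta = \beta'$.

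The only substantive step is the derivation of $j_1 = \max(\mu_{\underline u_\alpha}, \nu_{\underline \omega_\beta})$, which synthesizes the shape constraints from \eqref{eq1:lem518} and \eqref{eq2:lem518} with right-invertibility of $\mathbf{P}$; everything else is bookkeeping with strictly increasing integer sequences, and the edge cases $\alpha \in \{0,a\}$, $\beta \in \{0,b\}$ are handled exactly by the conventions indicated in the statement's footnotes.
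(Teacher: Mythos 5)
Your proof is correct and follows essentially the same route as the paper: uniqueness of the $\mathbf{P}_1,\mathbf{P}_2$ split via \Cref{lemma_qSH_dec}, right-invertibility (no zero rows) pinning down $j_1$, and strict monotonicity of $\mu,\nu$ (together with $u_r,\omega_r\ge1$ from \Cref{lemma:primitive_decomposition}) for the second claim. The explicit formula $j_1=\max(\mu_{\underline u_\alpha},\nu_{\underline\omega_\beta})$ is simply a concrete rendering of the paper's "no zero rows" step, and the argument that $\underline u_\alpha=\underline u_{\alpha'}$ and $j_1=j_1'$ force $(\alpha,\beta)=(\alpha',\beta')$ is the same monotonicity observation the paper compresses into its final sentence.
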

\begin{proof}
Due to \Cref{lemma_qSH_dec},  $\mathbf{P}$ decomposes uniquely into $\mathbf{P}_1$ and $\mathbf{P}_2$.
Since $\mathbf{P}$ is right invertible, it contains no zero rows.
Therefore, with (\ref{eq1:lem518}) and (\ref{eq2:lem518}), $j_1$ and $j_2=j-j_1$ are unique. 
The second part follows since $\mu$ and $\nu$ are strictly increasing. 
\end{proof}

\begin{lemma}\label{lem:PdiagAbQChar}
Let $\mathbf{a}=\diag(\mathbf{v}_1,\ldots,\mathbf{v}_a)$ and $\mathbf{b}=\diag(\mathbf{w}_1,\ldots,\mathbf{w}_b)$ be decompositions into connected compositions. 
\index[general]{splittings1@$\splitt$}
For fixed $$(\mathbf{P},\mathbf{Q})\in\QSH(\row(\mathbf{a}),\row(\mathbf{b});j)\times\QSH(\col(\mathbf{a}),\col(\mathbf{b});k),$$
the set
\begin{equation*}\label{lem:bialg_Set1}
\{(\mathbf{x},\mathbf{y})\in\composition^2\mid\mathbf{x}\not=\ec\not=\mathbf{y}\land\diag(\mathbf{x},\mathbf{y})=\mathbf{P}\diag(\mathbf{a},\mathbf{b})\mathbf{Q}^\top\}
\end{equation*}
is in one-to-one correspondence to the set of \DEF{splittings}
\begin{equation*}\label{lem:bialg_Set2}
{\splitt}_{\mathbf{P},\mathbf{Q}}^{\mathbf{a},\mathbf{b}}:=\{(\alpha,\beta)\in\domainAlphaBeta_{a,b}\mid
\mathbf{P}=\begin{bmatrix}\mathbf{P}_{11}^{(\alpha)}&\mathbf{P}_{12}^{(\beta)}\\\mathbf{P}_{21}^{(\alpha)}&\mathbf{P}_{22}^{(\beta)}\end{bmatrix}
\land
\mathbf{Q}=\begin{bmatrix}
\mathbf{Q}_{11}^{(\alpha)}
&\mathbf{Q}_{12}^{(\beta)}\\
\mathbf{Q}_{21}^{(\alpha)}
&\mathbf{Q}_{22}^{(\beta)}
\end{bmatrix}
\},\end{equation*}
where $\mathbf{P}$ and $\mathbf{Q}$ are simultaneously $(\alpha,\beta)$-decomposable with  (\ref{eq1:lem518}), (\ref{eq2:lem518}), \begin{align}\label{eq3:lem518}
    \begin{bmatrix}\mathbf{Q}_{11}^{(\alpha)}\\\mathbf{Q}_{21}^{(\alpha)}\end{bmatrix}
&=
\begin{bmatrix}\e{\iota_{1}}&\cdots&\e{\iota_{\underline{v}_{\alpha}}}&0_{k_1}&\cdots&0_{k_1}\\
0_{k_2}&\cdots&0_{k_2
}&
\e{(\iota_{(\underline{v}_{\alpha}+1)}-k_1)}&\cdots&\e{(\iota_{\col(\mathbf{a})}-k_1)}\end{bmatrix}
\end{align}
and 
\begin{align}
\label{eq4:lem518}
\begin{bmatrix}\mathbf{Q}_{12}^{(\beta)}\\\mathbf{Q}_{22}^{(\beta)}\end{bmatrix}&=
\begin{bmatrix}\e{\kappa_{1}}&\cdots&\e{\kappa_{\underline{w}_{\beta}}}&0_{k_1}&\cdots&0_{k_1}\\
0_{k_2}&\cdots&0_{k_2}&
\e{(\kappa_{(\underline{w}_{\beta}+1)}-k_1)}&\cdots&\e{(\kappa_{\col(\mathbf{b})}-k_1)}\end{bmatrix}
\end{align} 
for suitable $k_1,k_2\in\N$ and increasing 
$(\iota,\kappa)\in\N^{\col(\mathbf{a})}\times\N^{\col(\mathbf{b})}$.  
\end{lemma}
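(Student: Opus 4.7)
The plan is to construct explicit maps $\Phi:\splitt_{\mathbf{P},\mathbf{Q}}^{\mathbf{a},\mathbf{b}}\to\{(\mathbf{x},\mathbf{y}):\mathbf{x},\mathbf{y}\neq\ec,\;\diag(\mathbf{x},\mathbf{y})=\mathbf{P}\diag(\mathbf{a},\mathbf{b})\mathbf{Q}^\top\}$ and a reverse map $\Psi$, and to verify that they are mutually inverse.

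For $\Phi$, given a splitting $(\alpha,\beta)$ together with the prescribed block decompositions (\ref{eq1:lem518})--(\ref{eq4:lem518}), I would carry out the block product $\mathbf{P}\diag(\mathbf{a},\mathbf{b})\mathbf{Q}^\top=\mathbf{P}_1\mathbf{a}\mathbf{Q}_1^\top\star\mathbf{P}_2\mathbf{b}\mathbf{Q}_2^\top$ using the two-by-two block decompositions of each factor. Four cross terms of the form $\mathbf{P}_{11}^{(\alpha)}\mathbf{a}(\mathbf{Q}_{21}^{(\alpha)})^\top$, $\mathbf{P}_{21}^{(\alpha)}\mathbf{a}(\mathbf{Q}_{11}^{(\alpha)})^\top$, $\mathbf{P}_{12}^{(\beta)}\mathbf{b}(\mathbf{Q}_{22}^{(\beta)})^\top$, $\mathbf{P}_{22}^{(\beta)}\mathbf{b}(\mathbf{Q}_{12}^{(\beta)})^\top$ vanish, because each targets entries of $\mathbf{a}$ (resp.\ $\mathbf{b}$) that lie off the block diagonal of $\diag(\mathbf{v}_1,\ldots,\mathbf{v}_a)$ (resp.\ $\diag(\mathbf{w}_1,\ldots,\mathbf{w}_b)$) and are therefore $\monCompNeutrElem$-valued. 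What remains lives in the two diagonal blocks of the output:
\[
\mathbf{x}:=\mathbf{P}_{11}^{(\alpha)}\mathbf{a}(\mathbf{Q}_{11}^{(\alpha)})^\top\star\mathbf{P}_{12}^{(\beta)}\mathbf{b}(\mathbf{Q}_{12}^{(\beta)})^\top,\quad \mathbf{y}:=\mathbf{P}_{21}^{(\alpha)}\mathbf{a}(\mathbf{Q}_{21}^{(\alpha)})^\top\star\mathbf{P}_{22}^{(\beta)}\mathbf{b}(\mathbf{Q}_{22}^{(\beta)})^\top.
\]
Non-emptiness of both $\mathbf{x}$ and $\mathbf{y}$ is guaranteed by $(\alpha,\beta)\in\domainAlphaBeta_{a,b}$, and the absence of $\monCompNeutrElem$-lines is inherited from $\mathbf{a}$ and $\mathbf{b}$ via right-invertibility of the relevant Kronecker-column submatrices.

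For $\Psi$, start from $\diag(\mathbf{x},\mathbf{y})=\mathbf{P}\diag(\mathbf{a},\mathbf{b})\mathbf{Q}^\top$ with $\size(\mathbf{x})=(j_1,k_1)$. A non-$\monCompNeutrElem$ entry of $\mathbf{a}$ at position $(p,q)$ is transported to position $(\mu_p,\iota_q)$ in the product (and analogously $(\nu_p,\kappa_q)$ for entries of $\mathbf{b}$), hence must lie either in the upper-left $(j_1\times k_1)$-block or in the lower-right block. Within a single connected component $\mathbf{v}_i$, the graph that links non-$\monCompNeutrElem$ entries via shared rows and columns is connected---essentially the content of $\mathbf{v}_i\in\compositionConnected$---so all non-$\monCompNeutrElem$ entries of $\mathbf{v}_i$ must land in the \emph{same} output-block, forcing all rows of $\mathbf{v}_i$ into the same row-block and all columns into the same column-block. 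Strict monotonicity of $\mu_1<\cdots<\mu_{\row(\mathbf{a})}$ and $\iota_1<\cdots<\iota_{\col(\mathbf{a})}$ then implies that the indices $i$ with $\mathbf{v}_i$ landing in the upper block form an initial segment $\{1,\ldots,\alpha\}$; the same reasoning applied to $\mathbf{b}$ yields $\beta$, and non-emptiness of $\mathbf{x},\mathbf{y}$ places $(\alpha,\beta)$ in $\domainAlphaBeta_{a,b}$. Reading off the induced row/column distributions yields the prescribed forms (\ref{eq1:lem518})--(\ref{eq4:lem518}), so $(\alpha,\beta)\in\splitt_{\mathbf{P},\mathbf{Q}}^{\mathbf{a},\mathbf{b}}$.

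Mutual invertibility of $\Phi$ and $\Psi$ then follows from \Cref{lem:uniquenessAndInje}, which makes the four-block decomposition of $\mathbf{P}$ and of $\mathbf{Q}$ uniquely determined by $(\alpha,\beta)$; both composites reduce to verifying $(j_1,k_1)=(\underline{u}_\alpha+\underline{\omega}_\beta,\;\underline{v}_\alpha+\underline{w}_\beta)$, which holds by construction. The main obstacle sits inside $\Psi$: showing that no connected component $\mathbf{v}_i$ or $\mathbf{w}_\ell$ can be split across the upper and lower diagonal blocks. This is precisely where connectedness---not merely the absence of $\monCompNeutrElem$-lines---is indispensable, and it generalises the two-component case study of \Cref{lemma:PdiagABQ} to arbitrary numbers of connected components.
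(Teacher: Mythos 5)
The forward direction ($\Phi$) is fine, and the structure of the proof is essentially that of the paper. But there is a genuine gap in $\Psi$.

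You justify the key step "all non-$\monCompNeutrElem$ entries of $\mathbf{v}_i$ must land in the same output-block" by asserting that the bipartite graph linking non-$\monCompNeutrElem$ entries through shared rows and columns is connected for a connected composition, and you call this "essentially the content of $\mathbf{v}_i\in\compositionConnected$." This is false. Connectedness of a matrix composition (\Cref{def:diag} and following) means absence of a non-trivial \emph{block-diagonal} factorization; it does not imply connectivity of the row–column incidence graph. A minimal counterexample is
\begin{equation*}
\begin{bmatrix}\vareps&\w{1}\\\w{2}&\vareps\end{bmatrix}\in\compositionConnected,
\end{equation*}
whose two non-$\monCompNeutrElem$ entries share neither a row nor a column, so the graph splits into $\{r_1,c_2\}$ and $\{r_2,c_1\}$. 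Your shared-row/shared-column propagation step thus never engages, yet you still need to rule out splitting $\w{1}$ into the upper-left block and $\w{2}$ into the lower-right one.

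The conclusion you want \emph{is} true, but the correct argument must use strict monotonicity of the embedding indices together with connectedness in the block-diagonal sense — the same argument the paper uses implicitly inside \Cref{lemma:PdiagABQ} — not graph connectivity alone. Concretely: suppose $\mathbf{v}_{i;p,q}\neq\monCompNeutrElem$ lands in the upper-left block and $\mathbf{v}_{i;p',q'}\neq\monCompNeutrElem$ lands in the lower-right; monotonicity forces $p<p'$ and $q<q'$, hence a row-cut $u$ and column-cut $v$ of $\mathbf{v}_i$ with $p\leq u<p'$, $q\leq v<q'$ such that rows $\leq u$ map into $\{1,\ldots,j_1\}$ and columns $\leq v$ into $\{1,\ldots,k_1\}$; since $\mathbf{v}_i$ is connected, some off-diagonal block of this cut contains a non-$\monCompNeutrElem$ entry of $\mathbf{v}_i$, which is then transported to an off-diagonal position of $\diag(\mathbf{x},\mathbf{y})$ — a contradiction. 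Your use of monotonicity in the next sentence (to get the initial segment) is fine, but it cannot repair the preceding claim; the two must be combined at the "same block" step, not separated.
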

\begin{example}\label{ex:bialgebraRelationPossibleSplittings}
~
\begin{enumerate}
    \item
    In  \Cref{ex:bialgebraRelConnectedCase} we have $\mathbf{a},\mathbf{b}\in\compositionConnected$, i.e.,  both $\mathbf{a}$ and $\mathbf{b}$ have a trivial decomposition of length $a=b=1$. If $\mathbf{P}=\begin{bmatrix}e_3&e_1&e_2\end{bmatrix}$ and $\mathbf{Q}=\begin{bmatrix}e_2&e_1\end{bmatrix}$, then   $\splitt_{\mathbf{P},\mathbf{Q}}^{\mathbf{a},\mathbf{b}}=\{(0,1)\}$
     where 
$$(\begin{bmatrix}\mathbf{P}_{11}^{(\alpha)}&\mathbf{P}_{12}^{(\beta)}\\\mathbf{P}_{21}^{(\alpha)}&\mathbf{P}_{22}^{(\beta)}\end{bmatrix},
\begin{bmatrix}\mathbf{Q}_{11}^{(\alpha)}&\mathbf{Q}_{12}^{(\beta)}\\\mathbf{Q}_{21}^{(\alpha)}&\mathbf{Q}_{22}^{(\beta)}\end{bmatrix})=
(\begin{bmatrix}
0_{j_1\times 1}&\mathrm{I}_2\\
\mathrm{I}_1&0_{j_2\times 2}\end{bmatrix},
\begin{bmatrix}
0_{k_1\times 1}&\mathrm{I}_1\\
\mathrm{I}_1&0_{k_2\times 1}
\end{bmatrix})$$
with $(j_1,j_2)=(2,1)$ and $(k_1,k_2)=(1,1)$, that is case \ref{lemma:PdiagABQ_part2}. of \Cref{lemma:PdiagABQ}.
The only other candidate $(\alpha,\beta)=(1,0)\in\domainAlphaBeta_{1,1}$ is not a splitting, since ${\underline{u}}_{1}=2$ and $\mu_{1}=3$, i.e.
(\ref{eq1:lem518}) is not satisfied.  
\item 
For a case with $\mathbf{a}\not\in\compositionConnected$, consider
\begin{equation}\label{eq:exSplittings}
(\mathbf{a},\mathbf{b})
=(\begin{bmatrix}
\w{1}&\w{2}&\varepsilon&\varepsilon\\
\varepsilon&\w{3}&\varepsilon&\varepsilon\\
\varepsilon&\varepsilon&\w{4}&\w{5}
\end{bmatrix},
\begin{bmatrix}
\varepsilon&\w{6}\\
\w{7}&\w{8}
\end{bmatrix})\in\monoidComp_8^{3\times 4}\times\monoidComp_8^{2\times 2},
\end{equation}
Then, with
$\mathbf{P}=\begin{bmatrix}e_1&e_2&e_5&e_3&e_4\end{bmatrix}$ and $\mathbf{Q}=\begin{bmatrix}e_1&e_2&e_5&e_6&e_3&e_4\end{bmatrix}$, 
$$\mathbf{P}\diag(\mathbf{a},\mathbf{b}){\mathbf{Q}}^\top=\begin{bmatrix}
\w{1}&\w{2}&\varepsilon&\varepsilon&\varepsilon&\varepsilon\\
\varepsilon&\w{3}&\varepsilon&\varepsilon&\varepsilon&\varepsilon\\
\varepsilon&\varepsilon&\varepsilon&\w{6}&\varepsilon&\varepsilon\\
\varepsilon&\varepsilon&\w{7}&\w{8}&\varepsilon&\varepsilon\\
\varepsilon&\varepsilon&\varepsilon&\varepsilon&\w{4}&\w{5}\end{bmatrix}$$
which has two splittings $\splitt_{\mathbf{P},\mathbf{Q}}^{\mathbf{a},\mathbf{b}}=\{(1,0),(1,1)\}$ via 
\begin{align*}
(\mathbf{P},\mathbf{Q})
&=(\begin{bmatrix}
\begin{bmatrix}e_1&e_2&0_2\end{bmatrix}
&\begin{bmatrix}0_2&0_2\end{bmatrix}\\
\begin{bmatrix}0_3&0_3&e_3\end{bmatrix}
&\begin{bmatrix}e_1&e_2\end{bmatrix}
\end{bmatrix},
\begin{bmatrix}
\begin{bmatrix}e_1&e_2&0_2&0_2\end{bmatrix}
&\begin{bmatrix}0_2&0_2\end{bmatrix}\\
\begin{bmatrix}0_4&0_4&e_3&e_4\end{bmatrix}
&\begin{bmatrix}e_1&e_2\end{bmatrix}
\end{bmatrix})\\
&=(\begin{bmatrix}
\begin{bmatrix}e_1&e_2&0_4\end{bmatrix}
&\begin{bmatrix}e_3&e_4\end{bmatrix}\\
\begin{bmatrix}0&0&1\end{bmatrix}
&\begin{bmatrix}0&0\end{bmatrix}
\end{bmatrix},
\begin{bmatrix}
\begin{bmatrix}e_1&e_2&0_4&0_4\end{bmatrix}
&\begin{bmatrix}e_3&e_4\end{bmatrix}\\
\begin{bmatrix}0_2&0_2&e_1&e_2\end{bmatrix}
&\begin{bmatrix}0_2&0_2\end{bmatrix}
\end{bmatrix}).
\end{align*}
\item With $(\mathbf{a},\mathbf{b})$ as in (\ref{eq:exSplittings}), $\mathbf{P}=\mathrm{I}_5$ and $\mathbf{Q}=\begin{bmatrix}
    e_1&e_2&e_3&e_4&e_2&e_5
\end{bmatrix}$,
$$\mathbf{P}
\diag(\mathbf{a},\mathbf{b})
{\mathbf{Q}}^\top=
\begin{bmatrix}
\w{1}&\w{2}&\varepsilon&\varepsilon&\varepsilon\\
\varepsilon&\w{3}&\varepsilon&\varepsilon&\varepsilon\\
\varepsilon&\varepsilon&\w{4}&\w{5}&\varepsilon\\
\varepsilon&\varepsilon&\varepsilon&\varepsilon&\w{6}\\
\varepsilon&\w{7}&\varepsilon&\varepsilon&\w{8}
\end{bmatrix}$$
is connected.     
The constellation $(0,1)\in\domainAlphaBeta_{2,1}$ is not a splitting with ${\mathbf{P}}_{1,1}=1$.
Also for the remaining cases $1\leq\alpha\leq2$, 
\begin{align*}\mathbf{Q}=
\begin{bmatrix}
\begin{bmatrix}e_1&e_2&0_2&0_2\end{bmatrix}
&\begin{bmatrix}e_2&0_2\end{bmatrix}\\
\begin{bmatrix}0_3&0_3&e_3&e_4\end{bmatrix}
&\begin{bmatrix}0_3&e_3\end{bmatrix}
\end{bmatrix}
=
\begin{bmatrix}
\begin{bmatrix}e_1&e_2&e_3&e_4\end{bmatrix}
&\begin{bmatrix}e_2&0_4\end{bmatrix}\\
\begin{bmatrix}0&0&0&0\end{bmatrix}
&\begin{bmatrix}0&1\end{bmatrix}
\end{bmatrix}
\end{align*}
is not $(\alpha,\beta)$-decomposable. This follows  
since with $\iota_{\underline{v}_1}=2$ and $\iota_{\underline{v}_2}=4$,   
$\mathbf{Q}_2$ is not of the form 
(\ref{eq4:lem518}). 
Therefore, $\splitt_{\mathbf{P},\mathbf{Q}}^{\mathbf{a},\mathbf{b}}$ is empty.
\end{enumerate}
\end{example}

\begin{figure}
     \centering
     \begin{subfigure}[b]{0.3\textwidth}
         \centering
         $\begin{bmatrix}
\w{1}&\w{2}&\varepsilon&\varepsilon&\varepsilon&\varepsilon\\
\varepsilon&\w{3}&\varepsilon&\varepsilon&\varepsilon&\varepsilon\\
\varepsilon&\varepsilon&\w{4}&\w{5}&\varepsilon&\varepsilon\\
\varepsilon&\varepsilon&\varepsilon&\varepsilon&\varepsilon&\w{6}\\
\varepsilon&\varepsilon&\varepsilon&\varepsilon&\w{7}&\w{8}\end{bmatrix}$
         \caption*{$\splitt_{\mathbf{P},\mathbf{Q}}^{\mathbf{a},\mathbf{b}}=\{(1,0),(2,0)\}$}
     \end{subfigure}
     \hfill
     \begin{subfigure}[b]{0.3\textwidth}
         \centering
         $\begin{bmatrix}
\w{1}&\w{2}&\varepsilon&\varepsilon&\varepsilon\\
\varepsilon&\w{3}&\varepsilon&\varepsilon&\varepsilon\\
\varepsilon&\varepsilon&\w{4}&\w{5}&\varepsilon\\
\varepsilon&\varepsilon&\varepsilon&\varepsilon&\w{6}\\
\varepsilon&\varepsilon&\varepsilon&\w{7}&\w{8}
\end{bmatrix}$
         \caption*{$\splitt_{\mathbf{P},\mathbf{Q}}^{\mathbf{a},\mathbf{b}}=\{(1,0)\}$}
     \end{subfigure}
     \hfill
     \begin{subfigure}[b]{0.3\textwidth}
         \centering
         $\begin{bmatrix}
\w{1}&\w{2}&\varepsilon&\varepsilon&\varepsilon\\
\varepsilon&\w{3}&\varepsilon&\varepsilon&\varepsilon\\
\varepsilon&\varepsilon&\w{4}&\w{5}&\varepsilon\\
\varepsilon&\varepsilon&\varepsilon&\varepsilon&\w{6}\\
\varepsilon&\w{7}&\varepsilon&\varepsilon&\w{8}
\end{bmatrix}$
         \caption*{$\splitt_{\mathbf{P},\mathbf{Q}}^{\mathbf{a},\mathbf{b}}=\emptyset$}
     \end{subfigure}
     \newline\\
     \begin{subfigure}[b]{0.3\textwidth}
         \centering
         $\begin{bmatrix}
\w{1}&\w{2}&\varepsilon&\varepsilon&\varepsilon&\varepsilon\\
\varepsilon&\w{3}&\varepsilon&\varepsilon&\varepsilon&\varepsilon\\
\varepsilon&\varepsilon&\varepsilon&\w{6}&\varepsilon&\varepsilon\\
\varepsilon&\varepsilon&\w{7}&\w{8}&\varepsilon&\varepsilon\\
\varepsilon&\varepsilon&\varepsilon&\varepsilon&\w{4}&\w{5}\end{bmatrix}$
         \caption*{$\splitt_{\mathbf{P},\mathbf{Q}}^{\mathbf{a},\mathbf{b}}=\{(1,0),(1,1)\}$}
     \end{subfigure}
     \hfill
     \begin{subfigure}[b]{0.3\textwidth}
         \centering
         $\begin{bmatrix}
\w{1}&\w{2}&\varepsilon&\varepsilon&\varepsilon\\
\varepsilon&\w{3}&\varepsilon&\varepsilon&\varepsilon\\
\varepsilon&\varepsilon&\w{6}&\varepsilon&\varepsilon\\
\varepsilon&\w{7}&\w{8}&\varepsilon&\varepsilon\\
\varepsilon&\varepsilon&\varepsilon&\w{4}&\w{5}\end{bmatrix}$
         \caption*{$\splitt_{\mathbf{P},\mathbf{Q}}^{\mathbf{a},\mathbf{b}}=\{(1,1)\}$}
     \end{subfigure}
     \hfill
     \begin{subfigure}[b]{0.3\textwidth}
         \centering
         $\begin{bmatrix}
\varepsilon&\w{6}&\varepsilon&\varepsilon&\varepsilon&\varepsilon\\
\w{7}&\w{8}&\varepsilon&\varepsilon&\varepsilon&\varepsilon\\
\varepsilon&\varepsilon&\w{1}&\w{2}&\varepsilon&\varepsilon\\
\varepsilon&\varepsilon&\varepsilon&\w{3}&\varepsilon&\varepsilon\\
\varepsilon&\varepsilon&\varepsilon&\varepsilon&\w{4}&\w{5}\end{bmatrix}$
         \caption*{$\splitt_{\mathbf{P},\mathbf{Q}}^{\mathbf{a},\mathbf{b}}=\{(0,1),(1,1)\}$}
     \end{subfigure}
        \caption{
        For some terms appearing
        in the quasi-shuffle of $\mathbf a$ and $\mathbf b$ from 
        \Cref{ex:bialgebraRelationPossibleSplittings}.2
        (which lead to right invertible matrices $\mathbf P$ and $\mathbf Q$),
        all splittings due to the Lemma are given.}
\label{fig:casestudy_primitive_decomp_qshuffle}
\end{figure}\textbf{}

\begin{proof}[Proof of \Cref{lem:bialg_Set2}]
If $(\alpha,\beta)\in \splitt_{\mathbf{P},\mathbf{Q}}^{\mathbf{a},\mathbf{b}}$, then $\mathbf{P}$ and $\mathbf{Q}$ are $(\alpha,\beta)$-decomposable with $\mathbf{P}_{21}^{(\alpha)}\mathbf{a}{\mathbf{Q}_{11}^{(\alpha)}}^\top=
\mathbf{P}_{22}^{(\beta)}\mathbf{b}{\mathbf{Q}_{12}^{(\beta)}}^\top=\monCompNeutrElem_{j_2\times k_1}
$ and $
\mathbf{P}_{11}^{(\alpha)}\mathbf{a}{\mathbf{Q}_{21}^{(\alpha)}}^\top
=
\mathbf{P}_{12}^{(\beta)}\mathbf{b}{\mathbf{Q}_{22}^{(\beta)}}^\top
=\monCompNeutrElem_{j_1\times k_2}$, hence
\begin{align}\label{lemEq:PdiagAbQChar}
\mathbf{P}\diag(\mathbf{a},\mathbf{b})\mathbf{Q}^\top&=
\begin{bmatrix}
\mathbf{P}_{11}^{(\alpha)}\mathbf{a}{\mathbf{Q}_{11}^{(\alpha)}}^\top
\star
\mathbf{P}_{12}^{(\beta)}\mathbf{b}{\mathbf{Q}_{12}^{(\beta)}}^\top
&
\mathbf{P}_{11}^{(\alpha)}\mathbf{a}{\mathbf{Q}_{21}^{(\alpha)}}^\top
\star
\mathbf{P}_{12}^{(\beta)}\mathbf{b}{\mathbf{Q}_{22}^{(\beta)}}^\top
\\
\mathbf{P}_{21}^{(\alpha)}\mathbf{a}{\mathbf{Q}_{11}^{(\alpha)}}^\top
\star
\mathbf{P}_{22}^{(\beta)}\mathbf{b}{\mathbf{Q}_{12}^{(\beta)}}^\top
&\mathbf{P}_{21}^{(\alpha)}\mathbf{a}{\mathbf{Q}_{21}^{(\alpha)}}^\top
\star
\mathbf{P}_{22}^{(\beta)}\mathbf{b}{\mathbf{Q}_{22}^{(\beta)}}^\top
\end{bmatrix}\\
&=\diag(
\mathbf{P}_{11}^{(\alpha)}\mathbf{a}{\mathbf{Q}_{11}^{(\alpha)}}^\top
\star
\mathbf{P}_{12}^{(\beta)}\mathbf{b}{\mathbf{Q}_{12}^{(\beta)}}^\top
,\mathbf{P}_{21}^{(\alpha)}\mathbf{a}{\mathbf{Q}_{21}^{(\alpha)}}^\top
\star
\mathbf{P}_{22}^{(\beta)}\mathbf{b}{\mathbf{Q}_{22}^{(\beta)}}^\top)\nonumber
\end{align}
is not connected. 
Different splittings in $\splitt_{\mathbf{P},\mathbf{Q}}^{\mathbf{a},\mathbf{b}}$ yield different block decompositions (\ref{lemEq:PdiagAbQChar}) due to \Cref{lem:uniquenessAndInje}. 

Conversely, if 
\begin{align*}\mathbf{P}\diag(\mathbf{a},\mathbf{b})\mathbf{Q}^\top
&=\underset{1\leq s\leq a}\bigstar
\begin{bmatrix}\e{\mu_{(\underline{u}_{(s-1)}+1)}}&\cdots&\e{\mu_{\underline{u}_{s}}}\end{bmatrix}\mathbf{v}_s
\begin{bmatrix}\e{\iota_{(\underline{v}_s+1)}}&\cdots&\e{\iota_{\underline{v}_{(s+1)}}}\end{bmatrix}^\top\\
&\;\;\;\;\;\;\;\;\;\;\;
\star
\underset{1\leq t\leq b}\bigstar
\begin{bmatrix}\e{\nu_{(\underline{\omega}_t+1)}}&\cdots&\e{\nu_{\underline{\omega}_{(t+1)}}}\end{bmatrix}\mathbf{w}_t
\begin{bmatrix}\e{\kappa_{(\underline{w}_t+1)}}&\cdots&\e{\kappa_{\underline{w}_{(t+1)}}}\end{bmatrix}^\top\\
&=\diag(\mathbf{x},\mathbf{y})
\end{align*}
with $\size(\mathbf{x})=(j_1,k_1)$, then analogous to \Cref{lemma:PdiagABQ}, all $s$-indexed factors 
$$\begin{bmatrix}\e{\mu_{(\underline{u}_{(s-1)}+1)}}&\cdots&\e{\mu_{\underline{u}_{s}}}\end{bmatrix}\mathbf{v}_s
\begin{bmatrix}\e{\iota_{(\underline{v}_s+1)}}&\cdots&\e{\iota_{\underline{v}_{(s+1)}}}\end{bmatrix}^\top
=:\diag(\mathbf{x}_1^{(s)},\mathbf{y}_1^{(s)})$$
decompose with $\size(\mathbf{x}_1^{(s)})=(j_1,k_1)$.
Furthermore,  ${\mathbf{x}}_1^{(s)}$ has all entries equal
to $\monCompNeutrElem$ or ${\mathbf{y}}_1^{(s)}$ does.
With strictly increasing $\mu$ and $\iota$, there is a unique  $0\leq\alpha\leq a$ such that $\mathbf{x}_1^{(1)},\ldots,\mathbf{x}_1^{(\alpha)}$ have entries different from $\monCompNeutrElem$, and $\mathbf{x}_1^{(\alpha+1)},\ldots,\mathbf{x}_1^{(a)}$ have all entries equal to $\monCompNeutrElem$.
Therefore, $\mathbf{P}_1$ and $\mathbf{Q}_1$ are of shape (\ref{eq1:lem518}) and (\ref{eq3:lem518}), respectively. 
Analogously one obtains $\mathbf{P}_2$ and $\mathbf{Q}_2$ of shape
(\ref{eq2:lem518}) and (\ref{eq4:lem518})
via factors derived from $\mathbf{w}_t$ and uniquely determined  $0\leq \beta\leq b$. 
Note $(\alpha,\beta)\in\domainAlphaBeta_{a,b}$ since $\mathbf{x}\not=\ec\not=\mathbf{y}$. 

If $\mathbf{P}\diag(\mathbf{a},\mathbf{b})\mathbf{Q}^\top=\diag(\mathbf{x}',\mathbf{y}')$ has a second decomposition with resulting splitting $(\alpha',\beta')\in\splitt_{a,b}$, then without loss of generality $\row(\mathbf{x})<\row(\mathbf{x}')$ and $\col(\mathbf{x})<\col(\mathbf{x}')$. 
With $\mathbf{v}_s,\mathbf{w}_t\in\compositionConnected$ this implies $\alpha<\alpha'$ or $\beta<\beta'$. 
\end{proof}

\begin{remark}\label{rem:quasishuffle_correspndence_bialgebra}
Let $\mathbf{a},\mathbf{b},\mathbf{v}$ and $\mathbf{w}$ be as in \Cref{lem:PdiagAbQChar}. 
For all $(\alpha,\beta)\in\domainAlphaBeta_{a,b}$ there is a one-to-one correspondence between 
\begin{enumerate}
\item 
the set of all  $$(\mathbf{P},\mathbf{Q})\in\bigcup_{j,k\in\N}\QSH(\row(\mathbf{a}),\row(\mathbf{b});j)\times\QSH(\col(\mathbf{a}),
\col(\mathbf{b});k)$$ which are simultaneously $(\alpha,\beta)$-decomposable, and 
\item 
the set of $4$-tuples 
$$(\begin{bmatrix}\mathbf{P}_{11}^{(\alpha)}&\mathbf{P}_{12}^{(\beta)}\end{bmatrix}
,
\begin{bmatrix}\mathbf{P}_{21}^{(\alpha)}&\mathbf{P}_{22}^{(\beta)}\end{bmatrix},
\begin{bmatrix}\mathbf{Q}_{11}^{(\alpha)}&\mathbf{Q}_{12}^{(\beta)}\end{bmatrix}
,
\begin{bmatrix}\mathbf{Q}_{21}^{(\alpha)}&\mathbf{Q}_{22}^{(\beta)}\end{bmatrix})$$
from 
$$
\bigcup_{j_1,j_2,k_1,k_2\in\N}\N^{j_1\times\row(\diag(\mathbf{a},\mathbf{b}))}
\times
\N^{j_2\times\row(\diag(\mathbf{a},\mathbf{b}))}
\times
\N^{k_1\times\col(\diag(\mathbf{a},\mathbf{b}))}
\times
\N^{k_2\times\col(\diag(\mathbf{a},\mathbf{b}))}$$
with right invertible components of the form (\ref{eq1:lem518}), (\ref{eq2:lem518}), (\ref{eq3:lem518}) and (\ref{eq4:lem518}).
\end{enumerate}
\end{remark}

We now verify the bialgebra property in order to conclude \Cref{theoroem:hopfalgebra}.
\begin{theorem}\label{theoroem:bialgebra}
$(\groundRing\langle\compositionConnected\rangle,\qShuffle,\eta,\Delta,\eps)$ is a graded, connected  bialgebra.
\end{theorem}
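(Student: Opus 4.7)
The plan is to reduce the bialgebra axioms to a statement about pairs of basis elements and then exploit the splitting bijection from \Cref{lem:PdiagAbQChar} together with \Cref{rem:quasishuffle_correspndence_bialgebra}. Grading has already been established (\Cref{CorgradedProd} together with \Cref{lemma:idweightDiag} for the coproduct), and connectedness holds because the zero-weight component is spanned by $\ec$. Compatibility of $\eta$ and $\eps$ with the product and coproduct is immediate from the definitions: $\eps$ vanishes on non-empty compositions and $\eps(\ec)=1$, while $\eta(1)=\ec$ is the neutral element of $\qShuffle$ and satisfies $\Delta\ec=\ec\otimes\ec$. So only the compatibility
\begin{equation*}
\Delta\circ\qShuffle=(\qShuffle\otimes\qShuffle)\circ(\id\otimes\,\tau\otimes\id)\circ(\Delta\otimes\Delta)
\end{equation*}
needs work, and by bilinearity it suffices to verify it on $\mathbf{a}\otimes\mathbf{b}$ for $\mathbf{a},\mathbf{b}\in\composition$.

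First I would fix connected decompositions $\mathbf{a}=\diag(\mathbf{v}_1,\ldots,\mathbf{v}_a)$ and $\mathbf{b}=\diag(\mathbf{w}_1,\ldots,\mathbf{w}_b)$ and expand the right-hand side via \Cref{def:coproduct}:
\begin{equation*}
\sum_{\alpha=0}^{a}\sum_{\beta=0}^{b}\bigl(\diag(\mathbf{v}_1,\ldots,\mathbf{v}_\alpha)\qShuffle\diag(\mathbf{w}_1,\ldots,\mathbf{w}_\beta)\bigr)\otimes\bigl(\diag(\mathbf{v}_{\alpha+1},\ldots,\mathbf{v}_a)\qShuffle\diag(\mathbf{w}_{\beta+1},\ldots,\mathbf{w}_b)\bigr).
\end{equation*}
Using \Cref{def_2ParamQS}, each factor in each summand is itself a sum over pairs of surjection matrices; indexing those on the left by $(\mathbf{P}_{11}^{(\alpha)}\mid\mathbf{P}_{12}^{(\beta)})$ and $(\mathbf{Q}_{11}^{(\alpha)}\mid\mathbf{Q}_{12}^{(\beta)})$ and those on the right by $(\mathbf{P}_{21}^{(\alpha)}\mid\mathbf{P}_{22}^{(\beta)})$ and $(\mathbf{Q}_{21}^{(\alpha)}\mid\mathbf{Q}_{22}^{(\beta)})$ brings the right-hand side into precisely the index set described in \Cref{rem:quasishuffle_correspndence_bialgebra}.

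Next I would expand the left-hand side. By \Cref{def_2ParamQS},
\begin{equation*}
\Delta\circ\qShuffle(\mathbf{a}\otimes\mathbf{b})=\sum_{j,k}\sum_{\mathbf{P},\mathbf{Q}}\Delta\bigl(\mathbf{P}\diag(\mathbf{a},\mathbf{b})\mathbf{Q}^\top\bigr).
\end{equation*}
For each fixed $(\mathbf{P},\mathbf{Q})$ the deconcatenation coproduct of $\mathbf{P}\diag(\mathbf{a},\mathbf{b})\mathbf{Q}^\top$ is a sum over non-trivial block factorizations into $(\mathbf{x},\mathbf{y})$, plus the two trivial terms $\mathbf{P}\diag(\mathbf{a},\mathbf{b})\mathbf{Q}^\top\otimes\ec$ and $\ec\otimes\mathbf{P}\diag(\mathbf{a},\mathbf{b})\mathbf{Q}^\top$. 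By \Cref{lem:PdiagAbQChar} the non-trivial factorizations are in bijection with the splittings $(\alpha,\beta)\in\splitt_{\mathbf{P},\mathbf{Q}}^{\mathbf{a},\mathbf{b}}$, and inspecting \Cref{lemEq:PdiagAbQChar} shows that under such a splitting the two tensor factors are exactly
\begin{equation*}
\mathbf{P}_{11}^{(\alpha)}\diag(\mathbf{v}_1,\ldots,\mathbf{v}_\alpha){\mathbf{Q}_{11}^{(\alpha)}}^{\!\top}\star\mathbf{P}_{12}^{(\beta)}\diag(\mathbf{w}_1,\ldots,\mathbf{w}_\beta){\mathbf{Q}_{12}^{(\beta)}}^{\!\top}
\end{equation*}
and the analogous expression with lower indices. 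The trivial tensor summands are absorbed into the $(\alpha,\beta)\in\{(0,0),(a,b)\}$ contributions of the right-hand side, which sit outside $\domainAlphaBeta_{a,b}$ and give precisely $\ec\otimes(\mathbf{a}\qShuffle\mathbf{b})$ and $(\mathbf{a}\qShuffle\mathbf{b})\otimes\ec$.

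The final step is to exhibit the bijection. Regrouping the double sum on the left by first choosing $(\alpha,\beta)$ and then the four blocks of $\mathbf{P}$ and $\mathbf{Q}$ compatible with that splitting, \Cref{rem:quasishuffle_correspndence_bialgebra} identifies the index set with the one already obtained for the right-hand side, and the summands agree term by term. The main obstacle is purely bookkeeping: one must check that when the blocks $\mathbf{P}_{i\bullet}^{(\cdot)}$ and $\mathbf{Q}_{i\bullet}^{(\cdot)}$ are assembled into $\mathbf{P}$ and $\mathbf{Q}$, the resulting matrices remain right-invertible elements of $\QSH$, and conversely that every right-invertible pair $(\mathbf{P},\mathbf{Q})$ admitting a splitting $(\alpha,\beta)$ arises in this way; both facts have already been isolated in \Cref{lem:uniquenessAndInje} and \Cref{lem:PdiagAbQChar}, so the remaining computation is a direct matching of indices.
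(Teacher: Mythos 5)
Your proposal is correct and follows essentially the same route as the paper: both expand $\Delta\circ\qShuffle(\mathbf{a}\otimes\mathbf{b})$ via the sum over $(\mathbf{P},\mathbf{Q})$, invoke \Cref{lem:PdiagAbQChar} to identify the non-trivial deconcatenations with splittings $(\alpha,\beta)\in\splitt_{\mathbf{P},\mathbf{Q}}^{\mathbf{a},\mathbf{b}}$, regroup the double sum over $(\mathbf{P},\mathbf{Q},\alpha,\beta)$ via \Cref{rem:quasishuffle_correspndence_bialgebra}, and then recognize the result as $(\qShuffle\otimes\qShuffle)\circ(\id\otimes\tau\otimes\id)\circ(\Delta\otimes\Delta)(\mathbf{a}\otimes\mathbf{b})$ using the product formula of \Cref{def_2ParamQS}. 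You additionally spell out the grading, connectedness, and unit/counit compatibility, which the paper leaves implicit; this is correct and a sensible completion.
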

\begin{proof}
For fixed $\mathbf{a},\mathbf{b}\in\composition$ let $\mathbf{v}\in\compositionConnected^{a}$ and $ \mathbf{w}\in\compositionConnected^{b}$ denote the decompositions into connected compositions with block sizes 
 $(u_{\alpha},v_{\alpha})=\size({\mathbf{v}}_{\alpha})$ and $({\omega}_\beta, w_\beta)=\size({\mathbf{w}}_{\beta})$, respectively.  
Then, with \Cref{lem:PdiagAbQChar}, 
\begingroup
\allowdisplaybreaks
\begin{align*}
\Delta\,\circ&\qShuffle(\mathbf{a}\otimes\mathbf{b})=
\sum_{\mathbf{P},\mathbf{Q}}\Delta(\mathbf{P}\diag(\mathbf{a},\mathbf{b})\mathbf{Q}^\top)\\
&=\sum_{\mathbf{P},\mathbf{Q}}\left(\mathbf{P}\diag(\mathbf{a},\mathbf{b})\mathbf{Q}^\top\otimes \ec + \ec\otimes \mathbf{P}\diag(\mathbf{a},\mathbf{b})\mathbf{Q}^\top+\sum_{\substack{\mathbf{x},\mathbf{y}\in\composition\setminus\{\ec\}\\\diag(\mathbf{x},\mathbf{y})=\mathbf{P}\diag(\mathbf{a},\mathbf{b})\mathbf{Q}^\top}}\mathbf{x}\otimes\mathbf{y}\right)\\
&\overset{(\ref{lemEq:PdiagAbQChar})}=\left(\sum_{\mathbf{P},\mathbf{Q}}\mathbf{P}\diag(\mathbf{a},\mathbf{b})\mathbf{Q}^\top\right)\otimes \ec + \ec\otimes \left(\sum_{\mathbf{P},\mathbf{Q}}\mathbf{P}\diag(\mathbf{a},\mathbf{b})\mathbf{Q}^\top\right)\\
&\;\;\;+\sum_{\mathbf{P},\mathbf{Q}}\sum_{(\alpha,\beta)\in \splitt_{\mathbf{P},\mathbf{Q}}^{\mathbf{a},\mathbf{b}}}
\mathbf{P}_{11}^{(\alpha)}\mathbf{a}{\mathbf{Q}_{11}^{(\alpha)}}^\top
\star
\mathbf{P}_{12}^{(\beta)}\mathbf{b}{\mathbf{Q}_{12}^{(\beta)}}^\top
\otimes
\mathbf{P}_{21}^{(\alpha)}\mathbf{a}{\mathbf{Q}_{21}^{(\alpha)}}^\top
\star
\mathbf{P}_{22}^{(\beta)}\mathbf{b}{\mathbf{Q}_{22}^{(\beta)}}^\top\\
&=\mathbf{a}\qShuffle\mathbf{b}\otimes \ec + \ec\otimes \mathbf{a}\qShuffle\mathbf{b}\\
&\;\;\;+\sum_{\mathbf{P},\mathbf{Q}}\sum_{\substack{(\alpha,\beta)\in\domainAlphaBeta_{a,b}\\
\text{such that }\mathbf{P},\mathbf{Q}\\
\text{ $(\alpha,\beta)$-decomposable}
}}
\mathbf{P}_{11}^{(\alpha)}\mathbf{a}{\mathbf{Q}_{11}^{(\alpha)}}^\top
\star
\mathbf{P}_{12}^{(\beta)}\mathbf{b}{\mathbf{Q}_{12}^{(\beta)}}^\top
\otimes
\mathbf{P}_{21}^{(\alpha)}\mathbf{a}{\mathbf{Q}_{21}^{(\alpha)}}^\top
\star
\mathbf{P}_{22}^{(\beta)}\mathbf{b}{\mathbf{Q}_{22}^{(\beta)}}^\top\\
&=\mathbf{a}\qShuffle\mathbf{b}\otimes \ec + \ec\otimes \mathbf{a}\qShuffle\mathbf{b}\\
&\;\;\;+\sum_{(\alpha,\beta)\in\domainAlphaBeta_{a,b}}
\sum_{\substack{
\mathbf{P},\mathbf{Q}\\
\text{ $(\alpha,\beta)$-decomposable}
}}
\mathbf{P}_{11}^{(\alpha)}\mathbf{a}{\mathbf{Q}_{11}^{(\alpha)}}^\top
\star
\mathbf{P}_{12}^{(\beta)}\mathbf{b}{\mathbf{Q}_{12}^{(\beta)}}^\top
\otimes
\mathbf{P}_{21}^{(\alpha)}\mathbf{a}{\mathbf{Q}_{21}^{(\alpha)}}^\top
\star
\mathbf{P}_{22}^{(\beta)}\mathbf{b}{\mathbf{Q}_{22}^{(\beta)}}^\top\\
&\overset{\ref{rem:quasishuffle_correspndence_bialgebra}}=\mathbf{a}\qShuffle\mathbf{b}\otimes \ec + \ec\otimes \mathbf{a}\qShuffle\mathbf{b}\\
&\;\;\;+\sum_{(\alpha,\beta)}
\sum_{\substack{
{[\mathbf{P}_{11}^{(\alpha)}\;\mathbf{P}_{12}^{(\beta)}]}\\
{[\mathbf{Q}_{11}^{(\alpha)}\;\mathbf{Q}_{12}^{(\beta)}]}}}
\sum_{\substack{
{[\mathbf{P}_{21}^{(\alpha)}\;\mathbf{P}_{22}^{(\beta)}]}\\
{[\mathbf{Q}_{21}^{(\alpha)}\;\mathbf{Q}_{22}^{(\beta)}]}}}
\mathbf{P}_{11}^{(\alpha)}\mathbf{a}{\mathbf{Q}_{11}^{(\alpha)}}^\top
\star
\mathbf{P}_{12}^{(\beta)}\mathbf{b}{\mathbf{Q}_{12}^{(\beta)}}^\top
\otimes
\mathbf{P}_{21}^{(\alpha)}\mathbf{a}{\mathbf{Q}_{21}^{(\alpha)}}^\top
\star
\mathbf{P}_{22}^{(\beta)}\mathbf{b}{\mathbf{Q}_{22}^{(\beta)}}^\top\\
&\overset{(\ref{eq2:proof_bialgebra},\ref{eq3:proof_bialgebra})}=(\qShuffle\otimes\qShuffle)\circ(\id\otimes\,\tau\otimes\id)\circ(\Delta\otimes\Delta)(\mathbf{a}\otimes\mathbf{b}),
\end{align*}
\endgroup
where the 
summation over all tuples $$(\begin{bmatrix}\mathbf{P}_{11}^{(\alpha)}&\mathbf{P}_{12}^{(\beta)}\end{bmatrix},\begin{bmatrix}\mathbf{Q}_{11}^{(\alpha)}&\mathbf{Q}_{12}^{(\beta)}\end{bmatrix})$$ 
yields
\begin{align}
\sum_{\substack{
{[\mathbf{P}_{11}^{(\alpha)}\;\mathbf{P}_{12}^{(\beta)}]}\\
{[\mathbf{Q}_{11}^{(\alpha)}\;\mathbf{Q}_{12}^{(\beta)}]}}}
\mathbf{P}_{11}^{(\alpha)}\mathbf{a}{\mathbf{Q}_{11}^{(\alpha)}}^\top
\star
\mathbf{P}_{12}^{(\beta)}\mathbf{b}{\mathbf{Q}_{12}^{(\beta)}}^\top
&=
\sum_{\substack{
\widetilde{\mathbf{P}}_1\in\QSH(\underline{u}_{\alpha},\underline{\omega}_{\beta};j_1)\\
\widetilde{\mathbf{Q}}_1\in\QSH(\underline{v}_{\alpha},\underline{w}_{\beta};k_1)}}
\widetilde{\mathbf{P}}_1\diag(\mathbf{v}_1,\ldots,\mathbf{v}_\alpha,\mathbf{w}_1,\ldots,\mathbf{w}_\beta){\widetilde{\mathbf{Q}}_1}^\top\nonumber\\
&=
\diag(\mathbf{v}_1,\ldots,\mathbf{v}_{\alpha})
\qShuffle
\diag(\mathbf{w}_1,\ldots,\mathbf{w}_{\beta})\label{eq2:proof_bialgebra},
\end{align}
and in the same manner,
\begin{equation}\label{eq3:proof_bialgebra}
\sum_{\substack{
{[\mathbf{P}_{21}^{(\alpha)}\;\mathbf{P}_{22}^{(\beta)}]}\\
{[\mathbf{Q}_{21}^{(\alpha)}\;\mathbf{Q}_{22}^{(\beta)}]}}}
\mathbf{P}_{21}^{(\alpha)}\mathbf{a}{\mathbf{Q}_{21}^{(\alpha)}}^\top
\star
\mathbf{P}_{22}^{(\beta)}\mathbf{b}{\mathbf{Q}_{22}^{(\beta)}}^\top
=
\diag(\mathbf{v}_{\alpha+1},\ldots,\mathbf{v}_a)
\qShuffle
\diag(\mathbf{w}_{\beta+1},\ldots,\mathbf{w}_b)
\end{equation}
for the second component in the tensor product. 
\end{proof}

\section{Conclusions and outlook}

In \Cref{def:ss} we introduce the two-parameter sums signature of a multi-dimensional function $Z:\N^2\rightarrow\groundRing^d$.
It stores all polynomial warping invariants (\Cref{theorem:invariants}, \Cref{sec:polynomialInvariant})
as a linear functional on the linear span of 
matrix compositions.

These invariants are compatible with a Hopf algebra on matrix compositions:
they satisfy a ``quasi-shuffle identity'',
which means that the sums signature is a character with respect to the quasi-shuffle (\Cref{lem:quasiShuffleRel}). 
Compatibility with a concatenation-type coproduct is encoded in a (weak) form
of Chen's identity, \Cref{thm:chen}.

The underlying Hopf algebra of matrix compositions is,
in the case of a one-dimensional ambient
space ($d=1$), isomorphic
to a sub-Hopf algebra of formal power series.
These are akin to quasisymmetric functions,
where the underlying poset of the natural
numbers has been replaced by the poset $\N^2$.
We therefore call them two-parameter quasisymmetric functions.

There remain several directions for future work.
\begin{itemize}

  \item
    The recent extension of Chen's iterated \emph{integrals} in \cite{GLNO2022} has properties similar to our two-parameter sums signature. 
    \emph{In which sense does the discrete setting converge to the continuous?}
    \emph{How are shuffles and quasi-shuffles related to each other? Is there a continuous version of diagonal concatenation that leads to Chen's identity with respect to diagonal deconcatenation?}

    \item
    In this article we restrict to $p=2$ parameters.
    Many results remain true for $p>2$, 
    when adjusting the row and column operations to tensor operations over arbitrary axes.
    This quickly leads to notational clutter. 
    \emph{Can the language of restriction species (and decomposition spaces)
    or $B_\infty$-structures
    simplify the proofs of the bialgebra properties?}

    \item 
    For two parameters, there are several operations that can take on the role of concatenation (and thus deconcatenation).
    We have presented a ``diagonal'' concatenation, 
    with a resulting (weak) notion of Chen's identity. 

    Mutatis mutandis, one could instead concatenate along the anti-diagonal.
    \emph{In which sense are those concepts equivalent (if at all)?}
    Alternatively,
    concatenation along just one axis (as in \cite{GLNO2022}) yields a modified Chen's relation when restricted to certain matrix compositions. 
    \emph{Is there a combination of several Chen-like formulas, which results in an efficient dynamic programming method?}

    \item
    Regarding applications, it will be interesting
    to investigate how the sums that we propose as feature
    for, say, images, compare to the integral features used in \cite{ZLT22}.
    We believe that, as in the one-parameter case, sums (instead of integrals)
    might provide a richer set of features.

    \item
    We introduced two-parameter quasisymmetric functions as a natural generalization of the (classical, one-parameter) quasisymmetric functions. 
    We presented a basis which corresponds to the monomial basis in the one-parameter case.
    \emph{Are there relevant analogons of other bases, for example the fundamental basis?}
    \emph{Closely related, what is a suitable notion of refinement for matrix compositions?}

    \item
    We provided an iterative evaluation procedure of signature coefficients for certain matrix compositions $\closureDiag$.
    Similar methods (with linear complexity) would be possible for matrix compositions based on chaining on the anti-diagonal. 
    \emph{What is the general complexity for arbitrary matrix compositions?}

    \item
    The quasi-shuffle that we introduce
    can be considered as two interacting (one-parameter) quasi-shuffles
    of rows and columns, \Cref{theo_char_qs}.
    It is therefore plausible, and in fact true,
    that our product possesses a tridendriform structure
    (in fact, there is one for the row-point-of-view
    and one for the column-point-of-view).
    Moreover, these two tridendriform structures
    combine into an ennea structure \cite{leroux2004ennea}, with $3^2=9$ operations.
    
    There is already an ennea structure on
    one-parameter quasi-shuffles, by pinning the last \emph{and} the first
    letter (compare \cite[Example 1.8]{aguiar2004quadri}).
    ``Tensoring'' this gives a (maybe interesting) structure with $9^2 = 81$ operations.

    \item The Hopf algebra from   \Cref{theoroem:hopfalgebra} guaranteed that the algebra of  matrix compositions if free. 
   \emph{Is there a free generating set akin to Lyndon words?}
\end{itemize}

\section*{Statements and Declarations}

\subsection*{Competing Interests}
The authors have been supported by the trilateral French-Japanese-German research project \emph{EnhanceD Data stream Analysis with the Signature Method} (EDDA) of the French National Research Agency (ANR), together with the Japan Science and Technology Agency (JST), and the Deutsche Forschungsgemeinschaft (DFG). 
\\\\
Many discussions took place at the event \emph{New interfaces of Stochastic Analysis and Rough Paths}, hosted by the Banff International Research Station (BIRS workshop '22, Banff, Canada), the \emph{International Conference on Scientific Computation and Differential Equations} (SciCADE '22) hosted by the University of Iceland, (Reykjavik, Iceland), during the virtual event \emph{AlCoVE: an Algebraic Combinatorics Virtual Expedition '22}, and at the workshop \emph{Stochastic and Rough Analysis '22} hosted by the FU, HU, TU and WIAS Berlin, together with the University of Potsdam at the Harnack-Haus (Berlin, Germany). 

\subsection*{Acknowledgements}

For various discussions, we thank 
Harald Oberhauser, 
Darrick Lee, 
Samy Tindel,
Fabian Harang, 
Georges Oppenheim, 
Amy Pang, 
Anthony Lazzeroni, 
Weixin Yang,
Danyu Yang and  
Terry Lyons. 
\\\\
L.S. would like to thank 
the Stochastic Analysis Research Group in the Mathematical Institute, University of Oxford (Oxford, UK) for the invitation in June '22.


\newcommand{\etalchar}[1]{$^{#1}$}

\printindex[general]

\end{document}